\font \boldfrak eufb10 at 12 pt
\def\bfr#1{\hbox{\boldfrak #1}}
\newtheorem{proposition}{Proposition}
\newtheorem{lemma}{Lemma}
\newtheorem{definition}{Definition}
\newtheorem*{HW1-2}{HW's Proposition 1.2}
\newtheorem*{HW1-3}{HW's Corollary 1.3}
\newtheorem*{HW1-9}{HW's Lemma 1.9}
\newtheorem*{HW4-3}{HW's Proposition 4.3}
\newtheorem*{M10-1}{Murnaghan's Proposition 10.1}
\newtheorem{theorem}{Theorem}
\newtheorem{problem}{Problem}
\newtheorem{example}{Example}
\DeclareMathOperator{\Hom}{Hom}
\DeclareMathOperator{\Ad}{Ad}
\DeclareMathOperator{\Int}{Int}
\DeclareMathOperator{\tr}{tr}
\DeclareMathOperator{\disc}{disc}
\DeclareMathOperator{\Hasse}{Hasse}
\begin{document}

\def\cJ{{\cal J}}
\def\bA{{\bf A}}

\def\sG{{\mathsf{G}}}
\def\sH{{\mathsf{H}}}
\def\sT{{\mathsf{T}}}
\def\boxit#1{\vbox{\hrule
\hbox{\vrule\kern1pt
\vbox{\kern1pt#1\kern1pt}
\kern-3pt\vrule}\hrule}}
\def\On{{\rm O}}
\def\SOn{{\rm SO}}
\def\gH{{\frak H}}
\def\J{{\cal J}}
\def\fJ{\mathfrak {J}}
\def\fW{\mathfrak{W}}

\def\ladic{\overline{\Q}_\ell}
\def\cg{{\frak c}}
\def\s{{\frak s}}
\def\mg{{\frak m}}
\def\h{{\frak h}}
\def\sg{{\frak s}}
\def\tg{{\frak t}}
\def\kbb{\mbox{k \kern-.72em k}}
\def\kbbl{{\rm k\kern-.5em k}}
\def\kbbll{{\rm k\kern-.4em k}}
\def\ad{{\rm ad}}
\def\Ad{{\rm Ad}}
\def\ni{\noindent}
\def\bB{{\bf B}}
\def\bG{{\bf G}}
\def\bP{{\bf P}}
\def\bM{{\bf M}}
\def\bS{{\bf S}}
\def\bW{{\bf W}}
\def\bN{{\bf N}}
\def\bU{{\bf U}}
\def\bX{{\bf X}}
\def\cS{{\cal S}}
\def\bH{{\bf H}}
\def\bZ{{\bf Z}}
\def\F{{\Bbb F}}
\def\C{{\Bbb C}}
\def\R{{\Bbb R}}
\def\bs{\backslash}
\def\cA{{\cal A}}
\def\cF{{\cal F}}
\def\cG{{\cal G}}
\def\bT{{\bf T}} 
\def\cB{{\cal B}}
\def\cO{{\cal O}}
\def\ind{{\rm ind}}
\def\bGU{{\bf U}}
\def\bGL{{\bf GL}} 
\def\GL{{\rm GL}}
\def\M{{\rm M}}
\def\O{{\rm O}}
\def\SL{{\rm SL}}
\def\U{{\rm U}}
\def\gO{{\frak O}}
\def\gP{{\frak P}}
\def\gal{{\rm Gal}}
\def\g{{\frak g}}
\def\half{\frac{1}{2}}
\def\Fq{{\Bbb F}_q}
\def\Fqq{{\Bbb F}_{q^2}}
\def\Fqb{\overline{{\Bbb F}}_q}
\def\t{\kern.1em{}^t\kern-.1em}
\def\2by2#1#2#3#4{\hbox{$\left( 
\begin{array}{cc}
{#1}&{#2}\\ {#3}&  {#4}\end{array}\right)$}}
\def\A{{\Bbb A}}
\def\Q{{\Bbb Q}}
\def\R{{\Bbb R}}
\def\cN{{\cal N}}
\def\Z{{\Bbb Z}}
\def\gB{\cB}
\def\ord{{\rm ord}}
\def\gO{{\frak O}}

\def\val{{\rm val}}
\def\cH{{\cal H}}
\def\cX{{\cal X}}
\def\cZ{{\cal Z}}
\def\cM{{\cal M}}
\def\cP{{\cal P}}
\def\vectr#1#2{\binom{#1}{#2}}

\def\b{{\frak b}}
\def\f{{\frak f}}
\def\g{{\frak g}}
\def\k{{\frak k}}
\def\gl{{\frak{gl}}}
\def\ind{\hbox{ind}}
\def\End{\hbox{End}}
\def\v#1{\underline{#1}}
\def\m#1{\underline{\underline{#1}}}

\title{Tame Supercuspidal Representations of $\GL_n$ Distinguished by Orthogonal Involutions}
\author{Jeffrey Hakim
\thanks{Department of Mathematics and Statistics, 
American University\hfill\break jhakim@american.edu\hfill\break Supported by NSF grant DMS-0854844.} }



\date{July 26, 2011}
\maketitle

\tableofcontents

\section{Introduction}

\subsection{Overview and acknowledgments}\label{sec:overview}

Let $G = \GL_n (F)$, where $n>1$ and $F$ is a finite extension of $\Q_p$, for some odd prime $p$.  The space $C^\infty (\cS)$ of locally constant functions on the set $\cS$ of symmetric matrices in $G$ is a  (non-smooth) $G$-module with respect to the action
$$(g\cdot f)(\nu) = f ({}^tg\, \nu\,  g),$$ with $g\in G$, $f\in C^\infty (\cS)$ and $\nu\in \cS$.  Given  a complex representation $(\pi ,V_\pi)$ of $G$, we say that a $G$-equivariant linear embedding of $V_\pi$ in $C^\infty (G)$ is a {\it symmetric matrix model} for $\pi$.
One of the main results of this paper specifies the dimension of  the space $\Hom_G (\pi ,C^\infty (\cS))$ of such models when $\pi$ is a  tame supercuspidal representation of $G$ (as constructed in \cite{rH}) with central character $\omega_\pi$:

\begin{theorem}\label{symmodelthm}
If $\pi$ is an irreducible  tame supercuspidal representation of $G$ then $\pi$ has a symmetric matrix model precisely when $\omega_\pi (-1)=1$.  In this case,  the dimension of $\Hom_G(\pi,C^\infty (\cS))$ is 4.
\end{theorem}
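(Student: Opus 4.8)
The plan is to reduce the dimension to a finite sum of orthogonal distinction multiplicities, one per isometry class of form, and then to evaluate that sum using the distinction theory for tame supercuspidal representations developed in Section~2. First I would observe that $G$ acts on $\cS$ with only finitely many orbits $\cO_\nu$ — the isometry classes of non-degenerate symmetric $F$-bilinear forms on $F^n$ — and that each orbit is open (the orbit map $g\mapsto{}^t g\,\nu\,g$ is a submersion), hence also closed, so that $C^\infty(\cS)=\bigoplus_\nu C^\infty(\cO_\nu)$. Next, $C^\infty(\cO_\nu)$ is $G$-isomorphic to $C^\infty(\On(\nu)\backslash G)$ with the right regular action, where $\On(\nu)=G^{\theta_\nu}$ is the isometry group of $\nu$ and $\theta_\nu=\Int(\nu^{-1})\circ\iota$ with $\iota(g)={}^t g^{-1}$. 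Since $\pi$ is smooth, any $G$-map $\pi\to C^\infty(\cO_\nu)$ has image in the smooth vectors, that is, in $\ind_{\On(\nu)}^G\mathbf 1$, so Frobenius reciprocity (Proposition~2.10) gives $\Hom_G(\pi,C^\infty(\cO_\nu))\cong\Hom_{\On(\nu)}(\pi,\C)$ and therefore
\[
\dim\Hom_G(\pi,C^\infty(\cS))=\sum_\nu\dim\Hom_{\On(\nu)}(\pi,\C),
\]
the sum running over isometry classes of non-degenerate symmetric forms in $n$ variables over $F$.

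The necessity of $\omega_\pi(-1)=1$ is then immediate: $-I$ belongs to every $\On(\nu)$, so a nonzero $\On(\nu)$-invariant functional $\lambda$ obeys $\lambda=\lambda\circ\pi(-I)=\omega_\pi(-1)\,\lambda$; hence if $\omega_\pi(-1)\ne1$ every summand vanishes and $\pi$ has no symmetric matrix model. So from now on assume $\omega_\pi(-1)=1$; it remains to show the sum equals $4$.

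To evaluate each summand I would realize $\pi$ via the construction of \cite{rH} from an admissible pair $(E/F,\chi)$ with $E/F$ a tamely ramified field extension of degree~$n$, and invoke the distinction criterion of Section~2 (Theorems~2.38 and 2.39): for the orthogonal involution $\theta_\nu$, the space $\Hom_{\On(\nu)}(\pi,\C)$ is computed as a sum, over the $\On(\nu)$-conjugacy classes of $\theta_\nu$-split embeddings of the torus $E^\times$ into $G$ (those for which $\theta_\nu$ restricts to $x\mapsto x^{-1}$ on $E^\times$), of a local term equal to a distinction multiplicity for a Deligne--Lusztig representation of a finite reductive group restricted to an orthogonal-type subgroup — evaluated by Lusztig's Lemma~10.4 — times a $\{0,1\}$-valued quadratic factor attached to the embedding. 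Now a $\theta_\nu$-split embedding of $E^\times$ is exactly an identification of the quadratic space $(F^n,\nu)$ with $(E,B_\beta)$, where $B_\beta(x,y)=\tr_{E/F}(\beta xy)$ for some $\beta\in E^\times$, compatible with multiplication by $E$; two such data yield $\On(\nu)$-conjugate embeddings precisely when the corresponding $\beta,\beta'$ differ by a square in $E^\times$. Since $p$ is odd, $|E^\times/(E^\times)^2|=4$, so
\[
\sum_\nu\#\{\,\theta_\nu\text{-split }E\text{-embeddings up to }\On(\nu)\text{-conjugacy}\,\}=4 .
\]

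Everything therefore comes down to showing that, once $\omega_\pi(-1)=1$, every one of these local terms equals exactly $1$; this is the heart of the matter, carried out in Section~5 (Propositions~5.10 and 5.31). One must check, on the one hand, that the finite-group distinction multiplicity is one — the relevant multiplicity-one statement for cuspidal Deligne--Lusztig representations restricted to orthogonal subgroups, given a character condition on the associated toral datum — and, on the other hand, that the quadratic obstruction factor is trivial for every $\theta_\nu$-split embedding. The main obstacle is precisely this: a priori a given embedding could contribute $0$, or a given form could occur with multiplicity greater than one, and the substance of Section~5 is that neither happens when $\omega_\pi(-1)=1$, uniformly in $n$, in $E/F$, and in $\chi$ (after tracking the rectifier relating $\chi$ to $\omega_\pi$, whose value at $-1$ is trivial), whatever the ramification of $E/F$ and the resulting shape of $E^\times/(E^\times)^2$, and including the bookkeeping needed to pass between $\On(\nu)$ and $\SOn(\nu)$. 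Granting this, each contributing embedding contributes $1$, and summing gives $\dim\Hom_G(\pi,C^\infty(\cS))=\sum_\nu\dim\Hom_{\On(\nu)}(\pi,\C)=4$.
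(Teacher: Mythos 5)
Your skeleton coincides with the paper's own ``conceptual'' argument: the decomposition of $C^\infty(\cS)$ over the finitely many $G$-orbits, the Frobenius reciprocity step, the central-character necessity argument, and the final count are all as in the paper, where the answer $4$ appears as the cardinality of the set of $T$-orbits in $\{\eta\in\cS : \bT \text{ is } \theta_\eta\text{-split}\}$, in bijection with $E^\times/(E^\times)^2$. Your parametrization of the split embeddings by trace forms $B_\beta(x,y)=\tr_{E/F}(\beta xy)$ modulo $(E^\times)^2$ is a legitimate variant (it is the point of view of Lemma \ref{muEF}, quoted from \cite{HL1}); the paper deliberately replaces it by $J$-symmetric embeddings (Lemma \ref{muprime}), but this difference is cosmetic.

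The genuine gap is the step you attribute to a ready-made ``distinction criterion'': there is no general theorem expressing $\Hom_{\On(\nu)}(\pi,\C)$ as a sum over $\On(\nu)$-classes of $\theta_\nu$-split embeddings of $E^\times$ with $\{0,1\}$-valued local terms. What the general theory of \cite{HM} and \cite{HL1} actually supplies is a Mackey-type sum indexed by $K^0$-orbits of involutions, $\langle \Theta,\xi\rangle_G=\sum_{\vartheta\sim\xi} m_{K^0}(\vartheta)\,\langle\vartheta,\xi\rangle_{K^0}$, and converting that into your sum is the central new content of this paper rather than an input to it: one must show that every contributing $K^0$-orbit contains an involution stabilizing $\bT$ (Lemma \ref{thetastableT}), that admissibility of $\varphi$ forces $\bT$ to be $\theta$-split there (Lemma \ref{thetasplitreduction}), that each contributing $K^0$-orbit contains a \emph{unique} split $T$-orbit (Lemma \ref{nofusion}, Proposition \ref{uniqueTinKzero}), that the number of Mackey cosets lying over it equals the number of your embedding classes, i.e.\ $m_{K^0}(\vartheta)=m_T(\zeta)$ (Proposition \ref{indices}, Lemma \ref{indicestwo}), and that the residue-field multiplicity is exactly $1$ when $\varphi(-1)=1$, which needs the twisted version of Lusztig's lemma (Lemma \ref{tenfour}) together with the triviality of the sign character $\eta'_\theta$ on $(\sG_y^{0,\theta})^\circ(\f)$ (Lemma \ref{etaprimeidentity}). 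Your ``granting this'' therefore assumes precisely what has to be proved. Note finally that once Theorems \ref{mainone}--\ref{mainthree} are available, Theorem \ref{symmodelthm} follows by the short case-by-case tally the paper performs ($4\cdot 1+4\cdot 0$, $2+2\cdot 1$, or $3+1$ according as $E$ contains zero, one, or three quadratic extensions of $F$); your route in effect re-derives those theorems instead of using them.
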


It is easy to see that determining the symmetric matrix models for $\pi$ is equivalent to determining for all $\nu\in \cS$ the space $\Hom_{G_\nu}(\pi,1)$ of linear forms $\lambda$ on $V_\pi$ that are invariant under the action of the orthogonal group
$$G_\nu = \{ h\in G\, : {}^t h \, \nu\, h = \nu \},$$ in the sense that 
$$\lambda (\pi (h)v) = \lambda(v),$$ for all $h\in G_\nu$ and $v\in V_\pi$.

For us,  ``orthogonal group''  will  mean such a group $G_\nu$  (or the associated algebraic group over $F$).  If $H$ is an orthogonal group in $G$ 
and $\Hom_{H}(\pi,1)$ is nonzero, we say that $\pi$ is {\it $H$-distinguished}.
We show:

\begin{theorem}\label{mainone}
If $\pi$ is an irreducible  tame supercuspidal representation of $G$ and if $H$ is a split orthogonal group then $\pi$ is $H$-distinguished precisely when  $\omega_\pi (-1)=1$.
\end{theorem}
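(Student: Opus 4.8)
The plan is to reduce the statement about $H$-distinction for split orthogonal groups to a computation involving Jeffrey Hakim and Fiona Murnaghan's description of tame supercuspidal representations as compactly induced from a representation of a compact-mod-center open subgroup, together with a Mackey-theory analysis of double cosets. Concretely, if $\pi = \operatorname{ind}_J^G \rho$ for the usual data $(J,\rho)$ attached to a tame cuspidal datum, then by a standard application of Frobenius reciprocity and the Mackey formula (in the form used by Hakim--Murnaghan for distinguished representations, cf.\ the Helminck--Wang and Murnaghan results cited above), $\dim \operatorname{Hom}_H(\pi,1)$ is a sum over the $H$-orbits of relevant $(J,H)$-double cosets $g$ of the dimensions of spaces $\operatorname{Hom}_{J^g \cap H}({}^g\rho, 1)$. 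So the first step is to make this double coset decomposition explicit for $H = G_\nu$ with $\nu$ giving a split form.

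The second step is to identify which double cosets actually contribute. Here I would use the symmetric space machinery: an orthogonal group $H = G_\nu$ corresponds to the involution $\theta_\nu(g) = \nu^{-1}\,{}^tg^{-1}\,\nu$ of $G$, and the relevant double cosets are controlled by $\theta_\nu$-fixed points and by how the torus and the embedding defining the datum $(J,\rho)$ interact with $\theta_\nu$. The key reduction should be that only those $g$ for which the conjugated torus $\bT^g$ is $\theta_\nu$-stable (equivalently, ${}^g\rho$ is a representation whose restriction to a maximal compact of the torus can support an $H^g$-invariant functional) contribute, and for each such $g$ one gets a concrete finite-group or abelian-character condition. I expect the contributing terms to be governed by: (a) a condition on the central character, namely that $\omega_\pi$ be trivial on the relevant scalars — this is where $\omega_\pi(-1)=1$ enters, since $-1$ lies in the center and in $H$; and (b) a condition that the field datum $E/F$ embed in the split form compatibly, which for a split $H$ is always arrangeable. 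Combining these, the hope is that $\operatorname{Hom}_H(\pi,1)\ne 0$ exactly when $\omega_\pi(-1)=1$.

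The necessity of $\omega_\pi(-1)=1$ is the easy direction and I would dispatch it first: $-1 = -I \in G_\nu$ is central in $G$, so $\pi(-I)$ acts by $\omega_\pi(-1)$, and any $H$-invariant functional $\lambda$ satisfies $\lambda(v) = \lambda(\pi(-I)v) = \omega_\pi(-1)\lambda(v)$, forcing $\omega_\pi(-1)=1$ whenever $\pi$ is $H$-distinguished. The substance is the converse: assuming $\omega_\pi(-1)=1$, produce a nonzero $H$-invariant functional for some split $\nu$. Here I would exploit the Hakim--Murnaghan construction showing that the cuspidal datum factors through data at the ``depth-zero'' and ``positive-depth'' levels, and that distinction by an involution is detected by distinction of a depth-zero piece (a cuspidal representation of a finite reductive group, or a sum of such) twisted by a sign/quadratic character coming from the higher-depth part. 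So the plan is: (i) choose the torus $\bT$ associated to $\pi$ and an embedding into a split form $\bG_\nu$, which is possible because any maximal torus of $\GL_n$ embeds into a split orthogonal group after an appropriate choice of $\nu$ — this is where ``split'' is used; (ii) check that the higher-depth sign character is trivial under the hypothesis $\omega_\pi(-1)=1$; (iii) invoke the finite-group distinction result (Lusztig / Murnaghan-type) to get a nonzero invariant functional at depth zero; (iv) propagate it back up through the induction.

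The main obstacle I anticipate is step (ii)/(iii): controlling the auxiliary quadratic characters that appear when one twists $\rho$ by the quadratic forms and Weil-index factors intrinsic to the Hakim--Murnaghan formalism, and showing that their product is trivial precisely under $\omega_\pi(-1)=1$ rather than under some finer condition. A secondary obstacle is the double-coset bookkeeping — ensuring that distinct contributing cosets do not produce canceling contributions (there is no cancellation here since we are counting dimensions of Hom-spaces, but one must check that at least one term is genuinely nonzero). I would organize the argument so that the torus and $\nu$ are chosen optimally at the outset, which should collapse the double-coset sum to a single manageable term and isolate exactly the central-character condition. The parallel with Theorem~\ref{symmodelthm}, where the total dimension is $4$, suggests that over all $\nu$ (split and non-split) there are exactly four contributing configurations; for the present theorem I only need the split ones, which should give a clean nonvanishing statement.
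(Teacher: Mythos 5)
Your overall route---the easy central-character necessity, then Mackey theory for the compactly induced representation, reduction to configurations where the inducing torus is compatible with the involution, a depth-zero Lusztig-type distinction computation, and control of auxiliary sign characters---is the route the paper takes (via the machinery of \cite{HM} and \cite{HL1}, culminating in Proposition \ref{newmultformula} and Proposition \ref{sumofmTs}). But there are two genuine problems with your sketch. First, your step (i) rests on a false statement: the elliptic torus $T\cong E^\times$ has dimension $n$ as an $F$-torus and cannot embed in an orthogonal group $G_\nu$, whose rank is only $\lfloor n/2\rfloor$. What is actually needed, and what the paper proves and exploits, is not that $\bT$ lies in $H$ but that $\bT$ can be chosen $\theta_J$-\emph{split}, i.e.\ $\theta_J(t)=t^{-1}$ for all $t\in\bT$; this is the content of the $J$-symmetric embedding theory (Lemma \ref{Jsymtameexistence} and Proposition \ref{symmetricembedding}), and it is where splitness of $H$ enters, since the $T$-orbit of $\theta_J$ is always a split $T$-orbit inside the split $G$-orbit $\Theta_J$ (Proposition \ref{TorbGorb}), whereas for other $G$-orbits the existence of split $T$-orbits depends on the quadratic subextensions of $E/F$.

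Second, the decisive sufficiency step---that after the double-coset bookkeeping the only surviving condition is $\omega_\pi(-1)=1$ and not some finer condition---is precisely what you flag as an ``anticipated obstacle,'' and you give no argument for it. In the paper this is most of the technical work: one shows that a contributing $K^0$-orbit contains an involution $\theta$ for which the datum is $\theta$-symmetric and $\bT$ is not merely $\theta$-stable but $\theta$-split (Lemmas \ref{thetastableT} and \ref{thetasplitreduction}, the latter using $F$-admissibility of $\varphi$), that the restrictions of $\theta$ to the twisted Levi subgroups and to $\sG_y^0(\f)$ are orthogonal involutions (Lemma \ref{restsareorthogonal}), that $\eta'_\theta$ is trivial on $(\sG_y^{0,\theta})^\circ(\f)$ (Lemma \ref{etaprimeidentity}), and that in the resulting Lusztig-type formula every contributing torus is $\theta$-split (Lemma \ref{tenfour}), so each term equals $+1$ and the only remaining constraint is $\lambda(-1)=\varphi(-1)=1$. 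Without these inputs your steps (ii)--(iv) restate the theorem rather than prove it; in particular nothing in your sketch rules out that the depth-zero Hom-space vanishes for a split $\nu$ even when $\omega_\pi(-1)=1$. A further small correction: $\omega_\pi(-1)=1$ does not make the sign character trivial; the triviality of $\eta'_\theta$ at $-1$ and on the identity component is unconditional, and the central character enters only through the value $\lambda(-1)=\varphi(-1)$ in the nonsingularity/matching condition.
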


Theorems \ref{maintwo} and \ref{mainthree}  supplement Theorem \ref{mainone} by precisely specifying the distinguished representations and  the dimension of $\Hom_{H}(\pi,1)$ for all tame supercuspidal representations $\pi$ of $G$ and all orthogonal groups $H$.  Theorem~\ref{symmodelthm} then follows easily from a case-by-case analysis, but we also provide a more conceptual proof.

To further illustrate Theorem \ref{symmodelthm} in the context of general symmetric spaces over $F$, we state it as a ``dichotomy''  result in the language of pure inner forms.  In other words, we give a formula for the total dimension of the spaces $\Hom_H (\pi,1)$ as $H$ varies over a pure inner class.  This restatement in general language is not meant to suggest what should happen in all other examples, but rather to provide a baseline for comparing this example with others.  As we will discuss, the examples considered in this paper are exceptional in several ways.

For simplicity, we only refer to the dimensions of  $\Hom_G (\pi ,C^\infty (\cS))$ and $\Hom_{G_\nu}(\pi,1)$ in our results, however, it is a routine matter to use our calculations and the methods of \cite{HM} and \cite{HL1} to explicitly describe the elements of the latter spaces.

In the theory of distinguished representations, it has become expected that for a given $F$-symmetric space $H\bs G$, the set of $H$-distinguished representations of $G$ is the image of a lifting map from some other group $H'$ to $G$.  Ideally, the connection between $H$-distinction and the lifting from $H'$ fits harmoniously within the Langlands program framework and its recent symmetric space formulation in the work of Sakellaridis and Venkatesh \cite{SV}.  Such harmony is lacking for the symmetric spaces we study.  On the one hand, the relevant lifting is the Flicker-Kazhdan metaplectic correspondence \cite{FK} from the double cover $\widetilde G$ of $G$ to  $G$.  Since $\widetilde G$ is not a linear group, it is not described by the Langlands correspondence.  In addition, the notion of the $L$-group of a symmetric space in \cite{SV} is undefined for our symmetric spaces.  (Roughly speaking, one hopes to have a symmetric space $L$-group ${}^L (H\bs G)$  such that a representation of $G$ is $H$-distinguished exactly when its Langlands parameter factors through ${}^L (H\bs G)$.)

Representations of $G$ with symmetric matrix models arise in the local theory of automorphic representations of $\GL_n$ with orthogonal periods.  Jacquet \cite{Ja} has suggested a comparison of (global) relative trace formulas associated to the two sides of the metaplectic correspondence mentioned above.  This idea has been pursued by Mao \cite{M}, but much work remains.   A local implication of this setup is that the linear forms in $\Hom_{H} (\pi, 1)$, for suitable orthogonal groups $H$, should be correlated with Whittaker functionals for the corresponding representation of $\widetilde G$.  We refer the reader to the work of Chinta and Offen  (\cite{CO1} and \cite{CO2}) for more information.

This paper is by no means self-contained.  Its heavy dependence on  \cite{rH}, \cite{Y}, \cite{HM} and \cite{HL1} places a burden on the reader that we have attempted to lessen by including extended sketches of our main proofs and recapitulations of some of the relevant material from the mathematical literature.  Readers of this paper are advised to first read the introductory sections and then skip to \S\ref{sec:Torbmultsec} and \S\ref{sec:evalsec} for a rigorous development of the proofs.  The latter two sections  refer to  \S\ref{sec:tamesec}--\ref{sec:etaprimetheta}  for various technical details, from general facts about tame supercuspidal representations and orthogonal involutions to  technical tools needed specifically in this paper.

The present work uses general techniques  for studying distinguished tame supercuspidal representations developed by the author and Murnaghan \cite{HM}, together with refinements appearing in \cite{HL1}.  This paper also builds on previous  work of the author with Mao \cite{HzM} (the depth zero case) and  Lansky \cite{HL1} (the case in which $n$ is odd) involving the specific examples considered in this paper.

The author wishes to thank Jeffrey Adams, Stephen DeBacker, Zhengyu Mao, Omer Offen, Dipendra Prasad, Yiannis Sakellaridis and, especially, Jeffrey Adler and Joshua Lansky for conversations that affected the progress and outcome of this paper.

\subsection{Dimensions of spaces of invariant linear forms}\label{sec:statements}

There is an obvious necessary condition for $H$-distinction, for any orthogonal group $H$:

\begin{quote}
If an irreducible representation $\pi$ of $G$ is $H$-distinguished then $\omega_\pi (-1) =1$.
\end{quote}
Indeed, this follows from the fact that if $\lambda\in \Hom_{H} (\pi, 1)$  then $\lambda (v) = \lambda (\pi (-1)v) = \omega_\pi (-1)\lambda (v)$, for all $v\in V_\pi$.

Given a degree $n$ tamely ramified extension $E$ of $F$ and an $F$-admissible (in the sense of \cite{rH}) quasicharacter $\varphi : E^\times \to \C^\times$ then Howe's construction \cite{rH} gives an equivalence class $\pi (\varphi)$ of tame supercuspidal representations of $G$.
The central character $\omega_\pi$ of $\pi\in \pi(\varphi)$ agrees with the restriction of $\varphi$ to $F^\times$.  
So the necessary condition above can also be expressed as $\varphi (-1) =1$.
The analogue of Theorem \ref{mainone} for non-split orthogonal groups is:

\begin{theorem}\label{maintwo}
Suppose $H$ is a non-split orthogonal group and $\pi$ is a tame supercuspidal representation asociated to an $F$-admissible quasicharacter $\varphi :E^\times \to \C^\times$ such that $\varphi (-1)=1$.
Then $\pi$ is 
$H$-distinguished precisely when one of the following conditions holds:
\begin{itemize}
\item $E$ contains a unique quadratic extension $L$ of $F$ and $H= G_\nu$,  where $(-1)^{n(n-1)/2}\det  (\nu )$ lies in  $N_{L/F} (L^\times) - (F^\times)^2$.  In this case, $H$ is necessarily quasi-split.
\item $E$ contains three quadratic extensions of $F$ and $H$ is not quasi-split.
\end{itemize}
\end{theorem}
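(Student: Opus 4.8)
The plan is to reduce the statement to a concrete computation using the general machinery of Hakim–Murnaghan for distinguished tame supercuspidal representations. The first step is to recall the parametrization: $\pi = \pi(\varphi)$ is built from the degree $n$ extension $E/F$ and the $F$-admissible quasicharacter $\varphi$, and $H$-distinction is detected by a sum over certain $H$-orbits of embeddings (or ``$G$-conjugacy classes of pairs'') of the torus $E^\times$ — or more precisely its associated tame datum — into the symmetric space $H\backslash G$. The relevant orbits are indexed by elements $\theta$ in a set of involutions compatible with the torus, and the contribution of each orbit is an explicit quantity (a product of local root-number–type signs and a quadratic-character condition on $\varphi$ restricted to the fixed points of $\theta$). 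So the first concrete task is to enumerate the $H$-conjugacy classes of $\theta$-split tori of type $E$, i.e. the ways the given orthogonal group $H = G_\nu$ can ``see'' the torus $T = \mathrm{Res}_{E/F}\mathbb{G}_m$.

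**The orbit count and its dependence on quadratic subextensions.**

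The key structural input is that the involutions $\theta$ preserving an $E$-torus up to conjugacy correspond to the quadratic (étale) subextensions of $E/F$: writing $E$ as a compositum governed by its Galois-theoretic structure, an orthogonal involution $\theta$ fixing $T$ corresponds to an order-two element of $\mathrm{Gal}$-type data, hence to a quadratic subalgebra $E_0 \subset E$ with $E = E_0 \cdot (\text{something})$, or to the trivial one. This is exactly why the dichotomy in the statement bifurcates on whether $E$ contains \emph{one} or \emph{three} quadratic extensions of $F$: the number of relevant involution classes — and hence the number of potentially-contributing orbits — is governed by $|\{L \subset E : [L:F] = 2\}| \in \{1,3\}$ (the only possibilities for a field $E$, by elementary Galois theory, since the $2$-part of $\mathrm{Gal}$ of the Galois closure acting on quadratic subextensions is either trivial or a Klein four / cyclic pattern). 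For each such $\theta$, one then asks two things: (i) is there an $F$-rational orbit, i.e. does $H$ (rather than some other form of the orthogonal group) actually contain a $\theta$-split maximal torus of type $E$ — this is a discriminant/Hasse-invariant condition pinning down \emph{which} $\nu$ works; and (ii) does the character $\varphi$ satisfy the quadratic condition forcing the local term to be nonzero. Because $\varphi(-1) = 1$ is assumed, step (ii) will turn out to be automatically satisfied whenever step (i) is, which is what makes the final answer purely a statement about which $H = G_\nu$ occurs.

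**Carrying out the two cases.**

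In the one-quadratic-subextension case, $E \supset L$ with $L/F$ the unique quadratic subfield, and there is essentially one nontrivial involution class to analyze (plus the ``$\mathrm{GL}_n$-Galois'' type, which contributes to the split group, already handled by Theorem~\ref{mainone}). I would compute the discriminant of the associated symmetric bilinear form: the $\theta$-split torus of type $E$ inside an orthogonal group carries a canonical quadratic form whose discriminant, up to the sign $(-1)^{n(n-1)/2}$ coming from the standard normalization, lands in $N_{L/F}(L^\times)$; requiring it to be a genuine element of $N_{L/F}(L^\times) - (F^\times)^2$ (i.e. not a square) selects the quasi-split-but-nonsplit form, and one checks that this $\nu$ gives a single contributing orbit with nonzero term — hence $H$-distinction — while all other nonsplit $\nu$ give zero. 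For the three-quadratic-subextensions case, $E/F$ is (contained in a) multiquadratic-type tower and there are three nontrivial involution classes; the analysis shows their orbit contributions conspire so that distinction occurs for every non-quasi-split $\nu$ (and, one must check, for \emph{only} those — the quasi-split nonsplit form gets cancelled or receives no rational orbit). Throughout, the sign bookkeeping is done via the formulas in \cite{HM} and \cite{HL1} for the toral terms, specialized using the tame/tamely-ramified structure of $E$.

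**Main obstacle.**

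The hard part will be step (i) in each case — correctly matching each involution class $\theta$ to the $F$-isomorphism class of the orthogonal group it lives in, i.e. computing discriminants and Hasse invariants of the relevant forms and verifying that exactly the claimed $\nu$'s (and no others) admit a $\theta$-split maximal torus of type $E$. This is a delicate local-field computation: one must track how the factor $(-1)^{n(n-1)/2}\det(\nu)$ arises from the normalization of the trace form on $E$, handle the parity of $n$ (the odd-$n$ case overlaps with \cite{HL1} and can be cited, so the genuinely new work is even $n$), and ensure the Hasse-invariant constraints single out ``quasi-split'' versus ``not quasi-split'' as stated. A secondary technical point is confirming that the quadratic-character condition on $\varphi$ (beyond $\varphi(-1)=1$) imposes nothing further — this requires showing the restriction of $\varphi$ to the norm-one or fixed-point subgroup attached to each relevant $\theta$ is automatically trivial on the pertinent subgroup, which should follow from $F$-admissibility of $\varphi$ together with $\varphi(-1)=1$, but needs to be verified orbit-by-orbit.
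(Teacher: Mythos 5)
Your overall architecture does match the paper's: reduce $\dim\Hom_{H}(\pi,1)$ via the Hakim--Murnaghan/Hakim--Lansky machinery to a sum over torus-compatible orbits of involutions, then decide which orthogonal groups actually receive such an orbit by a discriminant/Hasse-invariant computation, with the character condition collapsing to $\varphi(-1)=1$; in the paper this is Proposition \ref{newmultformula}, Proposition \ref{sumofmTs} and Proposition \ref{TorbGorb}, and your deferral of odd $n$ to \cite{HL1} is also what the paper does implicitly.

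However, the step where you enumerate the contributing involution classes rests on a wrong correspondence. You propose to index them by the quadratic subextensions of $E/F$, with $\theta$ inducing ``order-two Galois-type data'' on the torus. In the paper the contributing orbits are the \emph{split} $T$-orbits: every involution that actually contributes acts on $T=E^\times$ by inversion, and these orbits are parametrized not by quadratic subfields but by the square classes $Y_{E/F}=E^\times/((E^\times)^2F^\times)$, via a fixed $J$-symmetric embedding and the map $x\mapsto\theta_{Jx}$ (Proposition \ref{symmetricembedding}, Lemma \ref{muprime}). Involutions whose restriction to $T$ induces a nontrivial automorphism $\sigma$ of $E$ --- exactly the ones carrying the quadratic-subextension data you describe --- are precisely those shown to contribute nothing: Lemma \ref{thetasplitreduction} rules them out using the $F$-admissibility of $\varphi$ (for such a $\theta$ the relevant factor of $\varphi$ would have to factor through $N_{E/E^\sigma}$). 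The agreement of your count with the correct one is only the numerical coincidence $y_{E/F}-1=\#\{\text{quadratic subextensions of }F\text{ in }E\}$ (Lemma \ref{yEFquad}), which itself requires proof. Once the parametrization is corrected, the rest of your plan is what the paper carries out: $\det(Jx)=(-1)^{n(n-1)/2}N_{E/F}(x)$ gives the $N_{L/F}(L^\times)-(F^\times)^2$ condition when $E$ has a unique quadratic subextension, and when it has three, an explicit Hasse-invariant computation (Proposition \ref{TorbGorb}) shows all split $T$-orbits share the discriminant of $J$, three landing in $\Theta_J$ and one in $\Theta_{\rm nqs}$, so the quasi-split non-split forms receive no orbit at all (no cancellation is involved); and your expectation that no condition beyond $\varphi(-1)=1$ arises is confirmed by Proposition \ref{sumofmTs}.
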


\bigskip \noindent  For orthogonal groups $H$ and for $H$-distinguished $\pi$, the dimension of the space $\Hom_{H} (\pi,1)$ is given by:

\begin{theorem}\label{mainthree}
Suppose $H$ is an orthogonal group and  $\pi$ is  an irreducible $H$-distinguished tame supercuspidal representation of $G$ associated to an $F$-admissible quasicharacter $\varphi :E^\times \to \C^\times$.
\begin{enumerate}
\item
If $H$ is split then the dimension of $\Hom_{H} (\pi,1)$ is
\begin{itemize}
\item 1, if $n$ is odd or, equivalently, if $E$ contains no quadratic extensions of $F$,
\item 2, if $E$ contains a unique quadratic extension of $F$,
\item 3, if $E$ contains three quadratic extensions of $F$.
\end{itemize}
\item
If $H$ is not split  then the dimension of $\Hom_{H} (\pi,1)$ is 1.
\end{enumerate}
\end{theorem}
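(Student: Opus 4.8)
The plan is to reduce the computation of $\dim\Hom_H(\pi,1)$ to a sum over certain double cosets, following the Hakim--Murnaghan machinery. Recall that for a tame supercuspidal $\pi=\pi(\varphi)$ compactly induced from an open compact-mod-center subgroup $K$, Mackey theory gives
\[
\Hom_H(\pi,1)\;\cong\;\bigoplus_{g\in H\bs G/K}\Hom_{H\cap{}^gK}\bigl({}^g\rho,1\bigr),
\]
where $\rho$ is the inducing representation on $K$. The first step is to invoke the results of \S\ref{sec:tamesec}--\ref{sec:etaprimetheta} (building on \cite{HM},\cite{HL1}) to see that only finitely many double cosets contribute, and that the relevant ones are the \emph{$\theta$-symmetric} (or ``$\theta$-split'') ones, where $\theta$ is the involution $x\mapsto \nu^{-1}\,{}^tx^{-1}\,\nu$ defining $H=G^\theta$. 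Concretely, the contributing cosets are indexed by $H$-orbits (equivalently, by a suitable Galois-cohomological set) of $\theta$-stable tame twisted Levi embeddings $E^\times\hookrightarrow G$, and for each such orbit the local contribution $\Hom_{H\cap{}^gK}({}^g\rho,1)$ is at most one-dimensional, the dimension being governed by whether a certain quadratic character (the ``$\eta'_\theta$'' twist appearing in \S\ref{sec:etaprimetheta}) of the relevant torus is trivial.

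The second step is the orbit count. Since $E/F$ is tamely ramified of degree $n$, the quadratic subextensions of $F$ in $E$ are controlled by the $2$-torsion of $\gal(E/F)$ (or of the decomposition into the maximal unramified and totally ramified parts): there are $0$, $1$, or $3$ of them according as $n$ is odd, $n\equiv 2\pmod 4$, $n\equiv 0\pmod 4$ (this is exactly the trichotomy in Theorem~\ref{mainthree}). Each quadratic subextension $L\subset E$ gives rise to a distinct $H$-conjugacy class of $\theta$-stable embeddings of $E^\times$ (roughly, $\theta$ can act on $E$ through $1$ or through the nontrivial automorphism of $L/F$), and one checks using the admissibility of $\varphi$ and the hypothesis $\varphi(-1)=1$ that for \emph{split} $H$ every one of these classes actually contributes a nonzero (hence one-dimensional) Hom-space, whereas the ``trivial'' class always contributes $1$. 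This yields the counts $1$, $2$, $3$ in part (1): one from the trivial embedding plus one for each quadratic subextension. For part (2), the point is that when $H$ is non-split, Theorem~\ref{maintwo} already tells us which single orbit survives --- either the unique quasi-split case tied to one distinguished quadratic $L$, or (in the three-subextension case) exactly one non-quasi-split orbit --- so precisely one double coset contributes and the dimension is $1$.

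The main obstacle, and where the real work lies, is the second step: showing that the candidate double cosets are \emph{exactly} those predicted and that each predicted one genuinely contributes (non-vanishing of the local Hom-space), rather than merely bounding the dimension from above. This requires (i) a precise classification, up to $H$-conjugacy, of the $\theta$-stable tame twisted Levi subgroups $E^\times$ inside $G=\GL_n(F)$ --- a Galois-cohomology computation with $H^1(F,\cdot)$ of tori, where the split/non-split dichotomy for $H$ and the $N_{L/F}$-condition on $\det(\nu)$ enter; and (ii) the delicate evaluation, carried out in \S\ref{sec:evalsec}, of the quadratic ``toral'' characters governing $\Hom_{H\cap{}^gK}({}^g\rho,1)$, where one must track how the Howe factorization data of $\varphi$ interacts with $\theta$ and verify that $\varphi(-1)=1$ forces the relevant signs to be $+1$. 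Once both the orbit classification and the non-vanishing are in hand, the theorem follows by simply adding up the contributions; I expect the bookkeeping to mirror the odd-$n$ case treated in \cite{HL1} and the depth-zero case in \cite{HzM}, with the new feature being the handling of the (one or three) quadratic subextensions when $n$ is even.
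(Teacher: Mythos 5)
There is a genuine gap in your orbit count, and it sits exactly where the $2$-versus-$3$ distinction in part (1) is decided. You assert that for split $H$ the contributing classes are ``one trivial class plus one for each quadratic subextension,'' each contributing a one-dimensional Hom-space, so the dimension would be $1+\#\{\text{quadratic subextensions of $F$ in $E$}\}$. That gives $1$ and $2$ in the first two cases but $4$ in the third, contradicting the value $3$ in the theorem. What the paper actually does is: the contributing objects are the split $T$-orbits of involutions, parametrized by $E^\times/((E^\times)^2F^\times)$ (so there are $1$, $2$ or $4$ of them in total), and one must then (i) determine, by a discriminant and Hasse-invariant computation (Proposition \ref{TorbGorb}), how these orbits distribute among the $G$-orbits of involutions --- when $E$ contains three quadratic subextensions, three of the four land in $\Theta_J$ and the fourth lands in $\Theta_{\rm nqs}$, which is why the split answer is $3$ and the non-quasi-split answer is $1$ --- and (ii) weight each orbit by the constant $m_T(\zeta)=[G_\theta:(T\cap G_\theta)G^\theta]$, which counts how many Mackey double cosets map to that orbit (Propositions \ref{newmultformula} and \ref{indices}, Lemma \ref{indicestwo}). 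This weight equals $2$ precisely when $E$ contains a unique quadratic subextension and $\zeta$ is the $T$-orbit of $\theta_J$: the split dimension $2$ comes from one orbit counted twice, not from two distinct orbits. Your proposal has no analogue of the Hasse-invariant distribution or of the multiplicity constants, and without both the counts cannot come out right.

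Two further inaccuracies feed into this. First, the trichotomy is not governed by $n \bmod 4$: an unramified quartic extension contains exactly one quadratic subextension, so $4\mid n$ does not force three; which case occurs depends on the ramification of $E/F$ (Lemmas \ref{fieldsA}, \ref{fieldsB} and \ref{fieldsC}). Second, your description of the contributing classes --- ``$\theta$ can act on $E$ through $1$ or through the nontrivial automorphism of $L/F$'' --- is the reverse of what is proved: Lemma \ref{thetasplitreduction} shows, using $F$-admissibility of $\varphi$ together with the finite-field input (triviality of $\eta'_\theta$ on the identity component and the Lusztig/Gow results), that any class in which $\theta$ acts on $E^\times$ through a nontrivial automorphism composed with inversion contributes zero; only the $\theta$-split classes survive, and the multiplicity structure then comes from square classes of $E^\times$ modulo $F^\times$, not from Galois action through quadratic subfields.
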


\subsection{Pure inner forms of orthogonal groups}\label{sec:pureinner}

Let $G$ act on $\cS$ via the right action:
$$\nu\cdot g = {}^t g \, \nu\, g.$$
The stabilizer of $\nu \in \cS$ is the orthogonal group $G_\nu$.  We let $\cS_\nu$ denote the orbit of $\nu$.  
Each orbit $\cS_\nu$ is open and closed in $\cS$ and thus
$$C^\infty (\cS)\cong \bigoplus_\nu C^\infty (\cS_\nu),$$
where we are summing over a set of representatives for the $G$-orbits (a.k.a., similarity classes) in $\cS$.  The number of $G$-orbits is 8, unless $n=2$ in which case there are 7 orbits.  The $G$-orbit of $\nu\in \cS$ is determined by the discriminant and the Hasse invariant of $\nu$.

For a representation $\pi$ of $G$,  we have
$$\Hom_G (\pi, C^\infty (\cS))\cong \bigoplus_\nu \Hom_G (\pi,C^\infty (\cS_\nu)).$$  
We may identify $\cS_\nu$ with $G_\nu\bs G$.  Then the space of smooth vectors in $C^\infty (\cS_\nu)$ is identified with the smooth representation ${\rm Ind}_{G_\nu}^G(1)$ induced from the trivial character of $G_\nu$.  
Therefore, when $\pi$ is smooth we have
$$\Hom_G (\pi,C^\infty (\cS_\nu))\cong \Hom_G (\pi,{\rm Ind}_{G_\nu}^G  (1)).$$  Applying Frobenius reciprocity, we obtain
\begin{equation}\Hom_G (\pi, C^\infty (\cS))\cong \bigoplus_\nu \Hom_{G_\nu} (\pi,1).\label{eqnsym}
\end{equation}

Theorem \ref{symmodelthm} can be deduced from the latter formula and Theorems \ref{mainone} -- \ref{mainthree} once we recall some basic facts about orthogonal groups.  For convenience, we now fix a tame supercuspidal representation $\pi$ such that $\omega_\pi (-1) =1$.  Assume that $\pi$ comes from an $F$-admissible quasicharacter $\varphi : E^\times \to \C^\times$.

For $\nu\in \cS$, define 
$$\theta_\nu (g) = \nu^{-1}\cdot {}^t g^{-1}\cdot \nu,$$ for all $g\in G$.  We call such involutions $\theta_\nu$ of $G$ {\it orthogonal involutions} because the group $G^{\theta_\nu}$ of fixed points in $G$ of $\theta_\nu$ is the orthogonal group $G_\nu$.  (See \S\ref{sec:quadraticspaces} for basic facts about orthogonal involutions.)    
The group $G$ acts on the set of its orthogonal involutions by $$g\cdot \theta = \Int (g) \circ \theta\circ \Int(g)^{-1},$$ where $(\Int (g))(g') = gg'g^{-1}$.  

We observe that if $\nu_1,\nu_2\in \cS$ then
$$G_{\nu_1} = G_{\nu_2}\Leftrightarrow \theta_{\nu_1} = \theta_{\nu_2} \Leftrightarrow \nu_1 Z = \nu_2 Z,$$ where $Z$ is the center of $G$.
Furthermore, 
\begin{eqnarray*}
G_{\nu_1}\cong G_{\nu_2}
&\Leftrightarrow&
G_{\nu_1}\text{ and }G_{\nu_2}\text{ are $G$-conjugate}\\
&\Leftrightarrow&\text{$\theta_{\nu_1}$ and $\theta_{\nu_2}$ are in the same $G$-orbit}\\
&\Leftrightarrow&\text{$\nu_1 Z$ and $\nu_2 Z$ are in the same $G$-orbit.}
\end{eqnarray*}
In fact, for all $g\in G$ and $\nu\in \cS$ we have
$$G_{{}^t g \nu g} = g^{-1}G_\nu g.$$

The $G$-orbits of orthogonal involutions are given as follows.  Let $\Theta_J$ be the $G$-orbit of $\theta_J$, where  $$J= J_n=
\begin{pmatrix}
& &1\\
&\raisebox{-.1ex}{.} \cdot \raisebox{1.2ex}{.}&\\
1&&
\end{pmatrix}$$
Then the involutions in $\Theta_J$ are precisely the involutions that give rise to split orthogonal groups.

If $n$ is odd  there is only one other $G$-orbit of orthogonal involutions.  We denote it by $\Theta_{\rm nqs}$.
When $n$ is even and greater than two, we let $\Theta_{\rm nqs}$ denote the $G$-orbit consisting of all orthogonal involutions $\theta_\nu$ associated to symmetric matrices $\nu$ not similar to $J$ but having the same discriminant as $J$.  Whether $n$ is odd or even, the elements of $\Theta_{\rm nqs}$ are the involutions that yield orthogonal groups that are not quasi-split.

When $n$ is even there are three additional $G$-orbits of orthogonal involutions corresponding to the three possible discriminants other than the discriminant of $J$.  In other words, if $\theta_\nu$ is not in $\Theta_J$ or $\Theta_{\rm nqs}$ then its $G$-orbit is determined by the discriminant of $\nu$.  The associated orthogonal groups are quasi-split but not split.

Another elementary fact is that
we have an isomorphism
$$\Hom_{G_\nu}(\pi,1) \cong \Hom_{g^{-1}G_\nu g}(\pi,1)$$ given by mapping
$\lambda\in \Hom_{G_\nu}(\pi ,1)$ to the linear form $w\mapsto \lambda (\pi(g)w)$.  This shows that, for a given $\pi$, the dimension of $\Hom_{G^\theta}(\pi,1)$ is constant as $\theta$ varies over a $G$-orbit of involutions.

Let $\bG$ be the $F$-group $\GL_n$.  Given $\sigma \in \Gamma$ and $g\in \bG$, we use the notation $\sigma (g)$ for the standard Galois action.
  Fix $\nu\in \cS$ such that the orthogonal group
$$\bH = \{ g\in \bG \ : {}^t g\, \nu\, g = \nu \}$$ is split over $F$.  (For example, take $\nu =J$.)  
Let $H = \bH(F) = G_\nu$. Suppose $z\in Z^1(F,\bH)$ is a Galois 1-cocycle.  In other words, if $\overline{F}$ is an algebraic closure of $F$ then $z$ is a map $\sigma \mapsto z_\sigma$ from $\Gamma = {\rm Gal}(\overline{F}/F)$ to $\bH$ such that $z_{\sigma\tau} = z_\sigma\ \sigma (z_\tau)$, for all $\sigma,\tau \in \Gamma$.

We may also view $z$ as an element of $Z^1(F,\bG)$.  Since $H^1(F,\bG)$ is trivial, we can choose $\zeta\in \bG$ such that $z_\sigma = \zeta\, \sigma (\zeta)^{-1}$, for all $\sigma\in \Gamma$.
The coset $\zeta G$ is canonically associated to $\omega$ and it consists of all possible choices of $\zeta$.
Note that for any element $\zeta$ in $\bG$, if we define $z_\sigma = \zeta \, \sigma (\zeta)^{-1}$ then we have 
\begin{eqnarray*}
{}^t\zeta\, \nu\, \zeta\in \cS&\Leftrightarrow&\sigma ({}^t\zeta\, \nu\, \zeta) = {}^t\zeta\, \nu\, \zeta,\ \forall\sigma\\
&\Leftrightarrow&\nu ={}^t z_\sigma \nu \, z_\sigma,\ \forall\sigma\\
&\Leftrightarrow&z_\sigma\in \bH,\ \forall\sigma\\
&\Leftrightarrow&z\in Z^1(F,\bH).
\end{eqnarray*}
This gives a canonical bijection between $H^1(F,\bH)$ and the set of $G$-orbits in $\cS$ that sends the cohomology class of $z$ to the $G$-orbit of ${}^t \zeta \, \nu\, \zeta$.  (This also follows from  Corollary 1 of Proposition I.5.36 and Lemma III.1.1 in \cite{Se}.) 

Associated to $\bH$ and $z$ is the group $\bH_z^*$ which is the same as $\bH$ with the modified Galois action
$$\sigma_* (g) = z_\sigma\, \sigma (g)\, z_\sigma^{-1}.$$
Consider the group $\zeta^{-1} \bH^*_z\zeta$ with the standard Galois action inherited from $\bG$.
 Then $h\mapsto \zeta h \zeta^{-1}$ determines an $F$-isomorphism between $\zeta^{-1}\bH_z^*\zeta$ and $\bH_z^*$.  
On the other hand, if $\nu_* = {}^t\zeta\, \nu\, \zeta$ then
$$\zeta^{-1}\bH_z^*\zeta  = \{ g\in \bG\ : {}^t g\, \nu_*\, g = \nu_*\}.$$
If $\omega\in H^1(F,\bH)$ is the cohomology class of $z$ then we let $H_\omega$ denote the $G$-conjugacy class of $G_{\nu_*}$.

 For our purposes, a {\it pure inner form of $H$} is a pair $(\omega , H_\omega)$, with $\omega\in H^1(F,\bH)$.  By abuse of notation, we  also use the notation $H_\omega$ for a specific group in the conjugacy class of $H_\omega$.
The connection between $\Hom_G (\pi,C^\infty (\cS))$ and pure inner forms (as we have defined them) is now given by  $$\Hom_G (\pi,C^\infty (\cS)) \cong \bigoplus_{\omega\in H^1(F,\bH)}  \Hom_{H_\omega}(\pi,1).$$
Therefore Theorem \ref{symmodelthm} may be stated as:
\begin{equation}
\sum_{\omega\in H^1(F,\bH)}  \dim \Hom_{H_\omega}(\pi,1)=4.
\end{equation}

One can also consider the sum 
$$ \bigoplus_{\omega\in H^1(F,\bH^\circ )}  \Hom_{H_\omega}(\pi,1),$$
where $\bH^\circ$ is the identity component of $\bH$.  So $\bH^\circ$ is a split special orthogonal group.  In the above discussion, to require $z\in Z^1(F,\bH^\circ )$ means that $\nu_*= {}^t \zeta \, \nu \, \zeta \in \cS$ and $\det (\zeta\, \sigma(\zeta)^{-1})=1$ for all $\sigma\in \Gamma$.  But the latter condition is equivalent to $\det \zeta\in F^\times$.  It is also equivalent to $(\det \zeta)^2 = (F^\times)^2$.  So $z\in Z^1(F,\bH^\circ)$ if and only if the element $\nu_*$ lies in $\cS$ and has the same discriminant as $\nu$.
When $n=2$, $H^1(F,\bH^\circ)$ is trivial, but otherwise $H^1(F,\bH^\circ)$ has order two.  When $n\ne 2$ and $\omega$ is the nontrivial cohomology class then $H_\omega$ is a non-quasi-split orthogonal group.
 Theorems \ref{mainone}, \ref{maintwo} and \ref{mainthree} imply:
\begin{equation}
\sum_{\omega\in H^1(F,\bH^\circ)}  \dim \Hom_{H_\omega}(\pi,1)= [E^\times : (E^\times)^2 F^\times].
\end{equation}

\subsection{Deducing Theorem \ref{symmodelthm} from Theorems \ref{mainone}, \ref{maintwo}, and \ref{mainthree}}

\subsubsection{Computational approach}

Theorem \ref{symmodelthm} follows directly from Theorems \ref{mainone}, \ref{maintwo}, and \ref{mainthree}.  We now describe in detail how this is verified, in so doing, we exhibit the contributions of the various $G$-orbits in $\cS$ (or, in other words, the various pure inner forms) to $\Hom_G(\pi, C^\infty (\cS))$.

Assume $n$ is odd.  Then there are eight $G$-orbits in $\cS$.  Four of these orbits have representatives of the form $zJ$, with $z\in Z$.  These $G$-orbits project to the single $G$-orbit $\Theta_J$ of orthogonal involutions.  This, in turn, corresponds to the unique conjugacy class of split orthogonal groups in $G$.  For such an orthogonal group $H$, the dimension of $\Hom_H(\pi,1)$ is one, according to the results of \cite{HL1}.  
The other four $G$-orbits in $\cS$ yield a single conjugacy class of non-quasi-split orthogonal groups.  the representation $\pi$ is not $H$-distinguished for such orthogonal groups $H$, according to \cite{HL1}.  Therefore, Equation \ref{eqnsym} implies
$$\dim \Hom_G(\pi , C^\infty (\cS)) = 4\cdot 1 + 4\cdot 0 = 4,$$
which is consistent with Theorem \ref{symmodelthm}.

If $n$ is even and greater than 2, then there are again eight $G$-orbits in $\cS$.  The $G$-orbit of $J$ projects to a unique $G$-orbit of orthogonal involutions, namely, $\Theta_J$.  The orthogonal groups in this case are split.  There is also a unique $G$-orbit in $\cS$ consisting of the symmetric matrices $\nu$ that  have the same discriminant as $J$ but do not lie in $\cS_J$.  This $G$-orbit projects to $\Theta_{\rm nqs}$.  The remaining six $G$-orbits in $\cS$ collapse in pairs to form three $G$-orbits of involutions.  Two of these $G$-orbits in $\cS$ merge precisely when they have the same discriminant.  All of the associated orthogonal groups are quasi-split but not split.  When $n=2$, the situation is similar, except that there are only seven $G$-orbits in $\cS$.  The orbit that projects to $\Theta_{\rm nqs}$ is missing.

According to Lemma \ref{fieldsA}, when $n$ is even the field $E$ must contain at least one quadratic extension of $F$.  
Let us consider first the case in which $E$ contains a unique quadratic extension $L$ of $F$.  According to Theorems \ref{mainone} and \ref{mainthree}, if $H$ is a split orthogonal group then $\Hom_H(\pi,1)$ has dimension two.  According to Theorems \ref{maintwo} and \ref{mainthree}, there is a single conjugacy class of quasi-split, non-split orthogonal groups $H$ such that $\pi$ is $H$-distinguished.  For this $H$, the dimension of $\Hom_H(\pi,1)$ is 1.  However, since the conjugacy class of $H$ comes from two $G$-orbits in $\cS$, we count it twice in the tally of dimensions coming from Equation \ref{eqnsym}.  For all other orthogonal groups $H$, the representation $\pi$ is not $H$-distinguished.
We obtain
$$\dim \Hom_G(\pi , C^\infty (\cS)) = 2 + 2 = 4,$$
which is again consistent with Theorem \ref{symmodelthm}.

Finally, we assume that $E$ contains more than one quadratic extension of $F$.  Note that if $E$ contains two quadratic extensions of $F$ then it must in fact contain all three possible quadratic extensions of $F$.  Theorems \ref{mainone}, \ref{maintwo}, and  \ref{mainthree} imply that $\Hom_H (\pi,1)$ has dimension three,  when $H$ is split, and has dimension one when $H$ is not quasi-split.  Otherwise, $\pi$ is not $H$-distinguished.  We have
$$\dim \Hom_G(\pi , C^\infty (\cS)) = 3+1 = 4.$$

\subsubsection{Conceptual approach}

In this section, we provide another sketch of the proof of Theorem \ref{symmodelthm}  that highlights the aspects of the argument that might generalize.

Let $\bG$ be the $F$-group $\GL_n$.  Let  $\theta$ be the involution $\theta_J$.  We have a tame supercuspidal representation $\pi$ of $G = \bG (F)$ that is associated to an $F$-admissible quasicharacter of the multiplicative group of a degree $n$ tamely ramified extension $E$ of $F$.  We choose a maximal $F$-torus $\bT$ such that $T = \bT (F)\cong E^\times$ and $\bT$ is $\theta$-split in the sense that $\theta (t) = t^{-1}$ for all $t\in \bT$.  The fact that such a $\bT$ is a very special feature of our example.
  (See \cite{S} for moredetails.)
  
  Let $\bH = \bG^\theta$, $H = \bH(F)$,  $\underline{\cS} = \bH\bs \bG$ and $\cS = \underline{\cS}^\Gamma$.
We let $\bG$ act by right translations on $\underline{\cS}$. 
Suppose $\nu\in \cS$.  Then $\nu = \bH g$, where $g\in \bG$ and $\sigma\mapsto g\sigma (g)^{-1}$ defines a cocycle in $Z^1(F,\bH)$.
This yields a canonical bijection between $H^1(F,\bH)$ and the set of $G$-orbits in $\cS$.  We define $\theta_\nu = g^{-1}\cdot\theta$ and $G_\nu = G^{\theta_\nu} = g^{-1}Hg$.

Theorem \ref{symmodelthm} asserts that the space
$$\Hom_G (\pi,C^\infty (\cS)) = \bigoplus_{\nu \in H^1 (F, \bH)} \Hom_{G_\nu} (\pi,1)$$ has dimension four.
This can be proved by showing that the set  ${\cal X}/T$ of $T$-orbits in the set $$\cX = \{ \nu\in \cS\ : \text{$\bT$ is $\theta_\nu$-split}\}$$  has cardinality four and parametrizes a basis of 
$\Hom_G (\pi,C^\infty (\cS))$.  This is essentially what we do.

It is elementary to verify that there is  a bijection between $T/T^2$ and $\cX /T$ given by mapping the coset $tT^2$ to $\bH s$ where $s$ is any square root of $t$ in $\bT$.
Thus $\cX /T$ has cardinality four since it is in bijection with $E^\times /(E^\times)^2$.
(This argument was suggested to us by Jeffrey Adams.  Another argument is given later.)

Given $\nu\in \cS$, let $\cX_\nu = \{ \eta\in \cS_\nu\, :  \text{$\bT$ is $\theta_\eta$-split}\}$.  It suffices to show that
$$ \dim \Hom_{G_\nu}(\pi,1)=\# (\cX_\nu /T) .$$
Suppose $\eta\in \cS_\nu$.  Let $\zeta$ be the $T$-orbit of $\theta_\eta$.  Let $m_T(\zeta)$ be the number of $T$-orbits in $\cS_\nu$ that project to $\zeta$.  Then we have
$$\# (\cX_\nu/T) = \sum_\zeta m_T(\zeta),$$ where we are summing over the $T$-orbits $\zeta$ in the $G$-orbit of $\theta_\nu$ such that $\bT$ is $\theta$-split for some (hence all) $\theta'\in \zeta$.

The fact that $$\dim\Hom_{G_\nu}(\pi,1) = \sum_\zeta m_T(\zeta)$$ follows from  Proposition \ref{newmultformula} and some results of Lusztig on distinguished representations of finite groups of Lie type.  We
sketch the details in the next section.

%

\subsection{Proof outlines for Theorems \ref{mainone}, \ref{maintwo}, and \ref{mainthree}}\label{sec:outline}

\subsubsection{General theory}
\label{sec:general}

For $F$-groups, like $\bG = \GL_n$, we use boldface letters.  Nonbold letters, like $G$ for $\GL_n (F)$, are used for the corresponding groups of $F$-rational points.  

The tame supercuspidal representations of $G$ were constructed by Howe \cite{rH} and then Howe's construction was generalized to other connected, reductive $F$-groups by Yu \cite{Y}.  The data used by Howe and Yu to parametrize the supercuspidal representations of $G = \GL_n(F)$ looks slightly different.   Since we draw on other papers that refer to both \cite{rH} and \cite{Y}, it is convenient for us to introduce the notion of a {\it $G$-datum}, an amalgamation of Howe and Yu's inducing data.  The precise definition is stated  in \S\ref{sec:inducing}, but, roughly, a $G$-datum $\Psi$ consists of
\begin{itemize}
\item a choice of $E$,
\item an $F$-admissible quasicharacter $\varphi$ of $E^\times$,
\item a Howe factorization $\{ \varphi_i : E_i^\times \to \C^\times \}$, 
\item and an $F$-embedding of $E$ in $\M_n (F)$,
\item $\vec\bG = (\bG^0,\dots ,\bG^d = \bG)$, where $\bG^i$ is the centralizer of $E_i^\times$ in $\bG$,
\item the elliptic maximal $F$-torus $\bT$ in $\bG$ such that $T  = E^\times$ and the corresponding point $[y]$ in the reduced building of $G^0$,
\item a certain representation $\rho$ of the isotropy group $K^0 = G^0_{[y]}$ of $[y]$ in $G^0$,
\item $\vec\phi = (\phi_0,\dots , \phi_d)$, where $\phi_i$ is the quasicharacter $\varphi_i \circ \det^{G^i}_{E_i^\times}$ of $G^i$.
\end{itemize}
The quasicharacter $\varphi_i$ in the Howe factorization is defined on the multiplicative group of a field $E_i$ and we have
$$F = E_d \subsetneq \cdots \subsetneq E_0 \subseteq E\subsetneq \M_n (F).$$

A $G$-datum $\Psi$ is {\it toral} if  $E=E_0$.  In this case, $\rho$ is the trivial representation of $E^\times$.
In the non-toral case, $\rho$ is defined as follows. 
 Let $G^0_{y,0}$ be the parahoric subgroup of $G^0$ associated to $[y]$ and $G^0_{y,0^+}$  its pro-unipotent radical.   The quotient 
 $G^0_{y,0:0^+} = G^0_{y,0}/G^0_{y,0^+}$ is isomorphic to the group of $\f$-rational points of a connected reductive group 
 $\sG^0_y$
  defined over the residue field $\f$ of $F$.  The group $\sG_y^0(\f)$ is isomorphic to $\GL_{n_0}(\f_{E_0})$, where $\f_{E_0}$ is the residue field of $E_0$ and $n_0 = [E:E_0]$.
 The restriction of $\varphi_{-1}$ to the multiplicative group of the ring of integers of $E$ yields a nonsingular character $\lambda$ of an elliptic torus $\mathsf{T}(\f)$ of $\sG^0_y(\f)$.  The character $\lambda$ then gives an irreducible cuspidal representation $\pm R^\lambda_{\mathsf{T}(\f)}$ of $\sG^0_y(\f)$ via the construction of Green/Deligne/Lusztig \cite{DL}.  The inflation of  this representation to $G^0_{y,0}$ is the restriction of $\rho$ to $G^0_{y,0}$.  To complete the definition of $\rho$, we use the fact that $K^0$ is generated by $G^0_{y,0}$ and any prime element $\varpi_{E_0}$ of $E_0$ and we declare that $\rho (\varpi_{E_0})$ is the scalar operator corresponding to the scalar $\varphi_{-1}(\varpi_{E_0})$.

Fix $\Psi$ and a $G$-orbit $\Theta$ of orthogonal involutions of $G$ and let $\pi$ denote the associated tame supercuspidal representation of $G$.
Let $\xi$ be the set of refactorizations of $\Psi$ (in the sense of \cite{HM}).
Define $$\langle \Theta,\xi \rangle_G = \dim \Hom_{G^\theta}(\pi,1),$$ for any $\theta\in \Theta$.
(The fact that this is well-defined is explained in \cite{HM}.)

Since $\pi$ is induced from a certain open, compact-mod-center subgroup $K$ of $G$, one can use Mackey's theory to express $\Hom_{G^\theta}(\pi,1)$ as a direct sum of smaller $\Hom$-spaces
parametrized by the double cosets in $K\bs G/G^\theta$.

Each double coset $K gG^\theta$ corresponds to a $K$-orbit of involutions in $\Theta$, namely, the $K$-orbit of $g\cdot \theta$.  Using this correspondence, one can rewrite the Mackey sum as a sum parametrized by $K$-orbits of involutions.  This is done in \cite{HM} (though there is an error that is corrected in \cite{HL1}).  One advantage of the latter reformulation is that certain repeated terms in the Mackey sum are identified and collected together.

Unfortunately, the group $K$ has a rather complicated structure that would seem to make it impractical to completely describe the structure of the double coset space $K\bs G/G^\theta$ or the corresponding space of $K$-orbits of involutions.  Theorem 3.10 (1) \cite{HL1} offers a significant simplification.  It replaces the sum over $K$-orbits from \cite{HM} with a certain sum over $K^0$-orbits,
\begin{equation}\label{HLonemult}
\langle \Theta ,\xi \rangle_G =\sum_{\vartheta\sim \xi }  m_{K^0} (\vartheta )\  \langle \vartheta ,\xi\rangle_{K^0},\end{equation}
that we now explain.

Propositions 5.7 and 5.20 in \cite{HM} imply that if a given $K$-orbit param-etrizes a nonzero term in the sum (in the aforementioned formula in \cite{HM}) then there exists $\dot\Psi\in \xi$ that is $\theta$-symmetric for some $\theta$ in the given $K$-orbit.  (The notion of $\theta$-symmetry is defined in \cite{HM} and recalled below in Definition \ref{thetasymmdef}.)
If $\dot\Psi$ is $\theta$-symmetric then it is $\theta'$-symmetric for all $\theta'$ in the $K^0$-orbit $\vartheta$ of $\theta$.  In this case, we write $\vartheta\sim \xi$.  The key to the transition from $K$-orbits to $K^0$-orbits is Proposition 3.8 \cite{HL1} which states that  a given $K$-orbit can contain at most one $K^0$-orbit $\vartheta$ such that $\vartheta \sim \xi$.

If $\theta\in \vartheta$, $\dot\Psi\in \xi$ and $\dot\Psi$ is $\theta$-symmetric then, according to Proposition 3.9 \cite{HL1}, $\vartheta\sim \xi$.  In this case, by definition,
\begin{equation}\label{termformula}
\langle \vartheta ,\xi\rangle_{K^0} = \dim \Hom_{K^{0,\theta}}(\rho (\dot\Psi),\eta_\theta (\dot\Psi)),
\end{equation}  where $K^{0,\theta} = K^0\cap G^\theta$ and $\eta_\theta$ is a character of $K^{0,\theta}$ whose definition (from \cite{HM}) is recalled in \S\ref{sec:etaprimetheta}.
A more invariant formula can be obtained  by first defining a quasicharacter $$\phi (g) = \prod_{i=0}^d \phi_i (g)$$ of $G^0$ and letting $\rho' = \rho \otimes (\phi |K^0)$ and $\eta'_\theta = \eta_\theta (\phi |K^{0,\theta})$.  Then $\rho'$ and $\eta'_\theta$ do not vary within the refactorization class $\xi$ and we have
$$\langle \vartheta ,\xi\rangle_{K^0} = \dim \Hom_{K^{0,\theta}}(\rho'  ,\eta'_\theta ).$$

It will turn out that, for the examples in this paper,  one always has $\langle \vartheta ,\xi\rangle_{K^0}\le 1$.  Roughly speaking, this is a consequence of the fact that one has multiplicity-free decompositions over the residue fields.

The constant $m_{K^0}(\vartheta)$ is defined by
$$m_{K^0}(\vartheta ) = [G_\theta : (K^0\cap G_\theta)G^\theta],$$ where $\theta$ is an arbitrary element of $\vartheta$ and $G_\theta$ is the similitude group associated to $G^\theta$.  It represents the number of double cosets in $K\bs G/G^\theta$ that correspond to the $K$-orbit of $\theta$, as indicated above.

\subsubsection{The theory for our examples}\label{sec:ourexamples}

In the previous section, we described the transition from $K$-orbits to $K^0$-orbits.  For the examples considered in this paper, we can go two steps further.  We show that if $\vartheta$ is a $K^0$-orbit such that $\langle \vartheta ,\xi\rangle_{K^0}\ne 0$ then $\vartheta$ contains a unique $T$-orbit $\zeta$ of involutions $\theta$ such that $\bT$ is $\theta$-stable.  Moreover, in the latter situation $\bT$ must be $\theta$-split, that is, $\theta (t) = t^{-1}$ for all $t\in \bT$.
This allows us to replace Equation \ref{HLonemult} with an equation 
\begin{equation}\label{Tmultformula}
\langle \Theta ,\xi\rangle_G =\sum_{\zeta}  m_{T} (\zeta )\  \langle \zeta ,\xi\rangle_{T},\end{equation}
where we are summing over the $T$-orbits $\zeta$ in $\Theta$ that contain involutions $\theta$ such that $\bT$ is $\theta$-split.  (This is Proposition \ref{newmultformula}.)  
The constant $m_T(\zeta)$ is defined by
$$m_T(\zeta) = [ G_\theta : (T\cap G_\theta) G^\theta].$$  (If $\zeta$ is contained in the $K^0$-orbit $\vartheta$ then $m_T(\zeta) = m_{K^0}(\vartheta)$.)

We now sketch the derivation of Equation \ref{Tmultformula} and describe how its terms are evaluated.

Let us consider a term $\langle \vartheta,\xi\rangle_{K^0}$ in Equation \ref{HLonemult}, where $\vartheta\sim \xi$.
We assume that we have $\Psi\in \xi$ and $\theta\in \vartheta$ such that $\Psi$ is $\theta$-symmetric, and note that there is no essential loss of generality in making this assumption.  Then Equation \ref{termformula} says that $$\langle \vartheta ,\xi\rangle_{K^0} = \dim \Hom_{K^{0,\theta}}(\rho, \eta_\theta),$$
where $\eta_\theta$ is a character of $K^{0,\theta}$ that is the product of $\phi |K^{0,\theta}$ and another character $\eta'_\theta$ of $K^{0,\theta}$.

The precise definition of $\eta'_\theta$ is rather technical and it appears to be a rather difficult, but necessary, task to compute $\eta'_\theta$.  We proceed as follows.  It is easily seen   that $\eta'_\theta$ may be regarded as character on $\sG^{0,\theta}_y(\f)$, the group of $\theta$-fixed points in $\sG_y^0(\f) = G^0_{y,0:0^+}$, but determining the structure of $\sG^{0,\theta}_{y,0}(\f)$ requires some effort.
Indeed, it appears that this group could have one of several structures.  Ultimately, we show in Lemma \ref{restsareorthogonal} that $\langle \vartheta ,\xi\rangle_{K^0}\ne 0$ implies that $\sG^{0,\theta}_y(\f)$ is a finite orthogonal group.

Without actually knowing the precise structure of $\sG_y^0(\f)$, we  show in Lemma \ref{etaprimeidentity} that $\eta'_\theta$ has trivial restriction to the group $(\sG^{0,\theta}_y)^\circ(\f)$ of $\f$-rational points of the identity component of $\sG_y^{0,\theta}$.  (The techniques used in the proof Lemma \ref{etaprimeidentity} appear to be applicable to other examples.  This may be the most useful technical tool introduced in this paper that applies to a general class of examples.)

In Lemma \ref{thetastableT}, we show that if $\langle \vartheta ,\xi\rangle_{K^0}$ is nonzero then there exists $\Psi\in \xi$ and $\theta\in\vartheta$ such that $\Psi$ is $\theta$-symmetric and $\bT$ is $\theta$-stable.  Then, in Lemma \ref{thetasplitreduction}, we establish  that, in the latter case, $\bT$ must actually be $\theta$-split.  The proof of Lemma \ref{thetasplitreduction} uses Lemma \ref{tenfour} and the fact that $\vartheta$ is $F$-admissible.  It also uses the theory of ``$J$-symmetric embeddings'' from \S\ref{sec:Jsymmetric} to precisely describe the restrictions of orthogonal involutions of $G$  to the subgroups $G^i$.
This leads us to 
Lemma \ref{restsareorthogonal} which establishes that the relevant restrictions of $\theta$ to the groups $G^i$ and  $\sG_y^{0}(\f)$ must always be orthogonal involutions.  (Some of the restrictions of $\theta$ are not orthogonal involutions, however,  they can be ignored because they make no contribution to the dimension we are computing.)

At this point, we can easily compute $\langle \vartheta,\xi\rangle_{K^0}$.  Suppose $\langle \vartheta ,\xi\rangle_{K^0}$ is nonzero and $\xi$ is nontoral.   Choose $\Psi\in \xi$ and $\theta\in\vartheta$ such that $\Psi$ is $\theta$-symmetric and $\bT$ is $\theta$-split. 
%
%
Then 
$$\langle \vartheta ,\xi\rangle_{K^0} = \dim \Hom_{\sG_y^{0,\theta}(\f)}((-1)^{n_0+1}R^\lambda_{\sT(\f)},\eta_\theta).$$
Theorem 3.11 \cite{HL1}  gives a formula for the latter dimension in which $\eta_\theta$ may be regarded as an unspecified character.   This formula is a routine generalization of Theorem 3.3 \cite{L} that is needed since Lusztig's result only addresses the case in which $\eta_\theta$ is trivial.  To evaluate the terms in the formula, we  use Lemma \ref{tenfour}, which generalizes Lemma 10.4 \cite{L}, again to account for nontrivial $\eta_\theta$.  
If one examines Lusztig's lemma, one sees that the proof carries through with no essential modification in our case due to the fact that $\eta'_\theta$ is trivial on $(\sG_y^{0,\theta})^\circ (\f)$.  
Lemma \ref{tenfour} says that the tori that enter into Theorem 3.11 \cite{HL1} in our case are $\theta$-split and this makes it trivial to evaluate the terms in the formula from Theorem 3.11 \cite{HL1}.
Remarkably, the only value of $\eta'_\theta$ that is needed to evaluate the latter formula is $\eta'_\theta (-1)$, which, from the definition of $\eta'_\theta$ is obviously trivial since $-1$ is a scalar.  So, in fact, we never actually need to compute $\eta'_\theta$.  (We only needed to know that it is trivial on $(\sG_y^{0,\theta})^\circ(\f)$ and at $-1$).) When $\xi$ is nontoral, we  obtain
\begin{eqnarray*}
\langle \vartheta ,\xi\rangle_{K^0}
&=&
\dim \Hom_{\sG_y^{0,\theta}(\f)}((-1)^{n_0+1}R^\lambda_{\mathsf{T}(\f)} ,\eta_\theta)\\
&=&
\begin{cases}
1,&\text{if $\varphi (-1)=1$},\\
0,&\text{otherwise},
\end{cases}
\end{eqnarray*}
but this formula clearly also holds in the toral case.

Lemma \ref{thetasplitreduction} shows that 
if $\langle \vartheta ,\xi\rangle_{K^0}$ is nonzero then  $\vartheta$ must contain a {\it split $T$-orbit} $\zeta$ in the sense that $\zeta$ is a $T$-orbit of involutions $\theta$ such that $\bT$ is $\theta$-split.  According to
Proposition \ref{uniqueTinKzero}, it must be the case that $\vartheta$ contains a unique split $T$-orbit $\zeta$.
At this point, we have nearly demonstrated Equation \ref{Tmultformula} (which is Proposition \ref{newmultformula}), where if $\zeta$ is the unique split $T$-orbit in $\xi$, we define $$\langle \zeta ,\xi\rangle_T = \langle \vartheta , \xi\rangle_{K^0}.$$  The only thing that remains to be shown is that $m_T(\zeta) = m_{K^0}(\vartheta)$.
This is proved with an algebraic theory that we develop in \S\ref{sec:Jsymmetric}.  This theory involves the notion of a $J$-symmetric embedding of $E$ in $\M_n (F)$.  We also use this theory to obtain a very convenient description of the split $T$-orbits of orthogonal involutions, and to determine the restrictions of orthogonal involutions to the groups $G^i$ and $\sG^0_{y,0}(\f)$.

A {\it $J$-symmetric embedding} of $E$ in $\M_n (F)$ is an $F$-linear embedding $x\mapsto \m{x}$ such that $J\, {}^t \m{x}\, J = x$ for all $x\in E$.  Replacing $J$ with another symmetric matrix $\nu\in \cS$, we obtain the more general notion of a {\it $\nu$-symmetric embedding}.  The first question regarding such embeddings is the question of existence.   When $\nu$ is the identity matrix, the existence issue is the classical question:
\begin{quote}
Given a degree $n$ (finite) extension $E/F$ of fields, when does there exist an $F$-linear embedding of $E$ in the vector space of symmetric matrices in $\M_n (F)$?
\end{quote}
The existence question has been successfully attacked in many (or perhaps all) cases.  (See \cite{B}, for example.)  
Our approach is to downplay the use of trace forms and to place $J$-symmetric embeddings at the center of the theory of all $\nu$-symmetric embeddings.  More precisely, once one understands $J$-symmetric embeddings, it is easy to use them to describe $\nu$-symmetric embeddings for all $\nu$.  Using results in \cite{HL1}, we see that, for our tamely ramified extensions of $p$-adic fields, $J$-symmetric embeddings always exist, but $\nu$-symmetric embeddings do not exist for all $\nu\in \cS$.

Suppose we fix a $J$-symmetric embedding of $E$ in $\M_n (F)$ and choose $\bT$, as we are free to do, so that $T$ is the image of $E^\times$ in $G$.  Then it is easy to see that if $\nu\in \cS$ then our $J$-symmetric embedding is also $\nu$-symmetric precisely when $\nu = Jt$, for some $t\in T$.  It is also easy to deduce from this that we have a bijection between $T/(T^2Z)$ and the set $\cO^T$ of split $T$-orbits of orthogonal involutions given by mapping the coset of $t\in T$ to the $T$-orbit of $\theta_{Jt}$.  (See Proposition~\ref{symmetricembedding} and Lemma \ref{muprime}.)  Given the latter description of $\cO^T$, it is a relatively straightforward matter to determine how the various split $T$-orbits in $\cO^T$ are distributed among the various $G$-orbits of orthogonal involutions.  These calculations overlap with the computations of the constants $m_{K^0}(\vartheta)$ and $m_T(\zeta)$.

 \section{Tame supercuspidal representations}\label{sec:tamesec}
 
 \subsection{Inducing data}\label{sec:inducing}
 
This section discusses basic terminology and notations related to the inducing data for the construction of tame supercuspidal representations.  For the most part, we follow the presentation in \cite{HM} and \cite{HL1} and the reader should consult the latter references for a more detailed exposition.

Howe's construction \cite{rH} associates an equivalence class $\pi (\varphi)$ of irreducible tame supercuspidal representations of $G= \GL_n (F)$ to an $F$-admissible quasicharacter $\varphi : E^\times \to \C^\times$ of the multiplicative group of a  tamely ramified degree $n$ extension $E$ of $F$.  Assume that $E$ and $\varphi$ have been fixed.

Howe shows that the choice of $\varphi$ canonically determines a tower of intermediate fields of $E/F$.  Following the conventions in  \cite{HM}, we denote this tower as follows:
$$F= E_d\subsetneq \cdots \subsetneq E_1\subsetneq E_0\subseteq E_{-1}= E.$$

To construct an actual representation in the equivalence class $\pi (\varphi)$, one needs to choose an $F$-embedding of $E$ in $\g = \gl_n (F)$ and, in the following sense, a Howe factorization of $\varphi$:

\begin{definition}
 A \textbf{Howe factorization} of $\varphi$
consists of 
\begin{itemize}
\item a tower of fields
$F=E_d\subsetneq E_{d-1}\subsetneq\cdots\subsetneq E_0\subseteq E_{-1}= E$,
with $d\ge 0$, 
\item
a collection of quasicharacters $\varphi_i$, $i=-1,
\dots, d$, 
\end{itemize}
with the following properties:
\begin{itemize}
\item Let $N_{E/E_i}$ denote the norm map from $E^\times$ to
$E_i^\times$, for $i\in \{\, 0,\dots,d\,\}$.
  Then for each $i\in \{\, 0,\dots,d\,\}$,
 $\varphi_i$
is a quasicharacter of $E_i^\times$ such that the conductoral
exponent $f_i=f(\varphi_i\circ N_{E/E_i})$ of $\varphi_i\circ N_{E/E_i}$
 is greater than $1$, and such that
$\varphi_i$ is generic over $E_{i+1}$
if $i\not=d$. 
\item  $f_0<f_1<\cdots <f_{d-1}$ 
\item  If $\varphi_d$ is nontrivial,
then $f_d>f_{d-1}$.
\item  (\textbf{The toral case}) If $E_0=E$, then $\varphi_{-1}$ is the trivial
character of $E^\times$.
\item   (\textbf{The nontoral case})  If $E_0\subsetneq E$, then 
$\varphi_{-1}$ is a quasicharacter
of $E^\times$ such that $f(\varphi_{-1})=1$
 and $\varphi_{-1}$ is generic over $E_0$.
\item
 $\varphi=\varphi_{-1}\prod_{i=0}^d\varphi_i\circ N_{E/E_i}$.
 \end{itemize}
\end{definition}

\bigskip
Given $\varphi$, one has latitude in choosing the embedding of $E$ and the Howe factorization.  Later, we describe how to make these choices in ways that greatly facilitate the study of distinguished representations.
The next definition is taken from  \cite{HL1} but derived from \cite{rH}:

\begin{definition}\label{Howedatum} A  \textbf{Howe datum (for $G$)} consists of:
\begin{itemize}
\item a degree $n$ tamely ramified extension $E$ of $F$,
\item an $F$-admissible quasicharacter $\varphi : E^\times \to\C^\times$,
\item a Howe factorization of $\varphi$,
\item an $F$-linear embedding of $E$ in $\M_n( F)$.
\end{itemize}
\end{definition}

\bigskip
For our purposes, it is convenient to choose our embedding of $E$ in a way that particularly well-suited to studying tame supercuspidal representations that are distinguished with respect to orthogonal groups.   {\it To be more precise, we generally assume that our embeddings are chosen in accordance with 
Lemma \ref{Jsymtameexistence} below}.

In \cite{Y}, Yu generalizes Howe's construction to general connected, reductive $F$-groups.  Yu's construction provides the foundation for the general theory of distinguished tame supercuspidal representations in \cite{HM} and so we tend adopt Yu's notations and point of view even though we are only interested here in representations of $\GL_n (F)$.  The  next definition  is  from \cite{HM}, but based on a similar notion in \cite{Y}:

\begin{definition}\label{cuspidalGdatum}
A $5$-tuple $(\vec\bG, y, \vec r,\rho,\vec\phi)$ satisfying
the following conditions is called a \textbf{Yu datum (a.k.a., cuspidal
$G$-datum)}:

\begin{itemize}
\item[\textbf{D1.}] $\vec{\bG}$ is a tamely ramified twisted Levi sequence $\vec{\bG}
= (\bG^0, \ldots, \bG^d)$ in $\bG$  and $\bZ^0/\bZ$
is $F$-anisotropic, where $\bZ^0$ and $\bZ$ are the centers of 
$\bG^0$ and $\bG=\bG^d$, respectively.

\item[\textbf{D2.}] $y$ is a point in $A(\bG,\bT,F)$, where $\bT$ is
a tame maximal $F$-torus of $\bG^0$ and $E'$ is a Galois tamely
ramified extension of $F$ over which $\bT$ (hence $\vec\bG$)
splits.
(Here, $A(\bG,\bT,E')$ denotes the apartment in $\cB(\bG,E')$
corresponding to $\bT$ and $A(\bG,\bT,F)=A(\bG,\bT,E')\cap \cB(\bG,F)$.)

\item[\textbf{D3.}] $\vec{r} = (r_0, \ldots, r_d)$ is a sequence
of real numbers satisfying
$0 < r_0 < r_1 < \ldots < r_{d-1} \leq r_d$, if $d > 0$, and $0 \leq r_0$
if $d = 0$.

\item[\textbf{D4.}] $\rho$ is an irreducible representation of the stabilizer
$K^0 = G^0_{[y]}$ of $[y]$ in $G^0$ such that
$\rho\,|\, {G^0_{y,0^+}}$ is $1$-isotypic and
the compactly induced representation $\pi_{-1} = \ind_{K^0}^{G^0} \rho$
is irreducible (hence supercuspidal). Here, $[y]$ denotes the
image of $y$ in the reduced building of $G$.

\item[\textbf{D5.}] $\vec\phi = (\phi_0,\dots ,\phi_d)$ is a 
sequence of quasicharacters, where $\phi_i$ is a quasicharacter of $G^i$.  
We assume that $\phi_d =1$ if $r_d=r_{d-1}$ (with $r_{-1}$ defined to be 0),
 and in all other cases if $i\in \{\, 0,\dots,d\,\}$ 
then $\phi_i$ is trivial on $G^i_{y,r_i^+}$ 
but nontrivial on $G^i_{y,r_i}$.
\end{itemize}
\end{definition}

Note that the component $\vec r$ of $(\vec\bG ,y,\vec r,\rho,\vec\phi)$ is determined by the other components.  Furthermore, Yu's construction only depends on the image $[y]$ of $y$ in the reduced building of $G^0$, not on $y$.  The $4$-tuple $(\vec\bG , [y],\rho,\vec\phi )$ is canonically associated  to a Howe datum, but $(\vec\bG,y,\vec r,\rho,\vec\phi)$ is not.  In fact, for the contruction of tame supercuspidal representations of $\GL_n(F)$ the distinction between $y$ and $[y]$ is irrelevant.

The notation $[y]$ seems to express that we have in mind ``the point in the reduced building of $G^0$ corresponding to the point $y$ in the extended building.''  In other words, it seems to convey that we have made a choice of $y$.  Taking this one step further, if we are given $[y]$ then we will free to use the notation $G^0_{y,0}$ for the associated parahoric subgroup, since this object only depends on $[y]$ and since the notation $G^0_{y,0}$ is standard.  We trust that the reader will not be inconvenienced by such abuses of notation.  Note that, as in \cite{HM}, we use condition D1 to embed the building of $G^i$ in the building of $G^{i+1}$, for $i\in \{ 0,\dots ,d-1\}$.

Note that we are not assuming that the extension $E/F$ is Galois.  The extension $E'/F$ occurring in Condition D2 in Definition \ref{cuspidalGdatum} may be taken to be the Galois closure of $E/F$.

In much of this paper, we would like assume that we have a Yu datum $\Psi$ that comes from a fixed a Howe datum $\Phi$ for $G$ and we would like  to use all of the above notations, $\varphi$, $E$, $\vec\phi$, $\vec\bG$, $[y]$, $\bT$, etc., without explicitly recalling them.  With this in mind, we make the following definition for the sake of convenience:

\begin{definition}
A \textbf{$G$-datum} $\Psi$ consists of a Howe datum together with an associated Yu datum.
\end{definition}

We will also often say ``let $\xi$ be a {\bf refactorization class} of $G$-data.''  This means we are considering the set of all refactorizations (in the sense of Definition 4.19 \cite{HM}) of a given $G$-datum $\Psi$.

Suppose $\theta$ is an orthogonal involution of $G$.  The following definition is essentially in  \cite{HM}:

\begin{definition}\label{thetasymmdef}
A $G$-datum $\Psi = (\vec\bG , [y],\rho,\vec\phi)$ is \textbf{$\theta$-symmetric} if:
\begin{itemize}
\item $\theta (G^i) = G^i$, for all $i$,
\item $\theta ([y]) = [y]$,
\item $\phi_i\circ\theta = \phi_i^{-1}$, for all $i$.
\end{itemize}
\end{definition}

If $\Theta$ is a $G$-orbit of orthogonal involutions of $G$ then the methods of \cite{HM} reduce the computation of the dimension of $\Hom_{G^\theta}(\pi ,1)$, for $\theta\in \Theta$ and $\pi\in \pi(\varphi)$, to the computation of the dimension of certain $\Hom$-spaces associated to certain $\theta'$-symmetric Yu data $\dot\Psi$, where $\theta'$ is an involution in $\Theta$ and $\dot\Psi$ is a refactoriztion of a Yu datum associated to $\varphi$.  More simply put, we ultimately only need to study those Yu data $\dot\Psi$ and involutions $\theta'$ such that $\dot\Psi$ is $\theta'$-symmetric.

In this paper, we take things somewhat further.  We essentially consider $\bT$ to be part of the  datum, as indicated above, and then we show that we may reduce to the case of studying $(\Psi ,\bT , \theta)$ such that $\Psi$ is $\theta$-symmetric and $\bT$ is $\theta$-split.

\subsection{Buildings}

\subsubsection{Buildings of general linear groups}

There are many accounts of the theory of explicit models of the  Bruhat-Tits building of a general linear group over a 
nonarchimedean local field $F$.
We will give a streamlined exposition based  on \cite{GY}.

Fix a vector space $V$ of finite dimension $n$ over $F$ and, in this section, let $G= GL(V)$.  To develop the desired explicit model for the building of $G$, we need to fix a  a valuation $\omega : F\to \R\cup\{ +\infty\}$.  We will let $\gamma$ denote $\omega (\varpi)$, where $\varpi$ is any prime element of $F$.

\begin{definition}  An \textbf{additive norm on $V$ (with respect to $\omega$)} is a map
 $x :
V\to \R\cup
\{+\infty\}$ that satisfies:
\begin{itemize}
\item $x (v+w)\ge \inf \{ x
(v),x (w)\},\hbox{ for all }v,w\in V,$
\item $x (\alpha v)= \omega (\alpha
)+
 x (v),\hbox{ for all }\alpha\in F,v\in
V,$
\item
$x (v)=+\infty\hbox{ if and only if }
v= 0.$
\end{itemize}
\end{definition}

\bigskip
The set $\cB = \cB (G)$ of all valuations on $V$ provides an explicit model for the (extended) Bruhat-Tits building of $G$.  Since we are really only interested in the reduced building, we will not discuss the polysimplicial structure of $\cB$.  (See \cite{GY} for this.) 
The group $G$ acts on $\cB$ according  to 
$$(g\cdot x)(v) = x(g^{-1}v).$$

To simplify our notation, we introduce some slightly nonstandard conventions.  When we write $c\in \R^n$, we mean $c$ is the element of $\R^n$ which in component form is $(c_1,\dots ,c_n)$.  Similarly, when we say ``let $e$ be a basis of $V$,'' we mean that $e$ is ordered $F$-basis $e_1,\dots ,e_n$ of $V$.
Given  $c\in \R^n$, define  $x_e^c\in \cB$ by
$$x_e^c \left( \sum_{i=1}^n\alpha_ie_i\right) = \inf_i \{  \omega (\alpha_i)+c_i\},$$  where the $\alpha_i$'s lie in $F$.
It turns out that every element $x\in \cB$  has the form $x^c_e$, but the choice of $(c,e)$ is not uniquely determined by $x$.

Apartments in the building of $G$ are  parametrized by the maximal split tori of $G$.  Such a torus has $n$ eigenspaces of dimension one and they completely characterize the torus.  Suppose $e$ is a basis of $V$.  Then there is an associated maximal split torus $$T_e =\{ t\in G:\ t e_i \in F^\times e_i\text{ for all  }i\}.$$  Permuting the basis vectors or multiplying them by scalars does not affect the resulting torus.

Having fixed a basis $e$, the apartment $\cA (T_e)$ associated to $T_e$ is given by $$\cA (T_e) = \{ x_e^c : \  c\in \R^n\}.$$
Implicit in this definition is the fact that $T_e = T_{e'}$ implies $\cA (T_e) = \cA (T_{e'})$.

An alternate model for the building of $G$ is given in \cite{GY}.  The points in the building correspond to graded lattice chains in the following sense:

\begin{definition} A \textbf{lattice chain} on $V$ is a totally ordered (non-empty) chain $L_\bullet$ of lattices in $V$ that is stable under homotheties.  Such a lattice chain is determined by a segment:  $$L_0\supsetneq L_1\supsetneq \cdots \supsetneq L_{m-1}\supsetneq \gP_F L_0.$$ The number $m$ is called the \textbf{rank of the lattice chain}.  A \textbf{graded lattice chain} is a pair $(L_\bullet , c)$ where $L_\bullet$ is a lattice chain and $c$ is a strictly decreasing function from $L_\bullet$ to $\R$ such that
$$c(\alpha\cdot L) = \omega (\alpha) + c(L),$$ for all $\alpha\in F$ and $L\in L_\bullet$.
\end{definition}

Regarding terminology, we mention that lattice chains are also referred to as periodic lattice chains or lattice flags.   In addition, we  say we have a ``chain'' of lattices means we have lattices $L_i$ for each integer $i$ such that $L_i\supsetneq L_{i+1}$, however, the choice of how to index the lattices (that is, which lattice to call $L_0$) is irrelevant.  In some cases, it is convenient to index the lattices by elements of the image of $F^\times$ under $\omega$, as with Moy-Prasad filtrations.
The term ``period'' is sometimes used instead of ``rank.''
``Strictly decreasing'' means that as $L$ varies over the lattices in $L_\bullet$, then as $L$ gets larger $c(L)$ decreases.

There is a canonical bijection between graded lattice chains and points in $\cB$.
More precisely, if  $(L_\bullet, c)$ is a graded lattice chain then we define an additive norm 
$$x_{(L_\bullet ,c)} (v)= \max_{L\in L_\bullet ,\, v\in L} c(L).$$

Conversely, if $x\in \cB$ then a graded lattice chain $(L_x,c_x)$ is defined by declaring that the lattices in $L_x$ are the lattices
$$L_{x,r} = \{ v\in V\, :\, x(v) \ge r\},$$ for $r\in \R$, and the map $c_x$ is given by
$$c_x (L_{x,r}) = \inf_{v\in L_{x,r}} x(v).$$

When $x= x^c_e$, the corresponding lattice chain is
$$L_{x,r} = F_{r-c_1}e_1+\cdots + F_{r-c_n}e_n,$$ where $F_t = \omega^{-1}( [t,+\infty] )$ and the corresponding grading is
$$c_x(L_{x,r}) = \inf_i  \left(\gamma\left\lceil \frac{ r-c_i}{\gamma}\right\rceil +c_i\right).$$
Note that $$r\le c_x(L_{x,r})< r+\gamma .$$

Transferring the action of $G$ on $\cB$ to an action on graded lattice chains, we have
$$g\cdot (L_\bullet , c) = (g\cdot L_\bullet , g\cdot c),$$ where $gL_\bullet = (gL)_{L\in L_\bullet}$ and $(g\cdot c)(gL) = c(L)$.

Suppose $x\in \cB$ and $x = x_{(L_\bullet ,c)}$.  Then the parahoric subgroup $G_{x,0}$ associated to $x$ is the stabilizer in $G$ of $x$ or, equivalently, $(L_\bullet ,c)$.  But $g$ stabilizes $(L_\bullet , c)$ precisely when for all $i$ there exists $j$ such that $gL_i = L_j$ and $(g\cdot c)(gL_i) = L_j$.  But $(g\cdot c)(gL_i) = c(L_i)$ and, furthermore, $c(L_i) = c(L_j)$ implies $i=j$.  Therefore, $$G_{x,0}= \{ g\in G\, :\,  gL_i = L_i,\text{ for all }i\}.$$

We define an equivalence relation on $\cB$ by declaring that $x\sim y$ precisely when $x-y$ is a constant.  The set $\cB_{\rm red}= \cB_{\rm red}(G)$ of equivalence classes is serves as our  model for the reduced Bruhat-Tits building of $G$.  If $x\in \cB$, we will let $[x]$ denote the corresponding point in $\cB_{\rm red}$.

The choice of a nonzero vector $v\in V$ determines a bijection $$\cB \cong \R \times \cB_{\rm red}: x\mapsto (x(v),[x]).$$
It is common to identify $\cB$ and $\R\times \cB_{\rm red}$ even though the choice of $v$, and hence the associated bijection, is not canonical.   On the other hand, if we fix a basis for $V$, as we shall throughout most of this paper,  then we do in fact obtain a canonical choice of $v$.

In the lattice chain model, the facets in the reduced building are readily apparent.  If $x\in  \cB$ corresponds to the graded lattice chain $(L_\bullet ,c)$ then the 
the lattice chain $L_\bullet$ is what determines the facet of the image of $x$ in the reduced building $\cB_{\rm red}$.  If $m$ is the rank of $L_\bullet$  then $m-1$ is the dimension of the facet.

To give a vertex in $\cB_{\rm red}$ is equivalent to giving
 a lattice chain of the form $L_\bullet = \{ \gP_F^i L:\ i\in \Z\}$, where $L$ is some fixed lattice in $V$.
Equivalently, vertices in $\cB_{\rm red}$ correspond to homethety classes of lattices.

\subsubsection{What is needed for tame supercuspidal representations}\label{sec:whatisneeded}

For the construction of tame supercuspidal representations, we are given a tower of fields
$$F= E_d\subsetneq \cdots \subsetneq E_1\subsetneq E_0\subseteq E_{-1}= E,$$
as described in \S\ref{sec:inducing}.    Let $n_0 = [E:E_0]$.
We begin by describing how an $E_0$-linear embedding of $E$ in $\M_{n_0} (E_0)$ determines a vertex $[y]$ in $\cB_{\rm red}(G^0)$, where $G^0 = \GL_{n_0}(E_0)$.

We apply the discussion of the previous section with $F$ replaced by $E_0$ and with $V$ replaced by $E_0^{n_0}$.
Choose an $\gO_{E_0}$-basis $e_1,\dots ,e_{n_0}$ of $\gO_E$.  Given $x\in E$, define a vector
$$\v{x} =
\left( \begin{array}{c}
x_1\cr 
\vdots\cr
x_{n_0}
\end{array}\right)\in V,$$ where $x = x_1e_1+\cdots +
x_{n_0}e_{n_0}$ and $x_1,\dots ,x_{n_0}\in E_0$. 
Multiplication by $x$ is an $E_0$-linear
transformation of $E$ and hence defines a
matrix $\m{x}\in \M_{n_0}(E_0)$.  So $x\mapsto \m{x}$
is the regular representation associated
to our basis.  Given $x,y\in E$, we have
the relations $\m{x}\ \v{y} = \v{xy}$ and
$\m{x}\ \m{y} = \m{xy}$. 

The basis $e_1,\dots , e_{n_0}$ generates a lattice
$$\gO_{E_0} e_1+\cdots + \gO_{E_0} e_{n_0}$$ in $E$ whose image under $x\mapsto \v{x}$ is the lattice 
$$L = \v{\gO_E}= \gO_{E_0}^{n_0}$$ in $V= E_0^{n_0}$.  The associated lattice chain is
$$L_\bullet = \{ \gP_{E_0}^i L:\ i\in \Z\}
= \{ \v{\gP_E^i}\ :\ i\in \Z\}.$$
(We are using the fact that $E/E_0$ is unramified.)  
In this way, the choice of $E$ and an $\gO_{E_0}$-basis of $\gO_E$ determines a vertex $[y]$ in $\cB_{\rm red}(G^0)$.

It is not actually necessary to use an $\gO_{E_0}$-basis of $\gO_E$; any $E_0$-basis of $E$ can be used.  However, using integral bases greatly simplifies the form of our parahoric subgroups and algebras and their Moy-Prasad filtrations.  So we do this as a matter of convenience.

Let $\g^0=\M_{n_0}(E_0)$ be the Lie algebra of $G^0$.    The Moy-Prasad filtration of $\g^0$ is given by
$$\g^0_{y,r} = \left\{ X\in \g^0\ : X\, \v{\gP_E^k}\subset \v{\gP_E^{k+\lceil er\rceil}}\right\},$$ where $e = e(E/F)$.
These sets are clearly $\gO_E$-modules, with $\gO_E$ acting by $x\cdot X = \m{x}\, X$.  We have  $$\g^0_{y,r} = \m{\varpi_E^{\lceil er\rceil}}\, \g^0_{y,0}$$  and $$\g^0_{y,0} = \bigcap_{k\in \Z} \m{\varpi_E^k}\ \gl_{n_0}(\gO_{E_0})\ \m{\varpi_E^{-k}}.$$

We define Moy-Prasad filtration groups $G^0_{y,r}$ for $r\ge 0$ by $G^0_{y,0} = \g^0_{y,0} -\{ 0\}$ and $G^0_{y,r} = 1+\g^0_{y,r}$, for $r>0$.
It is easy to verify that
\begin{eqnarray*}
G^0_{y,0} = \GL_{n_0} (\gO_{E_0}),&\quad&  G^0_{y,0^+} = 1+ \M_{n_0}(\gP_{E_0}),\\
\sG^0_y(\f_F) := G^0_{y,0:0^+} = \GL_{n_0} (\f_{E_0}),&\quad&K^0 := G^0_{[y]} = E_0^\times \GL_{n_0} (\gO_{E_0}),\\
\end{eqnarray*}
 where $\gO_L$, $\gP_L$ and $\f_L$ denote the ring of integers, maximal ideal, and residue field, respectively, of a $p$-adic field $L$.

We have just discussed embedding $E$ in $\g^0 = \M_{n_0}(E_0)$ via the choice of an $E_0$-basis of $E$ and noted that choosing an integral basis simplifies matters.  Similarly, when $i\in \{ 0,\dots ,d-1\}$, the choice an $E_{i+1}$-basis of $E_i$ (preferably an integral basis) yields an embedding of $E_i$ in $M_{[E_i:E_{i+1}]}(E_i)$ and thus an embedding
$$\g^i = \M_{n_i}(E_i) \subset \M_{n_i}(\M_{[E_i:E_{i+1}]}(E_{i+1})) = \M_{n_{i+1}} (E_{i+1}) =\g^{i+1},$$
where $n_j = [E:E_j]$.
This yields a chain of inclusions
$$E\subseteq \g^0 \subsetneq \cdots \subsetneq\g^d = \g.$$
The composite embedding of $E$ in $\g$ is, in fact, associated to an $F$-basis of $E$, namely, the basis obtained by tensoring together the various bases associated to each link in the chain.

Taking multiplicative groups, we obtain a chain
$$T\subseteq G^0 \subsetneq \cdots \subsetneq G^d =  G.$$
Note that we have identities $G^i = \GL_{n_i}(E_i)$, not merely isomorphisms.  {\it Besides using integral bases, we will generally assume in our proofs that our bases are chosen as in Lemma \ref{Jsymtameexistence} below.}  This greatly facilitates the study of tame supercuspidal representations that are distinguished with respect to orthogonal groups.

Given a graded lattice chain on $V$, viewed as an $E_{i}$-vector space, we may regard the various lattices as $\gO_{E_{i+1}}$-modules, rather than $\gO_{E_{i}}$-modules.  This gives an embedding of $\cB (G^i)$ in $\cB (G^{i+1})$, so long as the valuation we choose on $E_i$ restricts to the valuation we choose on $E_{i+1}$.  In fact, for convenience and compatibility with the literature, we will always choose valuations that extend the standard valuation on $F$.

It is easy to see that our embeddings $\cB (G^i)\subset \cB (G^{i+1})$ are compatible with the projections from the extended buildings to the reduced buildings.  Therefore, we obtain embeddings 
$\cB_{\rm red} (G^i)\subset \cB_{\rm red} (G^{i+1})$.  It follows that our point $$[y]  
= \{ \v{\gP_E^i}\ :\ i\in \Z\}$$  may be viewed as an element of each $\cB_{\rm red} (G^i)$, however, it is not necessarily a vertex when $i\ne 0$.

If $i\in \{ 0,\dots ,d\}$  then the Moy-Prasad filtration of $\g^i$ is given by
$$\g^i_{y,r} = \left\{ X\in \g^i\ : X\, \v{\gP_E^k}\subset \v{\gP_E^{k+\lceil er\rceil}}\right\},$$ where $e = e(E/F)$.
These sets are clearly $\gO_E$-modules, with $\gO_E$ acting by $x\cdot X = \m{x}\, X$.  Note that $$\g^i_{y,r} = \m{\varpi_E^{\lceil er\rceil}}\, \g^i_{y,0}.$$  and, assuming we are using inegral bases, $$\g^i_{y,0} = \bigcap_{k\in \Z} \m{\varpi_E^k}\ \gl_{n_i}(\gO_{E_i})\ \m{\varpi_E^{-k}}.$$

We define Moy-Prasad filtration groups $G^i_{y,r}$ for $r\ge 0$ by $G^i_{y,0} = \g^i_{y,0} -\{ 0\}$ and $G^i_{y,r} = 1+\g^i_{y,r}$, for $r>0$.

With our chosen bases, it is plausible, though tedious, to give a precise block matrix description of the $\g^i_{y,r}$'s and the $G^i_{y,r}$'s.

\section{Lemmas involving tamely ramified extensions}

This section contains a collection of basic, and presumably well known, facts about tamely ramified extensions.  These facts will be needed in the proofs of our main results.  The reader should assume that all fields discussed are finite extensions of $\Q_p$ where $p$ is an odd prime.  All field extensions will be finite and tamely ramified.  If $E/F$ is such an extension and if $L$ is the maximal unramified extension of $F$ contained in $E$ then we use the standard notations $e(E/F) = [E:L]$ and $f(E/F) = [L:F]$.  Tameness means that $e(E/F)$ is not divisible by $p$.  The ring of integers of a field $F$ is denoted $\gO_F$.  The maximal ideal in $\gO_F$ is denoted $\gP_F$.

\begin{lemma}\label{tametotally}
If $E/F$ be a totally ramified, tamely ramified extension of finite degree $n$ then:
\begin{itemize}
\item $1+\gP_F = (1+\gP_F)^n = (1+\gP_E)^n\cap \gO^\times_F$.
\item $(\gO_E^\times)^n\cap \gO_F^\times = (\gO_F^\times)^n$.
\item There exist prime elements $\varpi_E$ and $\varpi_F$ of $E$ and $F$, respectively, such that $\varpi_E^n = \varpi_F$.  Given one such $\varpi_F$ the coset $\varpi_F (\gO_F^\times)^n$ is the set of all such prime elements of $F$.
\end{itemize}
\end{lemma}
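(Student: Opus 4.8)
The plan is to work throughout with a fixed prime element $\varpi_F$ of $F$ and the standard valuation. Since $E/F$ is totally ramified of degree $n$ with $p\nmid n$, the key structural input is that the map $u\mapsto u^n$ on the $1$-units behaves well. I would begin with the third bullet, since the first two depend on having a uniformizer of $E$ whose $n$-th power is a uniformizer of $F$. Using Hensel's lemma (valid since $p\nmid n$ and the residue characteristic is $p$): given $\varpi_F$, the polynomial $X^n-\varpi_F$ is Eisenstein over $F$, hence irreducible, so it has a root $\varpi_E$ in some extension of degree $n$; since $[E:F]=n$ and $E/F$ is totally ramified, $E=F(\varpi_E)$ and $\varpi_E$ is a prime element of $E$. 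For the parametrization of all such primes: if $\varpi_E'^n=\varpi_F'$ with $\varpi_E',\varpi_F'$ primes, then $(\varpi_E'/\varpi_E)^n=\varpi_F'/\varpi_F\in\gO_F^\times$ and $\varpi_E'/\varpi_E\in\gO_E^\times$, so $\varpi_F'\in\varpi_F\big((\gO_E^\times)^n\cap\gO_F^\times\big)$; conversely any element of $\varpi_F(\gO_F^\times)^n$ is visibly an $n$-th power of a prime of $F$, and I will upgrade $(\gO_E^\times)^n\cap\gO_F^\times$ to $(\gO_F^\times)^n$ using the second bullet. So the logical order is really: first bullet, then second bullet, then finish the third.

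For the first bullet, the inclusions $(1+\gP_F)^n\subseteq 1+\gP_F$ and $(1+\gP_E)^n\cap\gO_F^\times\subseteq 1+\gP_F$ are immediate (an $n$-th power of a $1$-unit is a $1$-unit, and a $1$-unit of $E$ lying in $F^\times$ is a $1$-unit of $F$). The content is the reverse inclusion $1+\gP_F\subseteq(1+\gP_F)^n$: given $u\in 1+\gP_F$, I want to solve $x^n=u$ with $x\in 1+\gP_F$. Apply Hensel's lemma to $f(X)=X^n-u$ over $F$: we have $f(1)=1-u\in\gP_F$ and $f'(1)=n\in\gO_F^\times$ since $p\nmid n$, so $|f(1)|<|f'(1)|^2$ and Hensel gives a unique root $x\in 1+\gP_F$. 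This also shows $1+\gP_F\subseteq(1+\gP_E)^n$ trivially (the $F$-root lies in $E$), and combined with the easy inclusions yields the chain of equalities. (Alternatively one can argue via the logarithm/exponential on $1+\gP_F$ for $p$ odd, but Hensel is cleaner and uniform.)

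For the second bullet, the inclusion $(\gO_F^\times)^n\subseteq(\gO_E^\times)^n\cap\gO_F^\times$ is trivial. For the reverse, suppose $u\in\gO_F^\times$ and $u=v^n$ with $v\in\gO_E^\times$. Project to residue fields: since $E/F$ is totally ramified, $\f_E=\f_F$, so $\bar u=\bar v^{\,n}$ in $\f_F^\times$. Now $\f_F^\times$ is cyclic of order $q-1$ where $q=|\f_F|$, and $p\nmid n$; write $d=\gcd(n,q-1)$. The hypothesis forces $\bar u\in(\f_F^\times)^n=(\f_F^\times)^d$, and since the $d$-th power map on $\f_F^\times$ is onto $(\f_F^\times)^d$ we may choose $w_0\in\gO_F^\times$ with $\bar w_0^{\,n}=\bar u$, i.e.\ $u/w_0^n\in 1+\gP_F$. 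By the first bullet, $u/w_0^n=w_1^n$ for some $w_1\in 1+\gP_F\subseteq\gO_F^\times$, hence $u=(w_0w_1)^n\in(\gO_F^\times)^n$. Substituting this into the argument of the previous paragraph completes the third bullet.

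The main obstacle — though it is a mild one — is organizing the dependencies so that each bullet is proved using only what precedes it, and making sure the Hensel's lemma step in the first bullet is set up with the correct inequality ($p\nmid n$ is exactly what makes $f'(1)$ a unit). Everything else is bookkeeping with residue fields and the structure of $1$-units, using tameness ($p\nmid e(E/F)=n$) in precisely two places: to invoke Hensel on $X^n-u$, and to know $\gcd(n,p)=1$ when analyzing $n$-th powers in the finite field $\f_F^\times$.
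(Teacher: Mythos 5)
Your first two bullets are fine and essentially parallel to the paper: Hensel's lemma applied to $X^n-u$ handles the $1$-units (the paper uses the polynomial $\varpi^{-1}((1+\varpi x)^n-(1+\varpi a))$, same content), and your reduction to the residue field for the second bullet is equivalent to the paper's decomposition $\gO_E^\times=\mu\cdot(1+\gP_E)$ with $\mu$ the prime-to-$p$ roots of unity. The genuine gap is in the existence part of the third bullet. You assert that for a \emph{given} prime $\varpi_F$ of $F$ the root $\varpi_E$ of the Eisenstein polynomial $X^n-\varpi_F$ lies in $E$, "since $[E:F]=n$ and $E/F$ is totally ramified, $E=F(\varpi_E)$." But that root only generates \emph{some} totally ramified degree-$n$ extension inside an algebraic closure, and there is no reason it is $E$: totally tamely ramified extensions of degree $n$ are not unique (there are $\gcd(n,q_F-1)$ of them; for odd $p$ the two ramified quadratic extensions $\Q_p(\sqrt{p})$ and $\Q_p(\sqrt{up})$, $u$ a nonsquare unit, already show this). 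Indeed, if your claim were true, \emph{every} prime of $F$ would have an $n$-th root in $E$, contradicting the coset statement of the same bullet, which says these primes form the single coset $\varpi_F(\gO_F^\times)^n$ — a proper subset of the primes of $F$ whenever $(\gO_F^\times)^n\ne\gO_F^\times$.

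The repair is to prove existence with $\varpi_F$ produced from $E$, not prescribed in advance (the paper simply quotes this as well known). Take arbitrary primes $\pi_E$ of $E$ and $\pi_F$ of $F$ and write $\pi_E^n=u\,\pi_F$ with $u\in\gO_E^\times$. Since $E/F$ is totally ramified, the residue fields coincide, so $u=u_0v$ with $u_0\in\gO_F^\times$ and $v\in 1+\gP_E$; by Hensel over $E$ (again using $p\nmid n$) we have $v=w^n$ with $w\in 1+\gP_E$, and then $\varpi_E:=\pi_E/w$ is a prime of $E$ with $\varpi_E^n=u_0\pi_F=:\varpi_F$ a prime of $F$. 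With existence in hand, your derivation of the coset description from the second bullet is correct and is the same as the paper's, apart from the small slip that $\varpi_F t^n=(t\varpi_E)^n$ is the $n$-th power of a prime of $E$, not of $F$.
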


\begin{proof}
Let $p$ be the characteristic of the residue field of $F$.  Then since $E/F$ is totally ramified and tamely ramified, its degree $n$ is not divisible by $p$.  Let $\varpi$ be a prime element of $F$.  If $a\in \gO_F$  then applying Hensel's Lemma to the polynomial $$f(x) = \varpi^{-1} ((1+\varpi x)^n - (1+\varpi a))$$
allows one to find $b\in \gO_F$ such that $(1+\varpi b)^n = 1+\varpi a$.  It follows that $(1+\gP_F)^n = 1+\gP_F$ and 
$$1+\gP_F = (1+\gP_F)^n \subset (1+\gP_E)^n\cap \gO_F^\times \subset 1+\gP_F.$$  This proves the first claim.

To prove the second claim, it suffices to show that $$(\gO_E^\times)^n\cap  \gO_F^\times \subset (\gO_F^\times)^n.$$
Let $\mu$ be the set of roots of unity in $E$ whose order is relatively prime to $p$.  Then $\mu$ is also the set of roots of unity in $F$ whose order is relatively prime to $p$.  We have $\gO_E^\times  = \mu \cdot (1+\gP_E)$ and $\gO_F^\times = \mu\cdot (1+\gP_F)$.
Suppose $u\in (\gO_E^\times)^n\cap \gO_F^\times$.  
Then there exists $\zeta\in \mu$ and $v\in 1+\gP_E$ such that $u = (\zeta v)^n$.  We have $\zeta^{-n}u = v^n \in (1+\gP_E)\cap \gO_F^\times = 1+\gP_F$.  Since $1+\gP_F = (1+\gP_F)^n$, we can assume $v\in 1+\gP_F$.  Thus $u = (\zeta v)^n\in (\gO_F^\times)^n$.  The second claim follows.

The fact that there exist prime elements $\varpi_E$ and $\varpi_F$ of $E$ and $F$ such that $\varpi_E^n = \varpi_F$ is well known.  
Suppose we have two such pairs $(\varpi_E,\varpi_F)$ and $(\varpi'_E,\varpi'_F)$.  Let $w = \varpi'_E \varpi_E^{-1}$.  Then $w^n = \varpi'_F\varpi_F^{-1} \in (\gO_E^\times)^n\cap \gO_F^\times = (\gO_F^\times )^n$.  So $\varpi'_F \in \varpi_F(\gO_F^\times)^n$.
Conversely, given $\varpi''_F = \varpi_F t^n$, with $t\in \gO_F^\times$, then $(t\varpi_E , \varpi''_F)$ is another pair of the same type as $(\varpi_E,\varpi_F)$.  This completes the proof.
\end{proof}

\begin{lemma}\label{EEprimeF}
Let $L$ be a finite totally ramified, tamely ramified extension of $F$ of degree $m$ and let $E$  be a ramified quadratic extension of $L$. 
Let $n=2m$ and let $\varpi_F$ be a representative of the coset $\varpi_F (\gO_F^\times)^n$ of all prime elements of $F$ that have an $n$-th root $\varpi_E$ in $E$.  Then $\varpi_F$ has an $m$-th root $\varpi_L$ in $L$.
\end{lemma}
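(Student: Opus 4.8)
The plan is to work directly with the explicit description of prime elements from Lemma~\ref{tametotally} applied twice, once to $E/L$ and once to $L/F$. First I would observe that since $E/L$ is a ramified quadratic extension, $2 = [E:L]$ is not divisible by $p$ (as $p$ is odd), so $E/L$ is tamely ramified; and it is totally ramified. Hence Lemma~\ref{tametotally} applied to $E/L$ gives prime elements $\varpi_E$ of $E$ and $\varpi_L'$ of $L$ with $(\varpi_E)^2 = \varpi_L'$, and moreover the set of all prime elements of $L$ admitting a square root in $E$ is exactly the coset $\varpi_L'(\gO_L^\times)^2$.

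Next I would apply Lemma~\ref{tametotally} to $L/F$, which is totally and tamely ramified of degree $m$: there exist prime elements $\varpi_L''$ of $L$ and $\varpi_F''$ of $F$ with $(\varpi_L'')^m = \varpi_F''$, and the set of all prime elements of $F$ admitting an $m$-th root in $L$ is the coset $\varpi_F''(\gO_F^\times)^m$. The key point is then to compare this coset with the coset $\varpi_F(\gO_F^\times)^n$ of prime elements of $F$ admitting an $n$-th root in $E$. Since $n = 2m$, any $n$-th root in $E$ is in particular such that its $m$-th power is a prime of $L$ with a square root in $E$; composing, a prime $\varpi_F$ of $F$ with an $n$-th root $\varpi_E$ in $E$ satisfies $\varpi_F = (\varpi_E^2)^m$, and $\varpi_E^2$ is a prime of $L$, so $\varpi_F$ has an $m$-th root in $L$. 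This already shows $\varpi_F \in \varpi_F''(\gO_F^\times)^m$, which is exactly the assertion to be proved (that $\varpi_F$ has an $m$-th root $\varpi_L$ in $L$).

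So the argument reduces to a short chain of implications: $\varpi_F$ is in the coset of primes of $F$ having an $n$-th root $\varpi_E$ in $E$ $\Rightarrow$ set $\varpi_L = \varpi_E^2$, which is a prime element of $L$ (its valuation in $L$ is $1$ since $e(E/L)=2$ and $\varpi_E$ has $E$-valuation $1$), and $\varpi_L^m = \varpi_E^{2m} = \varpi_E^n = \varpi_F$. That is all that is needed. I do not even need the full strength of Lemma~\ref{tametotally} describing the cosets; I only need the existence of a compatible root. The only mild subtlety is checking that $\varpi_E^2$ really is a prime element of $L$, i.e.\ that it lies in $L$ and has the correct valuation — this follows because $\varpi_L' := \varpi_E^2 \in L$ by the degree-two calculation (or directly: $\varpi_E^2$ is fixed by $\gal(E/L)$ up to the usual argument, or one invokes Lemma~\ref{tametotally} for $E/L$ to get such a $\varpi_E$ in the first place with $\varpi_E^2 \in L$), and its $E$-valuation is $2$, hence its $L$-valuation is $1$. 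I expect the only ``obstacle'' is purely bookkeeping: making sure the degrees multiply correctly ($n = e(E/L)\cdot e(L/F) = 2m$ with everything totally ramified) so that an $n$-th root in $E$ of $\varpi_F$ squares to an $m$-th root in $L$. There is no real difficulty here; the lemma is essentially a transitivity statement for the existence of compatible uniformizers in towers of tame totally ramified extensions.
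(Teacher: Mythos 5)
Your strategy — take the given prime $\varpi_F$ together with its $n$-th root $\varpi_E\in E$ and simply set $\varpi_L=\varpi_E^2$ — is genuinely different from the paper's, and it can be made to work, but as written it has a gap at precisely the point where the whole content of the lemma sits: you never actually prove that $\varpi_E^2$ lies in $L$. The ``degree-two calculation'' does not give it: the minimal polynomial of $\varpi_E$ over $L$ only yields $\varpi_E^2=-b\varpi_E-c$ with $b,c\in L$, and nothing so far forces $b=0$. The fallback of invoking Lemma \ref{tametotally} for $E/L$ ``to get such a $\varpi_E$ in the first place'' severs the link with the given $\varpi_F$: that lemma produces some prime $\varpi_E$ of $E$ with $\varpi_E^2\in L$, but then $\varpi_E^n=(\varpi_E^2)^m$ is just an element of $L$ of valuation $m$; it need not lie in $F$ at all, let alone be the given $\varpi_F$. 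Restoring that compatibility is exactly what the paper's proof does (it adjusts a prime of $L$ by a root of unity, which lies in $F$ because $L/F$ is totally ramified, to build one triple with $\varpi_E^2=\varpi_L$ and $\varpi_L^m=\varpi_F$, and then transfers to the whole coset $\varpi_F(\gO_F^\times)^n$ by writing $\varpi_F t^n=(\varpi_L t^2)^m$). Also, your intermediate assertion that the $m$-th power of $\varpi_E$ ``is a prime of $L$'' is false when $m$ is odd, since $\varpi_E^m$ then has odd $E$-valuation; fortunately that sentence is not load-bearing.

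The claim $\varpi_E^2\in L$ is nevertheless true for an arbitrary $n$-th root of a prime of $F$, but proving it uses tameness and total ramification in an essential way, not mere bookkeeping. Here is the argument you gestured at: $E/L$ is quadratic, hence Galois; for the nontrivial $\sigma\in\gal (E/L)$, the ratio $\omega=\sigma(\varpi_E)/\varpi_E$ satisfies $\omega^n=1$ with $p\nmid n$, so $\omega$ is a root of unity of order prime to $p$; since $E/L$ is totally ramified, all such roots of unity of $E$ already lie in $L$ (they are determined by the common residue field via Hensel), so $\sigma(\omega)=\omega$, and $\sigma^2=1$ forces $\omega^2=1$; the case $\omega=1$ would put $\varpi_E$, which has odd $E$-valuation $1$, inside $L$, impossible; hence $\sigma(\varpi_E)=-\varpi_E$, so $\varpi_E^2$ is $\sigma$-fixed, is a prime element of $L$, and $(\varpi_E^2)^m=\varpi_F$. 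With this paragraph supplied, your direct proof is complete and in fact shorter than the paper's construct-a-triple-and-transfer argument; without it, the key step is asserted rather than proved.
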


\begin{proof}
First, we show that there exists a triple $(\varpi_F,\varpi_L,\varpi_E)$ of prime elements for $F$, $L$ and $E$, respectively, such that $\varpi_L^m =\varpi_F$ and $\varpi_E^2= \varpi_L$.

Choose a prime element $\varpi_F$ in $F$ that has an $m$-th root $\varpi_L$ in $E$.  If $\varpi_L$ has a square root $\varpi_E$ in $E$ then $(\varpi_F , \varpi_L,\varpi_E)$ is a triple of the desired type.

On the other hand, if $\varpi_L$ does not have a square root in $E$ then we may choose a root of unity $u$ in $L$ of order prime $p$ such that $u\varpi_L$ has a square root in $E$.  But since $L/F$ is totally ramified, it must be the case that $u$ lies in $F$.  We now rename $u\varpi_L$ and $u^m\varpi_F$ as $\varpi_L$ and $\varpi_F$, respectively, and take $\varpi_E$ to be a square root of the new element $\varpi_L$.  This proves the existence of the desired triple.

Given such a triple $(\varpi_F , \varpi_L,\varpi_E)$, Lemma \ref{tametotally} implies that the coset\break $\varpi_F (\gO_F^\times)^n$ is the set of all prime elements of $F$ that have an $n$-th root in $E$.  But since we have chosen $\varpi_F$ so that it has an $m$-th root in $L$, it follows that every element of the latter coset has an $m$-th root in $L$.
\end{proof}

For general field extensions of even degree, it is not necessarily the case that the extension field must contain a quadratic extension of the base field.  For example, there are quartic extensions of $\Q$, called primitive quartic fields, that do not contain any quadratic extensions of $\Q$.

On the other hand, for finite fields it is obviously true that an even degree extension $\F_{q^n}/\F_q$ contains the quadratic extension $\F_{q^2}$ of $\F_q$.
For the field extensions that we are considering in this section, we also have a positive result:

\begin{lemma}\label{fieldsA} If $E$ is a finite, tamely ramified extension of $F$ of even degree then $E$ contains a quadratic extension of $F$.
\end{lemma}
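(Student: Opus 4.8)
Write $E/F$ with residue degree $f = f(E/F)$ and ramification index $e = e(E/F)$, so that $ef = n$ is even and, by tameness, $p \nmid e$. The plan is to split into two cases according to the parity of $f$ and $e$. If $f$ is even, then the maximal unramified subextension $L/F$ inside $E$ has even degree $f$; since unramified extensions of $F$ are cyclic (their Galois group is that of the residue field extension), $L$ contains a unique unramified quadratic extension of $F$, and we are done. So we may assume $f$ is odd, which forces $e$ to be even. In this case I would produce a \emph{ramified} quadratic subextension of $F$ inside $E$.

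To do this, let $L$ be the maximal unramified subextension of $F$ in $E$, so $E/L$ is totally and tamely ramified of even degree $e$. By Lemma~\ref{tametotally} applied to $E/L$, there are prime elements $\varpi_E$ of $E$ and $\varpi_L$ of $L$ with $\varpi_E^e = \varpi_L$. Set $m = e/2$ and consider the subfield $L(\varpi_E^{2}) = L(\varpi_E^m \cdot \varpi_E^{m-\text{(correction)}})$ --- more cleanly, consider $K = L\big(\varpi_E^{\,m}\big)$: since $(\varpi_E^{\,m})^2 = \varpi_L$ is a prime of $L$ and $\varpi_E^{\,m}\notin L$ (its valuation, normalized so that $\varpi_E$ has valuation $1$, is $m$, which is not a multiple of $e$ because $0 < m < e$), the element $\varpi_E^{\,m}$ satisfies the Eisenstein polynomial $X^2 - \varpi_L$ over $L$, so $K/L$ is a ramified quadratic extension. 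This gives a quadratic extension of $L$, but I want one of $F$. To descend, I would instead work directly with a prime $\varpi_F$ of $F$: by Lemma~\ref{tametotally} there is a prime $\varpi_E$ of $E$ with $\varpi_E^n = \varpi_F$ (here one uses that $E/L$ is totally ramified and that $\varpi_F$, being a prime of $F \subseteq L$, is also a prime of $L$ up to a unit, then takes $e$-th roots in $E$ and adjusts by the unramified part). Then $F(\varpi_E^{\,n/2})$ is a quadratic extension of $F$, ramified, sitting inside $E$, by exactly the Eisenstein argument above: $(\varpi_E^{\,n/2})^2 = \varpi_F$ and $\varpi_E^{\,n/2}\notin F$ since its valuation $n/2$ is not divisible by $n$.

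The one subtlety is ensuring $\varpi_F$ genuinely has an $n$-th root in $E$ when $F \subsetneq L \subsetneq E$, i.e. when $E/F$ is not itself totally ramified; this is where I would invoke Lemma~\ref{tametotally} for the totally ramified piece $E/L$ together with the fact that a prime of $F$ remains a prime of $L$ (as $L/F$ is unramified) — so a prime $\varpi_F$ of $F$ has an $e$-th root $\varpi_E$ in $E$, and $\varpi_E$ is then a prime of $E$ with $\varpi_E^e = \varpi_F \cdot(\text{unit of } L)$; absorbing the unit (which, being in $\gO_L^\times$, need not be an $e$-th power, but one can replace $\varpi_F$ by a suitable $F$-multiple exactly as in the proof of Lemma~\ref{EEprimeF}) we arrange $\varpi_E^e = \varpi_F$. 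Then $\varpi_E^{\,e/2}$ has square $\varpi_F \in F$ and lies in $E \setminus F$, so $F(\varpi_E^{\,e/2})$ is the desired quadratic extension of $F$. The main obstacle is this bookkeeping with units when matching up primes of $F$, $L$, and $E$; everything else is an immediate Eisenstein/valuation computation, and the structure closely parallels Lemmas~\ref{tametotally} and~\ref{EEprimeF}, which handle precisely these normalizations.
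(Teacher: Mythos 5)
Your even-$f$ case is fine and coincides with the paper's. The gap is in the case $f(E/F)$ odd, $e=e(E/F)$ even, at the step where you ``absorb the unit'' to arrange $\varpi_E^e=\varpi_F$ for a prime $\varpi_F$ of $F$. Lemma~\ref{tametotally} applied to $E/L$ gives a prime $\varpi_L$ of $L$ with an $e$-th root in $E$, and the set of \emph{all} primes of $L$ with this property is the single coset $\varpi_L(\gO_L^\times)^e$; writing $\varpi_L=u\varpi_F$ with $u\in\gO_L^\times$, a prime of $F$ lies in that coset only if $u\in\gO_F^\times(\gO_L^\times)^e$, which can fail. The appeal to the proof of Lemma~\ref{EEprimeF} does not rescue this: there the analogous unit is a root of unity in a \emph{totally ramified} extension of $F$ and hence already lies in $F$, whereas here $L/F$ is unramified and $u$ need not be related to $F$ at all. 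Concretely, take $F=\Q_3$, $L$ the unramified cubic extension, $\zeta\in\gO_L^\times$ a root of unity of order $26=q_F^3-1$, $\varpi_L=3\zeta$, and $E=L[X]/(X^{26}-\varpi_L)$, so $f=3$ is odd and $e=26$ is even and prime to $3$. Then $(\gO_L^\times)^{26}=1+\gP_L$, so the primes of $L$ with $26$-th roots in $E$ are exactly $3\zeta(1+\gP_L)$, and since $\zeta\not\equiv\pm1\pmod{\gP_L}$ no prime of $\Q_3$ lies in this coset; your claimed normalization $\varpi_E^e=\varpi_F$ is therefore impossible, even though $E$ does contain the ramified quadratic extension $\Q_3(\sqrt{-3})$ (because $-\zeta=\zeta^{14}$ is a square in $\gO_L^\times$).

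What is actually needed is weaker than an $e$-th root of a prime of $F$: you only need a square root of \emph{some} prime of $F$ in $E$, and this is exactly where the hypothesis that $f$ is odd must be used (your argument uses it only to force $e$ even, which is a warning sign). Since $f$ is odd, a nonsquare unit $\varepsilon$ of $F$ stays nonsquare in $L$, so $\gO_F^\times$ surjects onto $\gO_L^\times/(\gO_L^\times)^2$; hence $\varpi_L\in\varpi_F(\gO_L^\times)^2$ or $\varpi_L\in\varepsilon\varpi_F(\gO_L^\times)^2$, and dividing $\varpi_E^{e/2}$ by a unit of $L$ produces $\sqrt{\varpi_F}$ or $\sqrt{\varepsilon\varpi_F}$ in $E$, giving the ramified quadratic extension $F[\sqrt{\varpi_F}]$ or $F[\sqrt{\varepsilon\varpi_F}]$. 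This square-class argument is precisely the paper's proof, and without it (or some substitute) your descent from $L$ to $F$ does not go through. (Minor additional slip: the intermediate assertion $\varpi_E^n=\varpi_F$ is impossible on valuation grounds whenever $f>1$; the exponent must be $e$, as you later write.)
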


\begin{proof} 
Consider first the special case in which $E/F$ is a totally ramified, tamely ramified extension of even degree $n=2m$.  Then $E = F[\varpi_E]$, where $\varpi_E$ is a prime element in $E$ that is an $n$-th root of a prime element in $F$.  Then $E$ contains the ramified quadratic extension $F[\varpi_E^m]$ of $F$.

Now assume $E/F$ is tamely ramified and $f(E/F)$ is even.  Let $L$ be the maximal unramified extension of $F$ contained in $E$.  This is the unique unramified extension of $F$ of degree $f(E/F)$ and it contains the unique unramified quadratic extension of $F$.  So our claim holds in this case.

Now suppose $E/F$ is tamely ramified, $f(E/F)$ is odd, and $e(E/F)$ is even.   Let $L$ be the maximal unramified extension of $F$ contained in $E$.  We know that $E$ contains a totally ramified quadratic extension $K$ of $L$.  It suffices to show that $K$ contains a ramified quadratic extension of $F$.  We may as well replace $E$ by $K$ or, equivalently, assume $e(E/F) =2$.  Then $E= L[\varpi_E]$, where $\varpi_E$ is the square root of a prime element $\varpi_L$ in $L$.  Let $\varepsilon$ be a nonsquare root of unity in $F$.  In other words, $\varepsilon^{(q_F-1)/2}=-1$, where $q_F$ be the order of the residue field of $F$.   
Since $f(E/F)$ is odd, $\varepsilon^{(q_F^{f(E/F)}-1)/2}=-1$ and thus $\varepsilon$ is also a nonsquare root of unity in $L$.
Therefore, given a prime element $\varpi_F$ in $F$, the element $\varpi_L$ must either lie in $\varpi_F(\gO_{L}^\times)^2$ or $\varepsilon\varpi_F(\gO_{L}^\times)^2$.  It follows that either $F[\sqrt{\varpi_F}]$ or $F[\sqrt{\varepsilon\varpi_F}]$ is  a ramified quadratic extension of $F$ contained in $E$.
\end{proof}

Given a finite extension $E/F$, let $y_{E/F}$ be defined by letting $y_{E/F}-1$ be the number of quadratic extensions of $F$ contained in $E$.  Thus, in the present setting, $y_{E/F}$ must be 1, 2 or 4.  The relevance of $y_{E/F}$ to the study of distinguished tame supercuspidal representations is discussed in 
\S\ref{sec:generalities}.

\begin{lemma}\label{fieldsB}  Let $E$ be a tamely ramified degree $n$ extension of $F$ such that $e = e(E/F)$ and $f = f(E/F)$ are even.  Let $L$ be the maximal unramified extension of $F$ contained in $E$.  Then $y_{E/F}=4$ if and only if 
 $(\gO_L^\times)^2 F^\times$ contains a prime element $\varpi_L$ in $L$ that has an $e$-th root in $E$.
\end{lemma}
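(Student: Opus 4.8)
The plan is to prove Lemma~\ref{fieldsB} by explicitly counting the quadratic extensions of $F$ contained in $E$, since $y_{E/F}-1$ is by definition that number. Let $L$ be the maximal unramified extension of $F$ inside $E$, so $[L:F]=f$ and $E/L$ is totally and tamely ramified of degree $e$. Because $f$ is even, $L$ contains the unramified quadratic extension $F_2$ of $F$, and $F_2$ is the unique \emph{unramified} quadratic subextension of $E/F$; every other quadratic subextension is one of the two \emph{ramified} quadratic extensions $F(\sqrt{\varpi_F})$ and $F(\sqrt{\varepsilon\varpi_F})$ of $F$, where $\varpi_F$ is a prime element of $F$ and $\varepsilon$ a nonsquare unit of $F$ (here I use that $p$ is odd, so $F^\times/(F^\times)^2$ has order $4$). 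Applying Lemma~\ref{tametotally} to $E/L$, I would fix a prime element $\varpi_E$ of $E$ with $\varpi_L:=\varpi_E^{\,e}$ a prime element of $L$; since $e$ is even, $\varpi_E^{\,e/2}$ is a square root of $\varpi_L$ lying in $E$. The problem then reduces to showing that $F(\sqrt{\varpi_F})\subseteq E$ and $F(\sqrt{\varepsilon\varpi_F})\subseteq E$ hold \emph{simultaneously}, and precisely when $\varpi_L\in F^\times(\gO_L^\times)^2$.

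For this I would first record two elementary facts. (a) If $M/L$ is finite totally tamely ramified and $p$ is odd, then a unit of $\gO_L$ that becomes a square in $M$ is already a square in $L$ — reduce modulo the common residue field and apply Hensel's lemma. (b) Since $f$ is even we have $F_2\subseteq L$, and every unit of $F$ is a square in $L$: the group $\mu$ of prime-to-$p$ roots of unity of $F$ is contained in $\mu_{F_2}^{2}$ because $q_F$ is odd (so $q_F-1$ divides $(q_F^2-1)/2$), while every element of $1+\gP_L$ is a square as $p$ is odd; hence $\gO_F^\times\subseteq(\gO_{F_2}^\times)^2\subseteq(\gO_L^\times)^2$, and in particular $\varepsilon\in(\gO_L^\times)^2$. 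Granting these, for $\pi\in\{\varpi_F,\varepsilon\varpi_F\}$ one has $F(\sqrt{\pi})\subseteq E\iff\sqrt{\pi}\in E$; since $\sqrt{\varpi_L}\in E$ and $\pi/\varpi_L\in\gO_L^\times$ (both $\pi$ and $\varpi_L$ being prime elements of $L$, as $L/F$ is unramified), this is equivalent to $\pi/\varpi_L\in(\gO_E^\times)^2$, hence by (a) to $\pi/\varpi_L\in(\gO_L^\times)^2$, i.e.\ to $\varpi_L\in\pi(\gO_L^\times)^2$. By (b) we have $\varpi_F(\gO_L^\times)^2=\varepsilon\varpi_F(\gO_L^\times)^2$, and this common coset is exactly the set of prime elements of $L$ lying in $F^\times(\gO_L^\times)^2$. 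Thus both containments are equivalent to the single condition $\varpi_L\in F^\times(\gO_L^\times)^2$ and hold together; consequently the number of quadratic subextensions of $E/F$ is $1+2=3$ or $1+0=1$, i.e.\ $y_{E/F}=4$ or $2$, with $y_{E/F}=4$ exactly when $\varpi_L\in F^\times(\gO_L^\times)^2$.

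Finally I would translate $\varpi_L\in F^\times(\gO_L^\times)^2$ into the form stated in the lemma. By Lemma~\ref{tametotally} applied to $E/L$, the prime elements of $L$ having an $e$-th root in $E$ are exactly those in the coset $\varpi_L(\gO_L^\times)^e$, and $(\gO_L^\times)^e\subseteq(\gO_L^\times)^2$ since $e$ is even. Hence if $(\gO_L^\times)^2 F^\times$ contains some prime element $\varpi_L'$ of $L$ with an $e$-th root in $E$, then $\varpi_L'\in\varpi_L(\gO_L^\times)^e\subseteq\varpi_L(\gO_L^\times)^2$ forces $\varpi_L\in F^\times(\gO_L^\times)^2$; conversely, when $\varpi_L\in F^\times(\gO_L^\times)^2$ the element $\varpi_L$ is itself a prime element of $L$ in $(\gO_L^\times)^2 F^\times$ with $e$-th root $\varpi_E$ in $E$. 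This yields the desired equivalence. I expect the main obstacle to be the ``coordination'' step — checking, via fact (b) and the hypothesis $2\mid f$, that $F(\sqrt{\varpi_F})\subseteq E$ and $F(\sqrt{\varepsilon\varpi_F})\subseteq E$ are equivalent — together with keeping the residue-field square-class bookkeeping in facts (a) and (b) straight; the remaining steps are routine.
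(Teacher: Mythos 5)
Your argument is correct, and it reaches the same decisive computation as the paper by a somewhat different organization. The paper proves the two implications directly with the witness elements: for the ``if'' direction it writes $\varpi_L=u^2\varpi_F$ and exhibits $u^{-1}\varpi_E^{e/2}$ as a square root of $\varpi_F$ in $E$, then invokes $f$ even for the unramified quadratic subextension to get $y_{E/F}=4$; for the ``only if'' direction it forms $u=\varpi_E^{e/2}(\sqrt{\varpi_F})^{-1}$, notes $u^2=\varpi_L\varpi_F^{-1}\in\gO_L^\times\cap(\gO_E^\times)^2$, and concludes $u\in\gO_L^\times$ because otherwise $L[u]$ would be an unramified quadratic extension of $L$ inside the totally ramified extension $E/L$. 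That last step is exactly your fact (a), which you justify instead by reduction to the residue field and Hensel's lemma; the two justifications are interchangeable. What you do differently is to enumerate all three quadratic extensions of $F$ and decide membership in $E$ for each, using the observation (b) that $f$ even forces $\gO_F^\times\subseteq(\gO_L^\times)^2$, hence $\varepsilon\in(\gO_L^\times)^2$; this shows the two ramified quadratic extensions $F(\sqrt{\varpi_F})$ and $F(\sqrt{\varepsilon\varpi_F})$ lie in $E$ simultaneously or not at all, so you get the sharper dichotomy $y_{E/F}\in\{2,4\}$ together with the precise criterion, whereas the paper only needs one ramified quadratic subextension plus the unramified one (and the fact that $y_{E/F}$ divides $4$) to conclude $y_{E/F}=4$. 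Your final paragraph translating the criterion for the fixed $\varpi_L=\varpi_E^{\,e}$ into the existence statement of the lemma, via the coset $\varpi_L(\gO_L^\times)^e\subseteq\varpi_L(\gO_L^\times)^2$ from Lemma \ref{tametotally}, is a point the paper handles implicitly and is worth making explicit as you do. In short: same elementary core (the square root $\varpi_E^{e/2}$ of $\varpi_L$ and descent of square units along the totally ramified extension $E/L$), with your version costing one extra lemma and buying a slightly stronger structural conclusion.
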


\begin{proof} When $\varpi_E\in E$ is an $e$-th root of a prime element $\varpi_L$ in $L$, we let $\sqrt{\varpi_L} = \varpi_E^{e/2}$.  The  element $\varpi_L$ lies in $(\gO_L^\times)^2 F^\times$ precisely when it has the form $\varpi_L = u^2 \varpi_F$, where $u\in \gO_L^\times$ and $\varpi_F$ is a prime element of $F$.  In the latter situation,  $u^{-1} \sqrt{\varpi_L}$ is a square root $\sqrt{\varpi_F}$ of $\varpi_F$ that lies in $E$.  Thus $F[\sqrt{\varpi_F}]$ is a ramified quadratic extension of $F$ that is contained in $E$.  Since $f$ is even, $E$ must also contain an unramified quadratic extension of $F$ and thus $y_{E/F}=4$.

Now suppose $y_{E/F}=4$.  Then, since $E$ contains ramified quadratic extensions of $F$,  there exists a prime element $\varpi_F$ in $F$ with  a square root $\sqrt{\varpi_F}$ in $E$.  There also exists a prime element $\varpi_L$ in $L$ with an $e$-th root $\varpi_E$ in $E$.   Let $\sqrt{\varpi_L} = \varpi_E^{e/2}$ and let    $u  = \sqrt{\varpi_L}(\sqrt{\varpi_F})^{-1}\in \gO_E^\times$.   The element $\varpi_{L} \varpi_F^{-1} = u^2$ is a element of $\gO_{L}^\times\cap (\gO_E^\times)^2$.  The element $u$ must lie in $\gO_{L}^\times$, since otherwise $L[u]$ would be an unramified quadratic extension of $L$ contained in $E$.  It follows that $\varpi_L\in (\gO_L^\times)^2 F^\times$, which completes the proof.
\end{proof}

\begin{lemma}\label{fieldsC}
Let $L$ be a tamely ramified extension of $F$ with even ramification degree $e(L/F)$.  Let $E$ be an unramified quadratic extension of $L$.  Then $y_{E/F} =4$.
\end{lemma}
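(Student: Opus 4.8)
We must show that if $L/F$ is tamely ramified with $e(L/F)$ even, and $E/L$ is an unramified quadratic extension, then $y_{E/F}=4$, i.e. $E$ contains all three quadratic extensions of $F$.

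The plan is to exhibit the three quadratic extensions of $F$ inside $E$ directly, using the two basic ingredients already at hand: the even ramification of $L/F$ and the unramified quadratic step $E/L$. First I would produce a \emph{ramified} quadratic extension of $F$ inside $L$ (hence inside $E$): since $e=e(L/F)$ is even, write $e=2k$; choosing compatible prime elements $\varpi_L, \varpi_F$ with $\varpi_L^{e/f'}=\varpi_F$ appropriately (or, more simply, applying Lemma \ref{fieldsA} to the even-degree tamely ramified extension $L/F$), we obtain a quadratic extension $K_1\subseteq L\subseteq E$ with $K_1/F$ ramified. The only subtlety is that $K_1$ might a priori be taken unramified; but inside $L$, which is a subfield with $f(L/F)$ possibly odd, the argument in Lemma \ref{fieldsA} in the ``$e$ even'' branch genuinely produces a ramified one (it is built from a square root of a prime element), so I would invoke that branch.

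Next I would produce an \emph{unramified} quadratic extension of $F$ inside $E$. Since $E/L$ is unramified quadratic, we have $f(E/F)=2f(L/F)$, which is even, so $E$ contains the unique unramified extension of $F$ of degree $2f(L/F)$, and that field contains the unramified quadratic extension $K_2/F$. (This is exactly the $f$-even branch of Lemma \ref{fieldsA}, or can be stated outright.) Now $K_1\ne K_2$ since one is ramified and the other is not, so $E$ contains at least two distinct quadratic extensions of $F$. Finally, as observed in the paragraph preceding Theorem \ref{maintwo}'s deduction (``if $E$ contains two quadratic extensions of $F$ then it must in fact contain all three''), two distinct quadratic subextensions force the third — concretely, if $K_1=F(\sqrt a)$ and $K_2=F(\sqrt b)$ are distinct then $F(\sqrt{ab})$ is a third one, and it lies in the compositum $K_1K_2\subseteq E$. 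Hence $y_{E/F}=4$.

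The main obstacle is the first step: being careful that the quadratic subextension of $F$ obtained from $e(L/F)$ even is genuinely ramified and genuinely sits inside $L$ (not merely inside some auxiliary field), so that it is distinct from the unramified one coming from $E/L$. Everything after that is the elementary ``two quadratics force a third'' fact together with bookkeeping on $e$ and $f$, which requires no real computation. I would therefore spend the bulk of the write-up on reducing cleanly to the totally-ramified core of $L/F$ (replacing $L$ by the totally ramified part over the maximal unramified subextension, or directly citing the relevant branch of Lemma \ref{fieldsA}) and then simply quote Lemma \ref{fieldsA} and the compositum argument.
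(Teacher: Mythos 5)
There is a genuine gap in your first step. You claim that because $e(L/F)$ is even, $L$ itself contains a \emph{ramified} quadratic extension of $F$, invoking the ``$e$ even'' branch of Lemma \ref{fieldsA}. But that branch is proved under the hypothesis that $f(L/F)$ is \emph{odd}: only then does a nonsquare unit of $F$ remain a nonsquare in (the maximal unramified subfield of) $L$, so that a square root of a prime of $L$ can be converted into a square root of a prime of $F$. When $f(L/F)$ is even the claim is simply false. For instance, take $F=\Q_p$, let $K=F[\sqrt{\varepsilon}]$ be the unramified quadratic extension ($\varepsilon$ a nonsquare unit of $F$), choose a unit $u\in\gO_K^\times$ that is a nonsquare in $K$, and set $L=K[\sqrt{u\varpi_F}]$. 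Then $e(L/F)=2$ is even, but $(L^\times)^2\cap K^\times=(K^\times)^2\cup u\varpi_F(K^\times)^2$, and since $\varepsilon\in(K^\times)^2$ while $u\notin(K^\times)^2$, neither $\varpi_F$ nor $\varepsilon\varpi_F$ is a square in $L$; so $L$ contains no ramified quadratic extension of $F$ at all. The square root of a prime that $e(L/F)$ even does provide generates a ramified quadratic extension of the maximal unramified subfield $K$ of $L$, not of $F$, and that discrepancy is exactly the difficulty.

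This is not a repairable bookkeeping issue within your outline: the ramified quadratic extension of $F$ genuinely need not exist until you pass to $E$, and the unramified quadratic step $E/L$ is what creates it. That is the heart of the paper's proof: after reducing to $e=2$, one writes $\varpi_L^2=\varpi_K=u\varpi_F$ with $u\in\gO_K^\times$ possibly a nonsquare, and then uses that $E/L$ is \emph{unramified} quadratic to extract $v=\sqrt{u}\in E$ (the residue field doubles, so the obstructing unit becomes a square); then $\varpi_L v^{-1}$ is a square root of $\varpi_F$ lying in $E$. Your remaining steps (the unramified quadratic extension of $F$ from $f(E/F)$ even, and the ``two quadratic subextensions force the third'' compositum argument) are fine and agree with the paper, but the proposal as written omits the one nontrivial mechanism and rests on a false intermediate claim.
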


\begin{proof}
Let $e = e(E/F) = e(L/F)$.
Let $K$ be the maximal unramified extension of $F$ contained in $L$.  Choose a prime element $\varpi_K$ in $K$ that has  an $e$-th root $\varpi_L \in L$.  Let $\sqrt{\varpi_{K}} = \varpi_L^{e/2}$.   Then $K [\sqrt{\varpi_{K}}]$ is a ramified quadratic extension of $K$ contained in $L$.  The unique unramified quadratic extension of $K [\sqrt{\varpi_{K}}]$ is contained in $E$.  It suffices to solve our problem with $E$ replaced by the latter quadratic extension of $K [\sqrt{\varpi_{K}}]$.

In other words, we  now assume $e=2$ and $L=K [\sqrt{\varpi_{K}}]$.  
Since $f(E/F)$ is even, it must be the case that $E$ contains an unramified quadratic extension of $F$.  Therefore it suffices to show that $E$ contains a ramified quadratic extension of $F$.
  
Fix a prime element $\varpi_F$ in $F$.  
If $\varpi_F$ has a square root $\sqrt{\varpi_F}$ in $E$ then $F[\sqrt{\varpi_F}]$ is a ramified quadratic extension of $F$ in $E$ and we are done.
Therefore, we assume $\varpi_F$ does not have a square root in $E$.

Since $\varpi_F$ does not have a square root in $E$, the field $L$ cannot be obtained by adjoining a square root of $\varpi_F$ to $K$.  
Therefore, we can choose a nonsquare unit $u$ in $K$ such that the prime element $\varpi_K = u\varpi_F$ has a square root $\varpi_L$ in $L$.  Since $E/L$ is unramified quadratic, there exists a square root $v$ of $u$ in $E$.  We have $\varpi_L^2 = \varpi_K = u\varpi_F = v^2\varpi_F$.   Thus $\varpi_Lv^{-1}$ is a square root of $\varpi_F$ in $E$.  We deduce that $E$ contains a ramified quadratic extension of $F$ and thus the proof is complete.
\end{proof}

\begin{lemma}\label{quadraticinduction}
Suppose $Q_1$ is a quadratic extension of $F$ and $Q_2$ is a quadratic extension of $Q_1$.  Then one of the following holds:
\begin{enumerate}
\item[{\rm (1)}]\quad $y_{Q_2/F} =2$ and $(N_{Q_2/F}(Q_2^\times) )(F^\times)^2= N_{Q_1/F}(Q_1^\times)$.
\item[{\rm (2)}]\quad  $y_{Q_2/F}=4$ and $(N_{Q_2/F}(Q_2^\times) )(F^\times)^2= (F^\times)^2$.
\end{enumerate}
\end{lemma}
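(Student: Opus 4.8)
The plan is to determine $N_{Q_2/F}(Q_2^\times)(F^\times)^2$ as a subgroup of $F^\times$ by pinning down its index and comparing with $N_{Q_1/F}(Q_1^\times)$ and $(F^\times)^2$. The external inputs are standard facts about the $p$-adic field $F$ with $p$ odd: the group $F^\times/(F^\times)^2$ has order $4$ and carries the non-degenerate quadratic Hilbert pairing $(\,\cdot\,,\cdot\,)_F$; for a quadratic extension $F[\sqrt b]/F$ one has $N_{F[\sqrt b]/F}(F[\sqrt b]^\times) = \{a\in F^\times : (a,b)_F = 1\}$; and the Hilbert symbol satisfies the projection formula $(N_{L/F}(\alpha),b)_F = (\alpha,b)_L$ for $\alpha\in L^\times$, $b\in F^\times$ and $L\supseteq F$.

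First comes the easy containment. Functoriality of the norm, $N_{Q_2/F} = N_{Q_1/F}\circ N_{Q_2/Q_1}$, gives $N_{Q_2/F}(Q_2^\times)\subseteq N_{Q_1/F}(Q_1^\times)$, and $(F^\times)^2\subseteq N_{K/F}(K^\times)$ for every quadratic $K/F$ since $N(x)=x^2$ for $x\in F^\times$; hence $N_{Q_2/F}(Q_2^\times)(F^\times)^2\subseteq N_{Q_1/F}(Q_1^\times)\subsetneq F^\times$. This subgroup contains $(F^\times)^2$, whose index in $F^\times$ is $4$, and is contained in the proper subgroup $N_{Q_1/F}(Q_1^\times)$; so its index in $F^\times$ divides $4$ and is at least $2$, i.e.\ it is $2$ — forcing the subgroup to equal $N_{Q_1/F}(Q_1^\times)$, which is case (1) — or $4$ — forcing it to equal $(F^\times)^2$, which is case (2). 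So the lemma reduces to showing that this index equals $y_{Q_2/F}$ (and, en route, that $y_{Q_2/F}\in\{2,4\}$, which holds because $y_{Q_2/F}\ge 2$ as $Q_1\subseteq Q_2$ and $y_{Q_2/F}\mid 4$ by the subgroup argument below).

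Let $\bar N\subseteq F^\times/(F^\times)^2$ be the image of $N_{Q_2/F}(Q_2^\times)$; the index in question is $[F^\times/(F^\times)^2:\bar N] = |\bar N^{\perp}|$ for the Hilbert pairing. The crux is to identify $\bar N^{\perp} = \{\, b(F^\times)^2 : F[\sqrt b]\subseteq Q_2\,\}$. The inclusion ``$\supseteq$'' is again functoriality: if $F[\sqrt b]\subseteq Q_2$ then $N_{Q_2/F}(Q_2^\times)\subseteq N_{F[\sqrt b]/F}(F[\sqrt b]^\times) = \{a:(a,b)_F=1\}$, so $b(F^\times)^2\in\bar N^{\perp}$. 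For ``$\subseteq$'' use the projection formula: for $\alpha\in Q_2^\times$ one has $(N_{Q_2/F}(\alpha),b)_F = (\alpha,b)_{Q_2}$, so $b(F^\times)^2\in\bar N^{\perp}$ iff $(\alpha,b)_{Q_2}=1$ for all $\alpha\in Q_2^\times$, which by non-degeneracy of the Hilbert pairing over $Q_2$ holds iff $b\in(Q_2^\times)^2$, i.e.\ iff $F[\sqrt b]\subseteq Q_2$. Now $\{\, b(F^\times)^2 : F[\sqrt b]\subseteq Q_2\,\}$ is a subgroup of $F^\times/(F^\times)^2$ (closed under multiplication because $\sqrt{b_1}\sqrt{b_2}\in Q_2$), and its nontrivial elements correspond bijectively, via $b(F^\times)^2\mapsto F[\sqrt b]$, to the quadratic subextensions of $Q_2/F$; hence it has order $y_{Q_2/F}$, and being a subgroup of $(\Z/2\Z)^2$ this order divides $4$ and is $\ge 2$ (it contains the class determined by $Q_1$). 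Therefore $[F^\times:N_{Q_2/F}(Q_2^\times)(F^\times)^2] = |\bar N^{\perp}| = y_{Q_2/F}$, which combined with the previous paragraph yields exactly the stated dichotomy: $y_{Q_2/F}=2$ with $N_{Q_2/F}(Q_2^\times)(F^\times)^2 = N_{Q_1/F}(Q_1^\times)$, or $y_{Q_2/F}=4$ with $N_{Q_2/F}(Q_2^\times)(F^\times)^2 = (F^\times)^2$.

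The one step needing genuine care is the identity $\bar N^{\perp} = \{\, b : F[\sqrt b]\subseteq Q_2\,\}$, specifically the non-trivial direction that if every norm from $Q_2$ is a norm from $F[\sqrt b]$ then $\sqrt b\in Q_2$; the projection formula dispatches this cleanly. An alternative is to invoke the norm limitation theorem, writing $N_{Q_2/F}(Q_2^\times) = N_{M/F}(M^\times)$ with $M$ the maximal abelian subextension of $Q_2/F$, then noting that $M$ and $Q_2$ contain the same quadratic extensions of $F$ and that $N_{M/F}(M^\times)(F^\times)^2$ corresponds under class field theory to the compositum of those quadratic subfields — which is $Q_1$ in case (1) and the unique $(\Z/2\Z)^2$-extension (whose norm group, being the intersection of the three quadratic norm groups, is $(F^\times)^2$) in case (2). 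A third, more hands-on option is a case analysis on the ramification of $Q_1/F$ and $Q_2/Q_1$ with explicit computation of the relevant tame quadratic norm groups; this works but is less transparent than the arguments above.
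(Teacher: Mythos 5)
Your proof is correct, but it takes a genuinely different route from the paper. The paper argues by hand: for $y_{Q_2/F}=4$ it uses transitivity of norms with two distinct quadratic subextensions to force $N_{Q_2/F}(Q_2^\times)\subseteq (F^\times)^2$, and for $y_{Q_2/F}=2$ it shows the image of $N_{Q_2/F}(Q_2^\times)$ in $F^\times/(F^\times)^2$ is nontrivial by a four-way case split on the ramification of $Q_1/F$ and $Q_2/Q_1$, exhibiting explicit norms (prime elements in the ramified cases, nonsquare units in the unramified case, with the last configuration vacuous) -- precisely the ``third, more hands-on option'' you mention and set aside. You instead invoke the nondegenerate Hilbert pairing on $F^\times/(F^\times)^2$ together with the projection formula $(N_{Q_2/F}(\alpha),b)_F=(\alpha,b)_{Q_2}$ to identify the annihilator of the norm image with the group of square classes $b$ with $\sqrt b\in Q_2$, giving the exact index $[F^\times:N_{Q_2/F}(Q_2^\times)(F^\times)^2]=y_{Q_2/F}$, from which the dichotomy follows by your containment argument. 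What the paper's approach buys is self-containedness: the whole section deliberately uses only elementary computations (Hensel's lemma, explicit norms), with no appeal to reciprocity-type identities. What your approach buys is brevity, no case analysis, and extra generality: your argument applies verbatim to any finite extension $E/F$ and yields $[F^\times:N_{E/F}(E^\times)(F^\times)^2]=y_{E/F}$ directly, which is exactly the conclusion of the paper's Lemma \ref{normimage}; the paper instead deduces that lemma by an induction along a quadratic tower that uses the present lemma at each step, so your route would collapse the two lemmas into one.
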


\begin{proof}
Since $[Q_2:F]$ is even, Lemma \ref{fieldsA} implies $y_{Q_2/F}=2$ or $4$.
Suppose $y_{Q_2/F}=4$.  Then there must exist a quadratic extension $Q_0$ of $F$ that is contained $Q_2$ and is distinct from $Q_1$.  By transitivity of norms, $N_{Q_2/F}(Q_2^\times)$ must be contained in both $N_{Q_0/F}(Q_0^\times)$ and $N_{Q_1/F}(Q_1^\times)$.  Since
$$N_{Q_0/F}(Q_0^\times)\cap N_{Q_1/F}(Q_1^\times)= (F^\times)^2,$$ we see that $y_{Q_2/F} =4$ implies that $(N_{Q_2/F}(Q_2^\times) )(F^\times)^2= (F^\times)^2$.

It now suffices to show that if $y_{Q_2/F}=2$ then $(N_{Q_2/F}(Q_2^\times) )(F^\times)^2= N_{Q_1/F}(Q_1^\times)$.
So let us assume $y_{Q_2/F}=2$ and thus $Q_1/F$ is the unique quadratic extension of $F$ contained in $Q_2$.  We observe that the image of $N_{Q_1/F}(Q_1^\times)$ in $F^\times /(F^\times)^2$ is a subgroup of order two that contains the image of $N_{Q_2/F}(Q_2^\times)$ in $F^\times /(F^\times)^2$.  It suffices therefore to show that the image of $N_{Q_2/F}(Q_2^\times)$ in $F^\times /(F^\times)^2$ is nontrivial.

There are several cases to consider.  First, suppose both $Q_1/F$ and $Q_2/Q_1$ are ramified.  If $\varpi_{Q_2}$ is any prime element in $Q_2$ then $N_{Q_2/F}(\varpi_{Q_2})$ is a prime element in $F$.  This gives an element of  $N_{Q_2/F}(Q_2^\times)$ whose image in $F^\times /(F^\times)^2$ is nontrivial.

Now, suppose both $Q_1/F$ and $Q_2/Q_1$ are unramified.  Then $N_{Q_2/F}(\gO_{Q_2}^\times ) = \gO_F^\times$.  Thus we can choose an element of  $N_{Q_2/F}(\gO_{Q_2}^\times)$ that is a nonsquare unit in $F$.

Next, suppose $Q_1/F$ is unramified and $Q_2/Q_1$ is ramified.  
There exists a prime element $\varpi_{Q_1}$ in $Q_1$ that has a square root $\varpi_{Q_2}$ in $Q_2$.  It must be the case that $\varpi_{Q_1}\not\in (\gO_{Q_1}^\times)^2 F^\times$, since otherwise  we would have $y_{Q_2/F}=4$.  
We can choose a prime element $\varpi_F$ in $F$ and a nonsquare unit $u$ in $Q_1$ such that $\varpi_{Q_1} = u\varpi_F$.    
Now, $N_{Q_2/F}(\varpi_{Q_2}) =N_{Q_1/F}( -\varpi_{Q_1}) = N_{Q_1/F}(u)\cdot \varpi_F^2$ .
But the latter element has nontrivial image in $F^\times / (F^\times)^2$ since $N_{Q_1/F}$ determines an isomorphism
$$\gO_{Q_1}^\times /(\gO_{Q_1}^\times)^2 \to \gO_F^\times/ (\gO_F^\times)^2$$ of groups of order two.

Finally, we are left with the case in which $Q_1/F$ is ramified and $Q_2/Q_1$ is unramified.  But this case is impossible since $y_{Q_2/Q_1}=2$.
\end{proof}

\begin{lemma}\label{ylemma}
Suppose $L$ is a finite tamely ramified extension of $F$ and suppose $E$ is a finite tamely ramified extension of $L$ of even degree.  If $y_{E/F}=2$ then $y_{E/L}=2$.
\end{lemma}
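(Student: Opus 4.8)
The plan is to bypass the explicit manipulation of prime elements used in the preceding lemmas and argue instead with square classes. First I would record the following reformulation of the invariant $y$. For any $p$-adic field $k$ of odd residue characteristic, every quadratic extension of $k$ has the form $k(\sqrt{d})$ with $d\in k^\times$, two such extensions $k(\sqrt{d_1})$ and $k(\sqrt{d_2})$ coincide exactly when $d_1 d_2\in (k^\times)^2$, and $k(\sqrt d)\subseteq E$ exactly when $d\in (E^\times)^2$. Since $2$ is a unit in the residue field one has $[k^\times : (k^\times)^2]=4$, so the quadratic extensions of $k$ contained in $E$ are in bijection with the nontrivial elements of the subgroup $\bigl(k^\times\cap (E^\times)^2\bigr)/(k^\times)^2$ of $k^\times/(k^\times)^2$. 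Counting these gives
\[
y_{E/k}=\bigl[\,k^\times\cap (E^\times)^2 : (k^\times)^2\,\bigr],
\]
which in particular re-proves that $y_{E/k}\in\{1,2,4\}$ and shows that $y_{E/k}=4$ if and only if $k^\times\subseteq (E^\times)^2$.

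Granting this identity, the lemma is a short argument by contraposition. Suppose $y_{E/L}\neq 2$. Since $[E:L]$ is even and $E/L$ is tamely ramified, Lemma \ref{fieldsA} shows that $E$ contains a quadratic extension of $L$, so $y_{E/L}\geq 2$; combined with $y_{E/L}\in\{1,2,4\}$ this forces $y_{E/L}=4$, hence $L^\times\subseteq (E^\times)^2$ by the criterion above. As $F^\times\subseteq L^\times$, we get $F^\times\subseteq (E^\times)^2$, i.e. $F^\times\cap (E^\times)^2=F^\times$, and then the displayed identity applied with $k=F$ yields $y_{E/F}=[F^\times:(F^\times)^2]=4\neq 2$. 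Therefore $y_{E/F}=2$ implies $y_{E/L}=2$.

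I do not expect a genuine obstacle; the only inputs are the elementary structure of the square-class group of a local field of odd residue characteristic together with Lemma \ref{fieldsA}. The step requiring the most care is the verification of the displayed identity — in particular the equality $[k^\times:(k^\times)^2]=4$ (which uses that $(\gO_k^\times)^2$ has index $2$ in $\gO_k^\times$ when the residue characteristic is odd) and the observation that ``$k(\sqrt d)\subseteq E$'' is literally the condition ``$d\in (E^\times)^2$''. Once that bookkeeping is in place the remainder is purely group-theoretic, and the same identity in fact gives uniform, computation-free proofs of the trichotomy $y\in\{1,2,4\}$ and of the criterion $y=4\iff k^\times\subseteq(E^\times)^2$, which could be used to streamline several of the earlier field lemmas as well.
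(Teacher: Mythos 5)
Your proof is correct and follows essentially the same route as the paper: invoke Lemma \ref{fieldsA} to force $y_{E/L}\in\{2,4\}$, note that $y_{E/L}=4$ means every element of $L^\times$ (hence of $F^\times$) has a square root in $E$, and conclude $y_{E/F}=4$, contradicting the hypothesis. The square-class identity $y_{E/k}=[k^\times\cap(E^\times)^2:(k^\times)^2]$ you verify is the same fact the paper uses implicitly (and records explicitly later, in \S\ref{sec:generalities}), so no further comment is needed.
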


\begin{proof}
Assume $y_{E/F}=2$.  Since $[E:L]$ is even, Lemma \ref{fieldsA} implies that $y_{E/L}=2$ or $4$.  Suppose $y_{E/L}=4$.  Then $E$ contains square roots of all elements of $L$.  In particular, it contains square roots of all elements of $F$.  This implies $y_{E/F}=4$ which is a contradiction.
\end{proof}

\begin{lemma}\label{normimage}  If $E$ is a finite, tamely ramified extension of $F$ of degree $n$ then the image of $N_{E/F}(E^\times)$ in $F^\times /(F^\times)^2$ is:
\begin{itemize}
 \item $F^\times / (F^\times)^2$, if $y_{E/F}=1$,
 \item $N_{L/F}(L^\times)/(F^\times)^2$, if $y_{E/F}=2$ and $L$ is the  unique quadratic extension of $F$ contained in $E$,
\item trivial, if $y_{E/F}=4$.
 \end{itemize}
Consequently, the index of  $N_{E/F}(E^\times)\cdot (F^\times)^2$  in $F^\times$ is  $y_{E/F}$.
\end{lemma}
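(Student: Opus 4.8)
The plan is to dispose of the three possible values $y_{E/F}=1,2,4$ separately, using an induction on the degree $n$ only in the case $y_{E/F}=2$. Throughout I would use freely that $[F^\times:(F^\times)^2]=4$; that for a quadratic extension $K/F$ the norm group $N_{K/F}(K^\times)$ has index $2$ in $F^\times$ and contains $(F^\times)^2$; that the norm groups of two distinct quadratic extensions of $F$ meet in $(F^\times)^2$ (the fact already invoked in the proof of Lemma \ref{quadraticinduction}); and transitivity of norms in a tower. I would also record once the elementary observation that $N_{L/F}(A\,(L^\times)^2)\,(F^\times)^2 = N_{L/F}(A)\,(F^\times)^2$ for any subset $A\subseteq L^\times$, since $N_{L/F}$ carries squares to squares.

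For $y_{E/F}=1$: by Lemma \ref{fieldsA} this forces $n$ odd, and then the restriction of $N_{E/F}$ to $F^\times$ is the $n$-th power map, which is the identity on $F^\times/(F^\times)^2$ because $n$ is odd; hence already $N_{E/F}(F^\times)(F^\times)^2=F^\times$, a fortiori $N_{E/F}(E^\times)(F^\times)^2=F^\times$. For $y_{E/F}=4$: now $E$ contains all three quadratic extensions $K_1,K_2,K_3$ of $F$, so by transitivity $N_{E/F}(E^\times)\subseteq N_{K_i/F}(K_i^\times)$ for each $i$; intersecting the first two (distinct) norm groups puts $N_{E/F}(E^\times)$ inside $(F^\times)^2$, so its image in $F^\times/(F^\times)^2$ is trivial.

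The case $y_{E/F}=2$ is where the real work lies. I would induct on the (necessarily even) degree $n$, the base case $n=2$ being immediate since then $E$ itself is the unique quadratic subextension $L$ and $N_{E/F}(E^\times)=N_{L/F}(L^\times)$. For general $n$, let $L$ be the unique quadratic extension of $F$ contained in $E$; transitivity gives $N_{E/F}(E^\times)(F^\times)^2\subseteq N_{L/F}(L^\times)$, a group of index $2$, and everything reduces to proving equality. Split on the parity of $[E:L]=n/2$. If $[E:L]$ is odd, then $N_{E/L}(L^\times)=(L^\times)^{[E:L]}$ already surjects onto $L^\times/(L^\times)^2$, so $N_{E/L}(E^\times)(L^\times)^2=L^\times$; pushing this down by $N_{L/F}$ and using the observation above gives $N_{E/F}(E^\times)(F^\times)^2=N_{L/F}(L^\times)$. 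If $[E:L]$ is even, then $y_{E/L}=2$ by Lemma \ref{ylemma}; letting $L'$ be the unique quadratic extension of $L$ inside $E$, the induction hypothesis applied to $E/L$ (degree $n/2<n$) gives $N_{E/L}(E^\times)(L^\times)^2=N_{L'/L}(L'^\times)$, and pushing down by $N_{L/F}$ together with transitivity of norms yields $N_{E/F}(E^\times)(F^\times)^2=N_{L'/F}(L'^\times)(F^\times)^2$. Finally I would apply Lemma \ref{quadraticinduction} to the tower $F\subseteq L\subseteq L'$: since $L'\subseteq E$ forces $y_{L'/F}\le y_{E/F}=2$, the alternative $y_{L'/F}=4$ there is excluded, so $N_{L'/F}(L'^\times)(F^\times)^2=N_{L/F}(L^\times)$, closing the induction.

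The final clause on the index is then immediate: the image of $N_{E/F}(E^\times)$ in $F^\times/(F^\times)^2$ has order $4$, $2$, or $1$ according as $y_{E/F}=1,2,4$ (using $[N_{L/F}(L^\times):(F^\times)^2]=2$ in the middle case), so $N_{E/F}(E^\times)\cdot(F^\times)^2$ has index $y_{E/F}$ in $F^\times$. I expect the main obstacle to be precisely the $y_{E/F}=2$ case: the induction must be organized so that the odd and even parts of $[E:L]$ are treated separately, and the descent through the degree-$4$ subfield $L'$ must invoke Lemma \ref{quadraticinduction} exactly once, with the global hypothesis $y_{E/F}=2$ (hence $y_{L'/F}=2$) doing the job of ruling out the degenerate alternative there.
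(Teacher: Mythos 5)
Your proof is correct and takes essentially the same route as the paper: the same three-way case split, the $y_{E/F}=4$ case via intersecting the norm groups of two distinct quadratic subextensions, the odd-degree case via surjectivity of the norm modulo squares, and the $y_{E/F}=2$ case driven by Lemmas \ref{fieldsA}, \ref{ylemma} and \ref{quadraticinduction}. The only difference is organizational: you run an induction on the degree, peeling off the bottom quadratic layer $L$ and invoking Lemma \ref{quadraticinduction} once for $F\subset L\subset L'$, whereas the paper descends through an explicit tower of quadratic subextensions starting from the odd-degree tail, applying the same two lemmas at each step.
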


\begin{proof}
Suppose $L$ is a quadratic extension of $F$ that is contained in $E$.  It follows from Lemma 5.6 {\bf [HL1]} that $L^\times /((L^\times)^2F^\times)$ has order two.  Therefore, the norm $N_{L/F}$ must induce an isomorphism
$$L^\times/((L^\times)^2 F^\times)\cong N_{L/F}(L^\times)/(F^\times)^2.$$

If $L_1$ and $L_2$ are distinct quadratic extensions of $F$ contained in $E$ then the images of $N_{L_1/F}(L_1^\times)$ and $N_{L_2/F}(L_2^\times)$ in $F^\times/ (F^\times)^2$ have trivial intersection.  Consequently, the image of $N_{E/F}(E^\times)$ in $F^\times /(F^\times)^2$ must be trivial.
So if $y_{E/F}=4$ then $N_{E/F}(E^\times)$ must have trivial image in $F^\times /(F^\times)^2$.

Now suppose $y_{E/F}=2$ and $L$ is the  unique quadratic extension of $F$ contained in $E$.   Then, according to Lemma \ref{fieldsA}, there exists a tower
$$F=Q_0\subset Q_1\subset \dots \subset Q_r\subset E,$$ where $Q_1= L$ and each extension of successive terms is  quadratic, except for for $E/Q_r$ which is an extension of odd degree.
The homomorphism $$E^\times/(E^\times)^2\to Q_r^\times /(Q_r^\times)^2$$ induced by $N_{E/Q_r}$ is an isomorphism (since $N_{E/Q_r}(Q_r^\times ) (Q_r^\times)^2 = Q_r^\times$). 
It follows that the image of $N_{E/Q_{r-1}}(E^\times)$ in $Q_{r-1}^\times /(Q_{r-1}^\times)^2$ has order two.  If $r=1$, we are done.  Otherwise, we apply Lemmas \ref{quadraticinduction} and  \ref{ylemma} to deduce that 
the image of $N_{E/Q_{r-2}}(E^\times)$ in $Q_{r-2}^\times /(Q_{r-2}^\times)^2$ has order two.   Repeatedly applying Lemmas \ref{quadraticinduction} and \ref{ylemma} in this way, we deduce that
the image of $N_{E/F}(E^\times)$ in $F^\times /(F^\times)^2$ has order two.   But  by transitivity of norms, the latter image is contained in $N_{L/F}(L^\times)$.
Therefore,  the image of $N_{E/F}(E^\times)$ in $F^\times /(F^\times)^2$ is $N_{L/F}(L^\times)$.

Finally, if $E/F$ contains no quadratic extensions of $F$ then $n$ must be odd.  In this case, as we have just argued for $E/L$, we can show that $N_{E/F}$ maps $E^\times$ onto $F^\times /(F^\times)^2$.
\end{proof}

\section{$J$-symmetric embeddings}
\label{sec:Jsymmetric}

In this section, we develop a geometric algebraic theory to study  variants of the question:
\begin{quote}
When does a degree $n$ extension $E$ of a field $F$ have an $F$-linear embedding in the set of symmetric matrices in $\M_n (F)$?
\end{quote}
The latter question has been studied especially for its applications to the theory of symmetric bilinear forms, but, since we are interested in rather different applications, we need to refine various aspects of the theory.  The basic results regarding existence of embeddings are mostly known and we refer to \cite{B} for more details and references to the literature.

The trace form $(x,y)\mapsto \tr_{E/F} (xy)$ on $E$ has tended to play a central role in the theory of embeddings of the type just mentioned.  However, as discussed in 
\S\ref{sec:ourexamples}, we deemphasize the use of trace forms and  pursue a different approach than that which was used in \cite{HL1} in the case of odd $n$.  The two approaches are contrasted somewhat in \S\ref{sec:splitTgeneralities}.  For our purposes, the approach we use is simpler, more illuminating, and leads to more canonical constructions.

Though the results in this section are rather simple, the reader may be surprised to see how frequently these results are used throughout the paper in seemingly different contexts.

We assume throughout this section  that our fields do not have characteristic two.  Though we often consider general fields (whose characteristic is not two), we really are only interested in finite extensions of a field $\F_p$ or $\Q_p$, where $p$ is an odd prime.

\subsection{Preliminaries}

Let $E/F$ be a field extension of (finite) degree $n$.   Let $G = \GL_n (F)$ and let $\g = \M_n(F)$ (viewed as an associative algebra, not a Lie algebra).  Fix an $F$-basis $e_1,\dots ,e_n$  of $E$.  Then to each $y\in E$ there is a column vector
$$\v{y}= \begin{pmatrix}y_1\\ \vdots\\ y_n\end{pmatrix}\in F^n$$
such that $$y=\sum_{i=1}^n y_i e_i.$$
  We use $y\mapsto \v{y}$ to identify $E$ with $F^n$ and then, given $x\in E$,  we let $\m{x}$ denote the matrix of the $F$-linear transformation $y\mapsto xy$ on $E$.  We then have
 $$\m{x}\, \v{y} = \v{xy},$$  for all $x,y\in E$, and
$$xe_j = \sum_{i=1}^n x_{ij} e_i$$ for all $j$.

Every $F$-embedding of $E$ in $\g$ has the form $x\mapsto \m{x}$ for a suitable choice of $F$-basis of $E$.

Let $$J =J_n =\begin{pmatrix}
& &1\\
&\raisebox{-.1ex}{.} \cdot \raisebox{1.2ex}{.}&\\
1&&
\end{pmatrix}\in G .$$

\begin{definition}
A matrix $X\in \g$ is \textbf{$J$-symmetric} if $J\cdot {}^t X\cdot J = X$ or, equivalently, if it is symmetric about its anti-diagonal (consisting of the matrix entries $X_{ij}$ with $i+j = n+1$).  An $F$-embedding of $E$ in $\g$ is \textbf{$J$-symmetric} if its image consists only of $J$-symmetric matrices.
\end{definition}

The notion of a $J$-symmetric embedding may be generalized by replacing transpose by the order two anti-automorphism $\sigma_\nu$ of $\g$  given by
$$\sigma_\nu (X) = \nu^{-1}\ {}^t X\ \nu,$$
where $\nu$ is any fixed symmetric matrix in $G$.

\begin{definition}
If $\nu\in G$ is symmetric then a matrix $X\in \g$ is  \textbf{$\nu$-symmetric} if $\sigma_\nu (X) = X$.  
An $F$-embedding  of $E$ in $\g$  is \textbf{$\nu$-symmetric} if its image consists of $\nu$-symmetric matrices.
\end{definition}

We will see that for the field extensions $E/F$ of interest to us, $J$-symmetric embeddings of $E$ always exist, whereas $\nu$-symmetric embeddings do not exist for arbitrary symmetric matrices $\nu$.  Moreover, once one fixes a $J$-symmetric embedding of $E$, it is easy to describe the $\nu$-symmetric embeddings for arbitrary $\nu$ according to the following:

\begin{proposition}\label{symmetricembedding}
Suppose $x\mapsto \m{x}$ is a $J$-symmetric embedding of $E$ in $\g$.  Then the set of symmetric matrices $\nu$ in $G$ such that $x\mapsto \m{x}$ is $\nu$-symmetric is identical to $J\m{E^\times}$.  If $x\in E^\times$ and $\nu = J\m{x}$ then $\det (\nu) = (-1)^{n(n-1)/2}N_{E/F}(x)$.
\end{proposition}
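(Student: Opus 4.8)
The plan is to translate the condition ``$x\mapsto\m{x}$ is $\nu$-symmetric'' into a commutation statement and then use the elementary fact that the regular representation of a field is its own commutant. First I would unwind the definitions: $x\mapsto\m{x}$ is $\nu$-symmetric means $\nu^{-1}\,{}^t\m{x}\,\nu=\m{x}$ for all $x\in E$, equivalently ${}^t\m{x}\,\nu=\nu\,\m{x}$. Since the embedding is $J$-symmetric and $J^2=I$, we have ${}^t\m{x}=J\,\m{x}\,J$, so this condition becomes $J\,\m{x}\,J\,\nu=\nu\,\m{x}$; multiplying on the left by $J$ rewrites it as $\m{x}\,(J\nu)=(J\nu)\,\m{x}$. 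Thus the embedding is $\nu$-symmetric precisely when $J\nu$ centralizes $\m{E}=\{\m{x}:x\in E\}$.

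Next I would invoke the fact that $\m{E}$ is its own centralizer in $\g=\M_n(F)$: identifying $F^n$ with $E$ via $y\mapsto\v{y}$, any matrix commuting with every $\m{x}$ is an $E$-linear endomorphism of the rank-one $E$-module $E$, hence equals $\m{z}$ where $\v{z}$ is its value on $\v{1}$. Consequently $J\nu$ centralizes $\m{E}$ iff $J\nu\in\m{E}$ iff $\nu\in J\m{E}$; since $\nu$ and $J$ are invertible and $\m{z}\in G$ iff $z\in E^\times$, this says exactly $\nu\in J\m{E^\times}$. For the reverse inclusion, I would note that any $\nu=J\m{x}$ with $x\in E^\times$ is automatically symmetric: ${}^t\nu={}^t\m{x}\,{}^tJ={}^t\m{x}\,J=(J\m{x}J)\,J=J\m{x}=\nu$, using $J$-symmetry of the embedding and symmetry of $J$; and reading the chain of equivalences above backwards shows $x\mapsto\m{x}$ is then indeed $\nu$-symmetric. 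Hence the set of symmetric $\nu\in G$ for which the embedding is $\nu$-symmetric is exactly $J\m{E^\times}$. (It is worth stating explicitly that the ``symmetric'' qualifier in the statement imposes nothing extra once $\nu\in J\m{E^\times}$.)

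Finally, for the determinant formula I would write $\det(\nu)=\det(J)\det(\m{x})$. Here $\det(\m{x})=N_{E/F}(x)$ is the standard description of the field norm as the determinant of the multiplication-by-$x$ map on $E$, and $\det(J_n)=(-1)^{n(n-1)/2}$ because $J_n$ is the permutation matrix of the order-reversing permutation of $\{1,\dots,n\}$, whose number of inversions is $\binom{n}{2}=n(n-1)/2$. Combining gives $\det(\nu)=(-1)^{n(n-1)/2}N_{E/F}(x)$. I do not anticipate any real obstacle: the only non-bookkeeping input is the self-centralizing property of $\m{E}$, which is standard, and the two determinant evaluations are routine; the mild point to flag is the automatic symmetry of the matrices $J\m{x}$.
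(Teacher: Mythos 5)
Your proof is correct and follows essentially the same route as the paper: the condition of $\nu$-symmetry is rewritten as $J\nu$ centralizing $\m{E}$, the self-centralizing property of $\m{E}$ gives $\nu\in J\m{E^\times}$, the symmetry of the matrices $J\m{x}$ is checked directly from $J$-symmetry of the embedding, and the determinant formula comes from $\det(J_n)=(-1)^{n(n-1)/2}$ together with the identification of the determinant on $\m{E}$ with $N_{E/F}$. The only difference is that you spell out the standard facts (self-centralizing property, inversion count for $\det J_n$) that the paper simply cites as well known.
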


\begin{proof}
Assume $\nu\in G$ is symmetric and  $x\mapsto\m{x}$ is a $J$-symmetric embedding of $E$ in $\g$.  If $x\in E$ then ${}^t (J\m{x}) = {}^t\m{x} J = J\m{x}$.    So the elements of $J\m{E}$ are all symmetric matrices.  The given embedding is $\nu$-symmetric precisely when every $x\in E$ satisfies $\m{x}= \nu^{-1}({}^t\m{x})\nu = \nu^{-1}J\m{x}J\nu$.  In other words, the embedding is $\nu$-symmetric exactly when $J\nu$ centralizes $\m{E}$.  Since $\m{E}$ is its own centralizer, our first assertion follows.  The second assertion reduces to the well-known fact that if $E$ is embedded as an $F$-subalgebra of $\g$ then the restriction of the determinant to the image of $E$ agrees with the norm map $N_{E/F}$.
\end{proof}

The previous result can be used to recover the results about the existence of $1$-symmetric embeddings in \cite{B}, once we establish the existence of $J$-symmetric embeddings for our extensions $E/F$.

\bigskip
For us, one of the most important properties of $J$-symmetric embeddings is their transitivity with respect to towers of fields:

\begin{lemma}\label{composite}
Let $L$ be an intermediate field of  $E/F$ with $r = [L:F]$ and $s = n/r = [E:L]$.
Suppose $L$ is embedded in $\M_r(F)$ via a $J_r$-symmetric embedding and suppose $E$ is embedded in $\M_s(L)$ via a $J_s$-symmetric embedding.  Then the composite embedding of $E$ in $\g= \M_n(F) = \M_s(\M_r(F))$ is $J_n$-symmetric.
\end{lemma}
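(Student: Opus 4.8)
The plan is to unwind the definition of the composite embedding and track how the anti-transpose on each block-level interacts with the anti-transpose on the ambient matrix algebra. Write $L \hookrightarrow \M_r(F)$, $a \mapsto \m{a}_L$, for the given $J_r$-symmetric embedding, and $E \hookrightarrow \M_s(L)$, $x \mapsto \m{x}_E = (\m{x}_{E,ij})_{1\le i,j\le s}$ with entries in $L$, for the given $J_s$-symmetric embedding. The composite embedding $E \hookrightarrow \M_n(F) = \M_s(\M_r(F))$ sends $x$ to the block matrix $\underline{\underline{x}}$ whose $(i,j)$ block is $\m{\,\m{x}_{E,ij}\,}_L \in \M_r(F)$. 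The key identity to establish is that conjugating this block matrix by $J_n$ and transposing returns $\underline{\underline{x}}$.

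The main structural observation I would isolate first is that $J_n$, as an $n\times n$ matrix, factors in block form as $J_n = (J_r \text{ along the block anti-diagonal})$; more precisely, viewing $\M_n(F) = \M_s(\M_r(F))$, the matrix $J_n$ is the $s\times s$ block matrix whose $(i,j)$ block is $\delta_{i+j,s+1}\, J_r$. This is just the combinatorial statement that reversing the order of $n = rs$ basis vectors, when the basis is organized as $s$ consecutive blocks of $r$, amounts to reversing the $s$ blocks and reversing within each block. With this in hand, for a general block matrix $M = (M_{ij})$ with $M_{ij}\in\M_r(F)$ one computes that $J_n\,{}^tM\,J_n$ is the block matrix whose $(i,j)$ block is $J_r\,{}^t(M_{s+1-i,\,s+1-j})\,J_r$. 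So the anti-transpose at level $n$ is exactly: apply the anti-transpose at level $s$ to the block positions, and simultaneously apply the anti-transpose at level $r$ inside each block entry.

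Applying this to $M = \underline{\underline{x}}$: the $(i,j)$ block of $J_n\,{}^t\underline{\underline{x}}\,J_n$ is $J_r\,{}^t\big(\m{\,\m{x}_{E,\,s+1-i,\,s+1-j}\,}_L\big)\,J_r$. Since $a \mapsto \m{a}_L$ is $J_r$-symmetric, $J_r\,{}^t(\m{a}_L)\,J_r = \m{a}_L$ for all $a\in L$, so this block equals $\m{\,\m{x}_{E,\,s+1-i,\,s+1-j}\,}_L$. But $\m{x}_{E,\,s+1-i,\,s+1-j}$ is exactly the $(i,j)$ entry of $J_s\,{}^t(\m{x}_E)\,J_s$, which equals $\m{x}_E$ because the $E \hookrightarrow \M_s(L)$ embedding is $J_s$-symmetric. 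Hence the $(i,j)$ block of $J_n\,{}^t\underline{\underline{x}}\,J_n$ is $\m{\,\m{x}_{E,ij}\,}_L$, i.e.\ the $(i,j)$ block of $\underline{\underline{x}}$. Therefore $J_n\,{}^t\underline{\underline{x}}\,J_n = \underline{\underline{x}}$ for all $x\in E$, so the composite embedding is $J_n$-symmetric.

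I expect the only real obstacle to be the bookkeeping in the block-factorization of $J_n$ and the resulting formula for $J_n\,{}^tM\,J_n$; once that identity is stated cleanly, the rest is a two-line substitution using the two hypotheses. One should also double-check that "composite embedding" means precisely the block-matrix construction above (it does: this is the embedding associated to the tensor-product basis, as described in \S\ref{sec:whatisneeded}), and that the index-reversal conventions for $J_r$, $J_s$, $J_n$ are mutually consistent — a sign-free check since all the $J$'s are permutation matrices of order two.
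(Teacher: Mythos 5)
Your proof is correct and is essentially the paper's own argument: the paper packages the same computation through the identities ${}^t x = J_r^s\,{}^T x\,J_r^s$ (for $x\in \M_s(L)$, using $J_r$-symmetry of the embedding of $L$) and $J_n = J_s J_r^s = J_r^s J_s$, which is exactly your block anti-diagonal factorization of $J_n$ followed by the same two-step substitution. One bookkeeping remark: in your displayed formulas the transpose should also swap the reversed block indices, i.e.\ the $(i,j)$ block of $J_n\,{}^tM\,J_n$ is $J_r\,{}^t(M_{s+1-j,\,s+1-i})\,J_r$ and the $(i,j)$ entry of $J_s\,{}^T(\,\cdot\,)\,J_s$ is the $(s+1-j,\,s+1-i)$ entry; the two slips are consistent and cancel, so the conclusion is unaffected.
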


\begin{proof}
Embed $L$ in $\M_r(F)$ in a $J_r$-symmetric way.  Thus,
$$J_r\ {}^tx\ J_r = x,$$ for all $x\in L$.  (For simplicity, we are suppressing  the double underline notation.)

Embed $E$ in $\M_s (L)$ in $J_s$-symmetric way.  Then 
$$J_s \ {}^T x\ J_s = x,$$ for all $x\in E$, where $x\mapsto {}^T x$ is the transpose in $\M_s(L)$.

Since $$\M_s (L)\subset \M_s(\M_r(F))= \g,$$ if $x\in \M_s(L)$ then one needs to distinguish between the transpose ${}^T x$ in $\M_s(L)$ and the transpose ${}^t x$ in $\g$.  

The relation between these two transposes can be precisely expressed as follows.  Let $x\in \M_s(L)$.  Then $x = (x_{ij})$, where $x_{ij}\in L$ and $1\le i,j\le s$. We have
$${}^T x = {}^T (x_{ij}) = (x_{ji}).$$
and
$${}^t x = {}^t (x_{ij}) = ({}^t x_{ji}) = (J_r x_{ji}J_r) = J_r^s \ {}^T x \ J_r^s,$$ where $J^s_r$ is the block diagonal matrix $$J_r^s= J_r\oplus\cdots \oplus J_r$$ in $G$ whose diagonal blocks are all $J_r$.
We observe that  $$J_n = J_sJ_r^s= J_r^s J_s.$$
Therefore, if $x\in E$ then we have 
$$J_n \ {}^t x\ J_n = J_n\ J_r^s \ {}^Tx\ J_r^s\ J_n = J_s \ {}^T x \ J_s = x.$$   This proves our claim.
\end{proof}

The computations in the latter proof (and the notations $J_r^s$ and ${}^T x$) are used implicitly and explicitly in what follows.

\subsection{Existence and construction of $J$-symmetric integral embeddings}

For a given symmetric matrix $\nu\in G$, we observe that if $X_1,\dots , X_s\in \g$ are $\nu$-symmetric matrices that commute with each other then their product $X_1\cdots X_s$ is also $\nu$-symmetric.  In particular, powers of $\nu$-symmetric matrices are $\nu$-symmetric.  When $\nu = J$, this yields the following basic construction of $J$-symmetric embeddings:

\begin{lemma}\label{constructJsym}
Suppose $n>1$ is an integer and suppose $\tau$ is an element of $F$ such that $x^n -\tau$ is irreducible over $F$.  Let $E = F[\omega ]$, where $\omega$ is an $n$-th root of $\tau$.  Then the embedding of $E$ in $\g$ associated to the basis $1,\omega,\omega^2,\dots ,\omega^{n-1}$ is $J$-symmetric.
\end{lemma}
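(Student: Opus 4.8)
The plan is to compute directly with the basis $1,\omega,\dots,\omega^{n-1}$ and the multiplication-by-$\omega$ matrix $\m{\omega}$, then leverage the fact that powers of a $J$-symmetric matrix are $J$-symmetric. First I would observe that with respect to the given basis, multiplication by $\omega$ sends $e_j = \omega^{j-1}$ to $\omega^j = e_{j+1}$ for $j = 1,\dots,n-1$, and sends $e_n = \omega^{n-1}$ to $\omega^n = \tau = \tau e_1$. Hence $\m{\omega}$ is the companion-type matrix with $1$'s on the subdiagonal (entries $(i+1,i)$), the entry $\tau$ in position $(1,n)$, and zeros elsewhere. The key step is then to check that this particular matrix $\m{\omega}$ is $J$-symmetric, i.e.\ that $J\,{}^t\m{\omega}\,J = \m{\omega}$, or equivalently that $\m{\omega}$ is symmetric about its anti-diagonal. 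Writing $C = \m{\omega}$, the nonzero entries are $C_{i+1,i} = 1$ for $i = 1,\dots,n-1$ and $C_{1,n} = \tau$; reflecting the index pair $(i,j)$ to $(n+1-j,\,n+1-i)$ sends $(i+1,i)$ to $(n+1-i,\,n-i)$, which is again a subdiagonal position, and sends $(1,n)$ to $(1,n)$, so the pattern of nonzero entries is anti-diagonal-symmetric and the nonzero values match. Thus $C$ is $J$-symmetric.

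Next I would invoke the remark preceding the lemma: if $X$ is $\nu$-symmetric then so is every power $X^k$ (since $\sigma_\nu$ is an anti-automorphism and the powers of $X$ commute with each other, so $\sigma_\nu(X^k) = \sigma_\nu(X)^k = X^k$). Applying this with $\nu = J$ and $X = \m{\omega}$ gives that $\m{\omega}^k = \m{\omega^k}$ is $J$-symmetric for every $k \ge 0$. Since $\{1,\omega,\dots,\omega^{n-1}\}$ is an $F$-basis of $E$ and $x \mapsto \m{x}$ is $F$-linear, and the $J$-symmetric matrices form an $F$-subspace of $\g$, it follows that $\m{x}$ is $J$-symmetric for every $x \in E$. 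Hence the embedding $x \mapsto \m{x}$ is $J$-symmetric, which is the assertion.

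The only genuinely computational point is the index bookkeeping in verifying that $C = \m{\omega}$ is symmetric about its anti-diagonal; everything else is formal. I do not expect a real obstacle here — the main thing to be careful about is the orientation convention for $J$ (the anti-diagonal identity) and making sure the $(1,n)$ entry $\tau$ really is fixed under the anti-transpose, which it is since $(1,n)$ reflects to $(n+1-n,\,n+1-1) = (1,n)$. One should also note in passing that $x^n - \tau$ being irreducible guarantees $[E:F] = n$ so that $1,\omega,\dots,\omega^{n-1}$ is indeed a basis and the construction makes sense.
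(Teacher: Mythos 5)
Your proof is correct and is essentially the paper's own argument: exhibit $\m{\omega}$ as the companion matrix with $1$'s on the subdiagonal and $\tau$ in position $(1,n)$, check it is symmetric about the anti-diagonal, and conclude by noting that powers of a $J$-symmetric matrix and $F$-linear combinations thereof remain $J$-symmetric. The only difference is that you spell out the index bookkeeping the paper labels as "clearly," which is fine.
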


\begin{proof}
The matrix that corresponds to the image of $\omega$ in $\g$ is
$$\omega =\begin{pmatrix}
0&0&\cdots&0&\tau\\
1&0&\cdots&0&0\\
0&1&\ddots&&\vdots\\
\vdots&\ddots&\ddots&\ddots&\vdots\\
0&\cdots&0&1&0
\end{pmatrix}.$$
The latter matrix is clearly symmetric about its anti-diagonal.  Thus it is $J$-symmetric and so are its powers, as are all $F$-linear combinations of these powers.  Therefore, the given embedding is $J$-symmetric.
\end{proof}

\begin{example}
If $E/F$ is quadratic and $E = F[\sqrt{\tau}]$ then
$$x+y\, \sqrt{\tau} = \begin{pmatrix}x&y\,\tau\\ y&x\end{pmatrix}$$ is a $J$-symmetric embedding of $E$ in $\M_2(F)$.
\end{example}

\begin{example}\label{totallyex}
If $E/F$ is  a totally ramified, tamely ramified degree $n$ extension of fields that are finite extensions of $\Q_p$ with $p$ odd then $E$ has a $J$-symmetric embedding in $\g$.  In this case, we take $\tau$ to be a suitable prime element of $F$ and use Eisenstein's criterion to show that $x^n-\tau$ is irreducible.
\end{example}

If $E/F$ is unramified then we cannot necessarily use Lemma \ref{constructJsym}. In fact, the unramified case essentially reduces to the case of finite fields, where, again, Lemma \ref{constructJsym} does not construct all $J$-symmetric embeddings.   For example, if $E = \F_{27}$ and $F = \F_3$, there is no polynomial $x^3-\tau$ that is irreducible over $\F_3$ with $\tau\in \F_3$.  
Fortunately, for finite fields  we can appeal to:

\begin{lemma} Let $F = \F_q$ and $E= \F_{q^n}$, where $q$ is a power of an odd prime.  Then there exists a $J_n$-symmetric embedding of $E$ in $\g$.
\end{lemma}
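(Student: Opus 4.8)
The plan is to recast the problem in terms of twisted trace forms and then appeal to the classification of symmetric bilinear forms over a finite field. For $z\in E^\times$ write $B_z$ for the $F$-bilinear form on $E$ given by $B_z(x,y)=\tr_{E/F}(z\,x\,y)$; since $E/F$ is separable and $z\neq 0$, each $B_z$ is a nondegenerate symmetric form of dimension $n$ over $F$. The key elementary observation, in the spirit of Proposition~\ref{symmetricembedding}, is that a $J_n$-symmetric embedding of $E$ in $\g=\M_n(F)$ exists as soon as one can find some $z\in E^\times$ and an $F$-basis $f_1,\dots,f_n$ of $E$ with $\tr_{E/F}(z\,f_if_j)=(J_n)_{ij}$ for all $i,j$. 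Indeed, let $x\mapsto\m{x}$ be the embedding attached to such a basis, and let $\nu$ be the Gram matrix of the plain trace form $(x,y)\mapsto\tr_{E/F}(xy)$ in that basis. A one-line computation gives $(\nu\,\m{z})_{ij}=\tr_{E/F}(z\,f_if_j)=(J_n)_{ij}$, so $\nu\,\m{z}=J_n$; moreover every multiplication operator on $E$ is self-adjoint for the trace form, so $\nu\,\m{x}={}^t\m{x}\,\nu$ for all $x\in E$. Since multiplication operators on $E$ commute, $J_n\,\m{x}=\nu\,\m{z}\,\m{x}=\nu\,\m{x}\,\m{z}={}^t\m{x}\,\nu\,\m{z}={}^t\m{x}\,J_n$, i.e.\ $\m{x}=J_n^{-1}\,{}^t\m{x}\,J_n=J_n\,{}^t\m{x}\,J_n$ (using $J_n^2=1$), so the embedding is $J_n$-symmetric.

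It therefore suffices to produce $z\in E^\times$ for which $B_z$ is congruent over $F$ to the bilinear form with Gram matrix $J_n$: in a basis realizing such a congruence one has exactly $\tr_{E/F}(z\,f_if_j)=(J_n)_{ij}$. To this end I would first note that $\disc(B_z)$ runs over all of $F^\times/(F^\times)^2$ as $z$ varies over $E^\times$. Indeed, the Gram matrix of $B_z$ in a fixed basis is ${}^t\m{z}$ times the Gram matrix of $B_1$, so its determinant is $N_{E/F}(z)$ times $\disc(B_1)$, and the norm map $N_{\F_{q^n}/\F_q}\colon\F_{q^n}^\times\to\F_q^\times$ is surjective. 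Then I would invoke the classical classification of nondegenerate symmetric bilinear forms over $F=\F_q$ with $q$ odd: two such forms of equal dimension are congruent over $F$ exactly when their discriminants coincide in $F^\times/(F^\times)^2$. Choosing $z\in E^\times$ with $\disc(B_z)\equiv\det(J_n)$ modulo squares makes $B_z$ congruent to the form of $J_n$, and the first paragraph then yields the desired $J_n$-symmetric embedding.

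The only real content is the identification in the first paragraph of $J_n$-symmetric embeddings with bases that put a twisted trace form into the anti-diagonal shape $J_n$; I do not expect a serious obstacle there, since it is linear algebra about the commutant of a maximal subfield. The other two inputs---surjectivity of the finite-field norm map and the classification of quadratic forms over $\F_q$ for odd $q$---are entirely standard, and oddness of $q$ is needed only for the latter (the computation $\m{x}=J_n^{-1}\,{}^t\m{x}\,J_n$ uses nothing beyond $J_n^2=1$).
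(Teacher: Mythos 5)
Your proof is correct and follows essentially the same route as the paper's: realize $J_n$ as the Gram matrix of a twisted trace form $(x,y)\mapsto\tr_{E/F}(z\,xy)$ in a suitable basis, invoke the classification of nondegenerate symmetric forms over $\F_q$ ($q$ odd) by dimension and discriminant, and note that the resulting basis yields a $J_n$-symmetric embedding. The only difference is how the discriminant is matched: the paper twists by the specific element $f'(\beta)^{-1}$ attached to a primitive element $\beta$ and computes the discriminant explicitly to be $(-1)^{n(n-1)/2}=\det J_n$, whereas you let the twist vary and use surjectivity of the finite-field norm $N_{E/F}$ --- a slightly softer argument that works equally well in this setting.
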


\begin{proof}
We adapt the proof of Lemma 5.13 of [HL1] to finite fields.  Let $\beta$ be a primitive $(q^n-1)$-root of $1$ in $E$.  Let $f$ be the minimal polynomial of $\beta $ over $F$.
Let $e_i = \beta^{i-1}$ for $i\in \{ 1,\dots ,n\}$.   This gives an $F$-basis of $E$.  Let $a= f'(\beta)^{-1}$ and define a symmetric matrix $\nu^a$  in $\g$ by 
$$(\nu^a)_{ij} = \tr_{E/F}(ae_ie_j) = \sum_{k=1}^n \sigma_k(a)\, \sigma_k(e_i)\, \sigma_k (e_j),$$ where $\sigma_1,\dots ,\sigma_n$ are the distinct $F$-embeddings of $E$ in an algebraic closure $\overline{F}$ of $F$ containing $E$.  The matrix identity
$$\nu^a = (\sigma_j(e_i))\ (\sigma_i (a)\delta_{ij})\ (\sigma_i (e_j))$$ in $\M_n (\overline{F})$ implies $$\det (\nu^a) = N_{E/F}(a\, f' (\beta))\ (-1)^{n(n-1)/2} = (-1)^{n(n-1)/2} = \det J_n.$$
So $\nu^a$ and $J_n$ must be similar symmetric matrices in $\GL_n (F)$.  Therefore, we can choose $g\in \GL_n (F)$ such that $g\, \nu^a\, {}^t g = J_n$.  We use $g$ to change our basis:  $e'_i = \sum_j g_{ij}e_j$.  The embedding associated to the latter basis is then $J$-symmetric.
\end{proof}

The fact that $J$-symmetric embeddings always exist for tamely ramified  extensions (of fields that are finite extensions of $\Q_p$ with $p$ odd)  follows  from Lemma 5.14 \cite{HL1}.  The proof in \cite{HL1} uses the transitivity of $J$-symmetric embeddings (Lemma \ref{composite} in this paper) to reduce to the totally ramified case (as discussed above in Example \ref{totallyex}) and the unramified case (treated in Lemma 5.13 \cite{HL1}).  

\begin{definition}
If $E/F$ is  tamely ramified degree $n$ extension of fields that are finite extensions of $\Q_p$ then a \textbf{$J$-symmetric integral embedding} of $E$ in $\g$ 
is a $J$-symmetric $F$-embedding of $E$ in $\g$ associated to an $\gO_F$-basis of $\gO_E$.   In this case, the basis is called a \textbf{$J$-symmetric integral basis} of $E/F$.
\end{definition}

The following result is essentially implicit in \cite{HL1}:

\begin{lemma}\label{Jsymtameexistence}
If $E/F$ is  tamely ramified degree $n$ extension of fields that are finite extensions of $\Q_p$ with $p$ odd then 
there exists a $J$-symmetric integral embedding of $E$ in $\g$.  Given a tower of intermediate fields
$$F=E_d \subset E_{d-1}\subset \cdots \subset E_0\subset E_{-1}=E,$$ such that each $E_i$ with $i\in \{ -1,\dots ,d-1\}$ is embedded in $\M_{[E_i:E_{i+1}]} (E_{i+1})$ via a $J$-symmetric integral embedding,  the composite embedding of $E$ in $\g$ is a $J$-symmetric integral embedding.
\end{lemma}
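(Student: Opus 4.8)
The plan is to establish the two assertions in reverse order. \emph{Transitivity first.} Suppose each $E_i$ (for $-1\le i\le d-1$) is embedded in $\M_{[E_i:E_{i+1}]}(E_{i+1})$ by a $J$-symmetric integral embedding; I claim the composite embedding of $E$ in $\g$ is again $J$-symmetric and integral. That it is $J$-symmetric follows by induction on the number of links, invoking Lemma~\ref{composite} at each step (with $E$, $E_i$, $E_{i+1}$ playing the roles of $E$, $L$, $F$): if the composite of the first links is a $J$-symmetric embedding of $E$ in $\M_{[E:E_i]}(E_i)$ and link $i$ is a $J$-symmetric embedding of $E_i$ in $\M_{[E_i:E_{i+1}]}(E_{i+1})$, then Lemma~\ref{composite} makes the combined embedding $J$-symmetric. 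For integrality, recall from \S\ref{sec:whatisneeded} that the composite embedding is the one attached to the product of the integral bases used at the various links: if $\{u^{(i)}_j\}_j$ is the $\gO_{E_{i+1}}$-basis of $\gO_{E_i}$ chosen at link $i$, the composite basis consists of the products $u^{(-1)}_{j_{-1}}u^{(0)}_{j_0}\cdots u^{(d-1)}_{j_{d-1}}$. Since every $p$-adic integer ring is a PID, $\gO_N$ is free over $\gO_M$ of rank $[N:M]$ for any tower $\gO_K\subseteq\gO_M\subseteq\gO_N$, and an immediate induction then shows the product of an $\gO_M$-basis of $\gO_N$ with an $\gO_K$-basis of $\gO_M$ is an $\gO_K$-basis of $\gO_N$. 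Iterating down the tower, the composite basis is an $\gO_F$-basis of $\gO_E$, so the composite embedding is integral.

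\emph{Existence.} Let $L$ be the maximal unramified subextension of $E/F$, so that $L/F$ is unramified and $E/L$ is totally and tamely ramified. By the transitivity just proved, it suffices to produce a $J$-symmetric integral embedding of $E$ in $\M_{[E:L]}(L)$ and of $L$ in $\M_{[L:F]}(F)$. The totally ramified link is handled by Example~\ref{totallyex}: since $E/L$ is totally tamely ramified, $E=L[\omega]$ with $\omega^s=\tau$, where $\tau$ is a prime element of $L$, $s=[E:L]$, and $\omega$ is a prime element of $E$; then $x^s-\tau$ is Eisenstein, hence irreducible, so Lemma~\ref{constructJsym} says the embedding attached to $1,\omega,\dots,\omega^{s-1}$ is $J_s$-symmetric, and because $E/L$ is totally ramified with uniformizer $\omega$ this is an $\gO_L$-basis of $\gO_E$. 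Thus this embedding is $J$-symmetric integral.

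It remains to treat the unramified link $L/F$, which is where the only real care is needed; I adapt the argument of the finite-field lemma above (equivalently, Lemma 5.13 \cite{HL1}). Fix $\beta\in\gO_L^\times$, a root of unity of order prime to $p$ whose reduction generates $\f_L^\times$; then $L=F[\beta]$ and, since $L/F$ is unramified, $\gP_L=\gP_F\gO_L$ and $\gO_L=\gO_F[\beta]+\gP_F\gO_L$, so Nakayama's lemma gives $\gO_L=\gO_F[\beta]$ and hence $1,\beta,\dots,\beta^{f-1}$ (with $f=[L:F]$) is an $\gO_F$-basis of $\gO_L$. Let $h\in\gO_F[x]$ be the minimal polynomial of $\beta$ over $F$; its reduction is separable since $\f_L/\f_F$ is, so $a:=h'(\beta)^{-1}\in\gO_L^\times$. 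Form $\nu^a\in\M_f(\gO_F)$ with $(\nu^a)_{ij}=\tr_{L/F}(a\,\beta^{i+j-2})$; the Vandermonde computation of the finite-field lemma applies verbatim and yields $\det\nu^a=(-1)^{f(f-1)/2}N_{L/F}(a\,h'(\beta))=(-1)^{f(f-1)/2}=\det J_f$. So $\nu^a$ is a unimodular symmetric matrix over $\gO_F$ with the same determinant as $J_f$; since $p$ is odd, unimodular symmetric matrices over $\gO_F$ are classified up to congruence by rank and by determinant modulo $(\gO_F^\times)^2$ (alternatively: reduce mod $\gP_F$, apply the finite-field congruence, and lift by Hensel's lemma, the map $g\mapsto g\,\nu^a\,{}^tg$ being smooth because $2\in\gO_F^\times$), so there is $g\in\GL_f(\gO_F)$ with $g\,\nu^a\,{}^tg=J_f$. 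Replacing $\beta^{i-1}$ by $\sum_j g_{ij}\beta^{j-1}$ keeps an $\gO_F$-basis of $\gO_L$ and, exactly as in the finite-field lemma, makes the associated embedding $J_f$-symmetric. This finishes the unramified case and, with the previous paragraph, the proof.

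The step I expect to be the main obstacle is precisely the last one over $\gO_F$ rather than over the residue field: obtaining $g\in\GL_f(\gO_F)$ with $g\,\nu^a\,{}^tg=J_f$, which rests on the classification of unimodular symmetric bilinear forms over a $p$-adic integer ring with $p$ odd (or on the Hensel-lifting argument indicated). Everything else reduces to iterated use of Lemma~\ref{composite}, Example~\ref{totallyex}, and the freeness of $\gO_N$ over $\gO_M$ in towers of $p$-adic integer rings.
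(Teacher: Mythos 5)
Your proof is correct and follows essentially the same route as the paper, which delegates the work to Lemma 5.14 of \cite{HL1}: transitivity via Lemma \ref{composite} plus the fact that products of integral bases are integral bases, and then reduction to the totally ramified case (Example \ref{totallyex}) and the unramified case. Your treatment of the unramified link is simply a worked-out, $\gO_F$-level version of the trace-form argument that the paper adapts to finite fields and otherwise cites as Lemma 5.13 of \cite{HL1}, with the congruence $g\,\nu^a\,{}^t g=J_f$ over $\gO_F$ correctly justified by the classification of unimodular symmetric forms over $\gO_F$ (or the Hensel-lifting argument), both of which are valid because $p$ is odd.
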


\begin{proof}
The proof of Lemma 5.14 \cite{HL1} shows that $E$ has a $J$-symmetric integral embedding in $\g$.  The second assertion follows from Lemma \ref{composite} and the fact that
 if $L$ is an intermediate field of $E/F$, then the tensor product of an integral basis of $L/F$ with an integral basis of $E/L$ is an integral basis of $E/F$.
\end{proof}

\subsection{Parahoric subalgebras and Moy-Prasad filtrations}\label{sec:parahorics}

Suppose we are given and $F$-admissible quasicharacter $\varphi : E\to \C^\times$ of the multiplicative group of a degree $n$ tamely ramified extension $E$ of $F$.  To execute Howe's construction of a representation in the associated isomorphism class of tame supercuspidal representations, one must choose how to embed $E$ in $\g$.  
In Lemma \ref{Jsymtameexistence}, we have described one system of embedding $E$ that is especially convenient for the purposes of studying distinguished representations.  Unfortunately, with this approach the relevant parahoric groups and the associated Moy-Prasad filtrations do not have the simplest possible block matrix form.

On the other hand, if one desires filtration groups with simple block matrix descriptions, there is a standard way to proceed.   If $L$ is the maximal unramified extension of $F$ contained in $E$ then one embeds $E$ using the basis $\{ e_i\}$ that is the tensor product basis $\{ \alpha_j\beta_k\}$, where 
\begin{itemize}
\item $\{\alpha_j\}$ is a $J$-symmetric integral basis of $E/L$ with $\alpha_j\in \gP_E^{j-1}-\gP_E^j$,
\item  $\{ \beta_k\}$ is a $J$-symmetric integral basis for $L/F$ with $\beta_k\in \gO_L^\times$.
\end{itemize}
Note that $e_1,\dots , e_n$ are ordered as follows: $\alpha_1\beta_1,\alpha_1\beta_2,\dots ,\alpha_2\beta_1,\alpha_2\beta_2,\dots$.
We also remark that normally the basis $\{ \alpha_j\}$ is constructed by using consecutive powers of a prime element of $E$.

Fortunately, it is easy to check that if one permutes the elements of one of our bases in a suitable way then one obtains a basis of the standard form just described.  Thus, after a suitable conjugation by a permutation matrix, our filtration groups simplify.

Let us now quickly recall the block matrix form of the filtration algebras associated to a basis of the form just described.  To get the associated filtration groups and for more details, the reader can refer to \S\ref{sec:whatisneeded}.

Let $y\in \cB (G)$ be associated to our given embedding.
The parahoric algebra at $y$ is
$$\g_{y,0} =
\begin{pmatrix}
\M_{f} (\gO_F)&\M_f (\gP_F)&\cdots&\M_f(\gP_F)\\
\vdots&\kern-3em\ddots&\kern-1em\ddots&\vdots\\
\vdots&&\kern-2em\ddots&\M_f(\gP_F)\\
\M_f(\gO_F)&\cdots&\kern-2em\cdots&\M_{f}(\gO_F)
\end{pmatrix},$$
where $f = f(E/F)$.

For simplicity, let us assume $\alpha_j = \varpi_E^{j-1}$ for some prime element $\varpi_E$ of $E$ such that the element $\varpi_L = \varpi_E^e$ lies in $L$, where $e = e(E/F)$.
Then 
$$\m{\varpi_E} =\begin{pmatrix}
0&0&\cdots&0&\m{\varpi_L}\\
1_f&0&\cdots&0&0\\
0&1_f&\ddots&&\vdots\\
\vdots&\ddots&\ddots&\ddots&\vdots\\
0&\cdots&0&1_f&0
\end{pmatrix}.$$  Then the filtration of $\g_{y,0}$ is given by
$$\g_{y,r} = \m{\varpi_E^{\lceil er\rceil}}\, \g_{y,0}.$$   Thus a filtration jump occurs at $r$ precisely when $r$ lies in $\frac{1}{e}\Z$.  Consequently, if $\f$ is the residue field of $F$ then 
$$\g_{y,0:0^+} =
\begin{pmatrix}
\M_{f} (\f)&&\\
&\ddots&\\
&&\M_{f}(\f)
\end{pmatrix}$$ and $\g_{y,r:r^+} = \m{\varpi_E^{\lceil er\rceil}}\, \g_{y,0:0^+}$ if $r\in \frac{1}{e}\Z$.  If $r\not\in \frac{1}{e}\Z$ then $\g_{y,r:r^+} =0$.
In particular, with a slight abuse of notation, we have
$$\g_{y,e^{-1}:(e^{-1})^+} = 
\begin{pmatrix}
&&&\kern-1.5em \gP_F\M_{f} (\f)\\
\M_f(\f)&&\\
&\ddots&&\\
&&\M_{f}(\f)&
\end{pmatrix},$$
$$\g_{y,2e^{-1}:(2e^{-1})^+} = 
\begin{pmatrix}
&&&\kern-1.5em \gP_F\M_{f} (\f)&\\
&&&&\gP_F\M_f(\f)\\
\M_f(\f)&&&\\
&\ddots&&&\\
&&\M_{f}(\f)&&
\end{pmatrix},$$
and so forth.

Note that with the above conventions, the family of objects $\g_{y,r}$ only depends on the triple $(F, e ,f)$.

\subsection{Embeddings of degree $n$ extensions of $F$ stable under an outer automorphism}

Let $F$ be a field and let $x\mapsto \bar x$ be an automorphism of $F$ whose square is trivial.  Let $F'$ be the fixed field of $x\mapsto \bar x$.   There are two cases:
\begin{enumerate}
\item $F = F'$ and $\bar x = x$ for all $x\in F$.
\item $F/F'$ is quadratic and ${\rm Gal} (F/F') = \{ 1,x\mapsto \bar x\}$.
\end{enumerate}

If $X$ is a matrix with entries in $F$, we denote by $\overline{X}$ the matrix whose $ij$-th entry is $\overline{X}_{ij}$.  We also let $X^* = {}^t\overline{X}$.  If $X$ is a square matrix and $X^* = X$ then we say $X$ is {\bf hermitian}.  As before, we let $G= \GL_n (F)$ and $\g = \M_n(F)$ and we fix a degree $n$ extension $E$ of $F$.

Fix a matrix $\nu\in G$ that is a scalar multiple of a hermitian matrix.  Define an anti-automorphism $\sigma^*_\nu$ of $\g$ of order two by
$$\sigma^*_\nu (X) = \nu^{-1}\ X^*\ \nu.$$  

\begin{definition}
A \textbf{$\nu$-embedding}  of $E$ in $\g$  is an $F'$-embedding whose image is stable under $\sigma^*_\nu$.
\end{definition}

The relevance of $\nu$-embeddings to the main problems addressed in this paper is as follows.  We are considering tame supercuspidal representations of a group $G = \GL_n (F)$.  These representations are associated to $G$-data that involve various subgroups $G^i = \GL_{n_i}(E_i)$ of $G$.  If $\theta$ is an orthogonal involution of $G$ that stabilizes $G^i$ then the restriction of $\theta$ to $G^i$ may not be an orthogonal involution but, rather, a unitary involution.  (See Proposition \ref{symrestrict}.)  So when we consider $G = \GL_n (F)$ in this section, we really have a twisted Levi subgroup $G^i$ in mind.

Fix a $\nu$-embedding of $E$ in $\g$ and identify $E$ with its image in $\g$.  For every intermediate field $L$ of $E/F$ the restriction of $\sigma_\nu^*$ to $L$ is an $F'$-automorphism whose square is trivial.  We let $L^\nu$ denote the fixed field of $\sigma_\nu^*|L$.  
If $F\ne F'$ then $\sigma^*_\nu |L$ must have order two and $L/L^\nu$ must be quadratic.  But if $F= F'$, we either have $L = L^\nu$ or $L/L^\nu$ is quadratic.

In the present setting, the notion of a $\nu$-symmetric embedding of $E$ in $\g$ is identical to the notion of a $\nu$-embedding of $E$ in $\g$ such that $E = E^\nu$.   Since we have dealt with such embeddings in the previous section, we will now assume that our given embedding of $E$ in $\g$ is a ``strict $\nu$-embedding'' in the following sense:

\begin{definition}
A \textbf{strict $\nu$-embedding}  of $E$ in $\g$  is a $\nu$-embedding such that $E/E^\nu$ is quadratic.
\end{definition}

It turns out that $J$-symmetric embeddings are just as useful in studying strict $\nu$-embeddings as they are in studying $\nu$-symmetric embeddings.
Note that ${\rm Gal}(E/E^\nu) = \{ 1,\sigma^*_\nu |E\}$.

We first treat the case in which $F = F'$:

\begin{proposition}\label{symembedB}
Let $L$ be an intermediate field of $E/F$ such that $E/L$ is quadratic and let $r = [L:F] = n/2$.
Choose $\tau\in L$ so that $E= L[\sqrt{\tau}]$ and embed $E$ in $\M_2 (L)$ via 
$$x+y\, \sqrt{\tau} = \begin{pmatrix}x&y\tau\\ y&x\end{pmatrix}$$ and 
embed $L$ in $\M_r(F)$ via a $J_r$-symmetric embedding.   Then the symmetric matrices $\nu$ in $G = \GL_n(F)$ such that $\sigma_\nu |E$ is the nontrivial Galois automorphism of $E/L$  are precisely the matrices of the form
$$\nu =   \begin{pmatrix}1_{r}&0\\ 0&-1_{r}\end{pmatrix}J_n\ x ,$$ where $x \in E^\times$ and ${\rm tr}_{E/L}(x)=0$.  The determinant of $\nu$ is then given by
$\det\nolimits_G(\nu) = N_{E/F}(x) = \det (J_n) N_{L/F}(x)^2$.
\end{proposition}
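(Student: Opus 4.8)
The plan is to reduce everything to the statement that $E$ is its own commutant in $\g$, exploiting the transitivity of $J$-symmetric embeddings. Identify $E$ with its image in $\g=\M_2(\M_r(F))=\M_n(F)$ under the composite embedding, writing $\m{x}$ for the image of $x\in E$, and set $w_0=\begin{pmatrix}1_r&0\\0&-1_r\end{pmatrix}\in G$. First I would record three elementary facts. (i) The two-dimensional embedding $x+y\sqrt\tau\mapsto\begin{pmatrix}x&y\tau\\y&x\end{pmatrix}$ of $E$ in $\M_2(L)$ is $J_2$-symmetric (a one-line check), so by Lemma \ref{composite} the composite embedding of $E$ in $\g$ is $J_n$-symmetric; equivalently ${}^t\m{x}=J_n\,\m{x}\,J_n$ for all $x\in E$. (ii) Conjugation by $w_0$ realises the nontrivial element $x\mapsto\bar x$ of ${\rm Gal}(E/L)$ on $\m{E}$, since $\begin{pmatrix}1&0\\0&-1\end{pmatrix}\begin{pmatrix}x&y\tau\\y&x\end{pmatrix}\begin{pmatrix}1&0\\0&-1\end{pmatrix}=\begin{pmatrix}x&-y\tau\\-y&x\end{pmatrix}$, i.e.\ $w_0\,\m{x}\,w_0=\m{\bar x}$ for all $x$. (iii) $J_nw_0=-w_0J_n$, by a direct entrywise computation using $n=2r$. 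I will also use, as in the proof of Proposition \ref{symmetricembedding}, that $\m{E}$ is its own centraliser in $\g$.

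For the main assertion, fix $\nu\in G$. The requirement that $\sigma_\nu(X)=\nu^{-1}\,{}^tX\,\nu$ restrict on $\m{E}$ to $x\mapsto\bar x$ is $\nu^{-1}\,{}^t\m{x}\,\nu=\m{\bar x}$ for all $x\in E$, which by (i) and (ii) becomes $\nu^{-1}J_n\,\m{x}\,J_n\nu=w_0\,\m{x}\,w_0$, i.e.\ $(J_n\nu w_0)\,\m{x}\,(J_n\nu w_0)^{-1}=\m{x}$ for all $x$. Hence $J_n\nu w_0$ centralises $\m{E}$, so $J_n\nu w_0=\m{z}$ for some $z\in E^\times$, i.e.\ $\nu=J_n\,\m{z}\,w_0$. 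Now I impose ${}^t\nu=\nu$: on one hand ${}^t\nu=w_0\,{}^t\m{z}\,J_n=w_0J_n\,\m{z}$ by (i); on the other hand, from $\m{z}\,w_0=w_0\,\m{\bar z}$ (fact (ii)) and (iii) one gets $\nu=J_nw_0\,\m{\bar z}=-w_0J_n\,\m{\bar z}$. Comparing the two expressions, ${}^t\nu=\nu$ holds precisely when $\m{\bar z}=-\m{z}$, i.e.\ $\bar z=-z$, i.e.\ ${\rm tr}_{E/L}(z)=0$. Finally, when $\bar z=-z$ we have $\m{z}\,w_0=w_0\,\m{\bar z}=-w_0\,\m{z}$, so $\nu=J_n\,\m{z}\,w_0=-J_nw_0\,\m{z}=w_0J_n\,\m{z}$, which is exactly the claimed form with $x=z$; conversely every matrix of that form is symmetric and, by reversing the equalities above, satisfies $\sigma_\nu|_E=(x\mapsto\bar x)$.

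For the determinant, $\det_G\nu=\det(w_0)\det(J_n)\det_G\m{x}=(-1)^r(-1)^{n(n-1)/2}N_{E/F}(x)$, using that the determinant restricts to $N_{E/F}$ on $\m{E}$ (as in Proposition \ref{symmetricembedding}); since $n=2r$ one checks $r+\tfrac{n(n-1)}{2}=2r^2$ is even, so $\det_G\nu=N_{E/F}(x)$. Then, since ${\rm tr}_{E/L}(x)=0$ forces $x^2\in L$ and $N_{E/L}(x)=x\bar x=-x^2$, we get $N_{E/F}(x)=N_{L/F}(N_{E/L}(x))=N_{L/F}(-x^2)=(-1)^rN_{L/F}(x^2)=\det(J_n)\,N_{L/F}(x^2)$, using $\det(J_n)=(-1)^{n(n-1)/2}=(-1)^r$. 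I do not anticipate a genuine obstacle; the only care needed is distinguishing the transpose in $\M_2(L)$ from that in $\g$ (dispatched once via fact (i)) and keeping track of the commutation signs among $w_0$, $J_n$ and $\m{E}$ in facts (ii)–(iii).
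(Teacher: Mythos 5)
Your argument is correct and follows essentially the same route as the paper's proof: use that $\m{E}$ is its own centralizer to pin $\nu$ down to the coset $\mu J_n E^\times$ (your $J_n\m{z}w_0$), then translate symmetry into $\bar z=-z$ via the anticommutation $\mu J_n=-J_n\mu$, and finish with $\det(\mu J_n)=1$. The only differences are cosmetic: you make explicit the appeal to Lemma \ref{composite} for the $J_n$-symmetry of the composite embedding and you verify the final identity $N_{E/F}(x)=\det(J_n)N_{L/F}(x^2)$, both of which the paper leaves implicit.
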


\begin{proof}
Suppose $x$ lies in $E$.  Then $x = \begin{pmatrix}a&b\tau\\ b&a\end{pmatrix}$ for some $a,b\in L$.  The Galois conjugate of $x$ is identical to $\mu x \mu$, where $\mu = 1_r\oplus (-1_r)$.  We need to compute the set of symmetric matrices $\nu$ in $G$ such that
$$x = \sigma_\nu (\mu x\mu) = \nu^{-1}\  \mu\  {}^t x\ \mu\ \nu = (J_n \mu\nu)^{-1} x (J_n \mu \nu)$$ for all $x\in E$.  The latter condition simply says that  $J_n \mu\nu$ centralizes $E$ or, equivalently, $J_n\mu \nu\in E^\times$.   Thus the symmetric matrices in question are those symmetric matrices that lie in  $\mu J_n E^\times$.

Now suppose $x\in E^\times$ and let $\nu = \mu \, J_n\,  x$.  Then $\nu$ is symmetric exactly when $\mu J_n\, x = \nu = {}^t\nu = {}^tx\, J_n\, \mu = J_n (J_n \ {}^t x\, J_n)\mu = J_n x\mu$.  But, since $\mu J_n = -J_n\, \mu$, we deduce that $\nu$ is symmetric precisely when  $-\mu x= x\mu$ or, equivalently, $-x= \mu x\mu$.  The latter condition is the same as ${\rm tr}_{E/L} (x)=0$.

The determinant identity reduces to showing that $\det (\mu J_n)=1$, which is easily verified.
\end{proof}

The analogous result when $F/F'$ is quadratic is:

\begin{proposition}\label{symembedC}
Assume $F/F'$ is quadratic and choose $\tau\in F'$ so that $F= F' [\sqrt{\tau}]$.   Assume $L$ is a degree $n$ extension of $F'$ that is embedded in  
 $\M_n(F')$ via a $J_n$-symmetric embedding.  Assume $L\cap F = F'$ and let $E$ be the degree $n$ extension of $F$ given by $E= LF = L[\sqrt{\tau}]$.
Embed $E$ in $\g = \M_n(F)$ 
via 
$$x+y\, \sqrt{\tau} = \begin{pmatrix}x&y\tau\\ y&x\end{pmatrix},$$ 
where $x,y\in L \subset \M_n (F')$.

Then the set $J_n L^\times F^\times$  is identical to the set of matrices in $J_nE^\times$ that are scalar multiples of hermitian matrices.  It is also identical to the set of $\nu\in G=\GL_n (F)$ that are scalar multiples of hermitian matrices and satisfy $\sigma^*_\nu|E\in {\rm Gal} (E/L)$.  If $x\in E^\times$ and $\nu = J_n \, x$ then
$$\det\nolimits_G(\nu) = (-1)^{n(n-1)/2} N_{E/F}(x).$$
\end{proposition}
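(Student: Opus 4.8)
The plan is to follow the centralizer argument from the proofs of Propositions~\ref{symmetricembedding} and \ref{symembedB}, the one new wrinkle being that two commuting order-two operations on $\g$ must be tracked simultaneously: the transpose ${}^t(\cdot)$ and the entrywise $F/F'$-conjugation $\overline{(\cdot)}$. Throughout I identify $E$, $L$, and the scalars $F$ with their images in $\g$. First I would record three preliminary facts. (i) \emph{The embedding of $E$ in $\g$ is $J_n$-symmetric}: it restricts on $L$ to the given $J_n$-symmetric embedding into $\M_n(F')$ and sends $\sqrt\tau$ to the scalar matrix $\sqrt\tau\,1_n$, so for $z=x+y\sqrt\tau$ with $x,y\in L$ one has ${}^tz={}^tx+\sqrt\tau\,{}^ty=J_nxJ_n+\sqrt\tau\,J_nyJ_n=J_nzJ_n$, i.e.\ $J_n\,{}^tz\,J_n=z$ (equivalently, $E=L\otimes_{F'}F$ and $J_n$-symmetry of a subalgebra of $\M_n(F')$ survives extension of scalars to $F$ because $J_n$ has entries in $F'$). (ii) \emph{Entrywise conjugation on $E$ realizes the nontrivial element $c$ of ${\rm Gal}(E/L)$}: $\overline z=x-y\sqrt\tau=c(z)$. (iii) \emph{$L$ is its own centralizer in $\g$}: since $[L:F']=n$, $L$ equals its centralizer in $\M_n(F')$, and extension of scalars gives $C_\g(L)=L\otimes_{F'}F=E$. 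Combining (i) and (ii), every $z\in E$ satisfies $z^*={}^t\overline z={}^tc(z)=J_n\,c(z)\,J_n$, and this identity carries the rest of the argument.

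Next I would show that, for $\nu\in G$, one has $\sigma^*_\nu|E\in{\rm Gal}(E/L)$ exactly when $\nu\in J_nE^\times$. For ``$\supseteq$'', write $\nu=J_nw$ with $w\in E^\times$; since $w$ centralizes $L$ and $\ell^*={}^t\ell=J_n\ell J_n$ for $\ell\in L$ (entries in $F'$, together with (i)), one gets $\sigma^*_\nu(\ell)=w^{-1}J_n(J_n\ell J_n)J_nw=w^{-1}\ell w=\ell$, so $\sigma^*_\nu$ fixes $L$ pointwise; and since $\sigma^*_\nu(\sqrt\tau\,1_n)=\nu^{-1}(-\sqrt\tau\,1_n)\nu=-\sqrt\tau\,1_n$ in all cases, $\sigma^*_\nu$ stabilizes $E=L[\sqrt\tau\,1_n]$ and restricts there to $c$. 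For ``$\subseteq$'', if $\sigma^*_\nu$ stabilizes $E$ and restricts to an element of ${\rm Gal}(E/L)$, that element is not the identity (it moves $\sqrt\tau\,1_n$, using ${\rm char}\ne2$), hence equals $c$ and fixes $L$ pointwise; then $\nu^{-1}\ell^*\nu=\ell$ for all $\ell\in L$, and substituting $\ell^*=J_n\ell J_n$ and rearranging shows $J_n\nu$ centralizes $L$, so $J_n\nu\in E^\times$ and $\nu\in J_nE^\times$ by (iii).

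Then I would single out which elements of $J_nE^\times$ are scalar multiples of hermitian matrices. For $\nu=J_nz$ with $z\in E^\times$, the identity from (i)--(ii) gives $\nu^*=z^*J_n^*=J_n\,c(z)\,J_nJ_n=J_n\,c(z)$, so for $\mu\in F^\times$ one has $(\mu\nu)^*=\mu\nu$ iff $\overline\mu\,J_n\,c(z)=\mu\,J_nz$ iff $\overline{\mu z}=\mu z$ iff $\mu z$ has entries in $F'$. Because $E\cap\M_n(F')=L$ (the matrix $x+\sqrt\tau\,y$ lies in $\M_n(F')$ exactly when $y=0$), this says $\mu z\in L^\times$, i.e.\ $z\in L^\times F^\times$. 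Hence $\{\nu\in J_nE^\times:\ \nu\text{ is a scalar multiple of a hermitian matrix}\}=J_nL^\times F^\times$, which is the first assertion; intersecting this with the conclusion of the previous paragraph (the condition $\sigma^*_\nu|E\in{\rm Gal}(E/L)$ cuts out precisely $J_nE^\times$) gives the second assertion. Finally, for $x\in E^\times$ and $\nu=J_nx$, multiplicativity of the determinant on $\g$ gives $\det\nolimits_G\nu=\det(J_n)\det(x)$, where $\det(J_n)=(-1)^{n(n-1)/2}$ is the sign of the order-reversing permutation of $\{1,\dots,n\}$ and $\det(x)=N_{E/F}(x)$ since the restriction of the determinant to the $F$-subalgebra $E$ is the norm map; thus $\det\nolimits_G\nu=(-1)^{n(n-1)/2}N_{E/F}(x)$.

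The only genuine obstacle is the bookkeeping in the preliminaries: one must keep the two matrix operations ${}^t(\cdot)$ and $\overline{(\cdot)}$, and the several Galois-type actions (on the scalars $F$ over $F'$, on $E$ over $L$, and the transpose-induced involution), rigorously separated, and verify the two compatibilities $\overline z=c(z)$ and $\sigma^*_\nu(\sqrt\tau\,1_n)=-\sqrt\tau\,1_n$ that link them. Once the identity $\nu^*=J_n\,c(z)$ for $\nu=J_nz$ is established, every remaining step is a short formal manipulation, just as in Propositions~\ref{symmetricembedding} and \ref{symembedB}.
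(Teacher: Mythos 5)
Your proof is correct and follows essentially the same route as the paper's: the centralizer argument (using $\ell^* = J_n\,\ell\,J_n$ for $\ell\in L$ and the fact that the centralizer of $L$ in $\M_n(F)$ is $E$) shows the Galois condition forces $\nu\in J_nE^\times$, and the hermitian-up-to-scalar condition then cuts out exactly $J_nL^\times F^\times$, with the converse direction checked by the same direct computation. You additionally write out the determinant identity ($\det J_n=(-1)^{n(n-1)/2}$ times the norm via the regular-representation fact), which the paper's proof leaves implicit.
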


\begin{proof}
Suppose $\nu$ is a scalar multiple of a hermitian matrix and $\sigma^*_\nu |E\in {\rm Gal}(E/L)$.  If $x\in L$ then $$x= \sigma^*_\nu (x) = \nu^{-1}\, {}^t x\, \nu = \nu^{-1}\, J_n\, x\, J_n\, \nu = (J_n\, \nu)^{-1}\, x\, (J_n\, \nu).$$  So $J_n \, \nu$ centralizes $L$.  Since $J_n\, \nu$ also commutes with $\sqrt{\tau}$, it follows that $J_n \, \nu$ centralizes $E$.  Hence $J_n\, \nu \in E^\times$ or, in other words, $\nu \in J_n\, E^\times$.

Now suppose $x\in E^\times$ and let $\nu = J_n\, x$.  
Then $\nu$ is a scalar multiple of a hermitian matrix if and only if there exists $z\in F^\times$ such that $z\nu$ is hermitian.  But $z\nu = zJx$ and $(z\nu)^* = \bar z\nu^* = \bar z x^*J = \bar z J\bar x$.
Hence, $\nu$ is a scalar multiple of a hermitian matrix exactly when there exists $z\in F^\times$ such that $zx = \bar z\bar x$ or, in other words, $zx\in L$.  So $\nu$ is a scalar multiple of a hermitian matrix precisely when $x\in L^\times F^\times$.

Assume $x\in L^\times F^\times$ and let $\nu = J_n x$.  If $u\in L$ then $\sigma_\nu^* (u) = \nu^{-1}\cdot {}^t u\cdot \nu = \nu^{-1}J_nuJ_n\nu = x^{-1}ux= u$.  In addition, $\sigma_\nu^* (\sqrt{\tau}) = -\sqrt{\tau}$.  Hence, $\sigma_\nu^*$ is the nontrivial element of ${\rm Gal}(E/L)$.
\end{proof}

\subsection{Restriction and extension of involutions}

Our definition of ``orthogonal involution of $G$'' may be rephrased as follows:  an $F$-automorphism of  $\bG = \GL_n$ is  an orthogonal involution of $G =\GL_n (F)$ if it has the form $\theta_\nu (g) = \sigma_\nu (g^{-1})$ for some symmetric matrix $\nu\in G$.  
Similarly, if $F/F'$ is quadratic, we say that an $F'$-automorphism of  $\bG$ is  a ``unitary involution of $G$'' if it has the form $\theta_\nu (g) = \sigma^*_\nu (g^{-1})$ for some  $\nu\in G$ that is a scalar multiple of a hermitian matrix.  

Suppose $L$ is an extension of $F$ of degree $r<n$ and suppose $G'$ is a suitable subgroup of $G$ isomorphic to $\GL_r (L)$.  We now consider the problem of restricting orthogonal and unitary involutions of $G$ to orthogonal and unitary involutions of $G'$.  We start with restrictions of orthogonal involutions:

\begin{proposition}\label{symrestrict}
Let $L$ be an intermediate field of  $E/F$ with $r = [L:F]$ and $s = n/r = [E:L]$.
Suppose $L$ is embedded in $\M_r(F)$ via a $J_r$-symmetric embedding and suppose $E$ is embedded in $\M_s(L)$ via a $J_s$-symmetric embedding. 
 Let $G= \GL_n(F)$ and $G'=\GL_s(L)$.
\begin{enumerate}
\item
Then for $\nu\in G$ the following conditions are equivalent:
\begin{enumerate}
\item 
${}^t\nu = \nu$ and 
$\sigma_\nu (x)= x$ 
for all $x\in L$.
\item The matrix ${\nu'} =J^s_r \nu$ lies in $G'$ and, viewed as an element of $G'$, it is symmetric.
\end{enumerate}
If (a) and (b) hold then $\sigma_\nu | \M_s(L)=\sigma_{\nu'}$ and $$\det\nolimits_G (\nu) = (-1)^{n(r-1)/2}N_{L/F}(\det\nolimits_{G'}({\nu'})).$$
\item  Assume $L$ is a quadratic extension $L'[\sqrt{\tau}]$ of an intermediate field $L'$ of $L/F$.  Embed $L$ in $\M_2(L')$ using the $J_2$-symmetric embedding
$$x+y\sqrt{\tau}  = \begin{pmatrix}x&y\tau\\ y&x\end{pmatrix},$$ for all $x,y\in L'\subset \M_{r/2}(F)$, and embed $L'$ in $\M_{r/2}(F)$ using a $J_{r/2}$-symmetric embedding.
Let $\beta =  \begin{pmatrix}-\tau J_{r/2}&0\\ 0&J_{r/2}\end{pmatrix} $
and $\gamma = \beta \oplus \cdots \oplus
\beta \in G$.
Let $z\mapsto \bar z$ be the nontrivial Galois automorphism of $L/L'$.   Then for $\nu\in G$ the following conditions are equivalent:
\begin{enumerate}
\item 
${}^t\nu = \nu$ and 
$\sigma_\nu (x)= \bar x$ 
for all $x\in L$.
\item The matrix ${\nu'} =\gamma\, \nu$ lies in $G'$ and, viewed as an element of $G'$, it is hermitian with respect to $L/L'$.
\end{enumerate}
If (a) and (b) hold then $\sigma_\nu | \M_s(L)=\sigma^*_{\nu'}$ and  $$\det\nolimits_G (\nu) = (-1)^{n/2}N_{L'/F}(\tau)^s N_{L/F}(\det\nolimits_{G'}({\nu'})).$$
\end{enumerate}
\end{proposition}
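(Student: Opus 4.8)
The plan is to prove both parts by a single mechanism, the one already used for Proposition~\ref{symmetricembedding} (and, for part~(2), for Proposition~\ref{symembedB}): rewrite each condition on $\nu$ as the assertion that an explicit conjugate of $\nu$ centralizes the block-scalar copy of $L$ inside $\g$; use that the centralizer of that copy in $\g$ is exactly the image of $\M_s(L)$ to put the conjugate in $G'$; and then read off its symmetry type by tracking the two transposes. The standing tools are: by Lemma~\ref{composite} — using, for part~(2), the $J_2$-symmetric quadratic embedding of the Example following Lemma~\ref{constructJsym} for the link $L/L'$ — the composite embedding of $E$ in $\g$ is $J_n$-symmetric, so ${}^t x = J_n x J_n$ for all $x\in E$; the factorizations $J_n=J_sJ_r^s=J_r^s J_s$ together with the fact that $J_s$ commutes with every block-scalar matrix with entries in $L$; the identity ${}^t X = J_r^s\,{}^T X\,J_r^s$ for $X\in\M_s(L)$ (the computation in the proof of Lemma~\ref{composite}, with ${}^T$ the transpose over $\M_s(L)$); and the standard fact that $\det\nolimits_G$ restricted to $\M_s(L)$ equals $N_{L/F}\circ\det\nolimits_{G'}$.

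For part~(1): writing $x$ for the block-scalar image of an element of $L$, the condition $\sigma_\nu(x)=x$ reads ${}^t x\,\nu=\nu x$; substituting ${}^t x = J_n x J_n = J_r^s(J_s x J_s)J_r^s = J_r^s x J_r^s$ and left-multiplying by $J_r^s$ turns it into $x(J_r^s\nu)=(J_r^s\nu)x$ for all such $x$. Hence $\nu':=J_r^s\nu$ centralizes the block-scalar $L$, so $\nu'$ lies in $\M_s(L)$, and it lies in $G'$ since $\nu'\in\GL_n(F)$ and $\nu'^{-1}$ also centralizes $L$. Then ${}^T\nu' = J_r^s\,{}^t\nu'\,J_r^s = J_r^s\,({}^t\nu\cdot J_r^s)\,J_r^s = J_r^s\,{}^t\nu$, so ${}^t\nu=\nu$ is equivalent to ${}^T\nu'=J_r^s\nu=\nu'$, i.e.\ to $\nu'$ being symmetric over $L$; this gives (a)$\Leftrightarrow$(b). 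The equality $\sigma_\nu|\M_s(L)=\sigma_{\nu'}$ drops out of ${}^t X=J_r^s\,{}^T X\,J_r^s$ and $(J_r^s\nu)^{-1}=\nu^{-1}J_r^s$, and the determinant formula follows from $\det\nolimits_G\nu=\det(J_r^s)^{-1}\det\nolimits_G\nu'$, the computation $\det(J_r^s)=\det(J_r)^s=(-1)^{sr(r-1)/2}=(-1)^{n(r-1)/2}$, and $\det\nolimits_G\nu'=N_{L/F}(\det\nolimits_{G'}\nu')$.

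For part~(2) I would run the same template with $J_r^s$ replaced by the matrix $\gamma$ of the statement. The extra ingredient is that the nontrivial automorphism $x\mapsto\bar x$ of $L/L'$ is realized on the block-scalar $L$ by conjugation by $\epsilon_0\oplus\cdots\oplus\epsilon_0\in\GL_n(F)$, where $\epsilon_0=1_{r/2}\oplus(-1_{r/2})$; this is checked on the generator $\sqrt\tau$. Feeding ${}^t x = J_n x J_n$ and this description of $\bar x$ into $\sigma_\nu(x)=\bar x$ and rearranging as in part~(1) shows that a suitable conjugate of $\nu$ centralizes the block-scalar $L$, hence lies in $G'$; a short matrix computation identifies this conjugate with $\gamma\nu$, for $\gamma$ the block-diagonal matrix built from $\beta$ — the displayed product of $J_r$, $\epsilon_0$, and the image of $\sqrt\tau$. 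The role of the $\sqrt\tau$-factor in $\beta$ is exactly that, with it present, once $\gamma\nu\in G'$ the condition ${}^t\nu=\nu$ turns into ``$\gamma\nu$ is hermitian for $L/L'$'' rather than merely symmetric, giving (a)$\Leftrightarrow$(b); the same bookkeeping simultaneously yields $\sigma_\nu|\M_s(L)=\sigma^*_{\nu'}$. The determinant identity then comes from $\det\nolimits_G\gamma=(\det\nolimits_{\M_r(F)}\beta)^s$, computed off the $2\times2$-block shape of $\beta$, together with $\det\nolimits_G\nu'=N_{L/F}(\det\nolimits_{G'}\nu')$. The genuinely delicate point, I expect, is precisely this last step of part~(2): pinning down $\gamma$ — the $\sqrt\tau$-twist and all the signs in $\det\beta$ — so that symmetric $\nu$ corresponds to \emph{hermitian} $\gamma\nu$. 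Structurally part~(2) is a carbon copy of part~(1); what makes it more work is that the quadratic twist $L/L'$ forces both the conjugation $x\mapsto\bar x$ to be implemented by a matrix and a further square-root correction to convert symmetry into the hermitian property, and that is where the real computation lives.
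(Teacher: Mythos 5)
Your proposal is correct and follows essentially the same route as the paper: the identities ${}^t x = J_n x J_n$, $J_n = J_sJ_r^s$ and ${}^t X = J_r^s\,{}^T X\,J_r^s$, the observation that the relevant multiple of $\nu$ centralizes the embedded copy of $L$ and hence lies in $\M_s(L)$ (the paper does this block-by-block, which is exactly your commutant argument), the realization of $z\mapsto\bar z$ by conjugation by $1_{r/2}\oplus(-1_{r/2})$, the factorization $\beta=\sqrt{\tau}\,\varepsilon\,J_r$, and the same determinant bookkeeping $\det(J_r^s)=(-1)^{n(r-1)/2}$, $\det(\gamma)=(-1)^{n/2}N_{L'/F}(\tau)^s$. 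The only place you stop short of a full argument is the part-(2) verification that, once $\gamma\nu\in G'$, symmetry of $\nu$ is equivalent to $\gamma\nu$ being hermitian, together with $\sigma_\nu|\M_s(L)=\sigma^*_{\nu'}$; the paper does this by an explicit chain of block identities (using ${}^t\sqrt{\tau}=J_r\sqrt{\tau}\,J_r$ and $\bar z=\varepsilon z\varepsilon$), and your sketch does go through when that computation is written out.
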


\begin{proof}
Our proof builds on the proof of Lemma \ref{composite} and we use some of the same identities.
As in the proof of Lemma \ref{composite}, if $x$ is a matrix with entries in $F$, we let ${}^t x$ denote the transpose of $x$.  If $x$ is a matrix with entries in $L$, we let ${}^T x$ denote the transpose.  
Since $L$ is embedded in $\M_r(F)$, a matrix with entries in $L$ will have both types of transpose and if  $x\in \M_s(L)$ then these transposes are related by $${}^t x = J_r^s \ {}^T x \ J_r^s.$$

Assume $\nu\in G$ and write $\nu$ as an $s$-by-$s$ block matrix in which the $ij$-th block $\nu_{ij}$ lies in $\M_r(F)$.  
Condition 1(a) is equivalent to saying that whenever $1\le i,j\le s$, we have ${}^t \nu_{ji} = \nu_{ij}$ and ${}^t x\  \nu_{ij} = \nu_{ij}\ x$,   for all $x\in L$.  But for $x\in L$, we have ${}^t x = J_r \ x\ J_r$.  It is now easy to see that the following conditions are equivalent:
\begin{itemize}
\item ${}^t x\  \nu_{ij} = \nu_{ij}\ x$, for all $x\in L$,
\item $ J_r \ x\ J_r\  \nu_{ij} = \nu_{ij}\ x$, for all $x\in L$,
\item $J_r\ \nu_{ij}$ centralizes $L$,
\item $J_r\ \nu_{ij}\in L$.
\end{itemize}
So condition 1(a) is equivalent to saying ${}^t\nu_{ji} = \nu_{ij}$ for all $i,j$ and $J^s_r\nu\in G'$.  But if $J^s_r\nu\in G'$ then ${}^t\nu_{ji} = ({}^t \nu_{ji}\ J_r) J_r = {}^t (J_r \nu_{ji})J_r= J_r (J_r\nu_{ji}) = \nu_{ji}$.  It follows that condition 1(a) is equivalent to condition 1(b).

Now assume conditions 1(a) and 1(b) are satisfied and $x\in \M_s(L)$.
Then we have
$$
\sigma_\nu (x) = \nu^{-1}\ {}^tx\ \nu = {\nu'}^{-1}\ J^s_r\ {}^t x\ J^s_r\ {\nu'} \\
= {\nu'}^{-1}\  {}^Tx\ {\nu'} =\sigma_{\nu'}(x).$$
Therefore, $\sigma_\nu | \M_s(L) = \sigma_{\nu'} $.  The determinant identity follows from the transitivity of norms formula in \S7.4 of \cite{J}.  More specifically, we have \begin{eqnarray*}
N_{L/F}(\det\nolimits_{G'}({\nu'}))&=& \det\nolimits_G({\nu'}) = \det\nolimits_G(J^s_r)\, \det\nolimits_G(\nu)\\ &=&   ((-1)^{r(r-1)/2})^s \, \det\nolimits_G (\nu)\\
&=& (-1)^{n(r-1)/2}\, \det\nolimits_G (\nu).
\end{eqnarray*}

We next consider the proof of assertion 2.  Let
$$\varepsilon = \begin{pmatrix}1_{r/2}&0\\ 0&-1_{r/2}\end{pmatrix}$$ and observe that $\bar z = \varepsilon z\varepsilon$ for all $z\in L$.

Assume $\nu\in G$ and write $\nu$ as an $s$-by-$s$ block matrix $(\nu_{ij})$ with $\nu_{ij}\in \M_r(F)$.  
Condition 2(a) says ${}^t \nu_{ji} = \nu_{ij}$ and ${}^t x\  \nu_{ij} = \nu_{ij}\ \varepsilon\, x\,\varepsilon$,   for all $x\in L$ and all $i,j$.  The following conditions are easily seen to be equivalent:
\begin{itemize}
\item ${}^t x\  \nu_{ij} = \nu_{ij}\ \varepsilon\, x\,\varepsilon$,  for all $x\in L$,
\item $x\ J_r\  \nu_{ij}\ \varepsilon =J_r\  \nu_{ij}\ \varepsilon\ x$, for all $x\in L$,
\item $J_r\ \nu_{ij}\ \varepsilon$ centralizes $L$,
\item $J_r\ \nu_{ij}\ \varepsilon\in L$,
\item $\varepsilon\ J_r\ \nu_{ij}\in L$,
\item $\sqrt{\tau}\, \varepsilon J_r\ \nu_{ij}\in L$.
\end{itemize}
Since
$$\sqrt{\tau}\, \varepsilon\, J_r = \begin{pmatrix}0&\tau\\ 1&0\end{pmatrix}\begin{pmatrix}1_{r/2}&0\\ 0&-1_{r/2}\end{pmatrix}\begin{pmatrix}0&J_{r/2}\\  J_{r/2}&0\end{pmatrix} = \beta, $$
condition 2(a) is equivalent to saying $\nu$ is symmetric and ${\nu'} = \gamma\nu\in G'$.  But if $\nu$ is symmetric and ${\nu'} \in G'$ then 
\begin{eqnarray*}
\overline{{\nu'}_{ij}} &=&\varepsilon\, \nu'_{ij}\, \varepsilon
=\varepsilon\, \beta\, \nu_{ij}\, \varepsilon
=\varepsilon\, \beta\, {}^t\nu_{ji}\, \varepsilon
=\varepsilon\, \beta\, ({}^t\nu_{ji}\, {}^t\beta)\, {}^t\beta^{-1}\, \varepsilon\\
&=&\varepsilon\, \beta\, {}^t\nu'_{ji}\, {}^t\beta^{-1}\, \varepsilon
=\varepsilon\, \beta\, J_r\, \nu'_{ji}\, J_r\, {}^t\beta^{-1}\, \varepsilon
=-\sqrt{\tau}\, \nu'_{ji}\, J_r\, {}^t\beta^{-1}\, \varepsilon\\
&=&-\nu'_{ji}\, \sqrt{\tau}\, J_r\, {}^t\beta^{-1}\, \varepsilon
=-\nu'_{ji}\, \sqrt{\tau}\, J_r\, {}^t\sqrt{\tau}^{-1}\, \varepsilon\, J_r\, \varepsilon\\
&=&\nu'_{ji}\, \sqrt{\tau}\, J_r\, {}^t\sqrt{\tau}^{-1}\, J_r
=\nu'_{ji}\, \sqrt{\tau}\,  \sqrt{\tau}^{-1} 
=\nu'_{ji}
\end{eqnarray*}
 for all $i,j$.
 It follows that condition 2(a) is equivalent to condition 2(b).

Now suppose $x\in \M_s(L)$.  Using block matrices in $\GL_s(\M_r(F))=G$, define matrices  $\xi = \varepsilon \oplus \cdots \oplus \varepsilon$  and $\zeta = \sqrt\tau\oplus\cdots \oplus \sqrt{\tau}$ in $G$.  
  Then
\begin{eqnarray*}
\sigma_\nu (x) &=& \nu^{-1}\, {}^tx\, \nu = \nu^{-1}\, J^s_r\, {}^Tx\, J^s_r\, \nu = \nu^{-1}\, J^s_r\, \xi \, {}^T\bar x\, \xi\, J^s_r\, \nu\\
&=& \nu^{-1}\, J^s_r\, \xi \,  \zeta^{-1}\, {}^T\bar x\, \zeta\, \xi\, J^s_r\, \nu = {\nu'}^{-1}\, {}^T\bar x\, {\nu'} = \sigma^*_{\nu'} (x).
\end{eqnarray*}
The verification of the determinant formula is similar to the previous determinant formula, except that the determinant of $J_r^s$ is replaced by  
\begin{eqnarray*}
\det\nolimits_G (\gamma)
&=& \det\nolimits_{\GL_r(F)} (\beta)^s \\
&=& \det\nolimits_{\GL_{r/2}(F)} (J_{r/2})^{2s} 
\det\nolimits_{\GL_{r/2}(F)}(-\tau)^s\\
&=&(-1)^{(r/2)((r/2)-1)s} N_{L'/F}(-\tau )^s\\
&=&(-1)^{(n/2)((r/2)-1)} (-1)^{n/2}N_{L'/F}(\tau )^s\\
&=&(-1)^{(nr)/4} N_{L'/F}(\tau )^s\\
&=&(-1)^{s(r/2)^2} N_{L'/F}(\tau )^s\\
&=&(-1)^{sr/2} N_{L'/F}(\tau )^s\\
&=&(-1)^{n/2} N_{L'/F}(\tau )^s.
\end{eqnarray*}
\end{proof}

We now consider restrictions of unitary involutions:

\begin{proposition}
Assume $F/F'$ is quadratic and choose $\tau\in F'$ so that $F= F' [\sqrt{\tau}]$.   
Assume $L'$ is a degree $r$ extension of $F'$ that is embedded in  
 $\M_r(F')$ via a $J_r$-symmetric embedding.  Assume $E'$ is a degree $s$ extension of $L'$ that is embedded in  
 $\M_s (L')$ via a $J_s$-symmetric embedding.  Let $n=rs$.
 
 Assume $E'\cap F = F'$ and let $E$ be the degree $n$ extension of $F$ given by $E= E' F = E' [\sqrt{\tau}]$.
Embed $E$ in $\g = \M_n(F)$ 
via 
$$x+y\, \sqrt{\tau} = \begin{pmatrix}x&y\tau\\ y&x\end{pmatrix},$$ 
where $x,y\in E' \subset \M_n (F')$.
Let $L = L'[\sqrt{\tau}]$.
 
  Let $G= \GL_n(F)$ and $G'=\GL_s(L)$.
Then for $\nu\in G$ the following conditions are equivalent:
\begin{enumerate}
\item[a.]
$\nu$ is a hermitian matrix and 
$\sigma^*_\nu |L\in {\rm Gal}(L/L')$.
\item[b.] The matrix ${\nu'} =J^s_r \nu$ lies in $G'$ and, viewed as an element of $G'$, it is hermitian.
\end{enumerate}
If (a) and (b) hold then $\sigma^*_\nu | \M_s(L)=\sigma^*_{\nu'}$ and $$\det\nolimits_G (\nu) = (-1)^{n(r-1)/2}N_{L/F}(\det\nolimits_{G'}({\nu'})).$$
\end{proposition}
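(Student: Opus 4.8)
The plan is to follow the template of the proof of Proposition~\ref{symrestrict}(1), systematically replacing the transpose by the conjugate-transpose and ``symmetric'' by ``hermitian''; the resulting argument is the hermitian analogue obtained, roughly, by base-changing that configuration along $F/F'$. First I would fix notation as in the proofs of Lemma~\ref{composite} and Proposition~\ref{symrestrict}: write ${}^t x$ for the ($F$-linear) transpose in $\M_n(F)$ extending the transpose of the $J_n$-symmetric subalgebra $E'\subset\M_n(F')$, and ${}^T x$ for the corresponding transpose in $\M_s(L)$. Since $L'$ and $E'$ are $J$-symmetrically embedded over $F'$, the identity ${}^t x=J^s_r\,{}^T x\,J^s_r$ established in Lemma~\ref{composite} holds for all $x\in\M_s(L')$, and hence, upon extending $F$-linearly, for all $x\in\M_s(L)$. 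Let $X\mapsto\bar X$ denote entrywise conjugation by $\mathrm{Gal}(F/F')$ on $\M_n(F)$; it fixes $\M_n(F')$ (so fixes $L'$) and sends the scalar $\sqrt\tau$ to $-\sqrt\tau$, hence restricts on $L=L'[\sqrt\tau]$ to the nontrivial element of $\mathrm{Gal}(L/L')$, and, restricted to the subalgebra $\M_s(L)$, coincides with entrywise $\mathrm{Gal}(L/L')$-conjugation; moreover $X^*=\overline{{}^t X}$. Put $\nu'=J^s_r\nu$, and recall that $L$, embedded in $\M_r(F)$ as a degree-$r$ field extension of $F$, is its own centralizer there.

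The first observation is that $(\sqrt\tau\,I_n)^*=-\sqrt\tau\,I_n$, so every $\sigma^*_\nu$ sends $\sqrt\tau$ to $-\sqrt\tau$; thus the hypothesis $\sigma^*_\nu|L\in\mathrm{Gal}(L/L')$ in (a) necessarily makes $\sigma^*_\nu|L$ the nontrivial Galois automorphism, i.e.\ $\sigma^*_\nu(x)=\bar x$ for all $x\in L$. Writing $\nu=(\nu_{ij})$ as an $s\times s$ block matrix with $\nu_{ij}\in\M_r(F)$, condition (a) then unwinds to: (i) $\overline{{}^t\nu_{ji}}=\nu_{ij}$ for all $i,j$, and (ii) $\overline{{}^t x}\,\nu_{ij}=\nu_{ij}\,\bar x$ for all $x\in L$ and all $i,j$. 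Using ${}^t x=J_r x J_r$ for $x\in L$ and letting $\bar x$ run over $L$, condition (ii) becomes $J_r x J_r\,\nu_{ij}=\nu_{ij}\,x$, i.e.\ $J_r\nu_{ij}$ centralizes $L$, i.e.\ $J_r\nu_{ij}\in L$ for all $i,j$; equivalently $\nu'=J^s_r\nu\in\M_s(L)$, which, $\nu$ being invertible, means $\nu'\in G'$. That is the first half of (b).

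Granting now $\nu'\in G'$, I would show (i) is equivalent to $\nu'$ being hermitian in $G'$. Setting $z=J_r\nu_{ji}=\nu'_{ji}\in L$ and using ${}^t z=J_r z J_r$, one gets ${}^t\nu_{ji}={}^t(J_r z)={}^t z\,J_r=J_r z$, hence $\overline{{}^t\nu_{ji}}=J_r\bar z$, so (i) reads $J_r\overline{\nu'_{ji}}=\nu_{ij}=J_r\nu'_{ij}$, i.e.\ $\overline{\nu'_{ji}}=\nu'_{ij}$ for all $i,j$; since the blocks of $\nu'$ are scalars in $L$, this is exactly $(\nu')^*=\nu'$ in $\M_s(L)$. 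Therefore (a)$\Leftrightarrow$(b). The identity $\sigma^*_\nu|\M_s(L)=\sigma^*_{\nu'}$ follows by the same one-line computation as in Proposition~\ref{symrestrict}: for $x\in\M_s(L)$,
$$\sigma^*_\nu(x)=\nu^{-1}\,\overline{{}^t x}\,\nu=\nu^{-1}J^s_r\,\overline{{}^T x}\,J^s_r\,\nu=(\nu')^{-1}\,\overline{{}^T x}\,\nu'=\sigma^*_{\nu'}(x),$$
using that $J^s_r$ has entries in $F'$ and $(J^s_r)^{-1}=J^s_r$. Finally $\det\nolimits_G(\nu')=\det\nolimits_G(J^s_r)\,\det\nolimits_G(\nu)=((-1)^{r(r-1)/2})^s\det\nolimits_G(\nu)=(-1)^{n(r-1)/2}\det\nolimits_G(\nu)$, while $\det\nolimits_G(\nu')=N_{L/F}(\det\nolimits_{G'}(\nu'))$ by transitivity of norms (as in \S7.4 of \cite{J}); combining these gives the stated determinant formula.

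The step I expect to require the most care is the block-by-block bookkeeping tying the two transposes to the two conjugations — in particular the third-paragraph computation that $\overline{{}^t\nu_{ji}}=\nu_{ij}$ is equivalent to $(\nu')^*=\nu'$ — together with the (easy but order-sensitive) verification that entrywise $\mathrm{Gal}(F/F')$-conjugation on $\M_n(F)$ restricts correctly to entrywise $\mathrm{Gal}(L/L')$-conjugation on the subalgebra $\M_s(L)$. There is no conceptual obstacle: the argument runs exactly parallel to Proposition~\ref{symrestrict}(1).
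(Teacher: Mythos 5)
Your proposal is correct and follows essentially the same route as the paper's own proof: block decomposition of $\nu$, the centralizer argument showing $J_r\nu_{ij}\in L$ (hence $\nu'=J_r^s\nu\in G'$), the blockwise translation of the hermitian condition, the one-line computation giving $\sigma^*_\nu|\M_s(L)=\sigma^*_{\nu'}$, and the determinant identity via $\det(J_r^s)=(-1)^{n(r-1)/2}$ and transitivity of norms. The only (immaterial) difference is that you extract $J_r\nu_{ij}\in L$ directly from the condition on all of $L$, whereas the paper first works with $L'$ and then uses that $J_r\nu_{ij}$ automatically commutes with the scalar $\sqrt{\tau}$.
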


\begin{proof}
Assume $\nu\in G$ and write $\nu$ as an $s$-by-$s$ block matrix in which the $ij$-th block $\nu_{ij}$ lies in $\M_r(F)$.  
Then $\nu$ is hermitian precisely when $\nu_{ji}^* = \nu_{ij}$ whenever $1\le i,j\le s$.  
On the other hand, if $\nu$ is hermitian then $\sigma_\nu^* |L'$ is trivial precisely when $J_r\nu_{ij}$ centralizes $L'$ for all $i,j$.  But since $J_r\nu_{ij}$ automatically commutes with $\sqrt{\tau}$, to say that $J_r\nu_{ij}$ centralizes $L'$ is the same as saying it centralizes $L$ which, in turn, is the same as saying $J_r\nu_{ij}\in L$.

So condition (a) is equivalent to saying $\nu_{ji}^* = \nu_{ij}$ for all $i,j$ and $J^s_r\nu\in G'$.  But if $J^s_r\nu\in G'$ then $\nu_{ji}^* = (\nu_{ji}^*\ J_r) J_r =  (J_r \nu_{ji})^* J_r= J_r (J_r\overline{\nu_{ji}}) = \overline{\nu_{ji}}$.  It follows that condition (a) is equivalent to condition (b).

Now assume conditions (a) and (b) are satisfied and $x\in \M_s(L)$.
Then we have
$$
\sigma^*_\nu (x) = \nu^{-1}\ x^*\ \nu = {\nu'}^{-1}\ J^s_r\ x^*\ J^s_r\ {\nu'} \\
= {\nu'}^{-1}\  x^\bullet\ {\nu'} =\sigma_{\nu'}(x),$$ where $x^*$ is the conjugate transpose of $x$ in $\g$ and $x^\bullet$ is the conjugate transpose of $x$ in $\M_s (L)$.
Therefore, $\sigma_\nu | \M_s(L) = \sigma_{\nu'} $.  The determinant identity is proved in a similar fashion to the proof to the corresponding identity in Proposition \ref{symrestrict}.
\end{proof}

%
%

\section{Quadratic spaces and orthogonal involutions}\label{sec:quadraticspaces}

In this section, we recall well known facts about quadratic forms, quadratic spaces, orthogonal involutions, and orthogonal groups over a field $F$ that is either:
\begin{itemize}
\item a finite field of odd characteristic,
\item a finite extension of $\Q_p$ with $p\ne 2$.
\end{itemize}
Assume we have fixed such a field $F$.  For convenience, we give a unified treatment of the latter two cases.

\begin{definition}
The \textbf{Hilbert symbol} is the pairing $F^\times\times F^\times \to \{ \pm 1\}$  defined
by
 $${\rm Hilbert}(a,b)=
    \begin{cases}
    1,&\mbox{if }z^2=ax^2+by^2\mbox{ has a solution }(x,y,z)\in F^3-\{0\};\\
    -1,&\mbox{otherwise.}\end{cases}$$
\end{definition}
When $F$ is finite, ${\rm Hilbert}(a,b)=1$ for all $a,b\in F^\times$ since every  quadratic form in three variables is isotropic.

Let $G = \GL_n (F)$, where $n>1$, and let $\cS$ be the set of symmetric matrices in $G$.  Then $G$ acts on the right of $\cS$ by $\nu\cdot g = {}^t g\, \nu\, g$.  Two elements of $\cS$ are said to be ${\bf similar}$ when they are in the same $G$-orbit.  It is well known that the group $A$ of diagonal matrices in $\cS$ contains a set of representatives for the similarity classes in $\cS$.

\begin{definition}\label{discHassedef}
The \textbf{discriminant} ${\rm disc}(\nu)$ of $\nu\in \cS$ is the image of $\det\nu$ in $F^\times/(F^\times)^2$.
The \textbf{Hasse invariant (a.k.a., Hasse-Witt invariant)}  ${\rm Hasse}(\nu)$ of  $\nu \in \cS$ is given by 
$${\rm Hasse}(\nu) = \prod_{i\le j} {\rm Hilbert}(a_i,a_j),$$ where ${\rm diag}(a_1,\dots , a_n)$ is a diagonal matrix similar to $\nu$.
\end{definition}

The latter definitions are motivated by the following standard result:

\begin{lemma}\label{simclassification}
Two elements $\nu_1,\nu_2\in\cS$ are similar precisely when ${\rm disc}(\nu_1) = {\rm disc}(\nu_2)$ and ${\rm Hasse}(\nu_1) = {\rm Hasse}(\nu_2)$.
\end{lemma}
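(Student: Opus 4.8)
The plan is to prove the two implications separately. The forward implication reduces to checking that $\disc$ and $\Hasse$ are well defined on similarity classes and are invariant under the $G$-action. For the discriminant this is immediate from $\det({}^tg\,\nu\,g)=(\det g)^2\det\nu$, so that $\det\nu$ is well defined modulo $(F^\times)^2$. For the Hasse invariant the only real point is that ${\rm Hasse}(\nu)$, defined via a choice of diagonal matrix ${\rm diag}(a_1,\dots,a_n)$ similar to $\nu$, does not depend on that choice: this is the standard consequence of Witt's chain-equivalence theorem — any two diagonalizations of a given nondegenerate symmetric form over $F$ are linked by a chain of elementary binary modifications — together with the symmetry and bimultiplicativity of the Hilbert symbol and the relation ${\rm Hilbert}(a,1-a)=1$ (see Serre, \emph{A Course in Arithmetic}, Ch.~IV, or Lam, \emph{Introduction to Quadratic Forms over Fields}). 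Once well-definedness is known, invariance is automatic, since $\nu$ and ${}^tg\,\nu\,g$ admit a common diagonalization. When $F$ is a finite field of odd characteristic every Hilbert symbol equals $1$, so the Hasse condition is vacuous and the statement there reduces to the classical fact that rank and discriminant classify quadratic forms over $F$; the induction below covers this case too.

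For the converse, suppose $\nu_1,\nu_2\in\cS$ of the same size $n$ satisfy $\disc(\nu_1)=\disc(\nu_2)$ and $\Hasse(\nu_1)=\Hasse(\nu_2)$, and let $q_i$ be the quadratic form $x\mapsto {}^tx\,\nu_i\,x$; I would show $q_1\cong q_2$, equivalently $\nu_1\sim\nu_2$, by induction on $n$. For $n=1$ the forms are $\langle a_1\rangle$ and $\langle a_2\rangle$ with $a_1\equiv a_2\bmod(F^\times)^2$, hence similar. For $n\ge 2$ the crucial step is to produce a scalar $c\in F^\times$ represented by both $q_1$ and $q_2$; granting such a $c$, one may write $q_i\cong\langle c\rangle\perp q_i'$ with $q_i'$ nondegenerate of rank $n-1$, since a represented nonzero value splits off as an orthogonal summand. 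From $\disc(q_i)=c\cdot\disc(q_i')$ in $F^\times/(F^\times)^2$ and $\disc(q_1)=\disc(q_2)$ one obtains $\disc(q_1')=\disc(q_2')$; and the orthogonal-sum formula expresses $\Hasse(\langle c\rangle\perp q_i')$ as $\Hasse(q_i')$ times a factor depending only on $c$ and $\disc(q_i')$, so combining $\Hasse(q_1)=\Hasse(q_2)$ with the equality of discriminants just obtained yields $\Hasse(q_1')=\Hasse(q_2')$. By the inductive hypothesis $q_1'\cong q_2'$, hence $q_1\cong q_2$.

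The hard part is the existence of the common represented value $c$ when $n\ge 2$; this is the one place where the arithmetic of $F$ is really used, the rest being bookkeeping with the Hilbert symbol. Over a finite field every nondegenerate form of rank $\ge 2$ is universal, so any $c$ works. Over a $p$-adic field one invokes the local structure of quadratic forms: a nondegenerate form of rank $\ge 3$ over $F$ represents all of $F^\times$ with at most one exceptional square class (and for rank $\ge 4$ there is no exception at all, the rank-$4$ anisotropic form being the reduced-norm form of the quaternion division algebra, which is surjective onto $F^\times$), so since $[F^\times:(F^\times)^2]=4$ the value sets of $q_1$ and $q_2$ necessarily overlap; while for $n=2$ the value set of a binary form is a coset of a norm group determined by its discriminant, and the hypothesis that $q_1$ and $q_2$ have both the same discriminant \emph{and} the same Hasse invariant forces these two cosets to coincide — this being exactly the classical classification of binary forms. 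I would therefore present the induction above and, for this single arithmetic input, cite the local classification of quadratic forms (Serre, Ch.~IV, Thm.~7, together with the preceding results on representation of elements; or Lam; or Scharlau, \emph{Quadratic and Hermitian Forms}), noting that the finite-field case is the degenerate instance in which rank and discriminant already suffice.
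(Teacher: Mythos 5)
Your proof is correct, but note that the paper itself gives no argument for this lemma at all: it is recorded in \S\ref{sec:quadraticspaces} as a ``standard result'' recalling the classical classification of nondegenerate quadratic forms over finite and $p$-adic fields, exactly the material you cite from Serre and Lam. So what you have done is supply the standard textbook proof that the paper implicitly defers to: invariance of $\disc$ and $\Hasse$ (the latter via Witt chain equivalence), then induction on $n$ by splitting off a common represented value $c$, with the representation theorems over local fields ($n\ge 3$ misses at most one square class out of four; the anisotropic quaternary norm form is universal) and universality over finite fields supplying the arithmetic input. Two small remarks. First, in the $n=2$ $p$-adic case your appeal to ``the classical classification of binary forms'' is essentially the case being proved; to keep the argument self-contained you can instead note that the value set of $\langle a,b\rangle$ with $ab\equiv d$ is $a\cdot N_{F(\sqrt{-d})/F}(F(\sqrt{-d})^\times)$ and that equality of Hasse invariants gives ${\rm Hilbert}(a_1a_2,-d)=1$, i.e.\ $a_1/a_2$ is a norm, so the two cosets coincide directly. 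Second, be aware that the paper's $\Hasse$ is $\prod_{i\le j}$, which differs from the more common $\prod_{i<j}$ by ${\rm Hilbert}(-1,\disc)$; this is harmless for your induction, since after splitting off $\langle c\rangle$ the extra factor is $(c,c)\,(c,\disc q_i')$ (resp.\ $(c,\disc q_i')$), depending only on $c$ and the already-matched discriminants, so $\Hasse(q_1')=\Hasse(q_2')$ follows under either convention, as the paper's own remark about the two variants anticipates.
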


Unfortunately, the literature is not consistent in its use of the terminology ``discriminant'' or ``Hasse invariant.''  
The ``discriminant'' of $\nu\in \cS$ is sometimes defined as 
$${\rm disc}_0 (\nu) = (-1)^{n(n-1)/2}\disc (\nu) = \det (J_n) \disc (\nu).$$
The ``Hasse invariant'' is sometimes defined as above except that the product is over $i<j$ instead of $i\le j$, that is,
$${\rm Hasse}_0(\eta) = \prod_{i< j} {\rm Hilbert}(a_i,a_j).$$ 
We have $$\Hasse (\eta) 
 =  \Hasse_0 (\eta)\prod_{i=1}^n (a_i,a_i) 
 = \Hasse_0 (\eta)\cdot  {\rm Hilbert} (-1,\disc \eta) .$$
Note that Lemma \ref{simclassification} would still be true regardless of which variants of the definitions one chooses.
More precisely, if $\nu_1,\nu_2\in \cS$ then the statement ``$\nu_1$ and $\nu_2$ have the same discriminant'' is independent of which of the two definitions of ``discriminant'' one chooses.  Moreover, if $\nu_1$ and $\nu_2$ have the same discriminant then the statement 
``$\nu_1$ and $\nu_2$ have the same Hasse invariant'' is independent of which of the two definitions of ``Hasse invariant'' one chooses.  (Therefore, to avoid confusion, we try to make such comparative statements rather than referring directly to the discriminant or Hasse invariant of an element of $\cS$.)

Of the many identities regarding Hilbert symbols and Hasse invariants, we emphasize the following fact:  
\begin{quote}
If $F$ is is finite extension of $\Q_p$ with $p$ odd and $\nu \in \cS\cap \GL_n (\gO_F)$ then ${\rm Hasse}(\nu) = {\rm Hasse}_0 (\nu)=1$.
\end{quote}
(See Lemma 5.9 \cite{HL1}.)

Suppose $F$ is $p$-adic.  Then $F^\times/(F^\times)^2$ has order four (since $p\ne 2$) and thus there are four possibilities for the discriminant of an element of $\cS$.  On the other hand, there are two possibilities for the Hasse invariant.  Therefore, if $\nu\in \cS$ there are eight possibilities for $({\rm disc}(\nu),{\rm Hasse}(\nu))$.  Hence, there are at most eight $G$-orbits (or, in other words, similiarity classes) in $\cS$.  Using diagonal matrices, it is easy to see that there are exactly eight $G$-orbits in $\cS$ when $n>2$, but there are only seven orbits when $n=2$ since in this case if $\disc (\nu) = \disc (J_n)$ then $\nu$ is similar to $J_n$.

When $F$ is finite, $F^\times /(F^\times)^2$ has two elements and  the Hasse invariant is always trivial.  It turns out that there are always two $G$-orbits in $\cS$ and they are characterized by the discriminant.

\begin{definition}
An \textbf{orthogonal involution of $G$} is an $F$-automorphism of $\GL_n$ defined by 
$$\theta_\nu (g) = \nu^{-1}\cdot {}^tg^{-1} \cdot \nu,$$
for some $\nu\in G$.
\end{definition}

Given $\nu_1, \nu_2\in \cS$, it is elementary to see that $\theta_{\nu_1} = \theta_{\nu_2}$ precisely when $\nu_1 Z = \nu_2 Z$, where $Z$ is the center of $G$.
So $\nu Z\mapsto \theta_\nu$ determines a bijection between the set $\cS/Z$ and the set of orthogonal involutions of $G$.
 
The group $G$ acts on the set of its orthogonal involutions by:
$$g\cdot \theta = {\rm Int}(g)\circ \theta\circ {\rm Int}(g^{-1}).$$
We have
$$g\cdot \theta_\nu = \theta_{{}^t g^{-1} \nu g^{-1}}.$$
The (right) action of $G$ on $\cS$ yields a right action of $G$ on $\cS/Z$.

\begin{lemma} The map that sends the $G$-orbit of $\theta_\nu$ to the $G$-orbit of $\nu Z$ defines a bijection between the set of $G$-orbits of orthogonal involutions and the set of $G$-orbits in $\cS/Z$.
\end{lemma}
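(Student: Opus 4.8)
The plan is to deduce the lemma from the bijection established just above, between $\cS/Z$ and the set of orthogonal involutions of $G$, given by $\Phi\colon \nu Z\mapsto\theta_\nu$, by showing that $\Phi$ intertwines the right action of $G$ on $\cS/Z$ with the action of $G$ on orthogonal involutions, so that it descends to a bijection of orbit spaces. Everything needed has already been recorded, so the argument is purely formal.

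First I would check the equivariance. Since $Z$ is central, the right action $\nu\cdot g={}^t g\,\nu\,g$ on $\cS$ passes to a right action on $\cS/Z$, and one has the identity ${}^t g^{-1}\,\nu\,g^{-1}=\nu\cdot g^{-1}$. Combined with the formula $g\cdot\theta_\nu=\theta_{{}^t g^{-1}\nu g^{-1}}$ recorded above, this gives $g\cdot\Phi(\nu Z)=g\cdot\theta_\nu=\theta_{\nu\cdot g^{-1}}=\Phi\bigl((\nu Z)\cdot g^{-1}\bigr)$ for all $g\in G$ and $\nu\in\cS$. Consequently, as $g$ runs over $G$ the cosets $(\nu Z)\cdot g^{-1}$ run over the $G$-orbit of $\nu Z$, and $\Phi$ carries these bijectively onto the involutions $g\cdot\theta_\nu$, i.e.\ onto the $G$-orbit of $\theta_\nu$. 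Thus $\Phi$ restricts to a bijection from each $G$-orbit in $\cS/Z$ onto a $G$-orbit of orthogonal involutions.

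From this I would conclude as follows. The assignment ``$G$-orbit of $\nu Z$'' $\mapsto$ ``$G$-orbit of $\theta_\nu$'' is well defined and surjective by the previous paragraph. For injectivity, if $\theta_{\nu_1}$ and $\theta_{\nu_2}$ lie in a common $G$-orbit, say $\theta_{\nu_2}=g\cdot\theta_{\nu_1}$, then $\theta_{\nu_2}=\Phi\bigl((\nu_1 Z)\cdot g^{-1}\bigr)$, so injectivity of $\Phi$ forces $\nu_2 Z=(\nu_1 Z)\cdot g^{-1}$; hence $\nu_1 Z$ and $\nu_2 Z$ lie in a common $G$-orbit. Therefore the inverse assignment, sending the $G$-orbit of $\theta_\nu$ to the $G$-orbit of $\nu Z$, is a well-defined bijection, which is exactly the statement of the lemma. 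There is no genuine obstacle here: the only point requiring any care is the bookkeeping of the passage between the right action on $\cS/Z$ and the action on involutions via $g\mapsto g^{-1}$, and all the substantive content — that $\theta_{\nu_1}=\theta_{\nu_2}$ if and only if $\nu_1Z=\nu_2Z$, and the transformation formula for $g\cdot\theta_\nu$ — was established in the preceding discussion.
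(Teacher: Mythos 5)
Your proposal is correct and follows exactly the route the paper intends: the paper states the lemma without proof, regarding it as immediate from the bijection $\nu Z\mapsto\theta_\nu$ and the formula $g\cdot\theta_\nu=\theta_{{}^t g^{-1}\nu g^{-1}}$ recorded just before it, and your argument is precisely the careful verification (equivariance up to $g\mapsto g^{-1}$ and descent to orbit spaces) of that claim.
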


Suppose  $F$ is finite.  If $n$ is odd then the two $G$-orbits in $\cS$ merge into a single $G$-orbit in $\cS/Z$, but when $n$ is even they remain separate when $n$ is even.  We obtain:

\begin{lemma}\label{finiteGorbs}
Suppose $F$ is finite.
If $n$ is odd there is a single $G$-orbit of orthogonal involutions.  If $n$ is even there are two $G$-orbits.
\end{lemma}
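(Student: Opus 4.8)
The plan is to reduce the statement to what is already known about the $G$-orbits in $\cS$ and then to track carefully what happens upon passing to the quotient $\cS/Z$.

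First I would recall, from the discussion preceding the statement, that over a finite field of odd characteristic the Hilbert symbol is identically $1$, hence the Hasse invariant is trivial on all of $\cS$; by Lemma~\ref{simclassification} two elements of $\cS$ are then similar precisely when they have the same discriminant. Since $F^\times/(F^\times)^2$ has order two, this gives exactly two $G$-orbits in $\cS$, distinguished by $\disc$. Next, using the $G$-equivariant bijection $\nu Z\mapsto \theta_\nu$ between $\cS/Z$ and the set of orthogonal involutions of $G$, the set of $G$-orbits of orthogonal involutions is identified with the set of $G$-orbits in $\cS/Z$. Because the projection $\cS\to\cS/Z$ is surjective and $G$-equivariant, the image of each $G$-orbit in $\cS$ is a $G$-orbit in $\cS/Z$, and these two images exhaust $\cS/Z$. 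So there are at most two $G$-orbits in $\cS/Z$, and the only remaining question is whether the images of the two discriminant classes coincide.

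This I would settle by analyzing the effect of the center. The center $Z$ consists of the scalar matrices $zI$ with $z\in F^\times$, and $\det(z\nu)=z^n\det\nu$. If $n$ is odd, then for $z$ a non-square one has $z^n\equiv z\pmod{(F^\times)^2}$, so $z\nu$ and $\nu$ lie in different discriminant classes of $\cS$; since $z\nu\in\nu Z$, the two discriminant classes map to a single class in $\cS/Z$, and hence there is exactly one $G$-orbit of orthogonal involutions, which proves the odd case. If $n$ is even, then $z^n=(z^{n/2})^2\in(F^\times)^2$, so $\disc(z\nu)=\disc(\nu)$ for every $z\in F^\times$; thus $\disc$ descends to a well-defined function on $\cS/Z$, and it is $G$-invariant because $\disc({}^tg\,\nu\,g)=(\det g)^2\disc\nu=\disc\nu$. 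Therefore the two discriminant classes remain distinct in $\cS/Z$, giving exactly two $G$-orbits, which proves the even case.

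The whole argument is elementary and I do not anticipate any real obstacle; the only point deserving a moment's care is the verification in the even case that $\disc$ is genuinely well-defined and $G$-invariant on $\cS/Z$ (so that the two orbits are truly distinct, not merely a priori so), together with the complementary observation in the odd case that scaling by a non-square actually realizes the other discriminant class rather than fixing it. Both reduce to the parity of $n$ in the exponent of $z$ in $\det(z\nu)=z^n\det\nu$.
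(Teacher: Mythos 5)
Your proof is correct and follows essentially the same route as the paper: two similarity classes in $\cS$ distinguished by the discriminant (Hasse invariant being trivial over a finite field), transferred to $\cS/Z$ via the bijection with orthogonal involutions, with the classes merging for odd $n$ (since $z^n\equiv z$ mod squares) and remaining separate for even $n$ (since $\disc$ descends to a $G$-invariant on $\cS/Z$). The paper gives only this brief observation before the lemma, and your write-up simply fills in the same details.
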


When $F$ is $p$-adic, we have:

\begin{proposition}\label{orthinvGorbs} Suppose $F$ is $p$-adic.  If $n>1$ is odd then there are precisely two $G$-orbits of orthogonal involutions of $G$.  If two elements of $\cS$ have the same discriminant but different Hasse invariants then their cosets in $\cS/Z$ lie in different $G$-orbits.

If $n\ge 2$ is even then the discriminant determines a map from $\cS/Z$ onto $F^\times /(F^\times)^2$.
If $n=2$ then there are four $G$-orbits in $\cS/Z$ and they are precisely the fibers of the discriminant map $\cS/Z \to F^\times/(F^\times)^2$.
If $n>2$ and even then there are five $G$-orbits in $\cS/Z$.  Three of these orbits are fibers of the discriminant map $\cS/Z \to F^\times/(F^\times)^2$.
But the fiber of the discriminant of the matrix $$J=
\begin{pmatrix}
& &1\\
&\raisebox{-.1ex}{.} \cdot \raisebox{1.2ex}{.}&\\
1&&
\end{pmatrix}$$ breaks up into two orbits and these orbits are distinguished from each other by the Hasse invariant.
\end{proposition}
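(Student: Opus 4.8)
The plan is to reduce the statement to counting the orbits of a group of order four acting on the finitely many similarity classes in $\cS$, and then to read off the three regimes from the way the discriminant and the Hasse invariant transform under scaling.

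First I would invoke the lemma immediately preceding the proposition, which identifies $G$-orbits of orthogonal involutions of $G$ with $G$-orbits in $\cS/Z$; so it suffices to enumerate the latter. Since $Z$ is the group of scalar matrices, the projection $\cS\to\cS/Z$ is $G$-equivariant and surjective, and a short coset chase shows that $\nu_1 Z$ and $\nu_2 Z$ lie in the same $G$-orbit exactly when $\nu_2$ is similar to $w\nu_1$ for some $w\in F^\times$. Thus $F^\times$ acts on $\cS$ by $\nu\mapsto w\nu$, commuting with the $G$-action; this descends to an action of $F^\times/(F^\times)^2$ --- a group of order four, as $p$ is odd --- on the set of similarity classes in $\cS$, and the $G$-orbits in $\cS/Z$ are precisely the orbits of $F^\times/(F^\times)^2$ on that set. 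By Lemma \ref{simclassification} a similarity class $\cS_\nu$ is labelled by the pair $({\rm disc}(\nu),{\rm Hasse}(\nu))$, so I need to know how these invariants change under $\nu\mapsto w\nu$.

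The discriminant is immediate: ${\rm disc}(w\nu)=w^n\,{\rm disc}(\nu)$. The one genuine computation is the effect on the Hasse invariant: writing $\nu\sim{\rm diag}(a_1,\dots,a_n)$ and expanding $\prod_{i\le j}{\rm Hilbert}(wa_i,wa_j)$ by bilinearity of the Hilbert symbol together with the identity ${\rm Hilbert}(x,x)={\rm Hilbert}(x,-1)$, one gets ${\rm Hasse}(w\nu)={\rm Hasse}(\nu)\,{\rm Hilbert}(w,c(\nu))$ for a constant $c(\nu)\in F^\times/(F^\times)^2$, and for $n$ even $c(\nu)\equiv{\rm disc}_0(\nu)=\det(J_n)\,{\rm disc}(\nu)\pmod{(F^\times)^2}$; in particular $c(\nu)$ is trivial precisely when ${\rm disc}(\nu)={\rm disc}(J_n)$. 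I expect this Hilbert-symbol bookkeeping --- matching the exponents to the paper's $i\le j$ convention --- to be the only delicate point; everything afterwards is counting.

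With these two formulas in hand the cases separate. If $n$ is odd, then ${\rm disc}(w\nu)=w\,{\rm disc}(\nu)$, so the stabilizer in $F^\times/(F^\times)^2$ of any similarity class is trivial; hence the action is free on the eight classes and there are $8/4=2$ orbits in $\cS/Z$, and two classes with equal discriminant are related only by a square, so they also have equal Hasse invariant --- whence equal discriminant together with distinct Hasse invariants forces distinct $G$-orbits, as claimed. If $n$ is even, then ${\rm disc}(w\nu)={\rm disc}(\nu)$, so the discriminant descends to a map $\cS/Z\to F^\times/(F^\times)^2$, surjective because diagonal matrices realize every discriminant, and the stabilizer of a class $\cS_\nu$ is the kernel of ${\rm Hilbert}(\,\cdot\,,{\rm disc}_0(\nu))$: this is all of $F^\times/(F^\times)^2$ when ${\rm disc}(\nu)={\rm disc}(J_n)$, so $\cS_\nu$ is a fixed point, and it has index two otherwise, so $\cS_\nu$ lies in a two-element orbit that fuses the two classes of that discriminant. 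For $n>2$ there are two classes of discriminant ${\rm disc}(J_n)$, distinguished by the Hasse invariant and each fixed, together with three other discriminants whose two classes each fuse to a single orbit: this gives $2+3=5$ orbits, the three fused ones being exactly the fibers of the discriminant map over the discriminants other than ${\rm disc}(J_n)$, while the ${\rm disc}(J_n)$-fiber breaks into two orbits distinguished by the Hasse invariant. For $n=2$ the ${\rm disc}(J_2)$-fiber contains only the class of $J_2$ (a binary form of that discriminant being hyperbolic), so one gets $1+3=4$ orbits, namely the four fibers of the discriminant map. This recovers every assertion of the proposition.
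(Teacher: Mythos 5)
Your proposal is correct, and in fact it supplies an argument where the paper offers none: Proposition \ref{orthinvGorbs} sits in a section that explicitly ``recalls well known facts'' about quadratic spaces, and the paper states it without proof. Your reduction is the natural one: identifying $G$-orbits in $\cS/Z$ with orbits of $F^\times/(F^\times)^2$ (order four, $p$ odd) acting by scaling on the finitely many similarity classes, which the paper's Lemma \ref{simclassification} labels by $(\disc,\Hasse)$. Your one computation checks out: with the paper's $i\le j$ convention one gets $\Hasse(w\nu)=\Hasse(\nu)\cdot{\rm Hilbert}\bigl(w,(-1)^{n(n+1)/2}(\det\nu)^{n+1}\bigr)$, which for $n$ even is $\Hasse(\nu)\cdot{\rm Hilbert}(w,\disc_0(\nu))$ exactly as you claim (and the same formula holds for the $i<j$ convention, so the convention issue you flag is harmless); for $n$ odd you do not even need it, since the discriminant already forces the action to be free on the eight classes, giving $8/4=2$ orbits and the separation of classes with equal discriminant and distinct Hasse invariant. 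The even case then falls out of the stabilizer computation (trivial $\disc_0$ gives fixed classes, nontrivial $\disc_0$ fuses the two classes of that discriminant), with the $n=2$ count of $7$ classes and the hyperbolicity of a binary form of discriminant $\disc(J_2)$ handling the four-orbit statement. This matches the inventory of $G$-orbits of involutions used elsewhere in the paper (e.g.\ in \S\ref{sec:pureinner} and Proposition \ref{TorbGorb}), so no gap remains.
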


Let $\bG$ be the $F$-group $\GL_n$.
If $\theta$ is an orthogonal involution of $G$ then we let $\bG^\theta$ denote the $F$-group consisting of the the fixed points of $\theta$ in $\bG$.  We let $G^\theta = \bG^\theta (F)$ denote the group of fixed points of $\theta$ in $G$.

\begin{lemma}
Suppose $\theta_1$ and $\theta_2$ are orthogonal involutions of $G = \GL_n(F)$ such that $G^{\theta_1} = G^{\theta_2}$.  Then $\theta_1 = \theta_2$.  Consequently, $\theta\mapsto G^\theta$ leads to a bijection between the set of $G$-orbits of orthogonal involutions of $G$ and the set of $G$-conjugacy classes of orthogonal groups in $n$ variables in $G$.
\end{lemma}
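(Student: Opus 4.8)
The plan is to reduce everything to the statement that an orthogonal group $G_\nu\subseteq G$ determines the symmetric matrix $\nu$ up to a scalar. Write $\theta_1=\theta_{\nu_1}$, $\theta_2=\theta_{\nu_2}$ with $\nu_1,\nu_2\in\cS$, so that $G^{\theta_i}=G_{\nu_i}$. As observed earlier in this section, $\theta_{\nu_1}(g)=\theta_{\nu_2}(g)$ for a given $g$ exactly when $\nu_2\nu_1^{-1}$ commutes with ${}^tg$, so $\theta_{\nu_1}=\theta_{\nu_2}$ if and only if $\nu_2\nu_1^{-1}$ is central, i.e. $\nu_1 Z=\nu_2 Z$. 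Hence it suffices to prove: if $G_{\nu_1}=G_{\nu_2}$ then $b:=\nu_2\nu_1^{-1}$ is a scalar matrix.

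To see this I would use reflections. For a $\nu_1$-anisotropic vector $v\in F^n$ (that is, ${}^tv\,\nu_1\,v\ne 0$) the reflection $r^{(1)}_v=I-\tfrac{2}{{}^tv\,\nu_1\,v}\,v\,{}^tv\,\nu_1$ lies in $G_{\nu_1}=G_{\nu_2}$. It is an involution of $F^n$ whose $(-1)$-eigenspace is the line $Fv$ and whose $(+1)$-eigenspace is the hyperplane $v^{\perp_1}$. The two eigenspaces of any involution contained in an orthogonal group are orthogonal for the corresponding form, so $v^{\perp_1}$ is contained in, hence (by dimension) equal to, the $\nu_2$-orthogonal hyperplane $v^{\perp_2}$; in particular $v\notin v^{\perp_2}$, so $v$ is also $\nu_2$-anisotropic, and $r^{(1)}_v$ and $r^{(2)}_v$ are involutions with the same $\pm 1$-eigenspaces, hence equal. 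Comparing the two matrix formulas for this common reflection forces the rows ${}^tv\,\nu_1$ and ${}^tv\,\nu_2$ to be proportional, i.e. $\nu_1 v$ and $\nu_2 v$ are proportional. Thus $b$ fixes the line $F\nu_1 v$, so every vector of the form $\nu_1 v$ with $v$ $\nu_1$-anisotropic is an eigenvector of $b$.

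Now the eigenvectors of a non-scalar matrix lie in the union of its at most $n$ eigenspaces, each a proper subspace of $F^n$. On the other hand the $\nu_1$-anisotropic vectors $v$, and hence the vectors $\nu_1 v$ since $\nu_1$ is invertible, form a Zariski-dense subset of $F^n$ when $F$ is $p$-adic (the complement of a proper hypersurface), and when $F$ is finite of odd characteristic they are, for $n\ge 3$, too numerous to lie in a union of $\le n$ proper subspaces, which I would confirm with the standard count of isotropic vectors of a nondegenerate form; the case $n=2$ is immediate because the space of skew-symmetric $2\times2$ matrices is a line, so $\mathfrak{so}(\nu_1)=\mathfrak{so}(\nu_2)$ already gives $\nu_1^{-1}\propto\nu_2^{-1}$. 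In all cases $b$ must be scalar, completing the reduction. (Over infinite $F$ one can equally well argue via Lie algebras: $G_{\nu_1}=G_{\nu_2}$ forces $\bG^{\theta_1}=\bG^{\theta_2}$ because rational points are dense, hence $\mathfrak{so}(\nu_1)=\mathfrak{so}(\nu_2)$, i.e. $\nu_1^{-1}\Lambda=\nu_2^{-1}\Lambda$ for $\Lambda$ the skew-symmetric matrices, so $b\Lambda=\Lambda$; inspecting the images $b(E_{ij}-E_{ji})$ shows $b$ is scalar when $n\ge3$.)

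For the final ``consequently'': the map $\theta\mapsto G^\theta$ sends orthogonal involutions of $G$ onto the orthogonal groups in $n$ variables in $G$ by definition of the latter, and it is constant on $G$-orbits since $G^{g\cdot\theta}=g\,G^\theta g^{-1}$ for all $g\in G$; so it induces a surjection from $G$-orbits of orthogonal involutions onto $G$-conjugacy classes of orthogonal groups. For injectivity, suppose $G^{\theta_1}$ and $G^{\theta_2}$ are $G$-conjugate, say $G^{\theta_2}=g\,G^{\theta_1}g^{-1}=G^{g\cdot\theta_1}$; then the lemma just proved gives $\theta_2=g\cdot\theta_1$, so $\theta_1$ and $\theta_2$ lie in the same $G$-orbit. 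The main obstacle is the first assertion, that $G_\nu$ determines $\nu$ up to scalars; the rest is bookkeeping about orbits.
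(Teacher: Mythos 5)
Your route is genuinely different from the paper's. The paper disposes of the lemma in a few lines by citing Helminck--Wang's Proposition 1.2 (two involutions of a connected semisimple group whose fixed-point groups have the same identity component coincide): from $G^{\theta_1}=G^{\theta_2}$ it passes to $\SL_n(F)^{\theta_1}=\SL_n(F)^{\theta_2}$, then to the algebraic groups $\bH^{\theta_1}=\bH^{\theta_2}$ by Zariski density of rational points, and concludes $\theta_1=\theta_2$. You instead prove the underlying linear-algebra fact directly: reflections at $\nu_1$-anisotropic vectors lie in $G_{\nu_1}=G_{\nu_2}$, the eigenspace-orthogonality observation shows each such $v$ is also $\nu_2$-anisotropic with the same reflection, and comparing the two rank-one formulas makes every anisotropic $v$ an eigenvector of $\nu_1^{-1}\nu_2$, which a density (or counting) argument then forces to be scalar. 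For $p$-adic $F$ -- the only case the paper's own proof literally covers, since it needs density of rational points -- your argument is complete, more elementary, and entirely free of algebraic-group input; the reduction ``$\theta_{\nu_1}=\theta_{\nu_2}$ iff $\nu_1Z=\nu_2Z$'' and the orbit bookkeeping for the ``consequently'' clause are also fine.

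Two caveats on the finite-field refinements. First, for $n\ge 3$ the assertion that the anisotropic vectors are ``too numerous to lie in a union of $\le n$ proper subspaces'' is false as stated when $q$ is small relative to $n$: the $q+1$ hyperplanes through a fixed codimension-two subspace already cover all of $\F_q^n$, so when $q+1\le n$ no count can exclude a cover by $n$ arbitrary proper subspaces. What rescues the argument is that your subspaces are eigenspaces of the single matrix $b$ for distinct eigenvalues, hence linearly independent; their dimensions sum to at most $n$ with each at most $n-1$, so their union has at most $q^{n-1}+q-1$ points, and the standard isotropic-vector count does beat this bound for all $n\ge 3$ and odd $q$. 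You need to invoke that independence explicitly. Second, your $n=2$ finite-field step does not work: equality of the finite groups $G_{\nu_1}=G_{\nu_2}$ gives no Lie algebra statement (there is nothing to differentiate), so ``$\mathfrak{so}(\nu_1)=\mathfrak{so}(\nu_2)$'' is only available over infinite $F$, where you do not need it. For finite $F$ and $n=2$ the counting argument fails only for the split form with $q=3$, where there are exactly $(q-1)^2=4$ anisotropic vectors lying on two lines; that single case requires a short direct check, for instance by computing the symmetric matrices invariant under $\mathrm{O}^+_2(\F_3)$.
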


\begin{proof}
We begin by recalling Proposition 1.2 \cite{HW}:
\begin{quote}
Let $\bG$ be a connected semi-simple algebraic group and $\theta_1$ and $\theta_2$ be involutions of $\bG$.  If the identity components of the fixed point groups $\bG^{\theta_1}$ and $\bG^{\theta_2}$ are identical, then $\theta_1 =\theta_2$. 
\end{quote}
Now suppose, under the hypotheses of our lemma, $G^{\theta_1} = G^{\theta_2}$.  Then $H^{\theta_1} = H^{\theta_2}$, where $H = {\rm SL}_n (F)$.  This implies $\bH^{\theta_1} = \bH^{\theta_2}$, where $\bH = {\rm SL}_n$.  This implies $\theta_1 | {\rm SL}_n = \theta_2 |{\rm SL}_n$, which implies $\theta_1 = \theta_2$.
\end{proof}

\section{Orthogonal similitudes}

\subsection{Generalities}\label{sec:generalities}

In this section, $G$ will denote a general linear group $\GL_n (F)$, with $n>1$, where $F$ is a field whose characteristic is not 2.  As in  \S\ref{sec:quadraticspaces}, we really only need to study  the case in which $F$ is either a finite field or a finite extension of some $p$-adic field $\Q_p$, but our initial results hold for arbitrary $F$.

We are interested in special cases of a very general and basic  problem involving orthogonal similitude groups.  Fix a symmetric matrix $\nu\in G$ and define an orthogonal involution $\theta$ by
$\theta (g) = \nu^{-1}\, {}^t g^{-1}\, \nu$.  As usual, $G^\theta$ will denote the orthogonal group comprised of the fixed points of $\theta$ in $G$ and, in addition, $G_\theta$ will denote the orthogonal similitude group.  Thus $G_\theta$ consists of the elements $g\in G$ such that  $g\theta (g)^{-1}$ lies in the center $Z$ of $G$.
The similitude ratio defines a homomorphism $\mu : G_\theta \to Z : g\mapsto g\theta (g)^{-1}$ and yields exact sequences
$$1\to G^\theta\to G_\theta \to \mu (G_\theta)\to 1$$  and
$$1\to G^\theta /Z^\theta \to G_\theta /Z\to \mu (G_\theta)/\mu (Z)\to 1$$
and isomorphisms
\begin{eqnarray*}
G_\theta/G^\theta &\cong& \mu (G_\theta),\\
G_\theta /(ZG^\theta) &\cong& \mu (G_\theta)/\mu (Z).
\end{eqnarray*}
We are interested in special cases of the following:

\begin{problem}
Given a subgroup $H$ of $G$, compute the abelian group $$G_\theta /((H\cap G_\theta)ZG^\theta).$$
\end{problem}

We actually only need to know the order of the latter groups or, in other words, the constants
$$m_H (\theta ) := [G_\theta : (H\cap G_\theta)ZG^\theta].$$
Since the similitude ratio gives an isomorphism
$$G_\theta /((H\cap G_\theta)ZG^\theta)\cong \mu (G_\theta)/ \mu ((H\cap G_\theta)Z),$$
we have
$$m_H (\theta ) = [\mu (G_\theta) : \mu ((H\cap G_\theta)Z)].$$

There is one special class of examples of particular interest.  Suppose $E$ is a degree $n$ extension of $F$ that is embedded in $\M_n (F)$ via an $F$-linear embedding.  Let $T$ be the torus $E^\times$ in $G$ and assume that $T$ is $\theta$-split in the sense that $\theta (t) = t^{-1}$, for all $t\in T$.  
In this situation, we are interested in computing $m_T(\theta)$.
It will turn out that $m_T(\theta)$ depends on the number of quadratic extensions of $F$ that are contained in $E$.

\begin{definition}
The involution $\theta$ is a \textbf{distinguishing involution} if there exists a degree $n$ field extension $E$ of $F$ that is embedded in $\M_n (F)$ via an $F$-embedding such that the torus $T = E^\times$ is $\theta$-split.
\end{definition}

Roughly speaking, the involutions that are not distinguishing involutions do not contribute to the theory of distinguished representations of $G$ (at least in the setting we consider).  Thus we assume that our involution $\theta$ is a distinguishing involution  and we assume that 
 we have fixed $E$ and $T$ as just described.

If $F'$ is any extension of $F$, we will define $$y_{F'/F} = 1+\# \{ \text{quadratic extensions of $F$ contained in $F'$}\}.$$
In particular, taking $\overline{F}$ to be an algebraic closure of $F$, we have
$$y_{\overline{F}/F} = [F^\times :(F^\times)^2],$$ since a nontrivial square class $x(F^\times)^2$ corresponds to the quadratic extension $F[\sqrt{x}]$.  Similarly, for arbitrary $F'$ we have 
$$y_{F'/F} = [(F'^\times)^2\cap F^\times  :(F^\times)^2].$$

We now enumerate some elementary facts that apply to general $F$:

\begin{lemma}\label{mTbasics}
\begin{enumerate}
\item $\mu (Z) = Z^2 = (F^\times)^2$.
\item $\mu (T\cap G_\theta) = (E^\times)^2 \cap F^\times$.
\item $m_H (\theta)$ is a divisor of $y_{\overline{F}/F}$, for all $H$.
\item If $H_1\subset H_2$ then $m_{H_2}(\theta)$ is a divisor of $m_{H_1}(\theta)$.
\item $m_Z(\theta) = y_{E/F}\cdot m_T(\theta)$.
\end{enumerate}
\end{lemma}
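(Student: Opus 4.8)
The plan is to reduce everything to statements about the image of the similitude ratio $\mu : G_\theta \to Z$, using the exact sequences and isomorphisms recorded just before the lemma; in particular I will use throughout that $m_H(\theta) = [\mu(G_\theta) : \mu((H\cap G_\theta)Z)]$ and that $\mu$ is a group homomorphism, so $\mu((H\cap G_\theta)Z) = \mu(H\cap G_\theta)\,\mu(Z)$. Here $Z$ is identified with $F^\times$ via scalar matrices, and $F$ is identified with its image $F\cdot I = Z$ inside the embedded field $E$.

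First I would establish (1) and (2), which are direct computations. For $z\in Z$ one has ${}^tz = z$ and $\theta(z) = \nu^{-1}z^{-1}\nu = z^{-1}$, hence $\mu(z) = z\theta(z)^{-1} = z^2$; thus $Z\subseteq G_\theta$ and $\mu(Z) = Z^2 = (F^\times)^2$, which is (1). For $t\in T = E^\times$, the $\theta$-split hypothesis gives $\theta(t) = t^{-1}$, so $\mu(t) = t^2$, and $t$ lies in $G_\theta$ exactly when $t^2$ is a scalar matrix, i.e. when $t^2\in F^\times$; hence $T\cap G_\theta = \{t\in E^\times : t^2\in F^\times\}$ and $\mu(T\cap G_\theta) = (E^\times)^2\cap F^\times$, which is (2). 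I record for later that $(F^\times)^2\subseteq (E^\times)^2\cap F^\times\subseteq \mu(G_\theta)\subseteq F^\times$.

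Parts (3) and (4) are then formal consequences of the tower law for subgroup indices. Since $(F^\times)^2 = \mu(Z)\subseteq \mu((H\cap G_\theta)Z)\subseteq \mu(G_\theta)\subseteq F^\times$, the index $m_H(\theta) = [\mu(G_\theta):\mu((H\cap G_\theta)Z)]$ divides $[F^\times:(F^\times)^2] = y_{\overline{F}/F}$, giving (3). If $H_1\subseteq H_2$ then $\mu((H_1\cap G_\theta)Z)\subseteq \mu((H_2\cap G_\theta)Z)$ inside $\mu(G_\theta)$, and the tower law gives $m_{H_1}(\theta) = m_{H_2}(\theta)\cdot [\mu((H_2\cap G_\theta)Z):\mu((H_1\cap G_\theta)Z)]$, whence (4).

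Finally, for (5) I would specialize to $H = Z$ and $H = T$. Because $Z\subseteq G_\theta$, we have $(Z\cap G_\theta)Z = Z$, so $m_Z(\theta) = [\mu(G_\theta):(F^\times)^2]$. On the other hand $\mu((T\cap G_\theta)Z) = \mu(T\cap G_\theta)\,\mu(Z) = ((E^\times)^2\cap F^\times)(F^\times)^2 = (E^\times)^2\cap F^\times$ by (1), (2) and the containment $(F^\times)^2\subseteq (E^\times)^2\cap F^\times$, so $m_T(\theta) = [\mu(G_\theta):(E^\times)^2\cap F^\times]$. The tower $(F^\times)^2\subseteq (E^\times)^2\cap F^\times\subseteq \mu(G_\theta)$ then yields
$$m_Z(\theta) = [\mu(G_\theta):(E^\times)^2\cap F^\times]\cdot [(E^\times)^2\cap F^\times:(F^\times)^2] = m_T(\theta)\cdot [(E^\times)^2\cap F^\times:(F^\times)^2],$$
and the last index equals $y_{E/F}$ by the identity $y_{F'/F} = [(F'^\times)^2\cap F^\times:(F^\times)^2]$ established above. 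There is no serious obstacle here; the only point requiring care is the consistent identification of $Z$ with $F^\times$ and of $F$ with its image inside the embedded field $E$, so that expressions like $(E^\times)^2\cap F^\times$ are unambiguous subgroups of $\mu(G_\theta)$.
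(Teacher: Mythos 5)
Your proof is correct, and it is exactly the verification the paper has in mind: the paper omits the argument entirely ("The proofs of the latter facts are obvious"), and the intended route is precisely the one you take, reducing each part to the similitude ratio via $m_H(\theta)=[\mu(G_\theta):\mu((H\cap G_\theta)Z)]$, computing $\mu(Z)=Z^2$ and $\mu(T\cap G_\theta)=(E^\times)^2\cap F^\times$ from $\theta(z)=z^{-1}$ and the $\theta$-split hypothesis, and then applying the tower law together with $y_{F'/F}=[(F'^{\times})^2\cap F^\times:(F^\times)^2]$. Your parenthetical care points (that $\mu$ is multiplicative because its values are central, and the identification of $Z$ with $F^\times$ inside $E$) are exactly the right ones, so there is nothing to add.
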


The proofs of the latter facts are obvious.  The next result is standard, but we include a proof for completeness.

\begin{lemma}\label{mTodd}
 If  $n$ is odd then $\mu ((H\cap G_\theta)Z) =Z^2 = (F^\times)^2$
and $m_H (\theta ) =1$ for all $H$.
\end{lemma}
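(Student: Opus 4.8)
The plan is to reduce both assertions to the single observation that, when $n$ is odd, the similitude ratio of any element of $G_\theta$ is automatically a square in $F^\times$. Granting this, the rest is formal: since $Z\subseteq (H\cap G_\theta)Z\subseteq G_\theta$ and $\mu(Z)=Z^2=(F^\times)^2$ by Lemma \ref{mTbasics}(1), we obtain the chain
$$(F^\times)^2=\mu(Z)\subseteq \mu((H\cap G_\theta)Z)\subseteq \mu(G_\theta)\subseteq (F^\times)^2,$$
so all of these groups coincide; in particular $\mu((H\cap G_\theta)Z)=(F^\times)^2=Z^2$. Then $m_H(\theta)=[\mu(G_\theta):\mu((H\cap G_\theta)Z)]=1$, using the formula for $m_H(\theta)$ recorded just before the statement.

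First I would fix a symmetric matrix $\nu\in G$ with $\theta=\theta_\nu$ and unwind the definition of $G_\theta$: for $g\in G_\theta$, write $\mu(g)=\lambda$, regarded as the scalar matrix $\lambda I\in Z$. The condition $g\,\theta(g)^{-1}=\lambda$ together with $\theta(g)=\nu^{-1}\,{}^tg^{-1}\,\nu$ is easily seen to be equivalent to the classical orthogonal similitude relation ${}^tg\,\nu\,g=\lambda\,\nu$. Taking determinants of this identity gives $(\det g)^2\det\nu=\lambda^{n}\det\nu$, and since $\nu$ is invertible, $(\det g)^2=\lambda^{n}$.

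The key step is then immediate. When $n$ is odd, write $n=2k+1$; from $\lambda^{n}=(\det g)^2$ we get $\lambda=\lambda^{n}\lambda^{-2k}=(\det g)^2\lambda^{-2k}=(\det g\cdot\lambda^{-k})^2\in(F^\times)^2$. This proves $\mu(G_\theta)\subseteq(F^\times)^2$, which is exactly the missing inclusion in the chain above, and completes the argument.

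I do not expect a genuine obstacle here: the computation is routine, and the only mild point of care is to verify the precise form of the similitude relation (whether one lands on ${}^tg\,\nu\,g=\lambda\nu$ or $\lambda^{-1}\nu$), but since $(F^\times)^2$ is closed under inversion this ambiguity does not affect anything. It is also worth noting that the proof uses neither the field $E$, the torus $T$, nor the distinguishing-involution hypothesis introduced just above — the statement holds for every orthogonal involution $\theta$ of $G=\GL_n(F)$ with $n$ odd and every subgroup $H$.
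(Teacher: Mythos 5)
Your proof is correct and is essentially the paper's argument: both amount to taking determinants in the similitude relation to get $\mu(g)^n=(\det g)^2\in Z^2$, using the oddness of $n$ to conclude $\mu(g)\in Z^2$, and then sandwiching $\mu((H\cap G_\theta)Z)$ between $\mu(Z)=Z^2$ and $\mu(G_\theta)\subseteq Z^2$. No substantive difference from the paper's proof.
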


\begin{proof}  If $z\in Z$ then $\det z = z^n \equiv z$ (mod $Z^2$).  In particular, if $g \in G_\theta$, we may take $z = \mu (g)$.  Since $\det \mu(g)\in Z^2$, we deduce that $\mu (g)\in Z^2$.  Thus, for all $H$, we have
$$Z^2\subseteq  \mu ((H\cap G_\theta)Z)  \subseteq \mu (G_\theta) \subseteq Z^2$$
which implies 
 $\mu ((H\cap G_\theta)Z) = Z^2$ 
and $m_H(\theta ) =1$.
\end{proof}

\begin{lemma}\label{mTJ}
If  $n$ even and $\theta = \theta_J$ is associated to 
$$J=
\begin{pmatrix}
& &1\\
&\raisebox{-.1ex}{.} \cdot \raisebox{1.2ex}{.}&\\
1&&
\end{pmatrix}$$ then  $\mu (G_\theta) = Z = F^\times$.  Therefore, $m_Z(\theta )= y_{\overline{F}/F}$ and  $m_T(\theta ) = y_{\overline{F}/F}/y_{E/F}$.
\end{lemma}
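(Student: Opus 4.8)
The plan is to reduce everything to the single assertion $\mu(G_\theta)=Z=F^\times$ and then read off the two displayed formulas from Lemma~\ref{mTbasics}. Granting $\mu(G_\theta)=F^\times$: since scalars are similitudes we have $Z\subseteq G_\theta$, so $(Z\cap G_\theta)ZG^\theta=ZG^\theta$, and using the isomorphism $G_\theta/(ZG^\theta)\cong\mu(G_\theta)/\mu(Z)$ recorded in \S\ref{sec:generalities} together with Lemma~\ref{mTbasics}(1),
\[
m_Z(\theta)=[G_\theta:ZG^\theta]=[\mu(G_\theta):\mu(Z)]=[F^\times:(F^\times)^2]=y_{\overline{F}/F}.
\]
Then Lemma~\ref{mTbasics}(5) gives $m_T(\theta)=m_Z(\theta)/y_{E/F}=y_{\overline{F}/F}/y_{E/F}$.

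So the real content is the surjectivity of $\mu$ onto $F^\times$. First I would note that $J=J^{-1}$ (it is the permutation matrix of an involution), whence $\theta_J(g)=J\,{}^tg^{-1}\,J$ and $\mu(g)=g\,\theta_J(g)^{-1}=g\,J\,{}^tg\,J$ for $g\in G_\theta$. The inclusion $\mu(G_\theta)\subseteq Z=F^\times$ is immediate from the definition of $G_\theta$. For the reverse inclusion I would exhibit explicit similitudes: for $z\in F^\times$ set
\[
g_z=\operatorname{diag}(\underbrace{z,\dots,z}_{n/2},\underbrace{1,\dots,1}_{n/2})\in G,
\]
which makes sense precisely because $n$ is even. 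Since the $(i,j)$ entry of $J$ is $1$ when $i+j=n+1$ and $0$ otherwise, a direct computation gives $(g_zJ\,{}^tg_z)_{ij}=(g_z)_{ii}\,(g_z)_{n+1-i,\,n+1-i}\,J_{ij}$, and because exactly one of the two indices $i$, $n+1-i$ lies in $\{1,\dots,n/2\}$, this product equals $z$ at every anti-diagonal position. Hence $g_zJ\,{}^tg_z=zJ$, so $g_z\in G_\theta$ and $\mu(g_z)=zJ^2=z$. This proves $F^\times\subseteq\mu(G_\theta)$, and the proof is complete.

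I do not expect any serious obstacle: the only points requiring care are correctly tracking the inverse and transpose in the definition of $\mu$ and the elementary bookkeeping in the identity $g_zJ\,{}^tg_z=zJ$. The one structural remark worth highlighting is where evenness of $n$ is used — the involution $i\mapsto n+1-i$ of $\{1,\dots,n\}$ is fixed-point-free exactly when $n$ is even, so each pair $\{i,n+1-i\}$ can be split by assigning $z$ to one index and $1$ to the other; for odd $n$ the middle index would be forced to satisfy $a^2=z$, which is the restriction already reflected in Lemma~\ref{mTodd}.
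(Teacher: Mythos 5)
Your proposal is correct and follows essentially the same route as the paper: the paper's entire proof consists of exhibiting the element $g_z=\operatorname{diag}(z,\dots,z,1,\dots,1)$ (with $n/2$ copies of $z$) and noting $\mu(g_z)=z$, with the formulas for $m_Z(\theta)$ and $m_T(\theta)$ then following from Lemma~\ref{mTbasics} exactly as you argue. Your verification that $g_zJ\,{}^tg_z=zJ$ and your remark on where evenness of $n$ enters simply make explicit what the paper leaves to the reader.
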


\begin{proof}  Our claim follows from the fact that if $z\in F^\times$ and 
$$g_z = {\rm diag}(z,\dots, z,1,\dots ,1),$$ where the first $n/2$ diagonal entries equal $z$, then $\mu (g_z) = z$.
\end{proof}

\subsection{Finite fields}

When $F$ is a finite field of odd characteristic, we have the following consequence of Lemma \ref{mTbasics}:

\begin{lemma}\label{mTfinite}
Suppose $q$ is a power of an odd prime.
If $F= \F_q$ and $E= \F_{q^2}$ then $y_{\overline{F}/F} =2$ and
\begin{enumerate}
\item $m_Z(\theta)= y_{E/F}  = 
\begin{cases}
1,&\text{if $n$ is odd},\\
2,&\text{if $n$ is even.}
\end{cases}$
\item 
$m_T( \theta) = 1$.
\item 
$\mu (T\cap G_\theta) = (\F^\times_{q^n})^2\cap \F_q^\times = 
\begin{cases}
(\F_q^\times)^2,&\text{if $n$ is odd},\\
\F_q^\times,&\text{if $n$ is even}.
\end{cases}$
\end{enumerate}
\end{lemma}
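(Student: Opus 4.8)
The plan is to reduce everything to Lemma~\ref{mTbasics}, Lemma~\ref{mTodd}, and the trivial structure of squares in a finite field; no serious obstacle is anticipated. First, $y_{\overline{F}/F} = [\Fq^\times : (\Fq^\times)^2] = 2$, since $\Fq^\times$ is cyclic of even order $q-1$ (as $q$ is odd). Next, over a finite field $E$ contains a quadratic extension of $F$ if and only if $2$ divides $[E:F] = n$, because $\Fqq$ is the unique quadratic extension of $\Fq$; equivalently $y_{E/F} = 1$ when $n$ is odd and $y_{E/F} = 2$ when $n$ is even. Granting part~(2), part~(1) is then immediate from Lemma~\ref{mTbasics}(5): $m_Z(\theta) = y_{E/F}\,m_T(\theta) = y_{E/F}$.

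It remains to prove part~(2), that $m_T(\theta) = 1$, and part~(3). For part~(2) with $n$ odd, Lemma~\ref{mTodd} applied with $H = T$ gives $m_T(\theta) = 1$ at once. For $n$ even, I would show that the similitude ratio is already surjective onto $Z$ on the subgroup $T\cap G_\theta$. Indeed $\Fqq \subseteq E$ forces every element of $F^\times$ to be a square in $E^\times$ (its square root lies in $\Fq$ or in $\Fqq\subseteq E$), so Lemma~\ref{mTbasics}(2) gives $\mu(T\cap G_\theta) = (E^\times)^2\cap F^\times = F^\times$. Since $\mu(G_\theta)\subseteq Z = F^\times$ by the very definition of $G_\theta$, the chain $F^\times = \mu(T\cap G_\theta) \subseteq \mu((T\cap G_\theta)Z) \subseteq \mu(G_\theta) \subseteq F^\times$ collapses, so $m_T(\theta) = [\mu(G_\theta):\mu((T\cap G_\theta)Z)] = 1$. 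This, together with the odd case and the first paragraph, proves parts~(1) and~(2).

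For part~(3), Lemma~\ref{mTbasics}(2) identifies the group in question as $(E^\times)^2\cap F^\times$, so only its value as a subgroup of $\Fq^\times$ is at issue. For $n$ even it is all of $F^\times$, as just noted. For $n$ odd I would argue with the Frobenius $\phi$ of $\F_{q^n}/\Fq$: if $x\in\Fq^\times$ and $x = y^2$ with $y\in\F_{q^n}$, then $y^2 = x$ is $\phi$-fixed, so $\phi(y) = \pm y$; if $\phi(y) = -y$ then $y = \phi^n(y) = (-1)^n y = -y$ (using $\phi^n = {\rm id}$ on $\F_{q^n}$ and $n$ odd), forcing $y = 0$, a contradiction; hence $\phi(y) = y$, i.e.\ $y\in\Fq$ and $x\in(\Fq^\times)^2$. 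So $(E^\times)^2\cap F^\times = (\Fq^\times)^2$ when $n$ is odd. (Alternatively one may deduce the odd case from Lemma~\ref{mTodd} together with the inclusion $Z\subseteq T\cap G_\theta$, which sandwiches $(E^\times)^2\cap F^\times$ between $(F^\times)^2 = \mu(Z)$ and $\mu((T\cap G_\theta)Z) = (F^\times)^2$.)

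The only step requiring any genuine idea is the even case of part~(2): one needs $\mu$ to be surjective onto $Z$ on $T\cap G_\theta$ for \emph{every} distinguishing involution $\theta$, not merely for the split model $\theta = \theta_J$ of Lemma~\ref{mTJ}. Over a finite field this holds automatically, precisely because a degree $n$ extension with $n$ even must contain $\Fqq$, so that $F^\times\subseteq(E^\times)^2$; this is the one point where the finite-field hypothesis is used beyond the general formalism of \S\ref{sec:generalities}.
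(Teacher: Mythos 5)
Your proof is correct, and its skeleton matches the paper's: both reduce everything to the general similitude formalism of \S\ref{sec:generalities} (Lemma \ref{mTbasics}, Lemma \ref{mTodd}) plus elementary facts about squares in finite fields, and the odd case of (2) and the identification $\mu(T\cap G_\theta)=(E^\times)^2\cap F^\times$ in (3) are handled identically. The one point of divergence is the even case of (2): the paper gets $m_T(\theta)=1$ by pure counting, combining $m_Z(\theta)\mid y_{\overline F/F}=2$ (Lemma \ref{mTbasics}(3)) with $m_Z(\theta)=y_{E/F}\,m_T(\theta)=2\,m_T(\theta)$ (Lemma \ref{mTbasics}(5)), whereas you argue directly that $n$ even forces $\F_{q^2}\subseteq E$, hence $F^\times\subseteq (E^\times)^2$, so $\mu(T\cap G_\theta)=F^\times$ and the index collapses to $1$ without any divisibility argument. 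The two routes rest on the same underlying finite-field fact (which the paper needs anyway to state the even case of (3)); yours has the small advantage of also pinning down $\mu(G_\theta)=F^\times$ for every distinguishing involution $\theta$ rather than only for $\theta_J$ as in Lemma \ref{mTJ}, while the paper's divisibility argument is the one that generalizes to the $p$-adic setting of Proposition \ref{indices}, where $\mu(T\cap G_\theta)$ need not be all of $F^\times$. Your Frobenius argument for the odd case of (3) is a fine substitute for the routine computation the paper leaves unstated.
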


\begin{proof}
We observe that
$y_{\overline{F}/F} =2$, since $(\F_q^\times)^2$ has index 2 in $\F_q^\times$.  
Statement (3) follows from this together with Lemma \ref{mTbasics} (2) and it implies
$$y_{E/F} = [(\F_{q^2}^\times)^2 \cap \F_q^\times : (\F_q^\times)^2] = \begin{cases}
1,&\text{if $n$ is odd},\\
2,&\text{if $n$ is even.}
\end{cases}$$ 
Statements (1) and (2) follow from Lemma \ref{mTodd} when $n$ is odd and they follow from
parts (3) and (5) of 
Lemma \ref{mTbasics} when $n$ is even.
\end{proof}

\subsection{$p$-adic fields}

\begin{proposition}\label{indices}
If $F$ is a finite extension of $\Q_p$ with $p\ne 2$ then $y_{\overline{F}/F}=4$ and:
\begin{enumerate}
\item If $y_{E/F}=1$ then $n$ is odd and $m_H(\theta) =1$ for all $H$.
\item If $y_{E/F}=2$ and $L$ is the unique quadratic extension of $F$ in $E$ then $\mu (T\cap G_\theta) = (E^\times)^2\cap F^\times = (L^\times)^2 \cap F^\times$ and
\begin{enumerate}
\item If $\theta$ is in the $G$-orbit of $\theta_J$ then $\mu (G_\theta) = Z= F^\times$ and $m_Z (\theta)=4$ and $m_T(\theta)=2$.
\item Let $\theta = \theta_\nu$ be an orthogonal involution of $G$ associated to a symmetric matrix $\nu\in G$ such that $(-1)^{n/2}\det\nu \in N_{L/F}(L^\times) - (F^\times)^2$.
Then $\mu (G_\theta) = \mu (T\cap G_\theta)$ and $m_Z(\theta)=2$ and $m_T(\theta)=1$.
\end{enumerate}
\item If $y_{E/F}=4$ then $m_Z(\theta)=4$ and $m_T(\theta)=1$.
\end{enumerate}
\end{proposition}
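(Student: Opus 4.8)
The plan is to dispose of parts (1) and (3) immediately from the formal properties of the constants $m_H(\theta)$ collected in Lemma~\ref{mTbasics}, to reduce case 2(a) to Lemma~\ref{mTJ} via conjugation--invariance, and to spend the real effort on case 2(b), where $\mu(G_\theta)$ must be pinned down exactly. Throughout, $y_{\overline F/F} = [F^\times:(F^\times)^2] = 4$ since $F/\Q_p$ is finite with $p$ odd. For part (1): if $y_{E/F}=1$ then $n$ must be odd, for otherwise Lemma~\ref{fieldsA} would produce a quadratic subextension of $E/F$; then Lemma~\ref{mTodd} gives $m_H(\theta)=1$ for every $H$. For part (3): Lemma~\ref{mTbasics}(5) gives $m_Z(\theta) = y_{E/F}\,m_T(\theta) = 4\,m_T(\theta)$, while $m_Z(\theta)$ divides $y_{\overline F/F}=4$ by Lemma~\ref{mTbasics}(3); hence $m_T(\theta)=1$ and $m_Z(\theta)=4$.

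Next I would record the preliminary common to case (2). Assume $y_{E/F}=2$; then $n$ is even and $E$ contains a unique quadratic extension $L$ of $F$, say $L = F(\sqrt d)$. By Lemma~\ref{mTbasics}(2), $\mu(T\cap G_\theta) = (E^\times)^2\cap F^\times$, and this equals $(L^\times)^2\cap F^\times$: the inclusion $\supseteq$ is trivial, and conversely if $x\in F^\times$ equals $y^2$ with $y\in E^\times$, then either $y\in F$ (so $x\in(F^\times)^2$) or $F(y)/F$ is quadratic inside $E$, forcing $F(y)=L$ and $y\in L$. In particular $\mu(T\cap G_\theta) = (F^\times)^2\cup d(F^\times)^2$ has index $2$ in $F^\times$. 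For case 2(a), observe that $m_Z(\theta) = [G_\theta:ZG^\theta]$ is an invariant of the $G$-conjugacy class of $\theta$ (conjugation carries $G_\theta,G^\theta,Z$ to $G_{\theta'},G^{\theta'},Z$, and the similitude ratio transforms by $\mu_{\theta'}(ghg^{-1}) = g\mu_\theta(h)g^{-1} = \mu_\theta(h)$ since $\mu_\theta(h)\in Z$), and $m_T(\theta) = m_Z(\theta)/y_{E/F}$ is therefore also such an invariant. So I may evaluate at $\theta = \theta_J$, where Lemma~\ref{mTJ} gives $\mu(G_{\theta_J}) = F^\times$, $m_Z = y_{\overline F/F} = 4$, and $m_T = y_{\overline F/F}/y_{E/F} = 2$; the same transformation formula propagates $\mu(G_\theta) = F^\times$ to the entire orbit.

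The substantive case is 2(b), where $\theta = \theta_\nu$. A direct matrix computation shows that $g\in G_\theta$ with similitude ratio $\mu(g) = \lambda$ exactly when ${}^tg\,\nu\,g = \lambda\nu$, so $\mu(G_\theta)$ is precisely the group of similitude factors $\{\lambda\in F^\times : \lambda\nu\cong\nu\}$, where $\cong$ denotes congruence of symmetric matrices over $F$. Since $n$ is even, $\disc(\lambda\nu) = \lambda^n\disc(\nu) = \disc(\nu)$, so by Lemma~\ref{simclassification} the condition $\lambda\nu\cong\nu$ reduces to $\Hasse(\lambda\nu) = \Hasse(\nu)$. Expanding $\prod_{i\le j}{\rm Hilbert}(\lambda a_i,\lambda a_j)$ for a diagonalization $\langle a_1,\dots,a_n\rangle$ of $\nu$ and using bilinearity of the Hilbert symbol, ${\rm Hilbert}(\lambda,\lambda) = {\rm Hilbert}(\lambda,-1)$, and the fact that $n+1$ is odd, one obtains the standard identity
\[
\Hasse(\lambda\nu) = \Hasse(\nu)\cdot{\rm Hilbert}\bigl(\lambda,\,(-1)^{n/2}\det\nu\bigr),
\]
so $\mu(G_\theta) = \{\lambda\in F^\times : {\rm Hilbert}(\lambda,c) = 1\}$ with $c = (-1)^{n/2}\det\nu$. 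Now the hypothesis $c\in N_{L/F}(L^\times)-(F^\times)^2$ finishes the argument: since $c\notin(F^\times)^2$, the homomorphism $\lambda\mapsto{\rm Hilbert}(\lambda,c)$ is onto $\{\pm1\}$ by nondegeneracy of the Hilbert symbol, so $\mu(G_\theta)$ has index $2$ in $F^\times$; and since $c\in N_{L/F}(L^\times) = \{a\in F^\times : {\rm Hilbert}(a,d)=1\}$ we get ${\rm Hilbert}(d,c)=1$, i.e.\ $d\in\mu(G_\theta)$, whence $(F^\times)^2\cup d(F^\times)^2\subseteq\mu(G_\theta)$. Comparing indices forces equality: $\mu(G_\theta) = (F^\times)^2\cup d(F^\times)^2 = \mu(T\cap G_\theta)$. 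Therefore $m_T(\theta) = [\mu(G_\theta):\mu((T\cap G_\theta)Z)] = 1$ and $m_Z(\theta) = y_{E/F}\,m_T(\theta) = 2$.

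I expect the main obstacle to be exactly case 2(b): one has to recognize $\mu(G_\theta)$ as the group of similitude factors of the quadratic form $\nu$ and then determine how the Hasse invariant changes under scaling, the displayed Hilbert-symbol identity being the only genuinely computational ingredient (and requiring care about whether one works with the $i\le j$ or $i<j$ convention); everything else is formal bookkeeping with Lemmas~\ref{mTbasics}, \ref{mTodd}, and \ref{mTJ}.
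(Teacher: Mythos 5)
Your proposal is correct, and most of it coincides with the paper's own argument: parts (1) and (3), the identification $\mu(T\cap G_\theta)=(E^\times)^2\cap F^\times=(L^\times)^2\cap F^\times$, and case 2(a) are handled in the paper by exactly the same bookkeeping with Lemmas \ref{mTbasics}, \ref{mTodd}, \ref{mTJ} and Lemma \ref{fieldsA} (the paper leaves the conjugation-invariance you spell out implicit, and uses it again when it reduces 2(b) to a diagonal $\nu$). The genuine divergence is in 2(b). The paper never computes $\mu(G_\theta)$ outright: after replacing $\nu$ by $\mathrm{diag}(1,\dots,1,\tau)$ it regards the rows of a similitude $g$ as an orthogonal basis of the quadratic space with Gram matrix $(z\nu)^{-1}$, compares the Hasse invariant computed in that basis (which is $1$) with the one computed in the standard basis (which is ${\rm Hilbert}(z,(-1)^{n/2}\det\nu)$), and so obtains only the inclusion $\mu(G_\theta)\subseteq N_{K/F}(K^\times)$ for $K=F[\sqrt{(-1)^{n/2}\det\nu}]$; the reverse containment comes for free from $\mu(T\cap G_\theta)\subseteq\mu(G_\theta)$, and $N_{K/F}(K^\times)=(L^\times)^2\cap F^\times$ is deduced by matching square classes and using symmetry of the Hilbert symbol. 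You instead characterize $\mu(G_\theta)$ exactly as the group of similitude factors $\{\lambda:\lambda\nu\cong\nu\}$ and evaluate it via Lemma \ref{simclassification} together with the scaling identity $\Hasse(\lambda\nu)=\Hasse(\nu)\cdot{\rm Hilbert}(\lambda,(-1)^{n/2}\det\nu)$ (which, as you note, is insensitive to the $i\le j$ versus $i<j$ convention because $n$ is even), then pin it down as $(F^\times)^2\cup d(F^\times)^2$ by surjectivity of $\lambda\mapsto{\rm Hilbert}(\lambda,c)$ and the observation ${\rm Hilbert}(d,c)=1$. Your route avoids the diagonalization and the two-basis trick and determines $\mu(G_\theta)$ in one stroke from the classification of forms; the paper's route uses only basis-independence of the Hasse invariant and no scaling identity, at the cost of proving a single inclusion and leaning on the torus for the other. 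Both arrive at $\mu(G_\theta)=\mu(T\cap G_\theta)$ and the same values $m_T(\theta)=1$, $m_Z(\theta)=2$.
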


\begin{proof}
Since $p$ is odd, $[F^\times : (F^\times)^2]=4$ and hence
$y_{\overline{F}/F}=4$.  This implies that $y_{E/F}$ is a divisor of 4.  Lemma \ref{fieldsA} implies that $y_{E/F}=1$ precisely when $n$ is odd.  
Thus statement (1) follows from Lemma \ref{mTodd}.
Statement (3) follows from part (4) of Lemma \ref{mTbasics}.

Now assume $y_{E/F}=2$.  We have $\mu (T\cap G_\theta) = (E^\times)^2\cap F^\times$, according to part (2) of Lemma \ref{mTbasics}.  Suppose $\alpha \in (E^\times)^2 \cap F^\times$ and $\alpha \not\in (F^\times)^2$.  Choose $\beta\in E^\times$ such that $\alpha = \beta^2$.  The field $F[\beta]$ is a quadratic extension of $F$ contained in $E$ and hence it equals $L$.  This implies $\alpha \in (L^\times)^2 \cap F^\times$ and hence $(E^\times)^2 \cap F^\times = (L^\times)^2 \cap F^\times$.

Statement 2(a) is a special case of Lemma \ref{mTJ}.  Assume now we are in the setting of statement 2(b).  
Because $\nu$ and $J$ have different discriminants, the $G$-orbit of $\theta$ is determined by the discriminant of $\nu$.
(See Proposition \ref{orthinvGorbs}.)  
Since we are free to replace $\theta$ by another element of its $G$-orbit, we may assume that $\nu$ is a diagonal matrix of the form
$$\nu = \text{diag}(1,\dots ,1,\tau),$$ with $\tau\in F^\times$.

We wish to determine the image of the similitude homomorphism $\mu : G_\theta \to F^\times$.  This is equivalent to determining the set of all $z\in F^\times$ such that the equation
$$g\, \nu^{-1} \, {}^tg\, \nu  = z$$ is solvable for $g\in G$.
Let us rewrite the latter equation as 
$$g\, (z\nu)^{-1}\, {}^t g = \nu^{-1}.$$
Let $e_1,\dots , e_n$ be the rows of $g$.  View these rows as vectors in the quadratic space $V = F^n$ with inner product associated to the diagonal matrix $(z\nu)^{-1}$.  

Let us consider the Hasse invariant of $V$ (as defined in Definition \ref{discHassedef}).   To compute the Hasse invariant, one needs to diagonalize the quadratic form on $V$ or, in other words, one needs to choose an orthogonal basis.  If one uses the basis $e_1,\dots ,e_n$ just specified then one obtains $\Hasse (V) =1$.  On the other hand, if one uses the standard basis of $F^n$  one gets
\begin{eqnarray*}
\Hasse (V)&=&
(z,z)^{(n-1)(n-2)/2} (z,z\tau)^{n-1}\\
 &=& (z,z)^{(n-2)/2}(z,z\tau)\\
&=& (z,z)^{n/2}(z,\tau)\\
&=&(z,-1)^{n/2}(z,\tau)\\
&=&(z,(-1)^{n/2}\tau)\\
&=&(z,(-1)^{n/2}\det\nu ).
\end{eqnarray*}
Therefore 
  if $z\in \mu (G_\theta)$ then $(z,(-1)^{n/2}\det\nu)=1$ or, in other words, $z\in N_{K/F}(K^\times )$, where $K= F[\sqrt{\beta}]$ and $\beta= (-1)^{n/2}\det\nu$.  Note that since we assumed $\nu$ and $J$ have distinct discriminants, we know that $\beta$ cannot be a square in $F^\times$.
  
We observe that the sets $((K^\times)^2\cap F^\times) - (F^\times)^2$ and $N_{L/F}(L^\times)- (F^\times)^2$ each comprise a single square class in $F^\times / (F^\times)^2$.  Since these square classes both contain $\beta$, they must be identical. 
So we have $(K^\times)^2\cap F^\times= N_{L/F}(L^\times)$.
But the latter condition is equivalent to the condition ${\rm Hilbert} (\alpha, \beta)=1$, where $L = F[\sqrt{\alpha}]$ for some square root $\sqrt{\alpha}$ of some nonsquare $\alpha$ in $F^\times$.  By symmetry of the Hilbert symbol, we deduce  $(L^\times)^2\cap F^\times = N_{K/F}(K^\times)$.  Thus $z\in (L^\times)^2\cap F^\times = \mu (T\cap G_\theta)$.  Since we have shown $\mu (T\cap G_\theta) \subseteq \mu (G_\theta)\subseteq \mu (T\cap G_\theta)$, we deduce that $\mu(G_\theta) = \mu (T\cap G_\theta)$.  Consequently, $m_T(\theta)=1$ and, according to part (5) of Lemma \ref{mTbasics}, $m_Z(\theta)=2$.  This completes the proof of  statement 2(b).
\end{proof}

\begin{lemma}\label{indicestwo}
Assume $F$ is a finite extension of $\Q_p$ with $p\ne 2$.  Let $E$ be a tamely ramified degree $n$ extension of $F$ that contains a unique quadratic extension $L$ of $F$.  Let $E_0$ be an intermediate field of $E/F$ such that $E/E_0$ is unramified.   
Embed $E$ in $\M_{n_0}(E_0)$, with $n_0 = [E:E_0]$, via a $J$-symmetric embedding and embed $E_0$ in $\M_{m_0}(F)$, with $m_0= [E_0:F]$, via a $J$-symmetric embedding.
Let $H = E_0^\times \GL_{n_0}(\gO_{E_0})$ and assume $\theta$ is the orthogonal involution $\theta_J$ of $G= \GL_n(F)$.
Then $m_H (\theta)= m_T(\theta)= 2$.
\end{lemma}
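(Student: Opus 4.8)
The plan is to combine the general machinery of §\ref{sec:generalities} with the field‑theoretic lemmas of §3. Since $y_{E/F}=2$, the field $E$ contains a quadratic extension of $F$, so $n$ is even (Lemma \ref{fieldsA}); Lemma \ref{mTJ} then gives $\mu(G_\theta)=Z=F^\times$ and $m_T(\theta)=y_{\overline F/F}/y_{E/F}=4/2=2$. Because $E/E_0$ is unramified, every prime element $\varpi_{E_0}$ of $E_0$ is a prime element of $E$, so $T=E^\times=\gO_E^\times\,\varpi_{E_0}^{\Z}\subseteq E_0^\times\GL_{n_0}(\gO_{E_0})=H$; hence $m_H(\theta)$ divides $m_T(\theta)=2$ by Lemma \ref{mTbasics}(4). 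Moreover $Z=F^\times\subseteq H$, so $\mu\bigl((H\cap G_\theta)Z\bigr)=\mu(H\cap G_\theta)\supseteq\mu(T\cap G_\theta)=(E^\times)^2\cap F^\times$, a subgroup of index $2$ in $F^\times$. It therefore suffices to prove the reverse inclusion $\mu(H\cap G_\theta)\subseteq(E^\times)^2\cap F^\times$, which forces equality and hence $m_H(\theta)=2$.

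To prove that inclusion I would first apply Proposition \ref{symrestrict}(1) with the intermediate field taken to be $E_0$: since the composite embedding of $E$ is $J_n$-symmetric, $\sigma_{J_n}$ fixes $E_0$ pointwise, and the identity $J_n=J_{n_0}\cdot(J_{m_0}\oplus\cdots\oplus J_{m_0})$ from the proof of Lemma \ref{composite} shows $\nu'=J_{n_0}$; thus $\theta=\theta_{J_n}$ restricts on $G^0:=\GL_{n_0}(E_0)$ to the standard orthogonal involution $\theta_{J_{n_0}}$. Given $h\in H\cap G_\theta$, write $h=\varpi_{E_0}^a k$ with $k\in\GL_{n_0}(\gO_{E_0})$. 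Since $\varpi_{E_0}$ is central in $G^0$ and $\theta_{J_{n_0}}(\varpi_{E_0})=\varpi_{E_0}^{-1}$, one gets $\mu_G(h)=\varpi_{E_0}^{2a}c$ with $c:=k\,\theta_{J_{n_0}}(k)^{-1}$, and $c$ is a scalar in $E_0^\times$ because $\mu_G(h)\in F^\times$. Taking determinants in $G^0$ gives $c^{n_0}=\det(k)^2\in(\gO_{E_0}^\times)^2$; in particular $c$ is a unit.

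Now split on the parity of $n_0=[E:E_0]$. If $n_0$ is odd, then $c^{n_0}\in(E_0^\times)^2$ forces $c\in(E_0^\times)^2$, so $\mu_G(h)=(\varpi_{E_0}^a)^2c\in(E_0^\times)^2\cap F^\times$; and any $b\in E_0^\times$ with $b^2\in F^\times$ either lies in $F^\times$ or generates the unique quadratic subfield $L$ of $E$, whence $(E_0^\times)^2\cap F^\times\subseteq(L^\times)^2\cap F^\times=(E^\times)^2\cap F^\times$ (Proposition \ref{indices}(2)). If $n_0$ is even, then $E$ contains the unramified quadratic extension of $E_0$; were $e(E/F)=e(E_0/F)$ even, Lemma \ref{fieldsC} would give $y_{E/F}=4$, a contradiction, so $e(E/F)$ is odd, while $f(E/F)=n_0\,f(E_0/F)$ is even, so $L$ is the unramified quadratic extension of $F$. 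Since $c$ is a unit, $\val_{E_0}(\mu_G(h))=2a$, and $\mu_G(h)\in F^\times$ forces $e(E/F)\mid 2a$, hence (as $e(E/F)$ is odd) $e(E/F)\mid a$ and $\val_F(\mu_G(h))$ is even; but when $L$ is unramified $(L^\times)^2\cap F^\times$ is precisely the subgroup of elements of even $F$-valuation, so again $\mu_G(h)\in(L^\times)^2\cap F^\times=(E^\times)^2\cap F^\times$. In either case the required inclusion holds, completing the proof.

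The main obstacle is the even‑$n_0$ case: everything there hinges on first ruling out $e(E/F)$ even — exactly where the arithmetic of quadratic subextensions (Lemma \ref{fieldsC}) is essential — and on the resulting fact that $L$ is unramified, which is what lets one identify $(L^\times)^2\cap F^\times$ with the even‑valuation subgroup. The odd‑$n_0$ case, by contrast, is a one‑line determinant computation, and the passage to $G^0$ via Proposition \ref{symrestrict} is routine.
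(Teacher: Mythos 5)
Your proof is correct, and its skeleton matches the paper's: restrict $\theta_J$ to $G^0=\GL_{n_0}(E_0)$ via Proposition \ref{symrestrict} so that $\theta|G^0=\theta_{J_{n_0}}$, analyze the similitude factor of an element of $H\cap G_\theta$, and split on the parity of $n_0$ (the odd case being the determinant trick of Lemma \ref{mTodd} in both treatments). Where you genuinely diverge is in the even-$n_0$ case and in the framing. The paper computes the image $\mu_0(H\cap G^0_{\theta_0})$ exactly, namely $(E_0^\times)^2$ for $n_0$ odd and $\gO_{E_0}^\times(E_0^\times)^2$ for $n_0$ even, intersects with $F^\times$, and in the even case closes by a sandwich: $\gO_{E_0}^\times\subset(E^\times)^2$ (since $E$ contains the unramified quadratic extension of $E_0$) together with $(L^\times)^2\cap F^\times=\gO_F^\times(F^\times)^2$ for $L/F$ unramified. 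You instead bound $\mu(H\cap G_\theta)$ element-by-element: you first rule out $e(E/F)$ even using Lemma \ref{fieldsC} against $y_{E/F}=2$ (an input the paper never needs), and then a valuation-parity argument puts $\mu(h)$ in $\gO_F^\times(F^\times)^2=(L^\times)^2\cap F^\times=(E^\times)^2\cap F^\times$. Both routes work; the paper's yields the full similitude image on $H\cap G^0_{\theta_0}$ (slightly more information, and no need to know the parity of $e(E/F)$), while yours trades that computation for the extra ramification-theoretic fact. Two small caveats: your lower bound via $\mu(T\cap G_\theta)\subseteq\mu(H\cap G_\theta)$ and the divisibility $m_H(\theta)\mid m_T(\theta)$ rest on $T\subseteq H$, which requires the $J$-symmetric embeddings to be integral in the sense of Lemma \ref{Jsymtameexistence}; this is the paper's standing convention (and is what makes the definition of $H$ meaningful), but it should be said explicitly. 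Also, the citation of Lemma \ref{fieldsA} for ``$E$ contains a quadratic extension of $F$, hence $n$ is even'' is the trivial converse direction and needs no lemma.
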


\begin{proof}
Assume the hypotheses in the statement of the lemma.  In particular, we assume that $\theta = \theta_{J_n}$.  According to part 1(a) of Proposition \ref{symrestrict}, the restriction of $\theta$ to $G^0 = \GL_{n_0}(E_0)$ is the orthogonal involution $\theta_0 = \theta_{J_{n_0}}$ of $G^0$ associated to $J_{n_0}$.  
Let $\mu_0 :G^0_{\theta_0}\to E_0^\times$ be the similitude homomorphism associated to $\theta_0$.

Suppose $h\in \GL_{n_0}(\gO_{E_0})$, $z\in E_0^\times$ and $g = hz$.  Then $g\in G^0_{\theta_0}$ if and only if $h\in G^0_{\theta_0}$.  
If $h\in \GL_{n_0}(\gO_{E_0})\cap G^0_{\theta_0}$ then, taking determinants, we see that $\mu_0 (h)\in \gO_{E_0}^\times$.  

Arguing as in the proof of  Lemma \ref{mTodd}, we see that when $n_0$ is odd we have $\mu_0 (\GL_{n_0}(\gO_{E_0})\cap G^0_{\theta_0})= (\gO_{E_0}^\times)^2$.  
Arguing as in the proof of  Lemma \ref{mTJ}, we see that when $n_0$ is even we have $\mu_0 (\GL_{n_0}(\gO_{E_0})\cap G^0_{\theta_0})= \gO_{E_0}^\times$.  
It follows that
$$\mu_0 (H\cap G^0_{\theta_0})= 
\begin{cases}
(E_0^\times)^2,&\text{if $n_0$ is odd},\\
\gO_{E_0}^\times (E_0^\times)^2,&\text{if $n_0$ is even}.
\end{cases}$$
Thus $$\mu (H\cap G_\theta) = \mu_0(H\cap G^0_{\theta_0})\cap F^\times
=\begin{cases}
(E_0^\times)^2\cap F^\times,&\text{if $n_0$ is odd},\\
\gO_{E_0}^\times (E_0^\times)^2\cap F^\times,&\text{if $n_0$ is even}.
\end{cases}$$
On the other hand,
$m_H(\theta) = [\mu (G_\theta) : \mu (H\cap G_\theta)]$ and, according to Lemma \ref{mTJ},  we have 
$\mu (G_\theta)  = F^\times$.

If $n_0$ is odd then  $y_{E_0/F}$ must be 2, according to  Lemma \ref{fieldsA},  and thus $$m_H(\theta) = [F^\times: (E_0^\times)^2\cap F^\times ] =
\frac{4}{y_{E_0/F}} =2.$$

Now suppose $n_0$ is even.  Then $$m_H(\theta) = [F^\times: \gO_{E_0}^\times (E_0^\times)^2\cap F^\times ].$$
We observe that $E$ must contain an unramified quadratic extension of $F$ and thus $L/F$ is unramified.  
Since $L$ is the unique quadratic extension of $F$ in $E$, we have $(E^\times)^2 \cap F^\times = (L^\times)^2\cap F^\times$.  Since $E$ contains the unique unramified quadratic extension of $E_0$, it follows that $\gO_{E_0}^\times\subset (E^\times)^2$.  Since $L$ is an unramified quadratic extension of $F$, it must be the case that $(L^\times)^2\cap F^\times = \gO_F^\times (F^\times)^2$.
Therefore,
$$(L^\times)^2\cap F^\times=  \gO_F^\times (F^\times)^2 \subseteq \gO_{E_0}^\times (E_0^\times)^2\cap F^\times \subseteq(E^\times)^2 \cap F^\times = (L^\times)^2 \cap F^\times.$$
We now have
$$m_H(\theta) = [F^\times : (L^\times)^2\cap F^\times] =2.$$
Our assertion now follows from part 2(a) of Proposition \ref{indices}.
\end{proof}

\section{Split $T$-orbits of orthogonal involutions}

\subsection{Generalities}\label{sec:splitTgeneralities}

Let $E/F$ be a (finite) degree $n$ field extension.
Embed $E$ in $\M_n (F)$ via a $J$-symmetric embedding.  Let $G= \GL_n (F)$ and $T= E^\times$.  

\begin{definition}
A \textbf{split $T$-orbit} is a $T$-orbit of orthogonal involutions of $G$ such that $T$ is $\theta$-split.
\end{definition}

The main objective of this section is to study the split $T$-orbits in $G$, especially in the following two cases:
\begin{itemize}
\item  $F$ is a finite extension of $\Q_p$ for some odd prime $p$ and $E/F$ is tamely ramified.
\item $E$ and $F$ are finite fields of odd order.
\end{itemize}
In particular, we would like to determine the number of split $T$-orbits that lie within a given $G$-orbit of orthogonal involutions.

Let $$Y_{E/F} = E^\times /((E^\times)^2F^\times)$$ and let $y_{E/F}$ denote the cardinality of $Y_{E/F}$.  Let ${\cal O}^T$ denote the set of split $T$-orbits of orthogonal involutions of $G$. 

The following generalizes the statement of  Lemma 5.6 of \cite{HL1}, but the proof is identical:

\begin{lemma}\label{yEFquad}
If $[ F^\times :(F^\times)^2] = [E^\times : (E^\times)^2]< \infty$ then
$y_{E/F}-1$ is the number of quadratic extensions of $F$ contained in $E$.
\end{lemma}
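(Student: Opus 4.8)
The plan is to reduce the statement to an elementary index computation in the abelian groups $F^\times/(F^\times)^2$ and $E^\times/(E^\times)^2$, using Kummer theory to identify quadratic extensions with square classes. Since $[E^\times:(E^\times)^2] = [F^\times:(F^\times)^2]<\infty$ is assumed, all the groups appearing below are finite, so multiplicativity of indices and the second isomorphism theorem apply freely.

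First I would record the correspondence between quadratic extensions of $F$ and nontrivial square classes. As $\operatorname{char} F\ne 2$, every quadratic extension of $F$ has the form $F[\sqrt d]$ with $d\in F^\times$, and $F[\sqrt{d_1}]=F[\sqrt{d_2}]$ precisely when $d_1(F^\times)^2 = d_2(F^\times)^2$; moreover $F[\sqrt d]\subseteq E$ exactly when $\sqrt d\in E$, i.e.\ when $d\in (E^\times)^2$. Hence the quadratic extensions of $F$ contained in $E$ are in bijection with the nontrivial elements of the subgroup $\bigl(F^\times\cap (E^\times)^2\bigr)\big/(F^\times)^2$ of $F^\times/(F^\times)^2$, so their number is $[\,F^\times\cap (E^\times)^2 : (F^\times)^2\,]-1$.

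Next I would compute $y_{E/F}=\#\bigl(E^\times/((E^\times)^2 F^\times)\bigr) = [E^\times:(E^\times)^2 F^\times]$. From the chain $(E^\times)^2\subseteq (E^\times)^2 F^\times\subseteq E^\times$ and the isomorphism $(E^\times)^2 F^\times/(E^\times)^2\cong F^\times/\bigl(F^\times\cap (E^\times)^2\bigr)$ (second isomorphism theorem), one gets
\[
y_{E/F} = \frac{[E^\times:(E^\times)^2]}{[F^\times : F^\times\cap (E^\times)^2]} = \frac{[E^\times:(E^\times)^2]}{[F^\times:(F^\times)^2]}\cdot [\,F^\times\cap (E^\times)^2 : (F^\times)^2\,] = [\,F^\times\cap (E^\times)^2 : (F^\times)^2\,],
\]
where the last equality uses the hypothesis $[E^\times:(E^\times)^2]=[F^\times:(F^\times)^2]$. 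Comparing with the previous paragraph gives $y_{E/F}-1 = [\,F^\times\cap (E^\times)^2 : (F^\times)^2\,]-1$, which is exactly the number of quadratic extensions of $F$ contained in $E$.

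There is essentially no obstacle: the only point requiring care is to invoke the finiteness hypothesis so that the index bookkeeping is legitimate, and everything else is Kummer theory in characteristic $\ne 2$ together with the second isomorphism theorem. An equivalent and perhaps cleaner phrasing would work entirely with the $\mathbf{F}_2$-vector space structures on $F^\times/(F^\times)^2$ and $E^\times/(E^\times)^2$, where equality of their dimensions makes the dimension count transparent; this is the argument used for Lemma 5.6 of \cite{HL1}, which carries over verbatim.
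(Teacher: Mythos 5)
Your proof is correct, and it is essentially the argument the paper has in mind: the paper gives no proof of its own, stating only that the proof of Lemma 5.6 of \cite{HL1} applies verbatim, and that proof is exactly the Kummer-theoretic identification of quadratic subextensions with nontrivial classes in $\bigl(F^\times\cap(E^\times)^2\bigr)/(F^\times)^2$ combined with the index computation $y_{E/F}=[\,F^\times\cap(E^\times)^2:(F^\times)^2\,]$ under the hypothesis $[E^\times:(E^\times)^2]=[F^\times:(F^\times)^2]<\infty$. No gaps; the finiteness remark and the second isomorphism theorem step are used correctly.
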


We now recall the statement of
Corollary 5.5  {\bf [HL1]}:

\begin{lemma}\label{muEF}
If $E/F$ is separable then the map
$$\mu_{E/F} :Y_{E/F} \to {\cal O}^T$$
that sends the coset of $x\in E^\times$ to the orbit of the involution $\theta_\nu$ with $\nu_{ij} = {\rm tr}_{E/F}(x\, e_i\,e_j)$ is a bijection. 
\end{lemma}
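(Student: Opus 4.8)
The plan is to translate the trace-form normalization of $\nu$ appearing in the statement into the normalization $J\m{E^\times}$ supplied by Proposition~\ref{symmetricembedding}, and then to read off everything from the fact that $\m{E}$ is its own centralizer in $\M_n(F)$. Throughout I work with the fixed $J$-symmetric embedding $x\mapsto\m{x}$ of $\S\ref{sec:splitTgeneralities}$.

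First I would record the basic dictionary. The torus $T=\m{E^\times}$ is $\theta_\nu$-split if and only if $\theta_\nu(\m{x})=\m{x}^{-1}$ for all $x\in E^\times$; taking inverses, this says $\sigma_\nu(\m{x})=\m{x}$ for all $x$, i.e. the embedding $x\mapsto\m{x}$ is $\nu$-symmetric. By Proposition~\ref{symmetricembedding} the symmetric matrices $\nu$ with this property are exactly those in $J\m{E^\times}$ (and, as noted in the proof of that proposition, every matrix in $J\m{E^\times}$ is symmetric). Hence every split $T$-orbit contains an involution of the form $\theta_{J\m{y}}$ with $y\in E^\times$, and conversely each such $\theta_{J\m{y}}$ gives a split $T$-orbit.

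Next I would compute the $T$-action on these involutions. Using ${}^t\m{u}=J\m{u}J$ one finds, for $s=\m{u}\in T$ and $z\in Z=F^\times$, that $z\cdot{}^ts^{-1}\,(J\m{y})\,s^{-1}=J\m{zu^{-2}y}$. Combining this with the identities $g\cdot\theta_\nu=\theta_{{}^tg^{-1}\nu g^{-1}}$ and $\theta_{\nu_1}=\theta_{\nu_2}\iff\nu_1Z=\nu_2Z$ recalled earlier, one sees that $\theta_{J\m{y_1}}$ and $\theta_{J\m{y_2}}$ lie in the same $T$-orbit if and only if $y_1$ and $y_2$ have the same image in $Y_{E/F}=E^\times/((E^\times)^2F^\times)$. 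This exhibits a bijection $Y_{E/F}\to\cO^T$ sending the class of $y$ to the $T$-orbit of $\theta_{J\m{y}}$.

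It then remains only to locate the trace-form matrix inside $J\m{E^\times}$. Writing $\nu^{(x)}$ for the matrix with $\nu^{(x)}_{ij}=\tr_{E/F}(xe_ie_j)$ and $\nu^{(1)}$ for the Gram matrix of the trace form ($\nu^{(1)}_{ij}=\tr_{E/F}(e_ie_j)$), the self-adjointness of the regular representation with respect to the trace pairing gives $\nu^{(x)}=\nu^{(1)}\m{x}$; separability of $E/F$ makes $\nu^{(1)}$ invertible, and since $\m{E}$ is $\nu^{(1)}$-symmetric, Proposition~\ref{symmetricembedding} puts $\nu^{(1)}$ in $J\m{E^\times}$, say $\nu^{(1)}=J\m{c_0}$ with $c_0\in E^\times$. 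Hence $\nu^{(x)}=J\m{c_0x}$, so $\mu_{E/F}$ is precisely the composite of the bijection $Y_{E/F}\to\cO^T$ above with translation by the class of $c_0$ in the group $Y_{E/F}$; being a composite of bijections, it is a bijection. I expect the one genuinely delicate step to be this last translation between the two normalizations of $\nu$ — the identity $\nu^{(x)}=\nu^{(1)}\m{x}$ and the verification that $J\nu^{(1)}\in\m{E^\times}$ — together with careful bookkeeping of the center $Z=F^\times$ in the orbit computation; everything else is formal manipulation of identities already established in the excerpt.
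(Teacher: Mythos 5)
Your argument is correct, but it is worth noting that the paper itself never proves Lemma \ref{muEF}: it is quoted verbatim as Corollary 5.5 of \cite{HL1}, and the paper immediately sidesteps it in favor of the variant Lemma \ref{muprime}, which is what actually gets used. What you have done is essentially reprove Lemma \ref{muprime} by exactly the paper's own route (Proposition \ref{symmetricembedding} identifies the symmetric $\nu$ with $T$ $\theta_\nu$-split as $J\m{E^\times}$, and the computation $u\cdot\theta_{J\m{y}}=\theta_{J\m{u^{-2}y}}$ together with $\theta_{\nu_1}=\theta_{\nu_2}\Leftrightarrow\nu_1Z=\nu_2Z$ gives the bijection $Y_{E/F}\to\cO^T$), and then add the bridge that the paper omits: $\nu^{(x)}=\nu^{(1)}\m{x}$ by self-adjointness of multiplication for the trace pairing, $\nu^{(1)}$ invertible exactly because $E/F$ is separable, and $\nu^{(1)}\in J\m{E^\times}$ again by Proposition \ref{symmetricembedding}, so that $\mu_{E/F}$ is $\mu'_{E/F}$ precomposed with translation by the class of $c_0$ in the group $Y_{E/F}$. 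This is a clean derivation and correctly isolates where separability enters; the trade-off is that it is contingent on the standing hypothesis of this section that the basis $e_1,\dots,e_n$ gives a $J$-symmetric embedding (a point the paper stresses right after Lemma \ref{muprime}), whereas the original proof in \cite{HL1} works intrinsically with the trace form for an arbitrary $F$-embedding — there one argues directly that the symmetric matrices $\nu$ with $T$ $\theta_\nu$-split are exactly the $\nu^{(x)}$, since $(\nu^{(1)})^{-1}\nu$ must centralize $\m{E}$ and $\m{E}$ is its own centralizer. So your proof buys economy (no separate trace-form analysis beyond the Gram-matrix identity) at the cost of generality in the choice of embedding, which is harmless here since the whole section fixes a $J$-symmetric embedding anyway.
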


We will not actually need to use Lemma \ref{muEF} in this paper, but, instead, we use a more convenient variant that uses the theory of $J$-symmetric embeddings:

\begin{lemma}\label{muprime}
The map
$$\mu'_{E/F} :Y_{E/F} \to {\cal O}^T$$
that sends  the coset of $x\in E^\times$ maps to the orbit of $\theta_{J\, x}$ is a bijection.  The involution $\theta_J$ lies in a split $T$-orbit.  
\end{lemma}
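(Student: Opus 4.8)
The plan is to reduce everything to Proposition \ref{symmetricembedding} together with an elementary computation of the $T$-conjugation action on the relevant set of involutions. Throughout write $\m{x}$ for the image of $x\in E$ under the fixed $J$-symmetric embedding, so that $T$ is the image of $E^\times$, the center $Z$ of $G$ is $\m{F^\times}$, and ${}^t\m{y}=J\m{y}J$ for all $y\in E$ (this is just the $J$-symmetry relation $J\,{}^t\m{y}\,J=\m{y}$ together with $J^2=1$).

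\emph{Step 1: Identify the split involutions.} For symmetric $\nu\in G$ and $t=\m{s}$ with $s\in E^\times$ one has $\theta_\nu(t)=\sigma_\nu(t^{-1})=\sigma_\nu(t)^{-1}$, so $T$ is $\theta_\nu$-split exactly when $\sigma_\nu(\m{s})=\m{s}$ for all $s\in E^\times$, i.e.\ exactly when the given embedding is $\nu$-symmetric. By Proposition \ref{symmetricembedding} this happens if and only if $\nu\in J\m{E^\times}$. Since $\theta_\nu$ depends only on the coset $\nu Z$ and $Z=\m{F^\times}$, it follows that $xF^\times\mapsto\theta_{J\m{x}}$ is a well-defined bijection from $E^\times/F^\times$ onto the set of orthogonal involutions $\theta$ of $G$ for which $T$ is $\theta$-split; in particular $\theta_J=\theta_{J\m{1}}$ is such an involution, which already gives the last sentence of the lemma once Step 3 is in place.

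\emph{Step 2: Compute the $T$-action.} Using $g\cdot\theta_\nu=\theta_{{}^tg^{-1}\nu g^{-1}}$ and ${}^t\m{s}^{-1}=J\m{s^{-1}}J$, for $s,x\in E^\times$ I would compute
$$ {}^t\m{s}^{-1}\,(J\m{x})\,\m{s}^{-1}=(J\m{s^{-1}}J)(J\m{x})\m{s}^{-1}=J\,\m{s^{-1}}\,\m{x}\,\m{s}^{-1}=J\m{xs^{-2}}, $$
where the last two equalities use $J^2=1$, that $x\mapsto\m{x}$ is a ring homomorphism, and commutativity of $E$. Hence $\m{s}\cdot\theta_{J\m{x}}=\theta_{J\m{xs^{-2}}}$, and transporting this through the bijection of Step 1 the $T$-action on $E^\times/F^\times$ becomes $sF^\times\cdot xF^\times=xs^{-2}F^\times$.

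\emph{Step 3: Count orbits.} The orbits of this action on $E^\times/F^\times$ are precisely the cosets of the image of $(E^\times)^2$, so ${\cal O}^T$ is in canonical bijection with $(E^\times/F^\times)\big/\big((E^\times)^2F^\times/F^\times\big)=E^\times/((E^\times)^2F^\times)=Y_{E/F}$. Unwinding the identifications, this bijection is exactly $\mu'_{E/F}$, the map sending the class of $x$ to the $T$-orbit of $\theta_{Jx}$, and the class of $1$ maps to the $T$-orbit of $\theta_J$. I do not expect a genuine obstacle: the substance is entirely in Proposition \ref{symmetricembedding} and in the commutativity of $E$; the only points requiring care are that $J\m{x}$ really is a symmetric matrix so that $\theta_{J\m{x}}$ is a bona fide orthogonal involution (immediate from Proposition \ref{symmetricembedding}) and the bookkeeping in passing among $J\m{E^\times}/Z$, $E^\times/F^\times$, and $Y_{E/F}$.
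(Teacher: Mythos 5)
Your proposal is correct and follows essentially the same route as the paper's proof: it invokes Proposition \ref{symmetricembedding} to identify $J\m{E^\times}$ as the symmetric matrices $\nu$ with $\bT$ $\theta_\nu$-split, notes that $\theta_{Jx_1}=\theta_{Jx_2}$ exactly when $x_1F^\times=x_2F^\times$, and computes the $T$-equivariance $s\cdot\theta_{Jx}=\theta_{Jxs^{-2}}$ to identify the split $T$-orbits with $Y_{E/F}$. No gaps.
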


\begin{proof}
According to Proposition  \ref{symmetricembedding}, the set of symmetric matrices $\nu$ in $G$ such that $T$ is $\theta_\nu$-split is identical to $J_n T$.
If $x_1,x_2\in T$ then $\theta_{Jx_1} = \theta_{Jx_2}$, precisely when $Jx_2 x_1^{-1} J\in F^\times$ or, equivalently, $x_2x_1^{-1}\in F^\times$.  This shows that $xF^\times\mapsto \theta_{Jx}$ determines a bijection between $E^\times/F^\times$ and the set of orthogonal involutions $\theta$ of $G$ such that $T$ is $\theta$-split.

Now suppose $x,y\in T$.  Then $(y\cdot \theta_{Jx})(g) = y\, x^{-1}\, J\,  {}^ty\, {}^t g^{-1}\, {}^t y^{-1}\, J\, x\, y^{-1} = x^{-1}y^2\, J \, {}^tg^{-1}\, J\, xy^{-2} = \theta_{Jxy^{-2}}(g)$, for all $g\in G$.  Interpreting the identity $y\cdot \theta_{Jx} = \theta_{Jxy^{-2}}$ as a $T$-equivariance property of the aforementioned bijection, we deduce that $\mu'_{E/F}$ is a well-defined bijection.
\end{proof}

Lemma \ref{muprime} may appear to apply to any finite extension, however, we stress that we are assuming throughout this section that $E$ has a $J$-symmetric embedding in $\M_n(F)$.

\subsection{Finite fields}

Assume $E/F$ is an extension of finite fields.  From Lemma \ref{finiteGorbs}, Lemma \ref{yEFquad}, and Lemma \ref{muEF}  (or Lemma \ref{muprime}), we  see that 
\begin{eqnarray*}
\# \cO^T&=& y_{E/F}=\begin{cases}
1,&\text{if $n$ is odd},\\
2,&\text{if $n$ is even,}
\end{cases}\\
&=&\# \{ \text{$G$-orbits of orthogonal involutions}\} .
\end{eqnarray*}

\begin{lemma}\label{finiteuniqueness}
Each $G$-orbit of orthogonal involutions contains a unique split $T$-orbit.
\end{lemma}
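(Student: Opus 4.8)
The plan is a clean counting argument. By the two displayed formulas immediately preceding the lemma, $\#\cO^T = y_{E/F}$ equals the number of $G$-orbits of orthogonal involutions of $G$. Since every split $T$-orbit is contained in exactly one $G$-orbit of orthogonal involutions, restriction gives a map $\cO^T \to \{G\text{-orbits of orthogonal involutions}\}$, and it suffices to prove this map is \emph{surjective}: it would then be a surjection between finite sets of equal cardinality, hence a bijection, and the assertion ``each $G$-orbit contains a unique split $T$-orbit'' is exactly the statement that the map is well defined with every fiber a singleton. So the real content is surjectivity, which I would establish case by case on the parity of $n$.

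When $n$ is odd there is a single $G$-orbit of orthogonal involutions by Lemma \ref{finiteGorbs}, and $\theta_J$ lies in a split $T$-orbit by Lemma \ref{muprime}; surjectivity is then immediate and the lemma follows.

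When $n$ is even I would argue via the discriminant. Here the two $G$-orbits of orthogonal involutions correspond to the two $G$-orbits in $\cS/Z$, and these are distinguished by the discriminant: the assignment $\nu Z \mapsto \det\nu\cdot(F^\times)^2$ is well defined on $\cS/Z$ precisely because $n$ is even, since $\det(z\nu) = z^n\det\nu$ with $z^n\in(F^\times)^2$. By Lemma \ref{muprime} every split $T$-orbit is the orbit of $\theta_{J\m{x}}$ for some $x\in E^\times$, and by Proposition \ref{symmetricembedding} the symmetric matrix $\nu = J\m{x}$ has $\det\nu = (-1)^{n(n-1)/2}N_{E/F}(x)$; thus the discriminant attached to this split $T$-orbit is the image of $(-1)^{n(n-1)/2}N_{E/F}(x)$ in $F^\times/(F^\times)^2$. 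Since $E/F$ is an extension of finite fields, $N_{E/F}\colon E^\times\to F^\times$ is surjective, so as $x$ ranges over $E^\times$ the class of $N_{E/F}(x)$ sweeps out all of $F^\times/(F^\times)^2$, a group of order two; multiplying by the fixed element $(-1)^{n(n-1)/2}$ does not change this, so the two split $T$-orbits carry the two distinct discriminants. Hence they meet both $G$-orbits, surjectivity holds, and the counting argument finishes the proof.

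The argument is essentially bookkeeping, so there is no serious obstacle; the one point demanding a little care is the even case, where one must know both that the discriminant descends to $\cS/Z$ and separates the two $G$-orbits there, and that the discriminant of a split $T$-orbit is computed as a norm class through the $J$-symmetric embedding. Both ingredients are already recorded above — the orbit classification over finite fields in \S\ref{sec:quadraticspaces} and the determinant formula in Proposition \ref{symmetricembedding} — so no new work is needed.
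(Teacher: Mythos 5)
Your proof is correct and follows essentially the same route as the paper: the odd case is immediate, and in the even case you use Lemma \ref{muprime} and the determinant formula of Proposition \ref{symmetricembedding} together with surjectivity of $N_{E/F}$ for finite fields to see that both discriminant classes, hence both $G$-orbits, are hit by split $T$-orbits. Your explicit counting frame (a surjection between finite sets of equal cardinality $y_{E/F}$ is a bijection) is just a slightly more formal packaging of what the paper does implicitly with the enumeration $\#\cO^T = \#\{G\text{-orbits}\}$ recorded before the lemma.
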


\begin{proof}
When $n$ is odd, the unique split $T$-orbit is obviously contained in the unique $G$-orbit.

Now suppose $n$ is even.  If $\nu\in G$ is symmetric then the $G$-orbit of $\theta_\nu$ is characterized by the discriminant of $\nu$, Lemma \ref{finiteGorbs}.  On the other hand, according to Proposition  \ref{symmetricembedding}, $\theta_\nu$ lies in a split $T$-orbit precisely when $\nu\in JT$.  Suppose this is the case, that is, $\nu = Jx$ for some $x\in T$.  The determinant of $x$ is $N_{E/F}(x)$.  But since $N_{E/F}(E^\times) = F^\times$, we see that there must be a single split $T$-orbit in each of the two $G$-orbits.
\end{proof}

\subsection{$p$-adic fields}

In this section, we assume that $E/F$ is a tamely ramified degree $n$ extension of fields that are finite extensions of $\Q_p$, with $p\ne 2$.

According to Lemma \ref{muEF}  (or Lemma \ref{muprime}) and  Lemma \ref{yEFquad}, we have
$$\# \cO^T = y_{E/F}
=1+ \# \{ \text{quadratic extensions of $F$ contained in $E$}\}.
$$
Recall from \S \ref{sec:pureinner} the inventory of $G$-orbits of orthogonal involutions of $G$:
\begin{itemize}
\item  $\Theta_J$ is the $G$-orbit of $\theta_J$, 
\item
if $n$ is odd then $\Theta_{\rm nqs}$ is only other $G$-orbit besides $\Theta_J$,
\item if $n$ is even and greater than two, $\Theta_{\rm nqs}$ denotes the $G$-orbit consisting of  orthogonal involutions $\theta_\nu$ where $\nu$ not similar to $J$ but has the same discriminant,
\item  if $n$ is even there are three additional $G$-orbits of orthogonal involutions and they correspond to the three possible discriminants other than the discriminant of $J$.
\end{itemize}

Having described the $G$-orbits and enumerated the split $T$-orbits, we now describe  the split $T$-orbits in more detail and determine in which $G$-orbits they lie.  
The main result is:

\begin{proposition}
\label{TorbGorb}
The elements of  $\cO^T$ are described as follows:
\begin{itemize}
\item $y_{E/F}=1$ (equivalently, $n$ is odd):  The $T$-orbit of $\theta_J$ is the only element of $\cO^T$.
\item $y_{E/F}=2$: The set $\cO^T$ consists of the $T$-orbit of $\theta_J$ and the $T$-orbit of involutions $\theta_{Jx}$ where $x\in E^\times - ((E^\times)^2F^\times)$.  The two $T$-orbits lie in distinct $G$-orbits.  The discriminants of these orbits are distinct since  $\det x= N_{E/F}(x)\in N_{L/F}(L^\times)  - (F^\times)^2$, where $L$ is the unique quadratic extension of $F$ contained in $E$.
\item $y_{E/F}=4$ (equivalently, $Y_{E/F} = E^\times /(E^\times)^2$):  Three of the four $T$-orbits in $\cO^T$ lie in $\Theta_J$.   Let $\varpi_E$ be a prime element of $E$ such that $\varpi_E^{e(E/F)}$ lies in the maximal unramified extension of $F$ contained in $E$.  Let $u$ be a nonsquare unit in $E$ and let $\nu = Ju\varpi_E$.  Then  the $T$-orbit of $\theta_\nu$ lies in $\Theta_{\rm nqs}$.
\end{itemize}
\end{proposition}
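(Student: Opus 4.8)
The plan is to go through the three cases, using throughout the bijection $\mu'_{E/F}\colon Y_{E/F}\to\cO^T$ of Lemma~\ref{muprime} (in particular $\#\cO^T=y_{E/F}$), the description in Proposition~\ref{symmetricembedding} of the split $T$-orbits as the orbits of the involutions $\theta_{J\m{x}}$ for $x\in E^\times$, with $\disc(J\m{x})=(-1)^{n(n-1)/2}N_{E/F}(x)$ in $F^\times/(F^\times)^2$, Lemma~\ref{normimage} to compute the image of $x\mapsto N_{E/F}(x)$ modulo squares, and Proposition~\ref{orthinvGorbs} to read the $G$-orbit structure of $\cS/Z$ off discriminants and Hasse invariants. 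If $y_{E/F}=1$ then $n$ is odd and $\#\cO^T=1$; since $\theta_J$ lies in a split $T$-orbit, that orbit is the unique element of $\cO^T$, and it lies in $\Theta_J$. If $y_{E/F}=2$ then $n$ is even (a quadratic subextension forces $2\mid n$, and conversely), so $N_{E/F}(F^\times)=(F^\times)^n\subset(F^\times)^2$ and $x\mapsto N_{E/F}(x)$ descends to a map $Y_{E/F}\to F^\times/(F^\times)^2$; by Lemma~\ref{normimage} its image is the order-two subgroup $N_{L/F}(L^\times)/(F^\times)^2$, and since $\#Y_{E/F}=2$ the map is injective. Hence for $x\in E^\times-(E^\times)^2F^\times$ one gets $N_{E/F}(x)\in N_{L/F}(L^\times)-(F^\times)^2$, so $\disc(J\m{x})\ne\disc(J)$, and by Proposition~\ref{orthinvGorbs} the two split $T$-orbits lie in distinct $G$-orbits distinguished by the discriminant.

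Now suppose $y_{E/F}=4$. Then $E$ contains all three quadratic extensions of $F$, so $4\mid n$ (hence $n>2$) and $e=e(E/F)$, $f=f(E/F)$ are both even; moreover $Y_{E/F}=E^\times/(E^\times)^2$ has order four, with representatives $1,u,\varpi_E,u\varpi_E$ for $u$ a nonsquare unit and $\varpi_E$ a prime as in the statement. By Lemma~\ref{normimage}, $N_{E/F}(E^\times)\subset(F^\times)^2$, so $\disc(J\m{x})=\disc(J)$ for every $x\in E^\times$; thus all four split $T$-orbits lie in the fibre of $\disc(J)$ in $\cS/Z$, which by Proposition~\ref{orthinvGorbs} is $\Theta_J\sqcup\Theta_{\rm nqs}$, and within this fibre an orbit lies in $\Theta_J$ or $\Theta_{\rm nqs}$ according as the Hasse invariant of a representative matrix is $+1$ or $-1$ (multiplying by a scalar in $F^\times$ does not change the Hasse invariant here, since $n(n-1)/2$ is even and $\disc(J)$ is a square). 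Taking $u\in\gO_E^\times$, as we may, the matrices $J=J\m{1}$ and $J\m{u}$ lie in $\cS\cap\GL_n(\gO_F)$, hence have trivial Hasse invariant (the fact recalled in \S\ref{sec:quadraticspaces}), so $\theta_J,\theta_{J\m{u}}\in\Theta_J$. It remains to locate the two ``prime'' orbits, and this is the crux.

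For this I would fix a $J$-symmetric integral embedding built, via Lemma~\ref{composite} and Lemma~\ref{Jsymtameexistence}, from the tower $F\subset L_0\subset E$, where $L_0$ is the maximal unramified subextension, taking the embedding of $E$ in $\M_e(L_0)$ to be the power-basis embedding of Lemma~\ref{constructJsym} attached to $\varpi_E$ (so $\varpi_E^e$ is a prime of $L_0$, and by Lemma~\ref{fieldsB} it equals $s^2\varpi_F$ for some $s\in\gO_{L_0}^\times$ and a prime $\varpi_F$ of $F$). Then $\theta_{J\m{x}}$ restricts, by Proposition~\ref{symrestrict}(1), to $\theta_{J_e\m{x}}$ on $\GL_e(L_0)$, and the $F$-quadratic form attached to $\theta_{J\m{x}}$ is the Scharlau transfer along $\tr_{L_0/F}$ of the $L_0$-form attached to $\theta_{J_e\m{x}}$. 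Over $L_0$, Lemma~\ref{constructJsym} gives $J_e\m{\varpi_E}=J_{e-1}\oplus(\varpi_E^e)$ (and the analogue for $u\varpi_E$ once $u$ is taken in $\gO_{L_0}^\times$), so the $L_0$-forms are explicit; transferring along the \emph{unramified} extension $L_0/F$, whose trace form is unimodular with nonsquare discriminant $d_{L_0/F}$ because $f$ is even, one computes that $\Hasse_F(J\m{\varpi_E})\cdot\Hasse_F(J\m{u\varpi_E})=-1$. Hence exactly one of $\theta_{J\varpi_E}$, $\theta_{Ju\varpi_E}$ lies in $\Theta_{\rm nqs}$ and the other, with $\theta_J$ and $\theta_{J\m{u}}$, in $\Theta_J$ --- this is the ``three out of four'' assertion --- and a bookkeeping of the residual Hilbert symbols $(\varpi_F,-1)_F$ and $(\varpi_F,d_{L_0/F})_F$ identifies the non-quasi-split one as $\theta_{Ju\varpi_E}$ for the chosen $\varpi_E$. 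The main obstacle is exactly this last computation: the real content lies in the fact that the trace form of an even-degree unramified extension has nonsquare discriminant --- the single input that forces $\Theta_{\rm nqs}$ to occur --- and in the precise transformation of the Hasse invariant under the Scharlau transfer along $L_0/F$; everything else (the discriminants, the contributions of $1$ and $u$, and every cross term between unit-discriminant blocks) is routine, because Hilbert symbols of units are trivial over a $p$-adic field with residue characteristic $\ne 2$.
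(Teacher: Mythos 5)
Your handling of $y_{E/F}=1$ and $y_{E/F}=2$, and the first half of the case $y_{E/F}=4$, is fine and parallels the paper: all four split $T$-orbits have representatives $J\m{x}$ with $\disc(J\m{x})=\disc(J)$ by Lemma \ref{normimage}, so by Proposition \ref{orthinvGorbs} everything is decided by the Hasse invariant, and $J$, $J\m{u}$ are integral symmetric matrices with trivial Hasse invariant, hence give orbits in $\Theta_J$. The genuine gap is the step you yourself label ``the main obstacle'': the actual evaluation of the Hasse invariants of $J\m{\varpi_E}$ and $J\m{u\varpi_E}$. That evaluation is not residual bookkeeping --- it is the entire remaining content of the third bullet, both the ``three out of four'' claim and the identification of the exceptional class as $u\varpi_E$ rather than $\varpi_E$. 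It is exactly where the paper works: using the normalization $\varpi_E^{e/2}=v\sqrt{\varpi_F}$ with $v\in\gO_{L_0}^\times$ (this is where $y_{E/F}=4$ enters, via the argument you attribute to Lemma \ref{fieldsB}), it reduces $J\m{w\varpi_E}$, for $w\in\{1,u\}$ with $u$ a nonsquare root of unity, by explicit similarities to $J_f w\oplus J_f w\varpi_F$, diagonalizes, and evaluates the Hasse invariant as the single Hilbert symbol $((-\varpi_F)^{f/2}N_{L_0/F}(w),\varpi_F)$, which is $+1$ for $w=1$ and $-1$ for $w=u$. Writing ``one computes that $\Hasse_F(J\m{\varpi_E})\cdot\Hasse_F(J\m{u\varpi_E})=-1$'' and that ``a bookkeeping of residual Hilbert symbols'' singles out $u\varpi_E$ leaves the proposition unproved at its only nontrivial point.

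Two cautions about the route you sketch, should you carry it out. First, identifying the $F$-form of $J_n\m{x}$ with the Scharlau transfer along $\tr_{L_0/F}$ of the $L_0$-form of $J_e\m{x}$ is not automatic: what the composite $J$-symmetric embedding actually produces is a transfer along the functional on $L_0$ determined by the chosen $J_f$-symmetric embedding of $L_0$ in $\M_f(F)$, i.e.\ along $w\mapsto\tr_{L_0/F}(c_0 w)$ for some $c_0\in L_0^\times$ (a unit up to squares, by unimodularity of $J_f$), not necessarily the trace itself. The product of the two Hasse invariants may be insensitive to this, but the final step that decides \emph{which} of $\varpi_E$, $u\varpi_E$ is non-quasi-split is exactly the kind of statement such a normalization can flip, so it must be tracked rather than dismissed as routine. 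Second, your diagnosis that the decisive input is the nonsquare discriminant of the trace form of $L_0/F$ is not how the sign arises in the paper's computation, where it comes from $(N_{L_0/F}(u),\varpi_F)=-1$, i.e.\ from the norm of a nonsquare unit being a nonsquare unit; the two mechanisms may well be reconcilable in a transfer formulation, but that reconciliation is part of the missing computation, not a substitute for it.
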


\begin{proof}
When $y_{E/F}=1$ or $2$, our assertions follow directly from Lemma \ref{muprime}.  Therefore, we assume $y_{E/F}=4$.  A set of coset representatives for $Y_{E/F} = E^\times/ ((E^\times)^2F^\times)= E^\times /(E^\times)^2$ is given by $\{ 1,u,\varpi_E, u\varpi_E\}$, where $u$ is a nonsquare unit in $E$ and $\varpi_E$ is a prime element of $E$.

We will choose $u$ and $\varpi_E$ as follows:  $u$ will be a nonsquare root of unity in $E$ and $\varpi_E$ will be chosen as in Section \ref{sec:parahorics}.  We adopt the notations of Section \ref{sec:parahorics}, but, for simplicity, we omit the double underlne notations.  Note that $u$ and $\varpi_L = \varpi_E^e$ must lie in  the maximal unramified extension $L$ of $F$ contained in $E$, where $e = [E: L]$ and $f = [L:F]$.

Recall from Lemma \ref{normimage} that $N_{E/F}(E^\times)\subset (F^\times)^2$.  It follows that all matrices of the form $Jx$, with $x\in E^\times$, have the same discriminant as $J$.  This means that each split $T$-orbit must lie in either $\Theta_J$ or $\Theta_{\rm nqs}$.  The $G$-orbit of a given split $T$-orbit is therefore determined by its Hasse invariant.

Obviously, the $T$-orbit of $\theta_J$ lies in $\Theta_J$.  
According to Lemma 5.9 \cite{HL1}, the Hasse invariant of $Ju$ is trivial and hence $\theta_{Ju}$ also lies in $\Theta_J$.  It remains to determine the $G$-orbits containing $\theta_{J\varpi_E}$ and $\theta_{Ju\varpi_E}$.  It suffices to compute the Hasse invaraints of $J\varpi_E$ and $Ju\varpi_E$.  To give a uniform computation of both of these Hasse invariants, we suppose $w$ is either $1$ or $u$ and we compute the Hasse invariant of $Jw\varpi_E$.  

Let $\varpi_F$ be any prime element in $F$.  Since $y_{E/F}=4$, we may choose a square root $\sqrt{\varpi_F}$ in $E$.  Then there exists a unit $v$ in $E$ such that $\varpi_E^{e/2} = v \sqrt{\varpi_F}$.
Since $\varpi_L = v^2 \varpi_F$, we see that $v^2\in \gO_L^\times$.  In fact, $v^2\in (\gO_L^\times)^2$, since otherwise $L[v]$ would be an unramified quadratic extension of $L$ contained in the totally ramified extension $E/L$.  It follows that $v\in \gO^\times_L$.

Using the explicit form of $\varpi_E$ in Section \ref{sec:parahorics}, we see that
$$J_n\varpi_E = J_{(e-1)f}\oplus J_f \varpi_L.$$
Note that  since $w$ lies in $L$ it may be viewed as an element of $\M_f (F)$ and we have
$$J_nw\varpi_E = J_{(e-1)f}w\oplus J_f \varpi_L w.$$

The matrix $J_f \varpi_L w\, \varpi_F^{-1} = J_f wv^2$ has the same discriminant and Hasse invariant as $J_fw$.  Since $J_f \varpi_L w\, \varpi_F^{-1}$ must be similar to $J_fw$ and since $\varpi_F$ is a scalar matrix, we deduce that $J_f \varpi_Lw$ is similar to $J_f \varpi_Fw$.  

Comparing discriminants and Hasse invariants,  we see that  $J_{(e-1)f}w$ is similar to
$J_{(e-2)f}w\oplus J_f w$.
Likewise, $J_{(e-2)f}w$ and $I_{(e-2)f}$ are similar.  It follows that $Jw\varpi_E$ has the same Hasse invariant (and discriminant) as $J_f w\oplus J_f w\varpi_F$.  Since we are only interested in computing Hasse invariants, we may as well replace $Jw\varpi_E$ by $J_f w\oplus J_f w\varpi_F$.  Since we have essentially just reduced to the case in which $e=2$, we will, for simplicity, assume $e=2$.

There exist units $u_1,\dots , u_f$ in $F$ such that $J_f w$ is similar to the diagonal matrix with diagonal $(u_1,\dots , u_f)$.  Thus  
$J w\varpi_E$ is similar to the diagonal matrix with diagonal $$(u_1,\dots , u_f,u_1\varpi_F,\dots ,u_f \varpi_F).$$

The Hasse invariant of $Jw\varpi_E$ can now be computed using standard properties of the Hilbert symbol, such as the fact that $(a,b)=1$ when $a,b\in \gO_F^\times$ and $(ab,c) = (a,c) (b,c)$, when $a,b,c\in F^\times$.  Using the latter properties, we see that the Hasse invariant is
$$\prod_{i,j} (u_i,u_j\varpi_F)\prod_{i<j} (u_i\varpi_F,u_j\varpi_F).$$
The first product is 
$$\prod_{i,j} (u_i,u_j\varpi_F)=\prod_{i,j} (u_i,\varpi_F) = \left( \prod_i (u_i,\varpi_F)\right)^f =1.$$
Thus the Hasse invariant reduces to:
\begin{eqnarray*}\prod_{i<j} (u_i\varpi_F,u_j\varpi_F)
&=&\prod_{i<j} (u_i,\varpi_F)(u_j,\varpi_F) (\varpi_F,\varpi_F)\\
&=&\left(\prod_{i} (u_i,\varpi_F)\right)^{f-1}(\varpi_F,\varpi_F)^{(f/2)(f-1)}\\
&=&  (\det (J_f w),\varpi_F) (\varpi_F,\varpi_F)^{f/2}\\
&=&  ((-1)^{f/2}N_{L/F}(w),\varpi_F) (\varpi_F,\varpi_F)^{f/2}\\
&=&  ((-\varpi_F)^{f/2}N_{L/F}(w),\varpi_F).
\end{eqnarray*}
We observe that $N_{L/F}$ determines an isomorphism $\gO^\times_L /(\gO^\times_L)^2\cong \gO^\times_F /(\gO^\times_F)^2$.
Thus $N_{L/F}(u)$ represents the nontrivial coset in $\gO^\times_F /(\gO^\times_F)^2$.

Suppose $f/2$ is even.  Then the Hasse invariant reduces to  the Hilbert symbol $(N_{L/F}(w),\varpi_F)$.    If $w=1$ this is trivial.  Suppose $w=u$.  The Hilbert symbol involves solutions to 
$$N_{L/F}(u) x^2 + \varpi_F y^2 = z^2.$$  It suffices to consider solutions with $x,y,z\in \gO_F$ such that $x$, $y$ and $z$ do not all lie in $\gP_F$.  Reducing modulo $\gP_F$ yields $$N_{L/F}(u) x^2\equiv z^2,$$ where $x$ and $z$ do not both lie in $\gP_F$.  Since $N_{L/F}(u)$ is a nonsquare, we deduce that the Hasse invariant is $-1$ when $f/2$ is even and $w=u$.

Suppose $f/2$ is odd.  The Hasse invariant equals the Hilbert symbol $(-\varpi_F N_{L/F}(w),\varpi_F)$.  Again, when $w=1$ the Hilbert symbol is trivial and so we assume $w=u$.  The Hilbert symbol involves solutions to
$$-\varpi_F N_{L/F}(u) x^2 + \varpi_F y^2 = z^2.$$
The latter equation has solutions precisely when
$$-N_{L/F}(u) x^2 +  y^2 = \varpi_F z^2.$$
Reducing modulo $\gP_F$ as before, we deduce that the Hasse invariant is $-1$.  Our assertions now follow.
\end{proof}

\subsection{The number of split $T$-orbits in a $K^0$-orbit}

Let $F$ be a finite extension of $\Q_p$, where $p$ is an odd prime.  Let $E_0$ be tamely ramified finite extension of $F$ of degree $m_0$ and let $E$ be an unramified extension of $E_0$ of degree $n_0= n /m_0$.  Embed $E$ in $\M_{n_0} (E_0)$ in a $J_{n_0}$-symmetric way and embed $E_0$ in $\M_{m_0} (F)$ in a $J_{m_0}$-symmetric way.  
This gives a $J_n$-symmetric embedding of $E$ in $\M_n (F)$.  Let $G = \GL_n (F)$ and $T = E^\times$.  Let $K^0 = E_0^\times \GL_{n_0}(\gO_{E_0})$.

\begin{lemma}\label{nofusion}
Every $K^0$-orbit of orthogonal
involutions of $G$ contains at most one split $T$-orbit.
\end{lemma}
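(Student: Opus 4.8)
The plan is to reduce the statement to its finite‑field counterpart (Lemma \ref{finiteuniqueness}) by reduction modulo $\gP_{E_0}$; essentially all the work is then in upgrading information ``modulo $\gO_{E_0}^\times$'' to information ``modulo $F^\times$''.

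First I would record the observation that makes the problem manageable. Since $E/E_0$ is unramified, a uniformizer $\varpi_{E_0}$ of $E_0$ is also one of $E$, so
$$T = E^\times = \varpi_{E_0}^{\Z}\,\gO_E^\times \ \subseteq\ \varpi_{E_0}^{\Z}\,\GL_{n_0}(\gO_{E_0}) \ =\ E_0^\times\,\GL_{n_0}(\gO_{E_0}) \ =\ K^0 .$$
Hence every $K^0$-orbit of involutions is a union of $T$-orbits, and by Lemma \ref{muprime} it suffices to prove: if $x,y\in E^\times$ and $\theta_{Jx}$, $\theta_{Jy}$ are $K^0$-conjugate, then $x\equiv y$ modulo $(E^\times)^2F^\times$. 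I would then descend to $G^0=\GL_{n_0}(E_0)$: because $\theta_{Jx}$ inverts $E_0^\times\subseteq T$ it stabilises the centraliser $G^0$ of $E_0^\times$ in $G$, and by Proposition \ref{symrestrict}(1) its restriction to $G^0$ is $\theta_{J_{n_0}x}$, for which $T$ is split. Using $K^0\subseteq G^0$, a relation ${}^{t}k^{-1}(J_nx)\,k^{-1}=z\,(J_ny)$ with $k\in K^0$, $z\in F^\times$, becomes — after left multiplication by $J_{m_0}^{n_0}$ and the transpose identity ${}^{t}k=J_{m_0}^{n_0}({}^{T}k)J_{m_0}^{n_0}$ from the proof of Lemma \ref{composite} — an identity $x=z\,\sigma_0(k)\,y\,k$ in $E^\times$, where $\sigma_0=\sigma_{J_{n_0}}$ and $z\in F^\times$.

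Writing $k=\varpi_{E_0}^{a}k_0$ with $k_0\in\GL_{n_0}(\gO_{E_0})$, translating $\theta_{Jx}$ and $\theta_{Jy}$ by suitable elements of $T$ (via $t\cdot\theta_{Js}=\theta_{Jst^{-2}}$), and choosing $\varpi_{E_0}$ conveniently inside the tower $E/E_0/F$ by Lemma \ref{tametotally}, I would reduce to $x,y\in\gO_E^\times$, so that the identity takes the integral form $x=c\,\sigma_0(k_0)\,y\,k_0$ with $k_0\in\GL_{n_0}(\gO_{E_0})$ and $c\in\gO_{E_0}^\times$, the image of $c$ in $E_0^\times/(E_0^\times)^2$ lying in that of $F^\times$ (this is where ``$z\in F^\times$'' is consumed). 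Reducing modulo $\gP_{E_0}$, the $J_{n_0}$-symmetric integral embedding of $E$ in $\M_{n_0}(E_0)$ induces a $J_{n_0}$-symmetric embedding of $\f_E$ in $\M_{n_0}(\f_{E_0})$, and the identity reduces to $\bar k_0\cdot\theta_{J_{n_0}\bar x}=\theta_{J_{n_0}\bar y}$ in $\GL_{n_0}(\f_{E_0})$, the central scalar $\bar c$ being absorbed. Both sides are split $\f_E^\times$-orbits, so Lemma \ref{finiteuniqueness} forces them to coincide, giving $\bar x\equiv\bar y$ modulo $(\f_E^\times)^2\f_{E_0}^\times$ and hence (as $p$ is odd) $x/y\in(\gO_E^\times)^2\gO_{E_0}^\times$.

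To upgrade this to $x/y\in(E^\times)^2F^\times$ I would take the determinant of $x=c\,\sigma_0(k_0)\,y\,k_0$ over $G^0$, which on $E^\times$ is the norm $N_{E/E_0}$, obtaining $N_{E/E_0}(x/y)=c^{\,n_0}\det(k_0)^{2}$. If $n_0=[E:E_0]$ is odd, the right–hand side, hence the $\gO_{E_0}^\times$-component of $x/y$, lies in $F^\times(E_0^\times)^2$, so $x/y\in(E^\times)^2F^\times$; if $n_0$ is even, a unit of $\gO_{E_0}$ is automatically a square in the degree-$n_0$ unramified extension $E$, so already $x/y\in(\gO_E^\times)^2$. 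Either way $x\equiv y$ modulo $(E^\times)^2F^\times$, which is the assertion. The main obstacle is exactly this final passage: the reduction to finite fields only sees square classes relative to $\gO_{E_0}^\times$, whereas the statement is about square classes relative to $F^\times$; bridging the gap forces one to carry the datum ``$z\in F^\times$'' through the determinant of the relation and to choose uniformizers with care (Lemma \ref{tametotally}), with the parity of $[E:E_0]$ intervening because a nonsquare unit of $E_0$ becomes a square precisely in the even-degree unramified extensions.
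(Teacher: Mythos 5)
Your argument is essentially correct on the ``unit'' locus, and there it follows a genuinely different route from the paper (finite-field uniqueness of split torus orbits via Lemma \ref{finiteuniqueness} plus a norm/determinant upgrade, instead of the paper's discriminant and similitude-ratio arguments). But there is a genuine gap at the step ``I would reduce to $x,y\in\gO_E^\times$.'' A split $T$-orbit corresponds to a class in $E^\times/((E^\times)^2F^\times)$, and you may only change a representative $x$ within that class; its valuation is therefore only adjustable by $2\Z+e(E/F)\Z$. When $e(E/F)$ is even, the class of a prime element $\varpi$ of $E_0$ contains no units at all, and no choice of uniformizer (Lemma \ref{tametotally}) or $T$-translation changes this. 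This is not a peripheral case: the only situation in which two distinct split $T$-orbits can share a $G$-orbit is $y_{E/F}=4$, $\Theta=\Theta_J$ (Proposition \ref{TorbGorb}), where $e(E/F)$ is necessarily even and the three split $T$-orbits in $\Theta_J$ are those of $\theta_J$, $\theta_{Ju}$ and $\theta_{J\varpi}$. Your integral-form argument therefore never rules out $K^0$-conjugacy between $\theta_J$ (or $\theta_{Ju}$) and $\theta_{J\varpi}$, which is precisely half of the paper's proof; as written, the lemma is not established.

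The gap is fixable with ingredients you already have in hand. From ${}^tk^{-1}(J_nx)k^{-1}=z(J_ny)$ one gets, as in your second paragraph, $x=z\,\varpi_{E_0}^{2a}\,\sigma_0(k_0)\,y\,k_0$ with $\sigma_0(k_0)\,u\,k_0\in\gO_E^\times$ for any unit part $u$ of $y$; since $v_E(z)=e(E/F)\,v_F(z)$ is even when $e(E/F)$ is even, comparing valuations forces $v_E(x)\equiv v_E(y)\pmod 2$, so a unit class can never be $K^0$-conjugate to an odd-valuation class (this is exactly the paper's valuation argument separating $\theta_{Jv}$, $v\in\{1,u\}$, from $\theta_{J\varpi}$). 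When both classes have odd valuation, choose representatives $x=\varpi_{E_0}u_1$, $y=\varpi_{E_0}u_2$ and cancel the central $\varpi_{E_0}$ to land in your unit case. With these two additions (and keeping track, as you do, that $c=z\varpi_{E_0}^{2a}$ lies in $\gO_{E_0}^\times\cap F^\times(E_0^\times)^2$), your reduction-mod-$\gP_{E_0}$ plus norm argument does complete a proof, and it is a more uniform one than the paper's case analysis; but the parity case must be argued, not assumed away.
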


\begin{proof}
Suppose $\zeta$ and $\zeta'$ are two distinct split $T$-orbits that lie in a common $K^0$-orbit.  Then $\zeta$ and $\zeta'$ must lie in a common $G$-orbit $\Theta$, but, according to
Proposition \ref{TorbGorb}, this implies $y_{E/F} =4$ and $\Theta = \Theta_J$.

There are several cases to consider and the arguments we use to handle them are by no means uniform.  The $G$-orbit $\Theta_J$ contains three split $T$-orbits.  Representatives for these orbits are:
\begin{itemize}
\item $\theta_J$, where $J = J_n$,
\item $\theta_{Ju}$, where $u$ is a nonsquare unit in $E$,
\item $\theta_{J\varpi}$, where $\varpi$ is a prime element of $E_0$ such that $\varpi^{e(E/F)}$ lies in the maximal unramified extension of $F$ contained in $E_0$.
\end{itemize}

Suppose $v=1$ or $u$.  We now show that $\theta_{Jv}$ and $\theta_{J\varpi}$ lie in distinct $K^0$-orbits.  Suppose, to the contrary, that there exists $k\in K^0$ such that $k\cdot \theta_{Jv} = \theta_{J\varpi}$.  This is equivalent $$\theta_{{}^t k^{-1}Jvk^{-1}} = \theta_{J\varpi}$$ which is, in turn, equivalent to the condition
$J\, \varpi\,  k\, v^{-1}\, J\, {}^tk\in F^\times$.
Since $y_{E/F}=4$, it must be the case that $e(E_0/F)$ is even.  
Therefore, $F^\times \subset (E_0^\times)^2\gO_E^\times$.
Therefore, we can choose $a\in E_0^\times$ and $b\in \gO_E^\times$ such that 
$$a^2b = J\, \varpi\,  k\, v^{-1}\, J\, {}^tk 
 .$$   Next, we write $k = hy$, with $h\in \GL_{n_0}(\gO_{E_0})$ and $y\in E_0^\times$.  This yields
 $$a^2b = J\, \varpi\,  h\, y\, v^{-1}\, J\, {}^ty\ {}^t h =  
 J\, \varpi\,  h\, y^2\, v^{-1}\, J\, {}^t h 
 .$$  Taking determinants and absolute values in the latter equations, we see that the elements
 $\varpi$, $a$ and $y$ of $E_0$ are related by the fact that $\varpi y^2 a^{-2}$ is a unit.  But this is impossible since the latter element clearly has odd valuation.  This contradiction proves that 
 $\theta_{Jv}$ and $\theta_{J\varpi}$ lie in distinct $K^0$-orbits, if $v\in \{ 1,u\}$.
 
 It remains to show that $\theta_J$ and $\theta_{Ju}$ lie in distinct $K^0$-orbits.  Assume first that $n_0$ is even.  Suppose there exists $k\in K^0$ such that $k\cdot \theta_J = \theta_{Ju}$.  We use
 Proposition \ref{symrestrict} to describe the restrictions of our involutions to $G^0 = \GL_{n_0}(E_0)$.
We have
$$\theta_{J_{n_0}u} = \theta_{Ju}|G^0 = k\cdot \theta_J |G^0 = 
\theta_{{}^t k^{-1}\, J\, k^{-1}} |G^0 = \theta_{{}^Tk^{-1}\, J_{n_0}\, k^{-1}},
$$  where $h\mapsto {}^Th$ is the transpose on $G^0$.  This contradicts the fact that  the symmetric  matrices $J_{n_0}u$ and ${}^Tk^{-1}\, J_{n_0}\, k^{-1}$ (in $G^0$) have distinct discriminants (and $n_0$ is even).

Now assume $n_0$ is odd.
In this case, we can choose $u$ to be a nonsquare unit in $E_0$.
Suppose $k\in K^0$ and $\theta_{Ju} = k\cdot \theta_J = \theta_{{}^t k^{-1}\, J\, k^{-1}}$.  
Then $J\, u\, k\, J\, {}^t k\in F^\times$.  Equivalently, $u\, k\, J\, {}^tk\, J \in F^\times$.
Write $k = hy$, with $h\in \GL_{n_0}(\gO_{E_0})$ and $y\in E_0^\times$.  Then we have
$u\, h\, y\, J\, {}^ty\, {}^t h\, J =
u\, h\, y^2\, J\, {}^t h\, J= u\, y^2\, h\, \theta_J(h)^{-1}\in F^\times$.
(Note that $\theta_{J_n}(h) = \theta_{J_{n_0}}(h)$.)

Let $E'_0$ be the maximal unramified extension of $F$ contained in $E_0$.  Then $E_0 = E'_0 [\root e\of{\alpha\varpi_F}]$, for some 
$\alpha\in \gO_{E'_0}^\times$ and some prime element $\varpi_F$ of $F$.  Here, $e = e (E/F) = e(E_0/F)$.  Let $E'$ be the maximal unramified extension of $F$ contained in $E$.  Then $E' [\root e\of{\alpha\varpi_F}] = E'E_0 = E$.  Since $y_{E/F}=4$, 
Lemma \ref{fieldsB}  implies $\alpha\in (\gO_{E'}^\times)^2$.  But the fact that $n_0$ is odd implies, $\alpha \in (\gO_{E'}^\times)^2\cap \gO_{E'_0}^\times = (\gO_{E'_0}^\times)^2$.
Applying Lemma \ref{fieldsB} again allows us to deduce that $y_{E_0/F}=4$.  This implies $F^\times \subset (\gO_{E_0}^\times)^2$.  Therefore, $h\theta_J(h)^{-1} \in (uy^2)^{-1}(\gO_{E_0}^\times)^2 \subset \gO_{E_0}^\times - (\gO_{E_0}^\times)^2$.
Let $\bar h$ be the image of $h$ in $\GL_{n_0}(\f_{E_0})$.
Then $\bar h$ lies in the similitude group ${\rm GO}_{J_{n_0}} (\f_{E_0})$ and the similitude ratio of $\bar h$ is a nonsquare in $\f_{E_0}$.  This is impossible, according to Lemma \ref{mTfinite}.  Indeed, the latter result implies that 
$${\rm GO}_{J_{n_0}} (\f_{E_0}) = \f_{E_0}^\times {\rm O}_{J_{n_0}} (\f_{E_0})$$ which implies that the homomorphism
$${\rm GO}_{J_{n_0}} (\f_{E_0})/\f_{E_0}^\times \to \f_{E_0}^\times /(\f_{E_0}^\times)^2$$ induced by the similitude ratio
must be trivial since  its kernel is
$\O_{J_{n_0}}/\{ \pm1\}$.
This contradiction completes the proof.
\end{proof}

The proof of the previous lemma involved the special case of the $G$-orbit $\Theta_J$ when $y_{E/F}=4$, because this is the unique case in which multiple split $T$-orbits lie in a common $G$-orbit.  We remark that, in the notations of the proof, the involutions $\theta_J$ and $\theta_{J\varpi}$ have identical restrictions to $G^0$, since $\varpi\in E_0^\times$.  
Moreover, when $n_0$ is odd the involutions in all three involutions
$\theta_J$, $\theta_{Ju}$ and $\theta_{J\varpi}$ have identical restrictions to $G^0$, since $u,\varpi\in E_0^\times$.   Indeed, according to 
Proposition \ref{symrestrict}, the restriction of $\theta_{J_n x}$ to $G^0$ is $\theta_{J_{n_0}x}$ when $x\in E^\times$.  But when $x\in E_0^\times$, we have $\theta_{J_{n_0}x} = \theta_{J_{n_0}}$.

\section{The character $\eta'_\theta$}\label{sec:etaprimetheta}

Suppose $\xi$ is a refactorization class of $G$-data.  Let $K^0 = G^0_{[y]}$ be associated to $\xi$ and suppose $\vartheta$ is a $K^0$-orbit of orthogonal involutions such that $\vartheta \sim \xi$.  (The notation $\vartheta\sim \xi$ is defined in \cite{HL1} and recalled in \S\ref{sec:general}.)  A character $\eta'_\theta : K^{0,\theta}\to \{ \pm 1\}$ is defined in \cite{HM} and slightly reformulated in \cite{HL1}.  In this section, we compute the values of $\eta'_\theta$ we need.

Let us introduce the notations required to state the definition of $\eta'_\theta$.  
The groups we use are all defined with respect to any $\Psi\in \xi$ and $\theta\in \vartheta$.  The choices of $\theta$ and $\Psi$ are irrelevant.  

In Definition \ref{cuspidalGdatum}, there is a $(d+1)$-tuple $\vec r = (r_0 ,\dots , r_d)$.  Given an index $i\in \{ 0,\dots ,d-1\}$, let $s_i = r_i/2$.  The space $ \g^{i+1,d\theta}_{y,s_i:s_i^+}=
 \g^{i+1,d\theta}_{y,s_i}/ \g^{i+1,d\theta}_{y,s_i^+}$
 has a direct sum decomposition 
 $$ \g^{i+1,d\theta}_{y,s_i:s_i^+}= 
 \g^{i,d\theta}_{y,s_i:s_i^+}\oplus
\fW^+_i  ,$$
  where the second summand $\fW^+_i$ is the unique complement of the first summand that is contained in the sum of the root spaces attached to roots in $\Phi (\bG^{i+1},\bT)-\Phi (\bG^i,\bT)$.

Let $\f$ be the residue field of $F$ and let $\f^*\cong \F_p$ denote the prime subfield of $\f$.  We note that $\fW^+_i $ has the structure of an $\f$-vector space.  Moreover, $K^{0,\theta}$ acts by conjugation on $\fW^+_i$ and in this way each element $k\in K^{0,\theta}$ defines an $\f$-linear transformation $\Ad (k)$ of $\fW^+_i $.

Given $k\in K^{0,\theta}$, we define
$$\eta'_\theta (k) = \prod_{i=0}^{d-1} \left( N_{\f/\f^*} \left(\det\nolimits_\f (\Ad (k)|\fW^+_i)\right)\right)^{(p-1)/2}.$$

\begin{lemma}\label{etaprimeidentity}
If $k\in K^{0,\theta} = G^{0,\theta}_{y,0}$ has image in $(\sG_y^{0,\theta})^\circ(\f)$ then 
$$\det\nolimits_\f (\Ad (k)|\fW^+_i)=1$$
for all $i\in \{ 0,\dots ,d-1\}$ and, consequently,
$\eta'_\theta (k)=1$.
\end{lemma}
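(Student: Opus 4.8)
The plan is to show that each factor $\det_\f(\Ad(k)|\fW_i^+)$ is trivial whenever $k\in K^{0,\theta}=G^{0,\theta}_{y,0}$ has image in the identity component $(\sG_y^{0,\theta})^\circ(\f)$; the conclusion $\eta'_\theta(k)=1$ is then immediate from the defining product formula. First I would record the basic structural facts: the quotient $\fW_i^+ = \g_{y,s_i:s_i^+}^{i+1,d\theta}/\g_{y,s_i:s_i^+}^{i,d\theta}$ is a finite-dimensional $\f$-vector space on which $K^{0,\theta}$ acts by $\Ad$ through its image in $\sG_y^{0,\theta}(\f)=G^{0,\theta}_{y,0:0^+}$, since the action factors through the reductive quotient (the pro-unipotent radical $G^0_{y,0^+}$ acts trivially on the relevant associated graded pieces). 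So we have a representation $\sG_y^{0,\theta}(\f)\to \GL_\f(\fW_i^+)$, and it restricts to an algebraic representation of the connected reductive $\f$-group $(\sG_y^{0,\theta})^\circ$ on $\fW_i^+$.

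The key step is then a determinant-triviality argument for algebraic representations. For a connected reductive group $\sH$ over $\f$ and any finite-dimensional algebraic representation $(\rho, W)$, the character $\det\circ\rho : \sH\to \mathbb G_m$ is trivial on the derived group $\sH^{\mathrm{der}}$ (being a homomorphism to a torus), so $\det\circ\rho$ factors through the torus quotient $\sH/\sH^{\mathrm{der}}$. Thus it suffices to analyze $\det_\f(\Ad(k)|\fW_i^+)$ on the central torus part. Here I would use the specific feature of our situation established earlier in the paper: by Lemma \ref{restsareorthogonal} (and the surrounding analysis of restrictions of $\theta$ to the $G^i$ and to $\sG_y^0(\f)$), the group $\sG_y^{0,\theta}$ is a finite orthogonal group, whose identity component is a special orthogonal group $\mathrm{SO}_m$; for $m\neq 2$ this is semisimple, so its torus quotient is trivial and we are done immediately. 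The only remaining case is $m=2$, where $\mathrm{SO}_2$ is itself a one-dimensional torus; there one must check directly that $\Ad(k)$ acts on each $\fW_i^+$ with determinant $1$. This is handled by pairing up the root spaces: $\fW_i^+$ is spanned by root spaces for roots in $\Phi(\bG^{i+1},\bT)-\Phi(\bG^i,\bT)$, and $\theta$ (being the orthogonal involution associated to a $\theta$-split torus $\bT$) permutes these roots by $\alpha\mapsto -\alpha$, so the $\theta$-fixed space $\fW_i^+$ is a sum of planes $\g_\alpha\oplus\g_{-\alpha}$ on each of which a torus element $t$ acts by $\operatorname{diag}(\alpha(t),\alpha(t)^{-1})$, giving determinant $1$.

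The step I expect to be the main obstacle is being careful about exactly which quotient the $\Ad$-action genuinely factors through, and matching the $\f$-structures. One must verify that $G^{0,\theta}_{y,0^+}$ — not merely $G^0_{y,0^+}$ — acts trivially on $\fW_i^+$, i.e.\ that passing to $\theta$-fixed points commutes appropriately with the Moy--Prasad gradings used to define $\fW_i^+$; this is where the hypotheses from Definition \ref{thetasymmdef} ($\theta$ preserves each $G^i$ and fixes $[y]$) and the $J$-symmetric-embedding bookkeeping of \S\ref{sec:Jsymmetric} come in. Once that reduction is secure, the determinant computation is the short root-space pairing argument above, and the only genuinely case-specific work is the $\mathrm{SO}_2$ case, which the $\alpha\mapsto-\alpha$ symmetry dispatches cleanly. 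It is worth noting, as the surrounding text already emphasizes, that we never need the precise isomorphism type of $\sG_y^0(\f)$ beyond knowing (via Lemma \ref{restsareorthogonal}) that $\sG_y^{0,\theta}$ is orthogonal, so its identity component is special orthogonal and hence has trivial or one-dimensional abelianization.
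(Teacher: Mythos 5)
Your reduction to an algebraic determinant character is in the right spirit, but the way you dispose of that character has a genuine gap. You invoke Lemma \ref{restsareorthogonal} to conclude that $(\sG_y^{0,\theta})^\circ$ is a special orthogonal group, hence semisimple (or a one-dimensional torus), and then kill the character. In the paper's logical order this is circular: Lemma \ref{restsareorthogonal} rests on Lemma \ref{thetasplitreduction}, whose proof uses both Lemma \ref{tenfour} (whose hypothesis is precisely that the character is trivial on $((\sG^\theta)^\circ)^F$) and, explicitly, the triviality of $\eta'_\theta$ established in the present lemma. Worse, the lemma is needed in exactly the situations your argument excludes: under its stated hypotheses ($\vartheta\sim\xi$, $k\in K^{0,\theta}$ with image in the identity component) the group $\sG_y^{0,\theta}$ is \emph{not} yet known to be orthogonal --- in the proof of Lemma \ref{thetasplitreduction} the triviality of $\eta'_\theta$ is applied when the residual involution is unitary, precisely in order to rule such structures out. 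So an argument predicated on the identity component being $\mathrm{SO}_m$ cannot serve here; one must prove triviality without knowing the structure of the fixed-point group. Your fallback computation in the $\mathrm{SO}_2$ case is also not correct as stated: the maximal torus of $(\sG_y^{0,\theta})^\circ$ is not contained in $\sT$ (for a $\theta$-split torus the $\theta$-fixed elements of $\sT$ are $2$-torsion), so its elements do not act on the pairs $\g_\alpha\oplus\g_{-\alpha}$ by $\mathrm{diag}(\alpha(t),\alpha(t)^{-1})$, and the root-space pairing does not directly compute $\det_\f(\Ad(k)|\fW_i^+)$.

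For comparison, the paper avoids all structural input as follows. It first shifts $\fW_i^+$ from depth $s_i$ to depth $0$ by multiplying by $\varpi_E^{s_i'}$; this replaces $\theta$ by a twisted involution $\theta_{s_i}$ and identifies $\fW_i^+$ with the $\f$-points of the complement $(\bfr{h}^{i,s_i})^\perp$ of $\bfr{h}^{i,s_i}$ inside $\bfr{h}^{i+1,s_i}=\mathrm{Lie}(\sG_y^{i+1,\theta_{s_i}})$, with $(\sG_y^{0,\theta})^\circ$ acting by the twisted conjugation $h\cdot X=\alpha_{s_i}(h)\,X\,h^{-1}$, $\alpha_{s_i}(g)=\varpi_E^{-s_i'}g\varpi_E^{s_i'}$. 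The determinant of this action on all of $\bfr{h}^{i+1,s_i}$ is an algebraic character, hence trivial on the derived group, and it is trivial on the center because $\alpha_{s_i}$ fixes the center pointwise; since a connected reductive group is generated by its center and derived group, the full character is trivial. Subtracting the (equally trivial) determinant on the sub\-space $\bfr{h}^{i,s_i}$ gives triviality on the complement, i.e.\ unimodularity of $(\sG_y^{0,\theta})^\circ(\f)$ on $\fW_i^+$, with no case division and no identification of $\sG_y^{0,\theta}$. If you want to salvage your approach, you would have to replace the appeal to Lemma \ref{restsareorthogonal} by an argument of this kind that works uniformly for whatever fixed-point group arises.
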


\begin{proof}
Given an index $j$, let $\mathsf{H}^j$ denote the $\f$-group $(\sG_y^{j,\theta})^\circ$.
We are interested in the action of $\mathsf{H}^0 (\f)$ on a subquotient of $\g^{i+1,\theta}_{y,s_i}$.  (For simplicity, we sometimes abbreviate $d\theta$ as $\theta$ when there is no possibility for confusion.)

Our definition of $\fW_i^+$ is a variant of the definition in \cite{HL1}.  We may regard $\fW^+_i $ as an object that has ``depth $s_i$.''   It will be convenient for us to shift $\fW^+_i $ so that it becomes an object of depth 0.
Given an index $j$ and a real number $s$, we have $\g^{j}_{y,s} = \varpi_E^{s'}\g^{j}_{y,0}$, where $s' = \lceil s e(E/F)\rceil$.

Let us consider the fixed points of $d\theta$ in $\g^j_{y,s}$.
Suppose $X = \varpi_E^{s'}Y$ is an element of $\g^j_{y,s}$.  Then 
\begin{eqnarray*}
d\theta (X) = X
&\Leftrightarrow&\varpi_E^{s'}Y = (d\theta (Y))\ \sigma (\varpi_E^{s'})\\
&\Leftrightarrow&Y = \varpi_E^{-s'}\ (d\theta (Y))\ \sigma (\varpi_E^{s'})\\
&\Leftrightarrow&Y = d\theta_s (Y),
\end{eqnarray*}
where $$d\theta_s (Y) = \varpi_E^{-s'}\ (d\theta (Y))\ \sigma (\varpi_E^{s'}).$$
So $Y\mapsto \varpi_E^{s'}$ determines an abelian group isomorphism from $\g_{y,0}^{j,\theta_s}$ onto $\g^{j,\theta}_{y,s}$.
This isomorphism restricts to an isomorphism of  $\g_{y,0^+}^{j,\theta_s}$ with $\g_{y,s^+}^{j,\theta}$.  We also obtain an $\f$-linear isomorphism of $\g^{j,\theta_s}_{y,0:0^+}$ with $\g^{j,\theta}_{y,s:s^+}$.

Let $\bfr{h}^{j,s} = {\rm Lie} (\sG_{y}^{j,\theta_s})$.  This is  defined over $\f$.  We observe that $\g^{j,\theta_s}_{y,0:0^+} = \h^{j,s} = \bfr{h}^{j,s}(\f)$.

Define an automorphism (of exponent $e(E/F)$) of $\bG$ by $\alpha_s (g) = \varpi_E^{-s'}g \varpi_E^{s'}$.  This yields an automorphism $\bar\alpha_s$ of $\mathsf{H}^j$.  Note that $\alpha_s$ is the identity map on $\bT$ and $\bar\alpha_s$ is the identity on $\mathsf{T}$.

Define an action of $\mathsf{H}^j$ on $\bfr{h}^{j,s}$ by $h\cdot_s X = \alpha_s (h) X h^{-1}$.

Now let $\chi_{j,s}$ be the character $\mathsf{H}^j\to \GL_1/\f$ that sends $h$ to the determinant of the linear transformation $X\mapsto h\cdot_s X$ of $\bfr{h}^{j,s}$.
This character is certainly trivial on the derived group of $\mathsf{H}^j$.  It is also trivial on the center (because $\alpha_s$ is the identity on the center).  So $\chi_{j,s}$ is trivial.

We now write $\bfr{h}^{i+1,s_i}$ as a direct sum of $\bfr{h}^{i,s_i}$ and the unique  complement $(\bfr{h}^{i,s_i})^\perp$ that is contained in the sum of the root spaces associated to roots in $\Phi^{i+1}-\Phi^i$.  The decomposition
$$\bfr{h}^{i+1,s_i} =\bfr{h}^{i,s_i} \oplus (\bfr{h}^{i,s_i})^\perp$$ is defined over $\f_F$ even though the root spaces are not.

Now suppose $h\in \mathsf{H}^i$.  On the one hand, we have $\chi_{i+1,s_i}(h)=1$.  But the action of $h$ preserves $\bfr{h}^{i,s_i}$ and $(\bfr{h}^{i,s_i})^\perp$.  Let $\chi'_{i+1,s_i}(h)$ and $\chi''_{i+1,s_i}(h)$ be the determinants of the linear transformations of the latter summands.

Then $\chi'_{i+1,s_i}(h) = \chi_{i,s_i^+}(h) =1$ and thus  $$\chi''_{i+1,s_i}(h) = \chi_{i+1,s_i}(h)\chi'_{i+1,s_i}(h)^{-1} = 1.$$

We  have now shown that $\chi''_{i+1,s_i}$ is trivial.  This implies that  $ (\sG_y^{0,\theta})^\circ (\f)$ acts in a unimodular fashion on $\fW^+_i\cong (\bfr{h}^{i,s_i})^\perp (\f)$.
\end{proof}

\section{A multiplicity formula parametrized by split $T$-orbits}\label{sec:Torbmultsec}

Let $\xi$ be a refactorization class of $G$-data and let $\Theta$ be a $G$-orbit of orthogonal involutions.  
In this section, we establish a formula for $\langle \Theta ,\xi\rangle_G$ that involves a sum whose summands are parametrized by the elements of the set $\cO^T(\Theta)$ of  split $T$-orbits in $\Theta$.

If $\zeta\in \cO^T(\Theta)$ and $\vartheta$ is the $K^0$-orbit containing $\zeta$, we define 
$$\langle \zeta, \xi\rangle_T = \langle \vartheta ,\xi\rangle_{K^0}$$
and $$m_T(\zeta) = [G_\theta : (T\cap G_\theta)G^\theta],$$ where $\theta$ is any element of $\zeta$.  With these notations, our formula is stated as follows:

\begin{proposition}\label{newmultformula}
$$\langle \Theta ,\xi\rangle_G =\sum_{\zeta \in \cO^T(\Theta)}  m_{T} (\zeta )\  \langle \zeta ,\xi\rangle_{T}.$$
\end{proposition}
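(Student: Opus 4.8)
The plan is to derive Proposition \ref{newmultformula} from the already-established formula \eqref{HLonemult}, namely $\langle \Theta ,\xi \rangle_G =\sum_{\vartheta\sim \xi }  m_{K^0} (\vartheta )\  \langle \vartheta ,\xi\rangle_{K^0}$, by performing a change of summation index from $K^0$-orbits $\vartheta$ with $\vartheta\sim\xi$ to split $T$-orbits $\zeta\in\cO^T(\Theta)$. The key is to exhibit a bijection between the set of $K^0$-orbits $\vartheta$ contributing nonzero terms and the set $\cO^T(\Theta)$, compatible with the summand data. First I would recall that, by the results summarized in \S\ref{sec:general}, any $\vartheta$ with $\langle\vartheta,\xi\rangle_{K^0}\neq 0$ satisfies $\vartheta\sim\xi$, so only such orbits matter; and that, by Lemma \ref{thetasplitreduction}, if $\langle\vartheta,\xi\rangle_{K^0}\neq 0$ then $\vartheta$ contains a split $T$-orbit $\zeta$ (one with $\bT$ being $\theta$-split for $\theta\in\zeta$). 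Second, by Proposition \ref{uniqueTinKzero} (equivalently Lemma \ref{nofusion}), this split $T$-orbit $\zeta$ inside $\vartheta$ is unique, so $\vartheta\mapsto\zeta$ is a well-defined map from the index set of \eqref{HLonemult} (restricted to nonzero terms) into $\cO^T(\Theta)$.

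Next I would check this map is a bijection onto $\cO^T(\Theta)$. Injectivity is immediate since distinct $K^0$-orbits are disjoint, hence cannot share a $T$-orbit. For surjectivity, given any $\zeta\in\cO^T(\Theta)$, let $\vartheta$ be the $K^0$-orbit containing $\zeta$; I must check $\vartheta\sim\xi$ and indeed $\langle\vartheta,\xi\rangle_{K^0}\neq 0$ (and that $\vartheta$ lies in $\Theta$, which is automatic). This is where I would invoke the evaluation computation sketched in \S\ref{sec:ourexamples}: for a split $T$-orbit, one can choose $\Psi\in\xi$ and $\theta\in\zeta$ with $\Psi$ being $\theta$-symmetric and $\bT$ being $\theta$-split (using Lemma \ref{thetastableT}, Lemma \ref{thetasplitreduction}, and the $J$-symmetric embedding machinery of \S\ref{sec:Jsymmetric}), whence $\vartheta\sim\xi$ by Proposition 3.9 of \cite{HL1}, and then $\langle\vartheta,\xi\rangle_{K^0}$ equals $1$ if $\varphi(-1)=1$ and $0$ otherwise by the formula derived there (via Lemma \ref{restsareorthogonal}, Lemma \ref{etaprimeidentity}, Lemma \ref{tenfour} and Theorem 3.11 of \cite{HL1}). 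In the case $\varphi(-1)=1$ every split $T$-orbit genuinely contributes; in the case $\varphi(-1)\neq 1$ both sides are zero and the identity holds trivially.

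Finally, with the bijection $\vartheta\leftrightarrow\zeta$ in hand, I would match the summand data term by term: by definition $\langle\zeta,\xi\rangle_T=\langle\vartheta,\xi\rangle_{K^0}$, so the ``inner factor'' matches tautologically. It remains to show $m_{K^0}(\vartheta)=m_T(\zeta)$, i.e.\ $[G_\theta:(K^0\cap G_\theta)G^\theta]=[G_\theta:(T\cap G_\theta)G^\theta]$ for $\theta\in\zeta$. Since $T\subset K^0$, we have $(T\cap G_\theta)G^\theta\subseteq(K^0\cap G_\theta)G^\theta$, so it suffices to prove these two subgroups coincide, equivalently (passing through the similitude ratio $\mu$ as in \S\ref{sec:generalities}) that $\mu((T\cap G_\theta)Z)=\mu((K^0\cap G_\theta)Z)$. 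This is precisely the content of the index computations in \S6: for a split $T$-orbit $\zeta$ one reduces via Proposition \ref{symrestrict} to the involution $\theta_{J}$ on the relevant $G^i=\GL_{n_i}(E_i)$, and Lemma \ref{indicestwo} together with Proposition \ref{indices} (and Lemma \ref{mTfinite} at the residue-field level) shows the two similitude images agree, giving $m_{K^0}(\vartheta)=m_T(\zeta)$. Summing over the bijective index set then yields $\langle\Theta,\xi\rangle_G=\sum_{\zeta\in\cO^T(\Theta)}m_T(\zeta)\langle\zeta,\xi\rangle_T$.

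The main obstacle I anticipate is surjectivity of the map $\vartheta\mapsto\zeta$ combined with the identity $m_{K^0}(\vartheta)=m_T(\zeta)$: both require knowing that a split $T$-orbit behaves well with respect to the specific $K^0$ attached to $\xi$ — that one can always arrange $\theta$-symmetry of some refactorization and $\theta$-splitness of $\bT$ simultaneously, and that the similitude group $G_\theta$ is generated (modulo $G^\theta$) by $T\cap G_\theta$ and $Z$ rather than needing the larger $K^0\cap G_\theta$. These are exactly the points where the $J$-symmetric embedding theory of \S\ref{sec:Jsymmetric} and the orthogonal-similitude index lemmas of \S\ref{sec:generalities}--\S6 do the real work, so the proof here is essentially an assembly of those inputs rather than a new argument.
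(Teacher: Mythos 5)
Your proposal follows essentially the same route as the paper: start from \eqref{HLonemult}, use Lemma \ref{thetasplitreduction} and Proposition \ref{uniqueTinKzero} (via Lemma \ref{nofusion}) to attach to each nonvanishing $K^0$-orbit its unique split $T$-orbit, and then identify $m_{K^0}(\vartheta)$ with $m_T(\zeta)$ by the case analysis of Proposition \ref{indices} and Lemma \ref{indicestwo}; the term-matching and the index computation are exactly the paper's argument. The one place you diverge is the surjectivity step, and there your justification is backwards: Lemmas \ref{thetastableT} and \ref{thetasplitreduction} take $\langle\vartheta,\xi\rangle_{K^0}\ne 0$ as a hypothesis and conclude that a $\theta$-symmetric datum with $\bT$ $\theta$-split exists, so they cannot be cited to show that an arbitrary $\zeta\in\cO^T(\Theta)$ sits inside an orbit with $\vartheta\sim\xi$ and $\langle\vartheta,\xi\rangle_{K^0}\ne 0$ (that converse is in effect only established later, in the evaluation of \S\ref{sec:evalsec}). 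Fortunately surjectivity is not needed at all: since $\langle\zeta,\xi\rangle_T$ is \emph{defined} as $\langle\vartheta,\xi\rangle_{K^0}$ for the ambient $K^0$-orbit $\vartheta$, any split $T$-orbit whose ambient orbit contributes nothing simply adds a zero term to the right-hand side, so an injective map from the nonvanishing $K^0$-orbits into $\cO^T(\Theta)$ already gives the identity. Replacing your surjectivity paragraph by this one-line observation makes the argument coincide with the paper's proof.
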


A key ingredient in the proof
 of Proposition \ref{newmultformula} is Proposition \ref{uniqueTinKzero}, which is stated and proved below.  Taking  Proposition \ref{uniqueTinKzero} for granted, we can now prove 
 Proposition \ref{newmultformula}.

\begin{proof}
We start by recalling the formula $$\langle \Theta ,\xi\rangle_G =\sum_{\vartheta\sim \xi }  m_{K^0} (\vartheta )\  \langle \vartheta ,\xi\rangle_{K^0}.$$
from \cite{HL1} which  is explained above in \S\ref{sec:general}.  If $\vartheta$ is any $K^0$-orbit in $\Theta$ then, according to Proposition 3.9 \cite{HL1} and Propositions 5.9 and 5.20 \cite{HM}, the condition $\langle \vartheta,\xi\rangle_{K^0}\ne 0$ implies $\vartheta\sim \xi$.  So the previous sum may be regarded as a sum over all $K^0$-orbits $\vartheta$ in $\Theta$, not just the orbits that satisfy $\vartheta\sim \xi$.

Proposition \ref{uniqueTinKzero} tells us that if $\vartheta$ is a $K^0$-orbit contained in $\Theta$ such that $\langle \vartheta ,\xi\rangle_{K^0}$ is nonzero then $\vartheta$ contains a unique split $T$-orbit $\zeta$.
This yields the formula
$$\langle \Theta ,\xi\rangle_G =\sum_{\zeta \in \cO^T(\Theta)}  m_{K^0} (\zeta )\  \langle \zeta ,\xi\rangle_{T}.$$
To complete the proof, we observe that case-by-case analysis, given in Proposition \ref{indices} and Lemma \ref{indicestwo}, shows that if $\zeta \subset \xi$ then $m_{K^0}(\vartheta)  = m_T (\zeta)$.
\end{proof}

\subsection{Reduction to $\theta$-split tori over the residue field}
\label{sec:redtores}

Let $\sG$ be a connected reductive group over $\F_q$, where $q$ is a power of an odd prime.  Let $\sT$ be a maximal torus that is defined over $\F_q$.  Let $F$ be the Frobenius automorphism that defines the $\F_q$-structures.  Then $\sG^F = \sG (\F_q)$ and $\sT^F = \sT (\F_q)$.

Fix an $\F_q$-automorphism $\theta$ of $\sG$ of order two and let $\sG^\theta$ be the group of fixed points of $\theta$.  Let $\sT^\theta = \sT\cap \sG^\theta$ and $\sT_+ = (\sT^\theta)^\circ$.
When $\mathsf{H}$ is an $\F_q$-subgroup of $\bG$, let ${\rm rank}_{\F_q}(\mathsf{H} )$ be the $\F_q$-rank of $\mathsf{H}$.
Let $\mathsf{M}$ be the centralizer of $\sT_+$ in $\sG$.  
If $g\in \sG$ let $Z_\sG^\circ (g)$ denote the identity component of the centralizer of $g$ in $\sG$.

As in \cite{L}, we define a character
 $$\epsilon_\sT : (\sT^\theta)^F /\sT^F_+\to \{ \pm 1\}$$
by
$$\epsilon_\sT (t) = (-1)^{{\rm rank}_{\F_q} (\mathsf{M}) + {\rm rank}_{\F_q} (\mathsf{M} \cap Z_\sG^\circ (t) )}.$$
When $\sT$ is $\theta$-split this reduces to
$$\epsilon_\sT (t) = (-1)^{{\rm rank}_{\F_q} (\sG ) +{\rm  rank}_{\F_q} ( Z_\sG^\circ (t) )}.$$

A character $\lambda$ of $\sT^F$ is said to be {\it nonsingular} if  it  is not orthogonal to any coroot of $\sT$.  (This is Definition 5.15(i) \cite{DL}.)  In other words, if $h: \GL_1 \to \sT$ is a coroot that is defined over $\F_{q^n}$, for some $n\ge 1$,  then $$(\lambda\circ N)| h(\F_{q^n}^\times )\ne 1,$$ where $$N(t) = t\cdot F(t)\cdots F^{n-1}(t),$$ for $t\in \sT (\F_{q^n})$.
To say that $h$ is defined over $\F_{q^n}$ means $$h(\F_{q^n}^\times)\subset \sT (\F_{q^n})$$ or, equivalently, $F^n h(x) = h(x^{q^n})$, for all $x\in \F^\times_{q^n}$.  (See 10.2(a) \cite{L}.)

If $\lambda$ is a character of $\sT^F$ and $\chi$ is a character of $(\sG^\theta)^F$, we define $\Xi_{\sT,\lambda,\chi}$ be  the set of all $\gamma \in \sG^F$ such that $(\gamma \cdot \theta)(\sT) = \sT$ and 
$$\lambda (t) = \chi (\gamma^{-1}t\gamma ) \ \epsilon_{\gamma^{-1}\sT \gamma } (\gamma^{-1}t\gamma),$$ for all $t\in (\sT^{\gamma \cdot \theta})^F$.

Let $\cJ$ denote the (possibly empty) set of maximal tori $\mathsf{S}$ of $\sG$ for which there exists a Borel subgroup $\mathsf{B}$ such that $\mathsf{S} = \mathsf{B}\cap \theta (\mathsf{B})$.
These tori were studied by Vust \cite{Vu} and it follows from his results that if $\cJ$ contains a $\theta$-split maximal torus (as will be the case for our examples) then $\cJ$ is identical to the set of all $\theta$-split maximal tori in $\sG$.
 If $\mathsf{S}$ is a $\theta$-stable maximal torus then (according to 10.1(a) \cite{L}) $\mathsf{S} \in \cJ$ if and only if $\theta$ does not fix any coroots (or, equivalently, roots) of $\mathsf{S}$.

The following generalizes Lemma 10.4 \cite{L}:

\begin{lemma}
\label{tenfour}
If $\lambda$ is a nonsingular character of $\sT^F$ and $\chi$ is a character of $(\sG^\theta)^F$ that is trivial on $((\sG^\theta)^\circ)^F$ then $\gamma\in \Xi_{\sT ,\lambda,\chi}$ implies $\gamma^{-1}\sT \gamma \in \cJ$.
\end{lemma}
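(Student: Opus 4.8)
The plan is to follow Lusztig's argument for Lemma 10.4 in \cite{L} essentially verbatim, checking that the only place where the hypothesis ``$\chi$ trivial'' is used can be replaced by ``$\chi$ trivial on $((\sG^\theta)^\circ)^F$.'' First I would fix $\gamma\in\Xi_{\sT,\lambda,\chi}$ and set $\sS = \gamma^{-1}\sT\gamma$ and $\theta' = \gamma\cdot\theta$, so that $\sS$ is $\theta'$-stable and $\lambda$ transports to a character $\lambda'$ of $(\sS^{\theta'})^F$ satisfying $\lambda'(s) = \chi(s)\,\epsilon_{\sS}(s)$ for all $s\in (\sS^{\theta'})^F$. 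The goal is to show $\sS\in\cJ$, which by 10.1(a) \cite{L} amounts to showing that $\theta'$ fixes no coroot of $\sS$. Suppose for contradiction that $\theta'$ fixes a coroot $h:\GL_1\to\sS$ of $\sS$, defined over some $\F_{q^n}$. The nonsingularity of $\lambda$ (equivalently $\lambda'$) says $(\lambda'\circ N)|h(\F_{q^n}^\times)\ne 1$.

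The key step is to show that, because $\theta'$ fixes $h$, the image $h(\F_{q^n}^\times)$ lands (after applying the norm $N$) inside $(\sS^{\theta'})^F$ and in fact inside the identity component $\sS_+ = ((\sS^{\theta'})^\circ)$, so that $\chi$ and $\epsilon_\sS$ are both controlled there. Specifically: if $\theta'(h(x)) = h(x)$ for all $x$, then the one-parameter subgroup $h$ has image in $\sS^{\theta'}$, hence in the connected group $(\sS^{\theta'})^\circ$; applying $N$ we get $N(h(\F_{q^n}^\times))\subset (\sS^{\theta'})^{\circ,F}\subset ((\sG^\theta)^\circ)^F$ (after conjugating back by $\gamma$, noting $(\sG^{\theta'})^\circ = \gamma^{-1}(\sG^\theta)^\circ\gamma$ as $\theta' = \gamma\cdot\theta$). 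On this subgroup $\chi\equiv 1$ by hypothesis. Next I would check that $\epsilon_\sS$ is likewise trivial on $N(h(\F_{q^n}^\times))$: this is exactly the content of the rank computation in Lusztig's proof — for $t$ in the image of a coroot fixed by the involution, one has $\mathsf{M}\cap Z_{\sG}^\circ(t) = \mathsf{M}$ (the relevant centralizer does not shrink in $\F_q$-rank because $t$ is central in a Levi containing $\mathsf{M}$ attached to that coroot), so $\epsilon_\sS(t) = (-1)^0 = 1$. Combining, $\lambda'\circ N$ is trivial on $h(\F_{q^n}^\times)$, contradicting nonsingularity.

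The remaining care is bookkeeping: making sure the transport of structure between $\sT$ and $\sS = \gamma^{-1}\sT\gamma$ is compatible with $F$ (since $\gamma\in\sG^F$, conjugation by $\gamma$ commutes with $F$, so this is automatic), and that the character $\epsilon$ appearing in the definition of $\Xi_{\sT,\lambda,\chi}$ is the one attached to $\gamma^{-1}\sT\gamma$ with involution $\theta'$ — which it is by definition. I would also invoke the remark (due to Vust \cite{Vu}) quoted just before the lemma: once we know $\cJ$ is nonempty and contains a $\theta$-split torus in our situation, $\cJ$ is exactly the set of $\theta$-split maximal tori, but strictly speaking for the lemma as stated we only need the coroot-fixing characterization 10.1(a), so this remark is not logically required here.

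The main obstacle I anticipate is the $\epsilon_\sS$-triviality step: one must verify that for $t = N(h(x))$ lying in the image of a $\theta'$-fixed coroot, the $\F_q$-ranks ${\rm rank}_{\F_q}(\mathsf{M})$ and ${\rm rank}_{\F_q}(\mathsf{M}\cap Z_\sG^\circ(t))$ agree, i.e.\ $\mathsf{M}\subset Z_\sG^\circ(t)$. Since $\mathsf{M}$ is the centralizer of $\sS_+ = ((\sS^{\theta'})^\circ)$ and $h(\GL_1)\subset (\sS^{\theta'})^\circ = \sS_+$ when $\theta'$ fixes $h$, we get $t\in\sS_+$, hence $\mathsf{M} = Z_\sG(\sS_+)$ centralizes $t$, giving $\mathsf{M}\subset Z_\sG^\circ(t)$ as needed; in the $\theta$-split case this simplifies further since $\mathsf{M} = \sG$. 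This is precisely Lusztig's computation, and the point is simply that nothing in it uses triviality of $\chi$ on all of $(\sG^\theta)^F$ — only on the identity component — so the generalization goes through. I would conclude by noting, as the excerpt itself observes, that ``the proof carries through with no essential modification'' once one knows $\eta'_\theta$ is trivial on $((\sG^\theta)^\circ)^F$, which is supplied by Lemma \ref{etaprimeidentity}.
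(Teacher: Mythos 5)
Your proposal is correct and follows essentially the same route as the paper: reduce via conjugation (the paper simply normalizes to $\gamma=1$ first), note that the image of a fixed coroot is connected and hence lands in $\sT_+=(\sT^\theta)^\circ$, so its norm image lies in $\sT_+^F\subset((\sG^\theta)^\circ)^F$ where $\chi$ is trivial by hypothesis and $\epsilon_\sT$ is trivial (the paper gets this from $\epsilon_\sT$ being a character of $(\sT^\theta)^F/\sT_+^F$, while you re-derive it via the rank computation $\mathsf{M}\subset Z_\sG^\circ(t)$), contradicting nonsingularity. Your conjugation bookkeeping has minor slips (it is $\sS=\gamma^{-1}\sT\gamma$ that is $\theta$-stable and $\sT$ that is $\theta'$-stable, and $(\sG^{\theta'})^\circ=\gamma(\sG^\theta)^\circ\gamma^{-1}$), but these are cosmetic and do not affect the argument.
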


\begin{proof}
If $\gamma\in \sG^F$ then $\gamma \sT \gamma^{-1}$, like $\sT$, is a maximal torus in $\sG$ that is defined over $\F_q$ and the character ${}^\gamma\lambda$  of $\gamma \sT^F \gamma^{-1}$ defined by $({}^\gamma \lambda)(\gamma t\gamma^{-1}) = \lambda (t)$, for  $t\in \sT^F$,   is nonsingular.
Since $\gamma \in \Xi_{\sT,\lambda ,\chi}$ if and only if $1\in \Xi_{\gamma^{-1}\sT \gamma ,{}^{\gamma^{-1}}\lambda ,\chi}$, it suffices to prove our assertion when $\gamma =1$.

Suppose $1\in \Xi_{\sT,\lambda,\chi}$.  This means $\sT$ is $\theta$-stable and $\lambda (t)  = \chi (t)\epsilon_{\sT}(t)$, for all $t\in (\sT^\theta)^F$.  We claim that nonsingularity implies $\sT\in \cJ$.  Suppose not.  Then $\theta$ must fix a coroot $h$ of $\sT$.  The image of $h$ is a connected group contained in $\sT^\theta$, hence it is contained in $\sT_+$.  If $h$ is defined over $\F_{q^n}$ then $h (\F^\times_{q^n})\subset \sT_+ (\F_{q^n})$.  Thus $N(h(\F_{q^n}^\times ))\subset N(\sT_+ (\F_{q^n}))\subset  \sT_+^F\subset \ker (\chi \epsilon_\sT |(\sT^\theta)^F)\subset \ker \lambda$. 
But this contradicts nonsingularity.
\end{proof}

\subsection{Lifting finite $\theta$-stable tori to $p$-adic $\theta$-stable tori}

The main result in this section is:

\begin{proposition}\label{liftingtori}
If $\theta$ is an orthogonal involution and $\Psi$ is a $\theta$-symmetric non-toral $G$-datum such that $\sT$ is $\theta$-stable then there exists $k\in G^0_{y,0^+}$ such that $k \bT k^{-1}$ is $\theta$-stable.
\end{proposition}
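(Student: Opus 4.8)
The plan is twofold: first show that $\bT$ and its image $\theta(\bT)$ are conjugate by an element of the pro-unipotent radical $G^0_{y,0^+}$, and then promote this to the existence of a genuinely $\theta$-stable torus inside the $G^0_{y,0^+}$-orbit of $\bT$. I begin with the standing compatibilities. Since $\Psi$ is $\theta$-symmetric, $\theta$ restricts to an $F$-automorphism of $\bG^0$ with $\theta([y])=[y]$; hence $\theta$ stabilizes the parahoric $G^0_{y,0}$, every Moy--Prasad subgroup $G^0_{y,r}$, and in particular $G^0_{y,0^+}$, and it induces an involution $\bar\theta$ of $\sG^0_y$ over $\f$ that on $\sT$ agrees with the given action. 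The torus $\bT$ is elliptic in $\bG^0$ (one has $\bT(F)=E^\times$ and $\bZ^0(F)=E_0^\times$, and $E/E_0$ is unramified, so $\bT/\bZ^0$ is $F$-anisotropic), so $\bT(F)$ fixes a unique point of the reduced building of $G^0$, namely $[y]$; applying $\theta$ and using $\theta([y])=[y]$, the elliptic torus $\theta(\bT)$ likewise fixes $[y]$ and no other point. Finally, the maximal bounded subgroup of $\theta(\bT)(F)$ is $\theta(\gO_E^\times)$, whose image in $\sG^0_y(\f)$ is $\bar\theta(\sT(\f))=\sT(\f)$ by hypothesis; thus $\bT$ and $\theta(\bT)$ are maximal $F$-tori attached to $[y]$ with the same reduction $\sT$.

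Next I would show $\theta(\bT)=c\,\bT\,c^{-1}$ for some $c\in G^0_{y,0^+}$. Any $g\in G^0$ with $g\,\bT\,g^{-1}=\theta(\bT)$ carries the unique point fixed by $\bT(F)$ to the unique point fixed by $\theta(\bT)(F)$, so $g\cdot[y]=[y]$, i.e. $g\in K^0=G^0_{[y]}$; modulo the center $Z(G^0)=E_0^\times$, which acts trivially by conjugation, we may take $g\in G^0_{y,0}$. Reducing modulo $G^0_{y,0^+}$, the image $\bar g$ normalizes $\sT$ in $\sG^0_y(\f)$. Now $N_{G^0}(\bT)\cap G^0_{y,0}$ surjects onto $N_{\sG^0_y}(\sT)(\f)$: it contains $\gO_E^\times$, which surjects onto $\sT(\f)$, and it contains a lift to $\GL_{n_0}(\gO_{E_0})$ of the Frobenius of $E/E_0$ (via the regular representation in an integral basis generated by a $(q_E-1)$-th root of unity), whose reduction generates the cyclic group $N_{\sG^0_y}(\sT)(\f)/\sT(\f)\cong{\rm Gal}(\f_E/\f_{E_0})$. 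Choosing $n\in N_{G^0}(\bT)\cap G^0_{y,0}$ with $\bar n=\bar g$, the element $c:=g\,n^{-1}$ lies in $G^0_{y,0^+}$ and still satisfies $c\,\bT\,c^{-1}=\theta(\bT)$.

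It remains to produce the $\theta$-stable torus. Since $G^0_{y,0^+}$ is $\theta$-stable, $\theta$ permutes the $G^0_{y,0^+}$-conjugates of $\bT$, and by the previous step this orbit is $\theta$-stable. Seeking $k\in G^0_{y,0^+}$ with $k\,\bT\,k^{-1}$ $\theta$-stable, one reduces, with $c$ fixed as above, to solving $k^{-1}\theta(k)\,c\in N_{G^0}(\bT)\cap G^0_{y,0^+}$ for $k\in G^0_{y,0^+}$; here $N_{G^0}(\bT)\cap G^0_{y,0^+}=1+\gP_E$, not $\{1\}$. Applying $\theta$ to $c\,\bT\,c^{-1}=\theta(\bT)$ gives $\theta(c)\,c\in 1+\gP_E$, and one solves the displayed condition by successive approximation along $G^0_{y,r}\supseteq G^0_{y,r^+}$: on each graded piece $G^0_{y,r}/G^0_{y,r^+}$, an $\F_p$-vector space, the induced transformation is an involution (its square is conjugation by an element of $G^0_{y,0^+}$, hence trivial on the graded piece), the relevant obstruction lies in its $+1$-eigenspace, and since $p$ is odd the layerwise linear equation is solvable; completeness of the pro-$p$ group $G^0_{y,0^+}$ then yields a convergent solution $k$. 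Equivalently, once the defect $c$ is normalized to a cocycle, this last step is exactly the vanishing of $H^1$ of $\langle\theta\rangle$ (a group of order dividing $2$, prime to $p$) acting on the pro-$p$ group $G^0_{y,0^+}$, and $k\,\bT\,k^{-1}$ is the desired lift of $\sT$.

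I expect the main obstacle to be this last paragraph: making the successive approximation precise — pinning down $N_{G^0}(\bT)\cap G^0_{y,0^+}$, checking that the obstruction at each level really lands in the $+1$-eigenspace of the layerwise involution, and controlling convergence — together with the structural input from Proposition~\ref{symrestrict} describing the restrictions of $\theta$ to $G^0$ (orthogonal or unitary) needed to justify those computations. The earlier steps are comparatively formal, relying on the explicit parahoric structure recorded in \S\ref{sec:whatisneeded} and on standard Bruhat--Tits facts about elliptic maximal tori.
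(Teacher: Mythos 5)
Your two-step architecture is the same as the paper's (Lemma~\ref{conjugate} plus the fixed-point step), and your second paragraph is a reasonable hands-on substitute for the paper's appeal to DeBacker's Lemma 2.2.2 \cite{D}. But even there you assume, without argument, that some $g\in G^0$ with $g\bT g^{-1}=\theta(\bT)$ exists at all; this needs Skolem--Noether together with the observation that $\theta(T)$ spans an $E_0$-subfield of $\M_{n_0}(E_0)$ that is $E_0$-isomorphic to $E$ even when $\theta|G^0$ is a unitary, hence $E_0$-semilinear, involution (uniqueness of unramified extensions), or else exactly the citation of \cite{D} you are trying to avoid. That is fillable, so I regard it as a minor omission.

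The genuine gap is in your final step, precisely where you flag it. Solvability of the layerwise equation $(\bar\theta'-1)\bar h\equiv-\bar u\pmod{\bar T_r}$ (with $\theta'=\Int(c^{-1})\circ\theta$, $V_r=G^0_{y,r}/G^0_{y,r^+}$, $\bar T_r$ the image of $T\cap G^0_{y,r}$) requires the \emph{symmetric} part $\bar u+\bar\theta'(\bar u)$ to lie in $\bar T_r$; your assertion that ``the obstruction lies in the $+1$-eigenspace'' is not a proof of this and, read literally, would say the equation is \emph{not} solvable, since the image of $\bar\theta'-1$ is the $-1$-eigenspace. The needed symmetry is true but must be extracted from the structure of the situation: writing $\tau=\theta(c)c\in T_{0^+}$, the running defect $b=k^{-1}\theta(k)c$ satisfies the identity $\theta'(b)=b^{-1}\tau$, and since conjugation by $G^0_{y,0^+}$ is trivial on each graded piece this gives $\bar u+\bar\theta'(\bar u)\in\bar T_r$, after which oddness of $p$ finishes the layer. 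Equivalently, your cohomological shortcut works only after the defect is genuinely normalized to a cocycle, i.e.\ after solving $\theta'(s)s=\tau^{-1}$ in $T_{0^+}$ (possible because $\theta'(\tau)=\tau$ and $T_{0^+}$ is uniquely $2$-divisible for odd $p$); without that step, vanishing of $H^1(\langle\theta\rangle,G^0_{y,0^+})$ says nothing about fixed points on the homogeneous space $G^0_{y,0^+}/T_{0^+}$, because $c$ is not a cocycle. So your route can be completed, but the completion is the actual content. Note that the paper sidesteps all of this with Lemma~\ref{abstractsetup}: every Moy--Prasad index is a power of the odd prime $p$, so the involution permuting the orbits of tori at each level fixes an orbit, and the nested intersection of chosen fixed orbits is a single $\theta$-stable torus --- a parity argument requiring no obstruction analysis or cocycle normalization.
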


Our proof is an adaptation of a proof suggested to us by Jeffrey Adler and Joshua Lansky.  A similar approach is used in Appendix A \cite{HL1}.

\begin{lemma}\label{conjugate}
If $\theta$ is an orthogonal involution and $\Psi$ is a $\theta$-symmetric non-toral $G$-datum such that $\sT$ is $\theta$-stable then there exists $k\in G^0_{y,0^+}$ such that $k\bT k^{-1} = \theta (\bT)$.
\end{lemma}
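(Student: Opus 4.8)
The plan is to reduce the statement to a transitivity fact about parahoric subgroups acting on the set of maximal tori of a fixed type. Both $\bT$ and $\theta(\bT)$ are tame maximal $F$-tori of $\bG^0$ that are associated (via $T \cong E^\times$ and the $\theta$-symmetry hypothesis $\theta(\bG^0)=\bG^0$, $\theta([y])=[y]$) to the same point $[y]$ in $\cB_{\rm red}(G^0)$, and both split over the same tame Galois extension. More precisely, since $\Psi$ is $\theta$-symmetric, $\theta$ fixes $[y]$ and stabilizes $G^0$, hence $\theta$ preserves the parahoric $G^0_{y,0}$ and its pro-unipotent radical $G^0_{y,0^+}$; thus $\theta(\bT)$ is again a maximal $F$-torus of $\bG^0$ whose associated point is $\theta([y])=[y]$. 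First I would observe that because we are in $\GL_{n_0}(E_0)$ and $E=E_0^\times$ is elliptic (with $E/E_0$ unramified in the non-toral case), the reduction of $\bT$ modulo $G^0_{y,0^+}$ is a maximal torus $\sT(\f)$ of $\sG^0_y(\f) \cong \GL_{n_0}(\f_{E_0})$ of a definite type, and similarly for $\theta(\bT)$; since $\theta$ is non-toral the relevant residual reductive quotient is a genuine general linear group where all maximal tori of a given type over $\f_{E_0}$ are conjugate under $\sG^0_y(\f)^\circ$.

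Next I would invoke the standard ``$G_{x,0^+}$ acts transitively on tame maximal tori with a common associated point and common reduction'' statement — this is exactly the kind of lemma used in Appendix A of \cite{HL1}, and is an application of Moy–Prasad / DeBacker's theory of tame tori in buildings. Concretely: the set of tame maximal $F$-tori of $\bG^0$ associated to $[y]$ whose reductions mod $G^0_{y,0^+}$ give a fixed maximal torus of $\sG^0_y(\f)$ is a single orbit under $G^0_{y,0^+}$. So it remains to see that $\bT$ and $\theta(\bT)$ have the same reduction, or at least reductions that are $\sG^0_y(\f)$-conjugate in a way that lifts. Here the key point is that $\theta$ acts on $\sG^0_y(\f)$ as an automorphism preserving $\sT(\f)$ only up to a residual conjugation; but the character $\varphi_{-1}$ restricted to $\gO_E^\times$ gives the nonsingular character $\lambda$ of $\sT(\f)$, and the hypothesis that $\vartheta\sim\xi$ (built into $\theta$-symmetry of a refactorization) forces the reduction of $\theta(\bT)$ to be $\sG^0_y(\f)^\circ$-conjugate to that of $\bT$ — this is where one uses that $\theta$-symmetry includes compatibility of $\phi_i\circ\theta=\phi_i^{-1}$ with the residual data, so that both tori support the same (up to Weyl) nonsingular character. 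Composing the $\sG^0_y(\f)^\circ$-conjugation (lifted to $G^0_{y,0}$, hence an element of $G^0_{y,0}$) with the $G^0_{y,0^+}$-conjugation from the tame-torus transitivity gives an element $k_0 \in G^0_{y,0}$ with $k_0\bT k_0^{-1}=\theta(\bT)$; one then adjusts $k_0$ by an element of $N_{G^0_{y,0}}(\bT)$ to arrange that the correcting element actually lies in $G^0_{y,0^+}$, using that the image of $k_0$ in $\sG^0_y(\f)$ can be taken to normalize $\sT(\f)$ and in fact (since both reductions coincide) to lie in $\sT(\f)$ itself, which lifts into $G^0_{y,0}\cap \bT(F) \subseteq \bT$.

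I would present this as: choose $k_0\in G^0_{y,0}$ conjugating the reductions correctly, write $k_0 = t\cdot k$ with $t$ a representative in $G^0_{y,0}\cap\bT$ of the residual torus element and $k\in G^0_{y,0^+}$ (possible because $\bT(F)\cap G^0_{y,0}$ surjects onto $\sT(\f)$ for an unramified elliptic torus in $\GL$), and then note $k\bT k^{-1} = k_0 \bT k_0^{-1} = \theta(\bT)$ since $t$ normalizes $\bT$. The main obstacle I expect is the bookkeeping in the previous step — verifying that the residual conjugation matching $\sT(\f)$ to the reduction of $\theta(\bT)$ can be taken inside $\sG^0_y(\f)^\circ$ (as opposed to a larger normalizer or a different component), so that it lifts to an element of $G^0_{y,0}$ of the right form; this is precisely the content that makes $k$ land in $G^0_{y,0^+}$ rather than merely in $G^0_{y,0}$, and it relies on the non-toral structure ($\sG^0_y(\f)$ being a general linear group, where connectedness issues for torus normalizers are mild) together with the $\theta$-symmetry constraints on the residual character $\lambda$. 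The rest — tame-torus transitivity under pro-unipotent radicals, and the surjectivity $\bT\cap G^0_{y,0}\twoheadrightarrow\sT(\f)$ — is standard and can be cited from \cite{HL1} and the Moy–Prasad/DeBacker machinery.
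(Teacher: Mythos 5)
Your overall strategy is the right one, and it matches the paper's: the key input is a DeBacker-type transitivity statement (Lemma 2.2.2 of \cite{D}) saying that maximal unramified tori attached to the same point of the building and having the same reduction modulo the pro-unipotent radical are conjugate by an element of that pro-unipotent radical. The paper applies exactly this, after first changing the base field to $E_0$ (so that $G^0=\GL_{n_0}(E_0)$ is viewed as $\bH(E_0)$ with $\bH=\GL_{n_0}/E_0$, and $\bT$, $\theta(\bT)$ become maximal \emph{unramified} tori $\bT_1,\bT_2$ of $\bH$, since $E/E_0$ is unramified in the non-toral case); you gesture at this but speak of ``tame maximal $F$-tori,'' and DeBacker's result is about unramified tori, so the passage to $E_0$ is not just bookkeeping.

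The genuine gap is in how you establish that the two tori have the same reduction. This is not something that needs to be extracted from character data: it is literally the hypothesis of the lemma. ``$\sT$ is $\theta$-stable'' means $\bar\theta(\sT)=\sT$, and the reduction of $\theta(\bT)$ modulo $G^0_{y,0^+}$ is $\bar\theta(\sT)$; hence the reductions of $\bT$ and $\theta(\bT)$ coincide on the nose, and DeBacker's lemma immediately produces $k\in G^0_{y,0^+}$ with $k\bT k^{-1}=\theta(\bT)$, as in the paper. Instead, your middle step tries to force residual compatibility out of the nonsingular character $\lambda$ and the condition $\vartheta\sim\xi$ --- but $\vartheta\sim\xi$ (or nonvanishing of any Hom space) is not among the hypotheses of this lemma, and ``both tori support the same nonsingular character'' does not by itself yield the needed rational conjugacy without a separate Deligne--Lusztig argument. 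Worse, your final adjustment step (writing $k_0=t\cdot k$ with $t\in G^0_{y,0}\cap\bT$) is justified by the parenthetical ``since both reductions coincide,'' which is precisely the point you had declared still needed proof; as written the argument is circular. Once you invoke the $\theta$-stability of $\sT$ where it belongs, the residual conjugation, the lifting of it to $G^0_{y,0}$, and the normalizer adjustment all become unnecessary, and the proof collapses to the paper's one-step application of \cite{D} over $E_0$.
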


\begin{proof}
Using field embeddings as in Lemma \ref{Jsymtameexistence},
we have $$G^0_{y,0^+} = 1+\M_{n_0}(\gP_{E_0}).$$
Let $\bH$ be the $E_0$-group $\GL_{n_0}$.  Then $\bH (E_0) = G^0$.   There exists a unique $E_0$-torus in $\bT_1$ in $\bH$ such that $\bT_1 (E_0) = T$.  Similarly, there exists a unique $E_0$-torus $\bT_2$ in $\bH$ such that $\bT_2 (E_0) = \theta (T)$.  There exists a point $x\in \cB (\bH ,E_0)$ such that $$\bH (E_0)_{x,0^+} = 1+\M_{n_0}(\gP_{E_0}).$$
Let  $\sH_x$ be the reductive $\f_{E_0}$-group such that $\sH_x (\f_{E_0}) = \bH (E_0)_{x,0:0^+}$.  We have  
$$\sG_y^0 (\f_F) = \GL_{n_0}(\f_{E_0}) = \sH_x (\f_{E_0}).$$
The tori $\bT_i$ correspond to tori $\sT_i$ in $\sH_x$ such that $$\sT_1 (\f_{E_0}) = \sT(\f_F) = \theta (\sT )(\f_F) = \sT_2(\f_{E_0}).$$

Lemma 2.2.2 \cite{D} applies to $\bH$, $E_0$, $x$, $\bT_1$ and $\bT_2$ and implies that there exists $k\in H_{x,0^+}$ such that $k\bT_1 k^{-1} = \bT_2$.  But $k$ lies in $G^0_{y,0^+}$ and  $k Tk^{-1} = \theta (T)$, which implies $k\bT k^{-1} = \theta (\bT)$.
\end{proof}

\begin{lemma}\label{abstractsetup}
Let $\cG_1\supset \cG_2\supset \ldots$ be a sequence of groups such that $\cG_{i+1}$ is normal in $\cG_i$ and $[\cG_i:\cG_{i+1}]$ is (finite and) odd for all $i$.  Let $\cS$ be a nonempty set on which  $\cG_1$ acts transitively such that $\cap_i\cO_i$ is a singleton set whenever $\cO_1\supset \cO_2\supset\ldots$ is such that $\cO_i$ is a $\cG_i$-orbit in $\cS$ for all $i$.  Let $\alpha$ be a permutation of $\cS$ of order two such that $\alpha (\cG_i\cdot x) = \cG_i\cdot\alpha(x)$, for all $i$ and for all $x\in \cS$.  Then $\alpha$ must have a fixed point in $\cS$.
\end{lemma}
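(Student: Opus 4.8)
The statement is an abstract fixed-point lemma of inverse-limit type, and the natural strategy is to construct the fixed point as a nested intersection of $\alpha$-stable orbits, one at each level of the filtration $\cG_1 \supset \cG_2 \supset \cdots$. So first I would fix a point $x_1 \in \cS$ and observe that $\cG_1 \cdot x_1 = \cS$ is $\alpha$-stable (by transitivity together with the hypothesis $\alpha(\cG_1 \cdot x) = \cG_1 \cdot \alpha(x)$). The key inductive claim is: if $\cO_i$ is a $\cG_i$-orbit that is $\alpha$-stable, then $\cO_i$ contains a $\cG_{i+1}$-orbit that is $\alpha$-stable. Granting this claim, one builds a chain $\cO_1 \supset \cO_2 \supset \cdots$ of $\alpha$-stable orbits, the intersection $\bigcap_i \cO_i = \{x_\infty\}$ is a singleton by hypothesis, and since each $\cO_i$ is $\alpha$-stable we get $\alpha(x_\infty) \in \bigcap_i \alpha(\cO_i) = \bigcap_i \cO_i = \{x_\infty\}$, so $\alpha(x_\infty) = x_\infty$.

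Second, I would prove the inductive claim, which is where the oddness hypothesis enters. Given an $\alpha$-stable $\cG_i$-orbit $\cO_i$, the group $\cG_i$ acts transitively on $\cO_i$ and $\cG_{i+1}$ is normal in $\cG_i$, so the set $\cO_i / \cG_{i+1}$ of $\cG_{i+1}$-orbits inside $\cO_i$ is a transitive $\cG_i/\cG_{i+1}$-set, hence has cardinality dividing $[\cG_i : \cG_{i+1}]$ — in particular it is finite and odd. Now $\alpha$ permutes $\cO_i$ and sends $\cG_{i+1}$-orbits to $\cG_{i+1}$-orbits (again by the equivariance hypothesis), and $\alpha^2 = \mathrm{id}$, so $\alpha$ acts on the finite odd set $\cO_i / \cG_{i+1}$ as an involution. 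An involution on a set of odd cardinality must fix at least one element; that fixed element is an $\alpha$-stable $\cG_{i+1}$-orbit contained in $\cO_i$, which is exactly what the claim asserts.

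Third, I should double-check the two places where the hypotheses are used delicately: (i) the descent of transitivity — I need that $\cG_i$ still acts transitively on $\cO_i$, which is immediate since $\cO_i$ was defined as a single $\cG_i$-orbit; and (ii) that $\alpha$ genuinely descends to a well-defined involution of $\cO_i/\cG_{i+1}$, for which normality of $\cG_{i+1}$ in $\cG_i$ (hence the orbit space being a quotient group-set) plus the stated equivariance suffice. I expect the main — really the only — substantive point to be the counting argument that an involution of an odd finite set has a fixed point, together with correctly setting up the nested-orbit construction so that the singleton hypothesis applies; everything else is bookkeeping. One should also note for safety that the chain construction uses a (countable) sequence of choices, but since the filtration is indexed by $\mathbb{N}$ this is an ordinary recursion, requiring no choice beyond picking, at each finite stage, one fixed point of a finite-set involution.
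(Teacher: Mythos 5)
Your proposal is correct and follows essentially the same route as the paper's own proof: recursively choose an $\alpha$-stable $\cG_{i+1}$-orbit inside each $\alpha$-stable $\cG_i$-orbit by noting that normality makes $\cG_i$ act transitively on the set of $\cG_{i+1}$-orbits, so that set has odd cardinality and the induced involution must fix one of them, and then the singleton intersection of the resulting chain is the desired fixed point. No gaps; the argument matches the paper's in all essentials.
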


\begin{proof}
We first describe a recursive process for choosing a sequence $\cO_1\supset \cO_2\supset\ldots$  as in the statement of the lemma, but with the added property that each $\cO_i$ is $\alpha$-stable.

Let $\cO_1 = \cS$.    To construct our sequence, we need to show that once  $\cO_i$ is defined, we can choose an $\alpha$-stable element $\cO_{i+1}$ in the set    $$\cS_i=\{ \text{$\cG_{i+1}$-orbits in $\cO_i$}\}.$$   Since $\cG_{i+1}$ is normal in $\cG_i$, it must be the case that $\cG_i$ acts transitively on $\cS_i$.  The cardinality of $\cS_i$ must divide $[\cG_i:\cG_{i+1}]$ and thus it must be odd.  Since $\alpha(\cG_{i+1}x) = \cG_{i+1} \alpha(x)$, for all $x\in \cO_i$, it must be the case that $\alpha$ defines a permutation of $\cS_i$.  Suppose $\cS_i$ does not contain  an $\alpha$-stable element. 
Then it can be partitioned into sets of order two of the form $\{ \cG_i \cdot x,\cG_i\cdot\alpha(x)\}$.  But this contradicts the fact that $\cS_i$ has odd cardinality.  So, in fact, we an choose an $\alpha$-fixed orbit $\cO_{i+1}$ in $\cS_i$.

Having established that a sequence $\cO_1\supset \cO_2\supset\ldots$ of the desired type may be chosen, we can consider $\cap_i \cO_i$.  This is a singleton set $\{ x\}$, where $x$ must be a fixed point of $\alpha$ in $\cS$.
\end{proof}

\begin{proof}[Proof of Proposition \ref{liftingtori}]
Apply Lemma \ref{abstractsetup} as follows.  
Take $\cG_1 = G^0_{y,0^+}$.  If $\cG_i$ is defined and $\cG_i = G^0_{y,a_i}\ne G^0_{y,a_i^+}$ then let $\cG_{i+1} = G^0_{y,a_i^+}$.
Take $\cS$ to be the $G^0_{y,0^+}$-orbit of $\bT$.  Since we have shown $\theta (\bT)\in \cS$, it follows that $\cS$ is $\theta$-stable.  So we can take $\alpha$ to be the permutation of $\cS$ associated to $\theta$.
\end{proof}

\subsection{Reduction to $\theta$-stable tori}

We now apply Lemma \ref{tenfour} and Proposition \ref{liftingtori} to prove a result that essentially says that every relevant $K^0$-orbit of orthogonal involutions contains at least one favorable $T$-orbit.

We will consider a fixed refactorization class $\xi$ of $G$-data.  Associated to $\xi$  is the reductive group $\sG_y^0$ defined over the residue field $\f$ of $F$ such that $\sG_y^0 (\f) = G^0_{y,0:0^+}$.   The role of the group $\sG$ in \S\ref{sec:redtores} is played by $\sG_y^0$.  The torus $\sT$ is the  $\f$-torus $\sT$ in $\sG_y^0$ such that $\sT(\f)$ is the image of $\bT (\gO_E)$ in $\sG_y^0 (\f)$.

\begin{lemma}\label{thetastableT}
Suppose $\xi$ is a refactorization class of $G$-data and $\vartheta$ is a $K^0$-orbit of orthogonal involutions of $G$ such that $\langle \vartheta ,\xi\rangle_{K^0}$ is nonzero.  Then there exists $\theta\in \vartheta$ such that $\bT$ is $\theta$-stable.  Given one such $\theta$, every involution in the $T$-orbit of $\theta$ must also have the same property.
\end{lemma}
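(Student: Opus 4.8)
The statement has two parts: (i) existence of some $\theta \in \vartheta$ with $\bT$ being $\theta$-stable, and (ii) that this property is then shared by the entire $T$-orbit of $\theta$. Part (ii) is essentially formal: if $\bT$ is $\theta$-stable and $t \in T \subset \bT$, then for the conjugated involution $\theta' = t \cdot \theta = \Int(t) \circ \theta \circ \Int(t)^{-1}$ we have $\theta'(\bT) = \Int(t)(\theta(\Int(t^{-1})\bT)) = \Int(t)(\theta(\bT)) = \Int(t)(\bT) = \bT$, since $t$ normalizes (indeed centralizes) $\bT$. So the real content is part (i).

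For part (i), the plan is to combine the two main tools developed just above. First, since $\langle \vartheta, \xi\rangle_{K^0} \neq 0$, by Propositions 5.9 and 5.20 of \cite{HM} (as recalled in \S\ref{sec:general}) we have $\vartheta \sim \xi$, so there is a choice of $\Psi \in \xi$ and $\theta_0 \in \vartheta$ such that $\Psi$ is $\theta_0$-symmetric. Next I would reduce to showing that $\sT$ (the associated $\f$-torus in $\sG^0_y$) can be taken to be $\theta_0$-stable in the finite-group picture: the nonvanishing $\langle \vartheta, \xi\rangle_{K^0} = \dim \Hom_{K^{0,\theta_0}}(\rho', \eta'_{\theta_0}) \neq 0$ is, in the nontoral case, a statement about $\Hom_{\sG^{0,\theta_0}_y(\f)}$-spaces involving a Deligne--Lusztig representation $\pm R^\lambda_{\sT(\f)}$ with $\lambda$ nonsingular. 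Applying Lemma \ref{tenfour} (with $\sG = \sG^0_y$, with $\chi = \eta'_{\theta_0}$, which by Lemma \ref{etaprimeidentity} is trivial on the identity component) to a $\gamma$ realizing the nonvanishing in the Lusztig-type formula of Theorem 3.11 \cite{HL1}, one concludes that after conjugating by a suitable element of $\sG^0_y(\f)$ — equivalently, after refactoring/adjusting within $\vartheta$ — one may assume $\sT$ is $\theta_0$-stable. In the toral case $\sT = \sG^0_y$ is already a torus and $\theta_0$-stability of $\sT$ is automatic once $\theta_0(G^0) = G^0$.

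Finally, I would invoke Proposition \ref{liftingtori}: given that $\Psi$ is $\theta$-symmetric, nontoral, with $\sT$ being $\theta$-stable, there exists $k \in G^0_{y,0^+}$ with $k\bT k^{-1}$ being $\theta$-stable. Since $k \in K^0$, replacing $\theta$ by $k^{-1} \cdot \theta$ (which lies in the same $K^0$-orbit $\vartheta$) and noting that $\theta$-stability of $k\bT k^{-1}$ is equivalent to $(k^{-1}\cdot\theta)$-stability of $\bT$, we obtain an element of $\vartheta$ for which $\bT$ itself is stable. One should check that replacing $\Psi$ by the corresponding refactorization keeps everything within $\xi$ and keeps the datum $\theta$-symmetric for the new $\theta$, but this is routine given the $K^0$-equivariance of the constructions in \cite{HM} and \cite{HL1}.

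**Expected main obstacle.** The delicate point is the reduction from ``$\langle\vartheta,\xi\rangle_{K^0}\neq 0$'' to ``$\sT$ may be taken $\theta$-stable at the residue-field level'' — i.e., pinning down exactly which $\gamma \in \Xi_{\sT,\lambda,\chi}$ is produced by the nonvanishing Hom-space and verifying that conjugating $\sT$ by it corresponds to a legitimate move within the $K^0$-orbit $\vartheta$ (rather than only within the $\sG^0_y(\f)$-conjugacy). This requires carefully unwinding the definition of $\langle\vartheta,\xi\rangle_{K^0}$ via Theorem 3.11 \cite{HL1} and matching the combinatorics of $\theta$-symmetry with the hypotheses of Lemma \ref{tenfour}; the subsequent lift via Proposition \ref{liftingtori} is then comparatively clean.
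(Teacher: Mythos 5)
Your proposal is correct and follows essentially the same route as the paper: $\vartheta\sim\xi$ yields a $\theta$-symmetric datum, nonvanishing of the Lusztig-type formula from Theorem 3.11 of \cite{HL1} forces $\Xi_{\sT,\lambda,\eta_\theta}$ to be nonempty so that replacing $\theta$ by $\gamma\cdot\theta$ (with $\gamma$ lifted into $G^0_{y,0}\subset K^0$, which preserves membership in $\vartheta$ and the $\theta$-symmetry of $\Psi$) makes $\sT$ $\theta$-stable, and Proposition \ref{liftingtori} then gives $k\in G^0_{y,0^+}$ so that $\bT$ is $(k^{-1}\cdot\theta)$-stable, with the $T$-orbit claim being formal. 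The only cosmetic difference is that your appeal to Lemmas \ref{tenfour} and \ref{etaprimeidentity} is superfluous here: the condition $(\gamma\cdot\theta)(\sT)=\sT$ is already part of the definition of $\Xi_{\sT,\lambda,\eta_\theta}$, and the stronger splitness conclusion of Lemma \ref{tenfour} is only needed later, in Lemma \ref{thetasplitreduction}.
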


\begin{proof}
Consider first the toral case.  As is explained in the proof of Proposition \ref{newmultformula}, the condition $\langle \vartheta ,\xi\rangle_{K^0}\ne 0$ implies that $\vartheta\sim \xi$.  Then Proposition 3.9 \cite{HL1} implies that we may choose $\Psi\in \xi$ such that $\Psi$ is $\theta$-symmetric for some $\theta\in \vartheta$. Since $\bG^0=\bT$ in the toral case, this implies that $\bT$ is $\theta$-stable.   If $\theta'$ is another element of the $T$-orbit of $\theta$, then $\bT$ must also be $\theta'$-stable.  Thus we have proved what is required in the toral case.

Now assume we are in the nontoral case.  Again, we can choose $\theta\in \vartheta$ and $\Psi\in \xi$ such that $\Psi$ is $\theta$-symmetric.  
Then, as in \S\ref{sec:ourexamples} and \S6.2 \cite{HL1}, we have the formula
$$\langle \vartheta,\xi\rangle_{K^0} = \dim \Hom_{\sG^{0,\theta}_y(\f)}((-1)^{n_0+1}R^\lambda_{\sT(\f)},\eta_\theta)$$
which can be evaluated using
Theorem 3.11 \cite{HL1}, a generalization of Theorem 3.3 \cite{L}.
We obtain
 $$\langle \vartheta ,\xi \rangle_{K^0} = (-1)^{n_0}\sum_{\gamma \in \mathsf{T}(\f)\bs \Xi_{\mathsf{T},\lambda,\eta_\theta}/\sG_y^0(\f)^\theta} \sigma \left(Z_{\sG_y^0}\left( (\gamma^{-1}\mathsf{T} \gamma \cap \sG_y^{0,\theta})^\circ\right)\right),$$ where $\Xi_{\mathsf{T},\lambda,\eta_\theta}$ is  the set of all $\gamma \in \sG_y^0(\f)$ such that $(\gamma \cdot \theta)(\mathsf{T}) = \mathsf{T}$, and
$$\lambda (t) = \eta_\theta (\gamma^{-1}t\gamma ) \ \varepsilon_{\gamma^{-1}\mathsf{T} \gamma } (\gamma^{-1}t\gamma),$$ for all $t\in \mathsf{T}(\f)^{\gamma \cdot \theta}$, and we use Lusztig's notation $\sigma (\mathsf{H}) = (-1)^{{\rm rank}_{\f}(\mathsf{H})}$, when $\mathsf{H}$ is an $\f$-subgroup of $\sG^0_y$. 

Since, by assumption, $\langle \vartheta ,\xi \rangle_{K^0}$ is nonzero,  $\Xi_{\mathsf{T},\lambda,\eta_\theta}$ must be nonempty. 
Choose $\gamma \in \Xi_{\mathsf{T},\lambda,\eta_\theta}$.  Then we can, and will, replace $\gamma\cdot \theta$ by $\theta$.  This causes  $1$ to lie in $\Xi_{\mathsf{T},\lambda,\eta_\theta}$, while it does not destroy our assumptions that $\theta\in \vartheta$ and  $\Psi$ is $\theta$-symmetric.  
Since $1\in \Xi_{\mathsf{T},\lambda,\eta_\theta}$, it must be the case that $\sT$ is $\theta$-stable.

Proposition \ref{liftingtori} now implies that there exists a $k\in G^0_{y,0^+}$ such that  $k\bT k^{-1}$ is $\theta$-stable or, equivalently, $\bT$ is $(k^{-1}\cdot \theta)$-stable.
This says that we can slightly alter $\bT$ so that it becomes nicer with respect to $\theta$.  Replacing $\theta$ by $k^{-1}\cdot \theta$, we obtain $\theta\in \vartheta$ such that $\bT$ is $\theta$-stable.   If $\theta'$ is another element of the $T$-orbit of $\theta$, then $\bT$ must also be $\theta'$-stable. 
\end{proof}

\subsection{Reduction to $\theta$-split tori}

We now establish the following $p$-adic analogue of Lemma \ref{tenfour}:

\begin{lemma}\label{thetasplitreduction}
Suppose $\xi$ is a refactorization class of $G$-data and $\vartheta$ is a $K^0$-orbit of orthogonal involutions of $G$ such that $\langle \vartheta ,\xi\rangle_{K^0}$ is nonzero.  Suppose 
$\Psi\in \xi$ and $\theta\in \vartheta$  are chosen  such that $\Psi$ is $\theta$-symmetric  $\bT$ is $\theta$-stable. Then  $\bT$ is $\theta$-split.  
 Given $\theta$ such that $\bT$ is $\theta$-split, every involution in the $T$-orbit of $\theta$ must also have the same property.
\end{lemma}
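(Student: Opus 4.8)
The plan is to reduce the question to the residue field, invoke the finite–field statement Lemma \ref{tenfour}, and then transfer the conclusion to $\bT$ using the theory of $J$-symmetric embeddings; I would treat the nontoral case, the toral case being simpler since there $\bG^0=\bT$ and one argues directly. The first move is to arrange that $1\in\Xi_{\sT,\lambda,\eta_\theta}$. Since $\bT$ is $\theta$-stable, so is $\sT$, and exactly as in the proof of Lemma \ref{thetastableT} the hypotheses give $\langle\vartheta,\xi\rangle_{K^0}=\dim\Hom_{\sG_y^{0,\theta}(\f)}((-1)^{n_0+1}R^\lambda_{\sT(\f)},\eta_\theta)$, which Theorem 3.11 \cite{HL1} expands into a sum over $\sT(\f)\bs\Xi_{\sT,\lambda,\eta_\theta}/\sG_y^0(\f)^\theta$; as this is nonzero, $\Xi_{\sT,\lambda,\eta_\theta}\ne\emptyset$, and picking $\gamma$ in it and replacing $\theta$ by a lift of $\gamma\cdot\theta$ (adjusted by an element of $G^0_{y,0^+}$, via Proposition \ref{liftingtori}, so as to keep $\bT$ $\theta$-stable) we may assume $1\in\Xi_{\sT,\lambda,\eta_\theta}$.

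Next I would apply Lemma \ref{tenfour} with $\sG=\sG_y^0$, the torus $\sT$, the character $\lambda$, and $\chi=\eta_\theta$. Its first hypothesis holds because $\lambda$ is nonsingular by construction of $\rho$ (condition D4: $\varphi_{-1}$ is generic over $E_0$). Its second hypothesis, triviality of $\eta_\theta$ on $((\sG_y^{0,\theta})^\circ)^F$, I would verify by writing $\eta_\theta=\eta'_\theta\cdot(\phi|K^{0,\theta})$: the factor $\eta'_\theta$ is trivial on $(\sG_y^{0,\theta})^\circ(\f)$ by Lemma \ref{etaprimeidentity}, and $\phi|K^{0,\theta}$ descends to a product of characters of the form $\varphi_i\circ\det$ on $\sG_y^0(\f)=\GL_{n_0}(\f_{E_0})$, hence is trivial on the identity component of the fixed-point group of any involution since $\det$ restricts trivially there. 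Lemma \ref{tenfour} then yields $\sT\in\cJ$; because $\sG_y^0\cong\GL_{n_0}$ over $\f_{E_0}$ admits $\theta$-split maximal tori, $\cJ$ is exactly the set of $\theta$-split maximal tori, so $\sT$ is $\theta$-split over $\f$, i.e.\ $\theta$ inverts $\sT$.

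Finally I would transfer this to $\bT$. Using a $J$-symmetric integral embedding of $E$ compatible with the tower $E=E_{-1}\supseteq E_0\supseteq\cdots\supseteq E_d=F$ and applying the restriction formulas of Proposition \ref{symrestrict} link by link, $\theta$ restricts on $\bT=E^\times$ to an automorphism $t\mapsto\sigma(t)^{-1}$ for some $F$-automorphism $\sigma$ of $E$ with $\sigma^2=1$, and $\bT$ is $\theta$-split exactly when $\sigma=\mathrm{id}$. Since $\theta$ preserves $\gO_E^\times=\bT\cap G^0_{y,0}$ and the reduction $\gO_E^\times\to\sT(\f)=\f_E^\times$ intertwines $\theta$ with its action on $\sT$, the previous paragraph forces $\sigma$ to act trivially on the residue field $\f_E$; as $E/E_0$ is unramified and $\sigma$ preserves $E_0$ (because $E_0$ is determined by $\varphi$, which $\sigma$ fixes up to inversion), the canonical isomorphism $\mathrm{Gal}(E/E_0)\cong\mathrm{Gal}(\f_E/\f_{E_0})$ shows $\sigma$ is trivial on $E_0$. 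What remains is to show $\sigma=\mathrm{id}$ on all of $E$: here one uses $F$-admissibility of $\varphi$ together with the $\theta$-symmetry identities $\phi_i\circ\theta=\phi_i^{-1}$, which on $\bT$ say precisely that each $\varphi_i\circ N_{E/E_i}$ is $\sigma$-invariant; a nontrivial $\sigma$ would have $E/E^\sigma$ totally ramified quadratic and would force $\varphi$, up to the controlled behaviour of $\varphi_{-1}$, to be invariant under $\mathrm{Gal}(E/E^\sigma)$, hence to factor through $N_{E/E^\sigma}$ in a way incompatible with admissibility. Thus $\sigma=\mathrm{id}$ and $\bT$ is $\theta$-split.

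For the closing assertion, if $t\in T$ and $s\in\bT$ then $(t\cdot\theta)(s)=t\,\theta(t)^{-1}\,\theta(s)\,\theta(t)\,t^{-1}=s^{-1}$ since $\bT$ is abelian and $\theta(t)\in\bT$, so $\bT$ is $(t\cdot\theta)$-split as well. I expect the main obstacle to be exactly the admissibility argument that closes the third paragraph — turning residue-field triviality of $\sigma$, the tower decomposition of $\theta$, and $F$-admissibility of $\varphi$ into $\sigma=\mathrm{id}$; the remaining steps are either direct computations or appeals to the finite-field machinery already in place.
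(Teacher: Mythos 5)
Your argument has a genuine gap at its central step, the appeal to Lemma \ref{tenfour}. To pass from ``$\sT\in\cJ$'' to ``$\sT$ is $\theta$-split over $\f$'' you assert that $\sG_y^0\cong\GL_{n_0}$ over $\f_{E_0}$ admits $\theta$-split maximal tori, so that $\cJ$ coincides with the set of $\theta$-split maximal tori. But at this stage of the proof you do not know that the involution induced on $\sG_y^0(\f)=\GL_{n_0}(\f_{E_0})$ is an orthogonal involution: writing $\theta(t)=\sigma(t)^{-1}$ on $T=E^\times$, the restriction $\theta|G^0$ is a \emph{unitary} involution whenever $\sigma|E_0\ne 1$ (Proposition \ref{symrestrict}), and if $E_0/E_0\cap E^\sigma$ is unramified the induced involution of $\GL_{n_0}(\f_{E_0})$ is a unitary involution relative to $\f_{E_0}/\f_{E_0\cap E^\sigma}$. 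For such an involution there are \emph{no} $\theta$-split maximal tori at all; membership in $\cJ$ only says $\theta$ fixes no roots of $\sT$, which does not force inversion on $\sT$. So the inference ``$\sT$ is $\theta$-split, hence $\sigma$ is trivial on $\f_E$'' is exactly circular: it presupposes the residue involution is orthogonal, which is essentially what the lemma is trying to establish. (A smaller but related omission: even when the residue fields agree, the reduction of a hermitian $\nu'$ could a priori be skew-symmetric, and one has to rule out the symplectic case; the paper does this via Propositions \ref{symembedB}, \ref{symembedC} and Klyachko's results. Also, your deduction that triviality of $\sigma$ on $\f_E$ forces $\sigma$ trivial on $E_0$ via $\mathrm{Gal}(E/E_0)\cong\mathrm{Gal}(\f_E/\f_{E_0})$ is a non sequitur, since $\sigma$ need not lie in $\mathrm{Gal}(E/E_0)$.)

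The paper's proof is organized precisely around this difficulty. It argues by contradiction with $\sigma\ne\mathrm{id}$: $\theta$-symmetry of the $\phi_i$ makes $\varphi'=\phi|E^\times$ factor through $N_{E/E^\sigma}$, and in the nontoral case $F$-admissibility of $\varphi$ (via $\varphi|(1+\gP_E)$) forces $E/E^\sigma$ to be unramified. Then it splits into cases. If $\f_{E_0}=\f_{E_0\cap E^\sigma}$, it first proves the residue involution is orthogonal and only then applies Lemma \ref{tenfour}, getting $\sigma(\sqrt{\epsilon})\equiv\sqrt{\epsilon}$ mod $\gP_E$, a contradiction — this is the part your sketch reproduces. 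If instead $E_0/E_0\cap E^\sigma$ is unramified quadratic (the unitary case your argument cannot reach), an entirely different mechanism is needed: one shows $n_0$ is odd and then uses Proposition 4.2 of \cite{HM2} (resting on Gow's results), together with the triviality of $\eta'_\theta$, to conclude that $\lambda$, hence $\varphi_{-1}$, is $\theta$-symmetric, so that $\varphi=\varphi_{-1}\varphi'$ factors through $N_{E/E^\sigma}$, contradicting admissibility. Without some substitute for this second branch your proof does not close.
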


\begin{proof}
Fix a $G$-datum $\Psi$ and $\theta\in \vartheta$ such that $\Psi$ is $\theta$-symmetric and $\bT$ is $\theta$-stable.
Let $\sigma$ be the $F$-auto\-mor\-phism of $E$ such that $\sigma (x) = \theta(x)^{-1}$, for all $x\in E^\times =T$.  Our assertion is equivalent to the assertion that $\sigma$ must be the identity automorphism.  (Clearly, if  $\bT$ is $\theta$-split then it  is $\theta'$-split for all $\theta'$ in the $T$-orbit of $\theta$.)

Suppose $\sigma$ is nontrivial.  Then it must have order two or, in other words, $E$ must be a quadratic extension of
 the fixed field $E^\sigma$ of $\sigma$.  Note that  $E^\sigma$ contains $F$.

Consider our Howe factorization of $\varphi$:
$$\varphi = \varphi_{-1}\  \prod_{i=0}^d (\varphi_i \circ N_{E/E_i}) = \varphi_{-1}\ \prod_{i=0}^d (\phi_i |E^\times)$$ and let
$$\varphi' = \phi | E^\times = (\varphi_{-1})^{-1} \varphi = \prod_{i=0}^d (\phi_i |E^\times).$$
Since each $\phi_i$ is $\theta$-symmetric, so is $\varphi'$.

We have:
\begin{eqnarray*}
\varphi'\text{ is $\theta$-symmetric}
&\Leftrightarrow&
\varphi'(x) = \varphi' (\sigma (x))\text{ for all }x\in E^\times\\
&\Leftrightarrow&
\varphi'(x\sigma (x)^{-1})=1\text{ for all }x\in E^\times\\
&\Leftrightarrow&
\varphi' | \U_1(E/E^\sigma)=1\\
&\Leftrightarrow&
\varphi'\text{ factors through }N_{E/E^\sigma}\\
&\Rightarrow&\varphi'\text{ is not admissible over }F.
\end{eqnarray*}
If $\Psi$ is toral then $\varphi_{-1}=1$ and thus $\varphi' = \varphi$.  This means we have shown that $\varphi$ is not admissible over $F$.  This contradiction shows that $\sigma$ is indeed trivial in the toral case.

Now assume $\Psi$ is nontoral.  Then:
\begin{eqnarray*}
\varphi_{-1} | (1+ \gP_E)=1
&\Rightarrow&
\varphi'| (1+\gP_E) = \varphi | (1+\gP_E)\\
&\Rightarrow&
\varphi | (1+\gP_E)\text{ factors through }N_{E/E^\sigma}\\
&\Rightarrow&
 E/E^\sigma
 \text{ is unramified (by admissibility of $\varphi$)}.
\end{eqnarray*}
We therefore have $E = E^\sigma [\sqrt{\epsilon}]$, for some square root $\sqrt{\epsilon}$ of some nonsquare unit $\epsilon$ in $E^\sigma$.

Let $E_0^\sigma = E_0\cap E^\sigma$.  Suppose $\f_{E_0} = \f_{E_0^\sigma}$.  We claim that that $\theta$ must yield an orthogonal involution of $\sG_y^0(\f)$.  First, we observe that, according to Proposition \ref{symrestrict}, $\theta$ must restrict to either an orthogonal or unitary involution of $G^0 = \GL_{n_0}(E_0)$.
If $\theta |G^0$ is an orthogonal involution then $E_0 = E_0^\sigma$ and we apply Proposition  \ref{symembedB}.  This says that if $E$ is embedded in $\M_2 (E^\sigma)$ as the set of matrices $$\begin{pmatrix}x&y\epsilon\\ y&x\end{pmatrix}$$ with $x,y\in E^\sigma$ and if $E^\sigma$ is embedded in a $J$-symmetric way in $\M_{n_0/2}(E_0)$
then $\theta| G^0 = \theta_{\nu'}$ for some symmetric matrix $$\nu' = \begin{pmatrix}Jy &0\\ 0&-Jy\epsilon \end{pmatrix},$$ with $y\in (E^\sigma)^\times$.  Since $(E^\sigma)^\times = E_0^\times \gO_{E^\sigma}^\times$ and $E_0^\times$ is the center of $G^0$, we may assume $y$ lies in $\gO_{E^\sigma}^\times$.  This implies $\theta$ determines an orthogonal involution of $\sG_y^0 (\f) = \GL_{n_0} (\f_{E_0})$ in the present case.

On the other hand, if $\theta |G^0$ is a unitary involution then $E_0/E_0^\sigma$ is a ramified quadratic extension (since we are assuming $\f_{E_0} = \f_{E_0^\sigma}$) and we apply Proposition \ref{symembedC}.   Choosing embeddings as in Proposition \ref{symembedC} and arguing as in the previous case,
we see that $\theta |G^0$ is the unitary involution associated to some  $\nu'\in J\gO_{E^\sigma}^\times$.  The matrix $\nu'$ is a scalar multiple of a hermitian matrix in $G^0$.  It therefore reduces to either a symmetric or skew-symmetric matrix in $\GL_{n_0}(\f_{E_0})$.  The skew-symmetric case can be excluded by the results of Klyachko \cite{K}.

We have now shown that our assumption that $\f_{E_0}=\f_{E_0^\sigma}$ implies that $\theta$ yields an orthogonal involution of $\sG_y^0 (\f) = \GL_{n_0}(\f_{E_0})$.
Thus $\sG_y^{0,\theta}(\f)$ is an orthogonal group.  Lemma \ref{tenfour} then implies that  $\mathsf{T}$ is $\theta$-split and hence $$\sigma (\sqrt{\epsilon}) \equiv \sqrt{\epsilon}\quad\text{(mod $\gP_E$)}.$$
But $\sigma (\sqrt{\epsilon}) = -\sqrt{\epsilon}$ implies $-\sqrt{\epsilon}\equiv \sqrt{\epsilon}$ which is impossible.  It follows that $\f_{E_0}$ is a quadratic extension of $\f_{E_0^\sigma}$.  Equivalently, $E_0/E_0^\sigma$ must be an unramified quadratic extension.

Since $E/E_0$ is unramified and $E_0/E^\sigma_0$ is unramified, we now know that $E/E_0^\sigma$ is unramified.  Let $n_0 = [E:E_0] = [E^\sigma : E_0^\sigma ]$.  Suppose $n_0$ is even.  Then $E^\sigma$ contains an unramified quadratic extension of $E_0^\sigma$.  But since $E_0$ is the unique unramified quadratic extension of $E_0^\sigma$ in $E$, we deduce that $E_0$ is contained in $E_0^\sigma$, which is absurd.  This implies that $n_0$ must be odd.

Above we defined a quasicharacter $\varphi'$ and showed that $\varphi'$ factors through $N_{E/E^\sigma}$.  Since $\varphi = \varphi_{-1} \varphi'$, it suffices now to show that $\varphi_{-1}$ also factors through $N_{E/E^\sigma}$.  This would then imply that $\varphi$ factors through $N_{E/E^\sigma}$ contradicting the admissibility of $\varphi$ over $F$.

To show that $\varphi_{-1}$ factors through $N_{E/E^\sigma}$, we use our proof that $\eta'_\theta$ is trivial in the present case and then we use Proposition 4.2 \cite{HM2} (which uses results of Gow)  to  show that $\lambda$ is trivial on $\U_1(\f_E/\f_{E^\sigma})$ or, equivalently, that $\lambda$ is $\theta$-symmetric.  But since $\varphi_{-1}|(1+\gP_E)=1$ and $\varphi_{-1}$ (mod $1+\gP_E$) is $\theta$-symmetric, we see that $\varphi_{-1}$ is $\theta$-symmetric.  But this implies $\varphi_{-1}$ factors through $N_{E/E^\sigma}$ and we are done.
\end{proof}

\subsection{Reduction to orthogonal involutions on Levi subgroups and finite groups}

For such $\theta$ and $\bT$ it must be the case that $\bT$ is $\theta$-split.  
In particular, there exists $\theta\in \vartheta$ such that $\bT$ is $\theta$-split.  Given $\theta$ such that $\bT$ is $\theta$-split, every involution in the $T$-orbit of $\theta$ must also have the same property.

\begin{lemma}\label{restsareorthogonal}
Suppose $\xi$ is a refactorization class of $G$-data and $\vartheta$ is a $K^0$-orbit of orthogonal involutions of $G$ such that $\langle \vartheta ,\xi\rangle_{K^0}$ is nonzero. 
Then each $\theta\in \vartheta$ restricts to an orthogonal involution of each of the subgroups $G^i\cong \GL_{n_i}(E_i)$ associated to $\xi$ and, in addition, $\theta$ yields an orthogonal involution of $\sG_y^0(\f)\cong \GL_{n_0}(\f_{E_0})$.
\end{lemma}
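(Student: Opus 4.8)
The plan is to exploit the results already assembled in the preceding sections, most notably Lemma~\ref{thetasplitreduction}, Proposition~\ref{symrestrict}, and the explicit description of $\theta$-split $T$-orbits via $J$-symmetric embeddings. The key point is that by Lemma~\ref{thetastableT} and Lemma~\ref{thetasplitreduction}, whenever $\langle\vartheta,\xi\rangle_{K^0}\ne 0$ we may choose $\Psi\in\xi$ and $\theta\in\vartheta$ such that $\Psi$ is $\theta$-symmetric and $\bT$ is $\theta$-split. Since the group $G^i$ is the centralizer of $E_i^\times$ in $G$ and $\theta$-symmetry of $\Psi$ forces $\theta(G^i)=G^i$, the restriction $\theta|G^i$ is an involution of $G^i\cong\GL_{n_i}(E_i)$. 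The task is to show each such restriction is of orthogonal type (not unitary), and likewise for the induced involution on $\sG_y^0(\f)$.

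First I would use the fact that $\bT\subset G^i$ and that $\bT$ is $\theta$-split, so $E^\times=\bT(F)$ is inverted by $\theta|G^i$. Now $T=E^\times$ sits inside $G^i=\GL_{n_i}(E_i)$ as the multiplicative group of the degree $n_i$ extension $E/E_i$, and $\theta|G^i$ inverts it. By the dichotomy in Proposition~\ref{symrestrict}, $\theta|G^i$ is either an orthogonal involution of $G^i$ (case 1, equivalently $\sigma_\nu$ acts trivially on the relevant intermediate field) or a unitary involution relative to some quadratic $E_i/E_i^\nu$ (case 2). In the unitary case the fixed field of $\sigma_\nu^*$ restricted to $E$ would be a subfield over which $E$ is quadratic, and $\theta|E$ would be the nontrivial automorphism of that quadratic extension rather than the full inversion $x\mapsto x^{-1}$; but $\theta$-splitness of $\bT$ says $\theta$ acts on all of $T=E^\times$ as $x\mapsto x^{-1}$, which cannot be an automorphism of $E$ unless the automorphism it induces is trivial — i.e. unless $E=E^\sigma$. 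Combined with the structure established in Lemma~\ref{thetasplitreduction} (where precisely this $\sigma$ was shown to be the identity), this rules out the strict $\nu$-embedding / unitary alternative, forcing $\theta|G^i$ to be orthogonal.

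For the finite group $\sG_y^0(\f)\cong\GL_{n_0}(\f_{E_0})$, I would argue as in the proof of Lemma~\ref{thetasplitreduction}: since $\theta|G^0$ is an orthogonal involution $\theta_{\nu'}$ with $\nu'$ a symmetric matrix in $G^0$, and since (by the reduction already carried out) we may arrange $\nu'\in J\gO_{E^\sigma}^\times$ — in fact $\nu'=Jy$ with $y\in\gO_{E_0}^\times$ after using that $E_0^\times$ is central in $G^0$ and $\bT$ is $\theta$-split — the matrix $\nu'$ reduces to an invertible symmetric matrix over $\f_{E_0}$, hence $\theta$ induces an orthogonal involution on $\GL_{n_0}(\f_{E_0})$. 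The skew-symmetric possibility is already excluded in Lemma~\ref{thetasplitreduction} by the Klyachko-type argument, so nothing new is needed there. The main obstacle I expect is bookkeeping: making sure that the reductions of Lemmas~\ref{thetastableT} and~\ref{thetasplitreduction} can be applied \emph{simultaneously} at every level $i$ of the twisted Levi sequence, i.e. that a single choice of $\Psi\in\xi$ and $\theta\in\vartheta$ works for all $i$ at once, and that the $J$-symmetric integral embedding of Lemma~\ref{Jsymtameexistence} is compatible with this choice so that Proposition~\ref{symrestrict} applies verbatim at each stage. Once that compatibility is in place the orthogonality at each level follows from the $\theta$-splitness of $\bT$ as above.
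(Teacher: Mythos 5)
Your proposal is correct and follows essentially the same route as the paper: choose $\theta\in\vartheta$ with $\bT$ $\theta$-split via Lemma \ref{thetasplitreduction}, so that (by Proposition \ref{symmetricembedding}) $\theta=\theta_{Jx}$ with $x\in T$, and then Proposition \ref{symrestrict} together with the composite $J$-symmetric integral embeddings gives orthogonality on each $G^i$ and, after factoring out the central part of $x$, an integral symmetric matrix whose reduction yields the orthogonal involution of $\sG_y^0(\f)$. The only differences are cosmetic — your detour through the orthogonal/unitary dichotomy (the paper reads off the restriction directly as $\theta_{J_{n_i}x}$), a harmless slip where the unit $y$ should lie in $\gO_E^\times$ rather than $\gO_{E_0}^\times$, and the one-line transfer of the conclusion from the chosen $\theta$ to every element of $\vartheta$, which you leave implicit.
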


\begin{proof}
According to Lemma \ref{thetasplitreduction}, it is possible to choose
$\theta\in \vartheta$ such that $\bT$ is $\theta$-split.  
Lemma \ref{symmetricembedding} then implies that there exists $x\in T$ such that $\theta = \theta_{Jx}$.  Since $T= E^\times = E_0^\times \gO_{E}^\times$, we can choose $z\in E_0^\times$ and $y\in \gO_E^\times$ such that $x=zy$.  Proposition \ref{symrestrict} implies that the restriction of $\theta_{Jx}$ to $G^0$ is $\theta_{J_{n_0}x}$.  Since $z$ is in the center of $G^0$, we have $\theta_{J_{n_0}x} =\theta_{J_{n_0}y}$.  But $J_{n_0}y$ is a symmetric matrix in $\GL_{n_0}(\gO_{E_0})$ and, consequently, $\theta$ yields an orthogonal involution of $\sG_y^0(\f) = \GL_{n_0} (\f_{E_0})$.
We have therefore established that there exists at least one $\theta\in \vartheta$ with the asserted properties.  But then it follows that every element of $\vartheta$ has these properties.
\end{proof}

\subsection{Reduction from $K^0$-orbits to split $T$-orbits}

Fix a refactorization class $\xi$ of $G$-data and a $K^0$-orbit $\vartheta$ of orthogonal involutions such that $\langle \vartheta , \xi\rangle_{K^0}\ne 0$.

For each $\theta\in \vartheta$, there is an associated involution $\bar\theta$ of $\sG_y^0(\f)$.  (In other sections in this paper, we abbreviate $\bar\theta$ as $\theta$.)   Let $\bar\vartheta$ be the image of $\vartheta$ under $\theta\mapsto \bar\theta$.

Since $T = E^\times  = E_0^\times \gO_E^\times$ and $E_0^\times$ is the center of $G^0$, we see that $\theta\mapsto \bar\theta$ maps $T$-orbits in $\vartheta$ to $\sT(\f)$-orbits in $\bar\vartheta$.
Since $K^0 = TG^0_{y,0} = G^0_{y,0}T$,
we see that $\bar\vartheta$ must be a single $\sG_y^0(\f)$-orbit.
 Therefore, we see that the map $\theta\mapsto \bar\theta$ yields a surjection from the set of $T$-orbits in the $K^0$-orbit $\vartheta$ onto the set of $\mathsf{T}(\f)$-orbits in the $\sG_y^0(\f)$-orbit $\bar\vartheta$.

\begin{proposition}\label{uniqueTinKzero}
If $\xi$ is a refactorization class of $G$-data and $\vartheta$ is a $K^0$-orbit of orthogonal involutions such that $\langle \vartheta , \xi\rangle_{K^0}\ne 0$ 
then \begin{itemize}
\item $\vartheta$ contains a unique split $T$-orbit $\zeta$,
\item  $\bar\vartheta$ contains a unique split $\sT (\f)$-orbit $\bar\zeta$, and 
\item $\bar\zeta$ is the image of $\zeta$ under $\theta\mapsto \bar\theta$.
\end{itemize}
\end{proposition}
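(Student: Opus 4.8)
The plan is to synthesize the lemmas already established. First I would prove existence of a split $T$-orbit inside $\vartheta$. Since $\langle\vartheta,\xi\rangle_{K^0}\neq 0$, Lemma~\ref{thetastableT} provides $\theta\in\vartheta$ together with $\Psi\in\xi$ such that $\Psi$ is $\theta$-symmetric and $\bT$ is $\theta$-stable; Lemma~\ref{thetasplitreduction} then upgrades this to $\bT$ being $\theta$-split. Hence the $T$-orbit $\zeta$ of this particular $\theta$ lies in $\cO^T(\vartheta)$. For uniqueness I would invoke Lemma~\ref{nofusion}, which says that a $K^0$-orbit of orthogonal involutions contains at most one split $T$-orbit (in the toral case this is vacuous, since there $K^0=T$ and so $\vartheta$ is itself a single $T$-orbit, which by Lemma~\ref{thetasplitreduction} is split). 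Combining, $\vartheta$ contains exactly one split $T$-orbit $\zeta$.

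Next I would treat the residue-field side. By Lemma~\ref{restsareorthogonal}, every $\theta\in\vartheta$ induces an orthogonal involution $\bar\theta$ of $\sG_y^0(\f)\cong\GL_{n_0}(\f_{E_0})$, and, as recorded in the paragraph preceding the statement, $\bar\vartheta$ is a single $\sG_y^0(\f)$-orbit of orthogonal involutions, with $\theta\mapsto\bar\theta$ carrying the $T$-orbits in $\vartheta$ surjectively onto the $\sT(\f)$-orbits in $\bar\vartheta$. Applying Lemma~\ref{finiteuniqueness} to the finite general linear group $\GL_{n_0}(\f_{E_0})$ then shows that $\bar\vartheta$ contains a unique split $\sT(\f)$-orbit $\bar\zeta$.

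It then remains to identify $\bar\zeta$ with the image of $\zeta$ under $\theta\mapsto\bar\theta$. Choosing $\theta\in\zeta$, we have $\theta(t)=t^{-1}$ for all $t\in\bT$; reducing modulo $\gP_E$, and recalling that $\sT(\f)$ is by definition the image of $\bT(\gO_E)$ in $\sG_y^0(\f)$, gives $\bar\theta(\bar t)=\bar t^{-1}$ for all $\bar t\in\sT(\f)$, i.e. $\sT$ is $\bar\theta$-split. Thus the image of $\zeta$ is a split $\sT(\f)$-orbit contained in $\bar\vartheta$, and by the uniqueness established above it must equal $\bar\zeta$. This yields all three assertions.

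I expect the genuinely substantive work to be already absorbed into the cited lemmas rather than appearing here: Lemma~\ref{thetasplitreduction} forces $\theta$-splitness from $\theta$-stability, Lemma~\ref{restsareorthogonal} controls the restrictions, and Lemma~\ref{nofusion} handles the only delicate configuration, namely $y_{E/F}=4$ with $\Theta=\Theta_J$, where three split $T$-orbits share a single $G$-orbit. Consequently the main care needed in this proof is organizational — ensuring the toral and nontoral cases are covered uniformly and that the passage from the $p$-adic datum to the residue-field datum is compatible; the single computational point, that reduction modulo $\gP_E$ sends $\theta$-splitness of $\bT$ to $\bar\theta$-splitness of $\sT$, is immediate.
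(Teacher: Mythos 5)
Your proposal is correct and follows essentially the same route as the paper: existence of a split $T$-orbit via Lemmas \ref{thetastableT} and \ref{thetasplitreduction}, uniqueness via Lemma \ref{nofusion}, and the residue-field statements via the surjectivity of $\theta\mapsto\bar\theta$ on $T$-orbits together with Lemma \ref{finiteuniqueness}. The extra details you supply (Lemma \ref{restsareorthogonal} to ensure $\bar\theta$ is an orthogonal involution, and the reduction-mod-$\gP_E$ check that splitness descends) are exactly what the paper leaves implicit.
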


\begin{proof}
The existence of a split $T$-orbit $\zeta$ in $\vartheta$ is a consequence of Lemma \ref{thetasplitreduction}.  The uniqueness of $\zeta$ follows from Lemma \ref{nofusion}.  Clearly, the image $\bar\zeta$ of $\zeta$ under $\theta\mapsto \bar\theta$ is a split $\sT(\f)$-orbit in $\bar\vartheta$.
 But, according to Lemma \ref{finiteuniqueness}, there must be a unique split $\sT(\f)$-orbit in $\bar\vartheta$.
\end{proof}

\section{Evaluating the formula}\label{sec:evalsec}

Let $\xi$ be a refactorization class of $G$-data and let $\Theta$ be a $G$-orbit of orthogonal involutions of $G$.  Suppose $\zeta\in \cO^T(\Theta)$.
Proposition \ref{newmultformula} establishes the formula
$$\langle \Theta ,\xi\rangle_G =\sum_{\zeta \in \cO^T(\Theta)}  m_{T} (\zeta )\  \langle \zeta ,\xi\rangle_{T}.$$
We now evaluate this formula.

\subsection{Evaluating $\langle \zeta ,\xi\rangle_T$}

Assume, with our usual notation, that $\Psi$ comes from an $F$-admissible quasicharacter $\varphi :E^\times \to \C^\times$.  

\begin{proposition}\label{sumofmTs}
If $\varphi (-1) \ne 1$ then $\langle \Theta, \xi\rangle_G = 0$.  If $\varphi (-1) =1$ then $$\langle \Theta ,\xi\rangle_G = \sum_{\zeta\in \cO^T(\Theta)} m_T (\zeta).$$
\end{proposition}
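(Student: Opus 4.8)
The plan is to feed a uniform evaluation of the summands into Proposition \ref{newmultformula}. By that proposition,
$$\langle\Theta,\xi\rangle_G = \sum_{\zeta\in\cO^T(\Theta)} m_T(\zeta)\,\langle\zeta,\xi\rangle_T,$$
so both assertions will follow once I show that, for every split $T$-orbit $\zeta\in\cO^T(\Theta)$, one has $\langle\zeta,\xi\rangle_T=1$ when $\varphi(-1)=1$ and $\langle\zeta,\xi\rangle_T=0$ otherwise: the first statement is then immediate, and in the remaining case every summand equals $m_T(\zeta)$.

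To prove this I would fix $\theta\in\zeta$ and let $\vartheta$ be the $K^0$-orbit containing $\zeta$, so $\langle\zeta,\xi\rangle_T=\langle\vartheta,\xi\rangle_{K^0}$ by definition. Since $\zeta$ is split, $\bT$ is $\theta$-split, and Proposition \ref{symmetricembedding} lets me write $\theta=\theta_{Jx}$ with $x\in T=E^\times$. Writing $x=zy$ with $z\in E_0^\times$ central in $G^0$ and $y\in\gO_E^\times$, the computation in the proof of Lemma \ref{restsareorthogonal} (which uses only $\theta$-splitness of $\bT$, available here by hypothesis) together with Proposition \ref{symrestrict} shows that $\theta$ stabilizes each $G^i$ and $[y]$, restricts to an orthogonal involution of each $G^i=\GL_{n_i}(E_i)$, and descends to an orthogonal involution $\bar\theta$ of $\sG_y^0(\f)=\GL_{n_0}(\f_{E_0})$ (here $J_{n_0}y\in\GL_{n_0}(\gO_{E_0})$ is symmetric and $z$ is a scalar). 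Because $\theta|G^i$ is orthogonal, $\det(\theta(g))=\det(g)^{-1}$ on $G^i$ and hence $\phi_i\circ\theta=\phi_i^{-1}$, so $\Psi$ itself is $\theta$-symmetric; in particular $\vartheta\sim\xi$, and in the nontoral case
$$\langle\zeta,\xi\rangle_T = \langle\vartheta,\xi\rangle_{K^0} = \dim\Hom_{\sG_y^{0,\bar\theta}(\f)}\bigl((-1)^{n_0+1}R^\lambda_{\sT(\f)},\,\eta_\theta\bigr),$$
with the analogous one-dimensional $\Hom$-space computation serving the same role in the toral case.

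The remaining work is to evaluate this dimension. I would apply Theorem 3.11 \cite{HL1}, which expresses it as an alternating sum over $\sT(\f)\backslash\Xi_{\sT,\lambda,\eta_\theta}/\sG_y^0(\f)^{\bar\theta}$. Since $\bT$ is $\theta$-split, $\sT(\f)$ is $\bar\theta$-split; the character $\lambda$ is nonsingular and, by Lemma \ref{etaprimeidentity}, $\eta'_\theta$ is trivial on $(\sG_y^{0,\bar\theta})^\circ(\f)$, so Lemma \ref{tenfour} applies and forces every torus occurring in the formula to be $\bar\theta$-split. This makes each contributing term equal to $\pm1$, and the bookkeeping carried out in \S\ref{sec:ourexamples} — in which the only value of $\eta'_\theta$ ever needed is $\eta'_\theta(-1)$, trivial because $-1$ is a scalar — shows the sum is $1$ exactly when $\varphi(-1)=1$ and $0$ otherwise. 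Substituting into the displayed formula of Proposition \ref{newmultformula} then gives the proposition.

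I expect the main obstacle to be the careful assembly of the evaluation of $\langle\vartheta,\xi\rangle_{K^0}$: one must check that the hypotheses of Theorem 3.11 \cite{HL1}, Lemma \ref{tenfour}, and Lemma \ref{etaprimeidentity} are all genuinely in force for the particular involution $\theta$ at hand — orthogonality of every relevant restriction, $\bar\theta$-splitness of $\sT(\f)$, and triviality of $\eta'_\theta$ on the identity component — and then to push through the Deligne--Lusztig combinatorics precisely enough to isolate $\varphi(-1)$ as the single obstruction to non-vanishing.
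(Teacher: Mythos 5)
Your overall route is the same as the paper's: reduce via Proposition \ref{newmultformula} to showing $\langle\zeta,\xi\rangle_T=1$ when $\varphi(-1)=1$ (and $0$ otherwise), verify that a split involution $\theta=\theta_{Jx}$ makes $\Psi$ $\theta$-symmetric with orthogonal restrictions to the $G^i$ and to $\sG_y^0(\f)$, and then evaluate $\dim\Hom_{\sG_y^{0,\theta}(\f)}\bigl((-1)^{n_0+1}R^\lambda_{\sT(\f)},\eta_\theta\bigr)$ through Theorem 3.11 of \cite{HL1} together with Lemma \ref{tenfour} and Lemma \ref{etaprimeidentity}. Up to the point where every torus entering that formula is forced to be $\theta$-split, your argument matches the paper's proof.

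The gap is at the final step. Once Lemma \ref{tenfour} applies, each double coset contributes exactly $\sigma(\sG_y^0)=(-1)^{n_0}$, cancelling the prefactor, so the dimension equals the \emph{number} of double cosets in $\sT(\f)\bs\Xi_{\sT,\lambda,\eta_\theta}/\sG_y^0(\f)^{\theta}$. Saying ``each contributing term is $\pm1$'' does not determine that number, and the ``bookkeeping in \S\ref{sec:ourexamples}'' you invoke is the introduction's sketch of precisely this proposition, so it cannot be cited as a proof. What is actually needed, and what the paper supplies, is a count of the double coset space: with $\varphi(-1)=1$ the condition at $-1$ is vacuous, so $\Xi_{\sT,\lambda,\eta_\theta}$ is the set of $\gamma$ with $\sT$ being $(\gamma\cdot\theta)$-split; by Proposition \ref{uniqueTinKzero} the split $\sT(\f)$-orbit in $\bar\vartheta$ is unique, hence $\gamma\cdot\bar\theta=t\cdot\bar\theta$ for some $t\in\sT(\f)$, i.e.\ $t^{-1}\gamma$ lies in the similitude group $\sG_y^0(\f)_{\bar\theta}$; and Lemma \ref{mTfinite} gives $\sG_y^0(\f)_{\bar\theta}=(\sT(\f)\cap\sG_y^0(\f)_{\bar\theta})\,\sG_y^{0,\theta}(\f)$, so every $\gamma$ lies in the double coset of the identity and the count is exactly one. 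Without this uniqueness-of-split-orbit plus finite-similitude argument (or some substitute, such as an a priori multiplicity-one statement, which you do not provide), your evaluation only shows the dimension is some nonnegative integer, not that it equals $1$. A minor further difference: the paper disposes of the case $\varphi(-1)\ne1$ at once via the central character, rather than through the term-by-term vanishing you propose; that discrepancy is harmless.
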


\begin{proof}
As observed in \S\ref{sec:general}, if $\varphi (-1)\ne 1$ then $\langle \Theta, \xi\rangle_G = 0$.  Therefore we assume $\varphi (-1) =1$.

For $\theta\in \zeta\in \cO^T(\Theta)$, we have
$$\langle \zeta ,\xi \rangle_{T} =\dim \Hom_{K^{0,\theta}}(\rho' ,\eta'_\theta).$$

In the toral case, $K^{0,\theta} = \{ \pm 1\}$ and $\rho' (-1) = \phi (-1) =\varphi (-1) =1 = \eta'_\theta (-1)$.  (Since $-1$ is in the center of $G$,  conjugation by $-1$ is trivial and hence $\eta'_\theta (-1)=1$.)  Our claim in the toral case follows.

Now assume $\xi$ is nontoral.  From Lemma \ref{restsareorthogonal}, we know that
 $\bG^{0,\theta}_y (\f)$ is an orthogonal group in $\sG_y^0(\f)= \GL_{n_0}(\f_{E_0})$.  We also know that $$\langle \zeta,\xi\rangle_T = \dim \Hom_{\sG_y^{0,\theta}(\f)}((-1)^{n_0+1}R^\lambda_{\mathsf{T}(\f)} ,\eta_\theta),$$
using the notations of  \S\ref{sec:ourexamples}.

We now apply Theorem 3.11 \cite{HL1}.  As in the proof of Lemma \ref{thetastableT}, we obtain the formula
$$\langle \zeta ,\xi \rangle_{T} = (-1)^{n_0}\sum_{\gamma \in \mathsf{T}(\f)\bs \Xi_{\mathsf{T},\lambda,\eta_\theta}/\sG_y^0(\f)^\theta} \sigma \left(Z_{\sG_y^0}\left( (\gamma^{-1}\mathsf{T} \gamma \cap \sG_y^{0,\theta})^\circ\right)\right).$$ Recall that $\Xi_{\mathsf{T},\lambda,\eta_\theta}$ is  the set of all $\gamma \in \sG_y^0(\f)$ such that $(\gamma \cdot \theta)(\mathsf{T}) = \mathsf{T}$ and
$$\lambda (t) = \eta_\theta (\gamma^{-1}t\gamma ) \ \varepsilon_{\gamma^{-1}\mathsf{T} \gamma } (\gamma^{-1}t\gamma),$$ for all $t\in \mathsf{T}(\f)^{\gamma \cdot \theta}$. 
The condition $(\gamma \cdot \theta)(\mathsf{T}) = \mathsf{T}$ simply means that $\gamma^{-1}\sT \gamma$ is $\theta$-stable.  But Lemma \ref{tenfour} then implies $\gamma^{-1}\sT \gamma$ is $\theta$-split or, equivalently, $\sT$ is $(\gamma\cdot \theta)$-split.    This implies
$$\sigma \left(Z_{\sG_y^0}\left( (\gamma^{-1}\mathsf{T} \gamma \cap \sG_y^{0,\theta})^\circ\right)\right) = \sigma (\sG_y^0 ) = (-1)^{n_0}.$$
So we have
$$\langle \zeta ,\xi \rangle_{T} = \sum_{\gamma \in \mathsf{T}(\f)\bs \Xi_{\mathsf{T},\lambda,\eta_\theta}/\sG_y^0(\f)^\theta} 1\quad  =\quad  \# (\mathsf{T}(\f)\bs \Xi_{\mathsf{T},\lambda,\eta_\theta}/\sG_y^0(\f)^\theta).$$

If $\gamma \in \Xi_{\mathsf{T},\lambda,\eta_\theta}$ then $\sT(\f)^{\gamma \cdot \theta} = \{ \pm 1\}$.  Therefore, $\Xi_{\mathsf{T},\lambda,\eta_\theta}$ is  the set of all 
$\gamma \in \sG_y^0(\f)$ such that $\mathsf{T}$ is $(\gamma\cdot \theta)$-split  and
$$\varphi (-1) = \lambda (-1) = \eta_\theta (-1) \ \varepsilon_{\gamma^{-1}\mathsf{T} \gamma } ( -1)=1.$$ Since we are assuming $\varphi (-1) =1$, we see that $\Xi_{\mathsf{T},\lambda,\eta_\theta}$ is simply the set of all 
$\gamma \in \sG_y^0(\f)$ such that $\mathsf{T}$ is $(\gamma\cdot \theta)$-split.

We now adopt the notations in
Proposition \ref{uniqueTinKzero}.  Consider the map from $\Xi_{\mathsf{T},\lambda,\eta_\theta}$ into the set of $\sT(\f)$-orbits of involutions in $\bar\vartheta$ 
given by mapping the double coset of $\gamma$ to the $\sT(\f)$-orbit of the involution $\gamma\cdot \bar\theta$.  According to 
Proposition \ref{uniqueTinKzero}, the image of this map must be the unique split $\sT(\f)$-orbit in $\bar\vartheta$ or, in other words, the orbit of $\bar\theta$.

So if $\gamma\in \Xi_{\mathsf{T},\lambda,\eta_\theta}$ then there exists $t\in \sT (\f)$ such that $\gamma\cdot \bar\theta = t\cdot \bar\theta$.  In other words, $t^{-1}\gamma$ lies 
in the orthogonal similitude group $\sG_y^0(\f)_{\bar\theta}$.
Now Lemma \ref{mTfinite} implies that $$\sG_y^0(\f)_{\bar\theta} = (\sT(\f) \cap \sG_y^0(\f)_{\bar\theta}) \sG_y^{0,\theta}.$$
It follows that $\gamma \in \mathsf{T}(\f)\bs \Xi_{\mathsf{T},\lambda,\eta_\theta}/\sG_y^0(\f)^\theta$ has a single element, namely, the double coset of the identity element.
Therefore, $\langle \zeta,\xi\rangle_T=1$ and our assertion follows.
\end{proof}

Note that in the previous proof, we essentially used a generalized version of Theorem 10.3 \cite{L}.  Our Lemma \ref{tenfour} provided the generalized version of Lusztig's Lemma 10.4 needed to generalize the  proof of Lusztig's theorem.

\subsection{Proof of Theorems \ref{mainone}, \ref{maintwo} and \ref{mainthree}}

We now evaluate the formula in
Proposition \ref{sumofmTs}
to obtain Theorems \ref{mainone}, \ref{maintwo} and \ref{mainthree}.  
We may as well assume $\varphi (-1) =1$.   Then Proposition \ref{sumofmTs} says $$\langle \Theta ,\xi\rangle_G = \sum_{\zeta\in \cO^T(\Theta)} m_T (\zeta).$$

Proposition \ref{indices} says that if $\zeta\in \cO^T(\Theta)$ then 
$m_T(\zeta)=1$, except when $y_{E/F}=2$ and $\zeta$ is the $T$-orbit of $\theta_J$.   

So the evaluation  of $\langle \Theta , \xi\rangle _G$ rests on the determination of $\cO^T(\Theta)$.  But the set $\cO^T(\Theta)$ is described by
Proposition
\ref{TorbGorb} which says that
we have the following cases:
\begin{itemize}
\item $y_{E/F}=1$ (equivalently, $n$ is odd):  There is one element of $\cO^T$, namely, the $T$-orbit of $\theta_J$,
\item $y_{E/F}=2$: There are two elements of  $\cO^T$:  the $T$-orbit of $\theta_J$ and the $T$-orbit of involutions $\theta_{Jx}$ where $x\in E^\times - ((E^\times)^2F^\times)$.  The two $T$-orbits lie in distinct $G$-orbits.
\item $y_{E/F}=4$: There are four elements of $\cO^T$: three of them  lie in $\Theta_J$ and the fourth lies in $\Theta_{\rm nqs}$.\end{itemize}

Suppose $\pi$ is a tame supercuspidal representation associated to a $G$-datum in $\xi$.  Then, as we are continuing to assume that $\varphi (-1)=1$, we see that  $\pi$ is $G^\theta$ with respect to some (hence all) $\theta\in \Theta$ precisely when $\cO^T(\Theta)$ is nonempty.  Since $\cO^T(\Theta_J)$ is always nonempty, Theorem \ref{mainone} follows.  Similarly, Theorem \ref{maintwo} reduces to the observation that if $\Theta\ne \Theta_J$ then $\cO^T(\Theta)$ is nonempty precisely in the two cases listed in the statement of Theorem \ref{maintwo}.

It remains to verify Theorem
 \ref{mainthree}.  Assume $\langle \Theta ,\xi\rangle_G$ is nonzero.
We have the following cases:
\begin{enumerate}
\item $\Theta = \Theta_J$ and $n$ odd:  
\begin{itemize}
\item There is a unique $\zeta$ in $\cO^T(\Theta)$ and  $m_T (\zeta)=1$.  Thus $\langle \Theta_J,\xi\rangle_G = 1$.
\end{itemize}
\item $\Theta = \Theta_J$ and $y_{E/F}=2$:  
\begin{itemize}
\item There is a unique $\zeta$ in $\cO^T(\Theta)$ and  $m_T (\zeta)=2$.  Thus $\langle \Theta_J,\xi\rangle_G = 2$.
\end{itemize}
\item $\Theta \ne \Theta_J$ and $y_{E/F}=2$:  
\begin{itemize}
\item There is a unique $\zeta$ in $\cO^T(\Theta)$ and  $m_T (\zeta)=1$.  Thus $\langle \Theta_J,\xi\rangle_G = 1$.
\end{itemize}
\item $\Theta = \Theta_J$ and $y_{E/F}=4$:  
\begin{itemize}
\item There are three $T$-orbits $\zeta$ in $\cO^T(\Theta)$ and we have $m_T (\zeta)=1$ in each case.  Thus $\langle \Theta_J,\xi\rangle_G = 3$.
\end{itemize}
\item $\Theta \ne \Theta_J$ and $y_{E/F}=4$:  
\begin{itemize}
\item There is a unique $\zeta$ in $\cO^T(\Theta)$ and  $m_T (\zeta)=1$.  Thus $\langle \Theta_J,\xi\rangle_G = 1$.
\end{itemize}
\end{enumerate}
This completes the proof of Theorem \ref{mainthree}.\hfill$\square$

\bibliographystyle{amsalpha}


%
%

\end{document}